\numberwithin{equation}{section}
\newtheorem{theorem}{Theorem}[section]
\newtheorem{prop}[theorem]{Proposition}
\newtheorem{remark}[theorem]{Remark}
\newtheorem{definition}[theorem]{Definition}
\newtheorem{lemma}[theorem]{Lemma}
\newtheorem{cor}[theorem]{Corollary}
\newcommand{\ad}{{\rm ad}}
\newcommand{\diag}{{\rm diag}}
\newcommand{\I}{{\mathrm{i}}} 
\title{Asymptotic and monodromy problems for higher-order Painlevé III equations}
\author{Zikang Wang and Xiaomeng Xu}
\date{}
\begin{document}

\maketitle

\begin{abstract}
In this paper, we study the isomonodromy deformation equations for the $n\times n$ system of first order meromorphic linear ordinary differential equations with two second order poles. We analyze the asymptotic behaviour of the solutions at a boundary point of the isomonodromic deformation space, and derive a parameterization of the solutions via asymptotic parameters. We then derive the explicit formula for the Stokes matrices and connection matrix of the associated linear system in terms of the asymptotic parameters. In the end, we apply the results to the study of the $tt^{*}$ equations.
\end{abstract}
\section{Introduction}

In this paper, we study the isomonodromy deformation for the $n\times n$ system of first order meromorphic linear ordinary differential equations with
two second order poles (at $\xi=0$ and $\xi=\infty$)
\begin{align}\label{introeq}
    \frac{d F}{d \xi}=\left(U+\frac{A}{\xi}+\frac{GVG^{-1}}{\xi^2}\right)\cdot F(\xi),
\end{align}
where $A\in\frak{gl}_n$, $G\in {\rm GL}_n$, and $U=\diag(u_1,\ldots,u_n)$, $V=\diag(v_1,\ldots,v_n)$ are diagoal matrices with $u_i\ne u_j$ and $v_i\ne v_j$ for $i\ne j$. 

The isomonodromy equations are a $(u_1,...,u_n,v_1,...,v_n)$-time dependent Hamiltonian system on the symplectic space $T^*{\rm GL}_n$.
In terms of the pair of matrices $(G,A)\in T^*{\rm GL}_n\cong {\rm GL}_n \times \frak{gl}_n$, the systems can be written as a completely integrable nonlinear partial differential equation for $n\times n$
matrix valued function function $A(\mathbf{u}, \mathbf{v})$ and $n\times n$
invertible matrix valued function $G(\mathbf{u}, \mathbf{v})$, with $2n$ complex variables $\mathbf{u}=(u_1,\ldots,u_n)$ and $\mathbf{v}=(v_1,...,v_n)$,
\begin{align}\label{eq:iso eq of two irr A-u}
    \frac{\partial A}{\partial u_k} &=[\mathrm{ad}_U^{-1}\mathrm{ad}_{E_k}A,A]+{\rm ad}_{E_k}(GVG^{-1}),\\
    \frac{\partial A}{\partial v_k} &= \mathrm{ad}_U(GE_kG^{-1}),\\
        \frac{\partial G}{\partial u_k} &= (\mathrm{ad}_U^{-1}\mathrm{ad}_{E_k}A)\cdot G,\\ \label{eq:iso eq of two irr G-v}
    \frac{\partial G}{\partial v_k} &= G\cdot (\mathrm{ad}_V^{-1}\mathrm{ad}_{E_k}(G^{-1}AG)).
\end{align}
Here $E_k={\rm diag} (0, \ldots ,
\underset{k \text{-th}}{1} 
, \ldots, 0)$, and for any $n\times n$ matrix $A$, $\ad_u^{-1} A$ is a $n\times n$ matrix with the $(i,j)$-entry
\begin{align}
\label{Nota:adu-1}
(\ad_U^{-1} A)_{ij}:=
\left\{\begin{array}{ll}
    \frac{1}{u_i - u_j} A_{i j} & ; i \neq j\\
    0 & ; i = j
  \end{array}\right..
\end{align}

We study this system for two primary reasons. On the one hand, the system lies at the crossroad of many subjects. It is closely related to the $tt^*$ equation \cite{cecotti1991topological, dubrovin1993tteq}, the double symplectic groupoid \cite{lu1989groupoides, boalch2007quasi}, tensor product rule in representation theory and so on. On the other hand, the solutions of the system has strong Painlev\'e property: following Miwa \cite{Miwa1981}, 
the solutions $A(\mathbf{u}, \mathbf{v})$ and $G(\mathbf{u}, \mathbf{v})$ of \eqref{eq:iso eq of two irr A-u}-\eqref{eq:iso eq of two irr G-v}
are multi-valued meromorphic functions 
of $u_1,\ldots,u_n$ and $v_1,...,v_n$, where
the branching occurs 
when $u$ or $v$ moves along a loop around
\begin{equation}\label{Deltauv}
\Delta_{\mathbf{u}, \mathbf{v}}=\{(u_1,...,u_n,v_1,...,v_n)\in \mathbb{C}^{2n}~|~u_i = u_j \text{ or } v_i = v_j, \,\text{for some } i\ne j \}.
\end{equation}
Thus, according to the original idea of Painlev\'{e}, 
they can be a new class of special functions. 

In particular, when the matrix order $n=2$, the above isomonodromy equations are equivalent to the Painlevé III equation \cite{Painleve1902}. 
We refer the reader to the book of Fokas, Its, Kapaev and Novokshenov \cite{FIKN2006} for a thorough introduction to the history and developments of the study of Painlev\'{e} equations. As stressed in \cite{FIKN2006,Its-Novokshenov1986}, the solutions of Painlev\'{e} equations are seen as nonlinear special functions, because they play the same role in nonlinear mathematical physics as that of classical special functions, like Airy functions, Bessel functions, etc., in linear physics. And it is the parametrization of Painlev\'{e} transcendents by their asymptotic behaviour at critical points and the solution to the monodromy problem that 
make Painlev\'{e} transcendents as efficient in applications as linear special functions.

For $n>2$, following \cite{dubrovin1993tteq}, the above isomonodromy equations can be considered as a high-order generalisation of the Painlevé III.
In this paper, we generalize the results for the asymptotics and monodromy problems for the Painlevé III (see \cite{Jimbo1982}, \cite{FIKN2006}) to the general $n$ case. 

\subsection{Asymptotics of solutions of isomonodromy deformation equations}
Our first result is to establish the asymptotic behavior of almost all the multivalued meromorphic solutions of the isomonodromy deformation system \eqref{eq:iso eq of two irr A-u}-\eqref{eq:iso eq of two irr G-v} at an iterated limit, and to identify the asymptotic parameters by elements of the cotangent bundle $T^*{\rm GL}_n$ satisfying the following boundary condition.

\vspace{2mm}

{\bf Boundary Condition:} An element $( G_0,\widehat{A}_0)\in T^*{\rm GL}_n\cong {\rm GL}_n\times \frak{gl}_n$ is said to satisfy boundary condition, if 
\begin{align}\label{eq:eigenvalue cond of hatA tilA}
        \left|\mathrm{Re}(\widehat{\lambda}^{(k)}_{i}-\widehat{\lambda}^{(k)}_{j})\right|<1,\quad \left|\mathrm{Re}(\widetilde{\lambda}^{(k)}_{i}-\widetilde{\lambda}^{(k)}_{j})\right|<1,\   k=1,2,...,n;\ 1\leq i,j\leq k.
\end{align}
Here $\{\widehat{\lambda}_i^{(k)}\}_{i=1,...,k}$ and $\{\widetilde{\lambda}_i^{(k)}\}_{i=1,...,k}$ are the sets of eigenvalues of upper left $k\times k$ submatrices of the matrices $\widehat{A}_0$ and $\widetilde{A}_0:=-G_0^{-1}\widehat{A}_0G_0$ respectively.

\vspace{2mm}
To formulate our results, we introduce the operator $\delta_k A$, which extracts the upper-left $k\times k$ submatrix and diagonal elements of an $n\times n$ matrix $A$:
\begin{align}\label{deltak}
  (\delta_k A)_{i j} 
  & :=
  \left\{\begin{array}{ll}
    A_{i j} & ; 1 \leqslant i, j \leqslant k \quad
    \text{or}\quad i = j\\
    0 & ; \text{otherwise}
  \end{array}\right..\\
  \delta A&:=\delta_1A \text{ the diagonal part of } A
\end{align} 
We introduce a new coordinate system $(\mathbf{z},t,\mathbf{w})=(z_0,z_1,...,z_{n-1},t,w_{n-1},...,w_2,w_0)$:
\begin{align}\label{coor1}
    &z_0=u_1,z_1=u_2-u_1,z_2=\frac{u_3-u_1}{u_2-u_1},...,z_{n-1}=\frac{u_n-u_1}{u_{n-1}-u_1},\\ \label{coor2}
    &t=(v_n-v_1)(u_n-u_1),\\ \label{coor3}
   &w_{n-1}=\frac{v_{n}-v_1}{v_{n-1}-v_1},w_{n-2}=\frac{v_{n-1}-v_1}{v_{n-2}-v_1},...,w_{2}=\frac{v_3-v_1}{v_2-v_1},\ w_0=v_1.
\end{align}
We take the convention $w_1:=\frac{t}{z_1\cdots z_{n-1}w_{n-1}\cdots w_2}$. (The form of the above isomonodromy equations under this new  coordinate system is given by equations \eqref{iso for z begin}–\eqref{iso for z end} in the main text.) 

\begin{theorem}\label{mainthm}
For any $( G_0,\widehat{A}_0)\in {\rm GL}_n\times \frak{gl}_n$ satisfying the boundary condition, there exists a unique multi-valued meromorphic solution $A(\mathbf{z},t,\mathbf{w};\widehat{A}_0,G_0),G(\mathbf{z},t,\mathbf{w};\widehat{A}_0,G_0)$ of the isomonodromy equations \eqref{iso for z begin}–\eqref{iso for z end}, and a sequence of $n \times n$ matrix-valued functions $\widehat{A}_k(z_1,...,z_k)$, $\widetilde{A}_k(w_1,\ldots,w_k)$ for $k=1,\ldots,n-1$, and $\widehat{G}$, such that 

\begin{align*}
   &\lim_{t\rightarrow 0} A(\mathbf{z},t,\mathbf{w})=\widehat{A}_{n-1}(\mathbf{z}),\\
   &\lim_{t\rightarrow 0} t^{-\widehat{A}_{n-1}}G(\mathbf{z},t,\mathbf{w})\cdot w_1^{\delta(G^{-1}AG)}=\widehat{G}(\mathbf{z},\mathbf{w}), \\
    &\widetilde{A}_{n-1}(\mathbf{w})=-\widehat{G}^{-1}\widehat{A}_{n-1}\widehat{G},
\end{align*}
and
\begin{align*}
&\lim_{z_{k}\rightarrow\infty} \delta_{k}\widehat{A}_k=\delta_k\widehat{A}_{k-1}, \ \lim_{z_k\rightarrow\infty} z_k^{\delta_k\widehat{A}_{k-1}}\cdot\widehat{A}_{k}\cdot z_k^{-\delta_k\widehat{A}_{k-1}}=\widehat{A}_{k-1},\ k=2,...,n-1,\ \widehat{A}_1=z_1^{-\delta\widehat{A}_0}\cdot\widehat{A}_0\cdot z_1^{\delta\widehat{A}_0};\\
      &\lim_{w_{k}\rightarrow\infty} \delta_{k}\widetilde{A}_k=\delta_k\widetilde{A}_{k-1}, \ \lim_{w_k\rightarrow\infty} w_k^{\delta_k\widetilde{A}_{k-1}}\cdot\widetilde{A}_{k}\cdot w_k^{-\delta_k\widetilde{A}_{k-1}}=\widetilde{A}_{k-1},\ k=2,...,n-1,\ \widetilde{A}_1=\widetilde{A}_0;\\
      &\lim_{w_2\rightarrow\infty}\cdots\lim_{w_{n-1}\rightarrow\infty}\lim_{z_{2}\rightarrow\infty}\cdots\lim_{z_{n-1}\rightarrow\infty}\overrightarrow{\prod_{k=1}^{n-1}}\left(z_{k}^{\delta_{k}\widehat{A}_{k-1}}z_{k}^{-\widehat{A}_{k}}\right)\cdot\left(z_1\cdots z_{n-1}w_{n-1}\cdots w_2\right)^{\widehat{A}_{n-1}}\widehat{G}\cdot\overleftarrow{\prod_{k=2}^{n-1}}\left(w_{k}^{\widetilde{A}_{k}}w_{k}^{-\delta_{k}\widetilde{A}_{k-1}}\right)= G_0.
\end{align*}
Here in the last identity, we let $z_{n-1},...,z_2$ and $w_{n-1},...,w_2$
  tend to infinity successively.
We call $(\widehat{A}_0, G_0)$ the boundary value of the corresponding solution at the iterated limit point 
\begin{equation}
t\rightarrow 0, z_{n-1}\rightarrow\infty,... ,z_{2}\rightarrow \infty,w_{n-1}\rightarrow\infty, ..., w_{2}\rightarrow \infty.  
\end{equation}

Furthermore, the set $\mathfrak{Sol}_{Shr}$ of solutions $A(\widehat{A}_0,G_0),G(\widehat{A}_0,G_0)$, parameterized by $( G_0,\widehat{A}_0)\in {\rm GL}_n\times \frak{gl}_n$ satisfying the boundary condition, includes almost all the solutions of the isomonodromy equation.
\end{theorem}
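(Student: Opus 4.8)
The plan is to prove Theorem \ref{mainthm} by an inductive unraveling of the iterated limit, reducing the $n\times n$ problem to a sequence of ``confluence'' steps, each of which is an asymptotic analysis of a smaller or rank-one type isomonodromy problem. First I would set up the $t\to 0$ limit: rewriting the isomonodromy equations \eqref{eq:iso eq of two irr A-u}--\eqref{eq:iso eq of two irr G-v} in the coordinates $(\mathbf z,t,\mathbf w)$ (the system \eqref{iso for z begin}--\eqref{iso for z end}), one sees that $t$ is the coordinate measuring the merging of the two irregular singularities' ``scales''; as $t\to 0$ the $\xi^{-2}$ term and the $U$ term decouple in a controlled way. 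The key analytic input is a fixed-point / majorant argument on a suitable Banach space of matrix-valued holomorphic functions, showing that for $(\widehat A_0,G_0)$ in the boundary-condition region the rescaled quantities $A$ and $t^{-\widehat A_{n-1}}G\,w_1^{\delta(G^{-1}AG)}$ extend holomorphically to $t=0$ with the stated limits $\widehat A_{n-1}(\mathbf z)$ and $\widehat G(\mathbf z,\mathbf w)$. The eigenvalue condition $|\mathrm{Re}(\widehat\lambda_i^{(k)}-\widehat\lambda_j^{(k)})|<1$ is exactly what guarantees that the formal gauge transformation conjugating the leading behavior to $t^{\widehat A_{n-1}}$ converges and that no resonances obstruct the limit — this is the higher-order analogue of the classical condition in Jimbo's work on Painlev\'e III \cite{Jimbo1982}.

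Next I would iterate in the $z_k$ and $w_k$ variables. Having reached the hypersurface $t=0$, the functions $\widehat A_{n-1}(\mathbf z)$ solve a residual isomonodromy system (the ``$U+A/\xi$'' truncation), and letting $z_{n-1}\to\infty$ is again a confluence of two eigenvalues of $U$, now at the level of one irregular and one regular singularity. Here I would invoke (or re-prove in this matrix setting) the asymptotic expansion of solutions of the Schlesinger-type equations near a collision of poles: the upper-left $(n-1)\times(n-1)$ block $\delta_{n-1}\widehat A_{n-1}$ has a limit $\delta_{n-1}\widehat A_{n-2}$, while the full matrix, after conjugation by $z_{n-1}^{\delta_{n-1}\widehat A_{n-2}}$, converges to $\widehat A_{n-2}$. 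Repeating for $z_{n-2},\dots,z_2$ produces the tower $\widehat A_{n-1}\rightsquigarrow\cdots\rightsquigarrow\widehat A_1=z_1^{-\delta\widehat A_0}\widehat A_0 z_1^{\delta\widehat A_0}$; the analogous tower on the $\mathbf w$-side (governed by $\widetilde A = -G^{-1}AG$) produces $\widetilde A_1=\widetilde A_0$. The composition-of-gauge-factors identity for $G_0$ is then bookkeeping: each limit step contributes a conjugating factor $z_k^{\delta_k\widehat A_{k-1}}z_k^{-\widehat A_k}$ (resp. $w_k^{\widetilde A_k}w_k^{-\delta_k\widetilde A_{k-1}}$), and assembling them in the prescribed ordered products, together with the $t$-step factor $(z_1\cdots z_{n-1}w_{n-1}\cdots w_2)^{\widehat A_{n-1}}\widehat G$, recovers $G_0$. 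Uniqueness follows because at each stage the limiting data plus the boundary-condition region determines the solution of the (smaller) isomonodromy Cauchy problem, and these inversions can be run in reverse.

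For the final clause — that $\mathfrak{Sol}_{Shr}$ contains \emph{almost all} solutions — I would argue by a dimension/genericity count. The isomonodromy equations \eqref{eq:iso eq of two irr A-u}--\eqref{eq:iso eq of two irr G-v} have a solution space of complex dimension $\dim T^*\mathrm{GL}_n = 2n^2$ (a solution is determined by initial data $(G,A)$ at one non-singular time), and the construction above produces, for each $(\widehat A_0,G_0)\in\mathrm{GL}_n\times\mathfrak{gl}_n$ (also dimension $2n^2$) in the \emph{open} subset cut out by the strict inequalities \eqref{eq:eigenvalue cond of hatA tilA}, a distinct solution — distinctness because the boundary value is read off from the solution by the limit formulas, so the parameterization map is injective. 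Hence the map $(\widehat A_0,G_0)\mapsto(A(\widehat A_0,G_0),G(\widehat A_0,G_0))$ is an injection from a full-dimensional open set into the solution space; since both have the same dimension and the map is holomorphic with the limits depending holomorphically on parameters, its image is open and dense, i.e. it omits only a measure-zero (positive-codimension) set of solutions — precisely those whose boundary data would violate some $|\mathrm{Re}(\lambda_i-\lambda_j)|<1$ or have non-generic eigenvalue configurations. One must also check that the map is a local biholomorphism onto its image (e.g. by computing that its derivative is invertible, or by exhibiting the inverse via the limit procedure), which upgrades ``image has full dimension'' to ``image is an open dense subset.''

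The main obstacle I anticipate is the \emph{uniformity} of the asymptotic expansions across the iterated — as opposed to joint — limits: one must control the solution as $t\to 0$ with $\mathbf z,\mathbf w$ in compact sets avoiding $\Delta_{\mathbf u,\mathbf v}$, \emph{and} then take $z_k,w_k\to\infty$ in the limiting objects, ensuring at each stage that error terms vanish and that the next limit exists without the orders of limits interfering. In particular, the nested conjugations by $z_k^{\delta_k\widehat A_{k-1}}$ involve non-commuting matrix powers whose growth must be dominated by the decay coming from the isomonodromy flow; keeping track of which blocks stabilize and which get rescaled (the role of the operator $\delta_k$) is the delicate combinatorial–analytic heart of the argument. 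The resonance-free hypothesis \eqref{eq:eigenvalue cond of hatA tilA} is what makes each individual step work, but verifying that it is \emph{preserved} (for the relevant submatrices $\widehat A_k,\widetilde A_k$) along the whole tower — so that the induction can proceed — is the technical crux.
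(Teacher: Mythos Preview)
Your outline for the first two stages --- the $t\to 0$ step via a fixed-point/Picard iteration argument with the eigenvalue condition ensuring convergence of the $t^{\widehat A_{n-1}}$ conjugation, and then the iterated $z_k\to\infty$, $w_k\to\infty$ confluences on the limiting one-pole system --- matches the paper's approach closely (the paper packages the first as Proposition~\ref{thm:asym in t} and Theorem~\ref{thm:asy of iso A from hatA}, and imports the second from \cite{TangXu} via Theorem~\ref{thm:iso asy by tangxu} and Proposition~\ref{prop:from A0,G0 to hatA, hatG}). Your worry that the boundary condition must be ``preserved along the tower'' is slightly misplaced: the condition \eqref{eq:eigenvalue cond of hatA tilA} is imposed only on $\widehat A_0$ and $\widetilde A_0$, and the cited result from \cite{TangXu} then produces the entire chain $\widehat A_1,\dots,\widehat A_{n-1}$ in one stroke, so there is no inductive hypothesis to maintain.

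The genuine gap is in your argument for the ``almost all'' clause. An injective holomorphic map from an open subset of $\mathbb C^{2n^2}$ into a $2n^2$-dimensional solution space has open image, but open is not dense: nothing in a dimension count, or in checking that the derivative is invertible, prevents the image from being, say, a proper open ball. Your sentence ``it omits only \dots\ those whose boundary data would violate some $|\mathrm{Re}(\lambda_i-\lambda_j)|<1$'' is circular, since a solution outside the image has no boundary data in your sense to speak of. The paper closes this gap by passing to the monodromy side: it defines a \emph{strictly log-confined} condition on the monodromy data (Definition~\ref{def:log-conf cond}), shows that this condition cuts out an open dense subset of the full monodromy space (Proposition~\ref{prop:almost every sol is shr}), and then proves via an explicit Riemann--Hilbert inversion (Theorem~\ref{thm:almost every solution shrinking}, using Lemma~\ref{lem:RH correspondence: from stokes mat to phi0}) that every solution whose monodromy is strictly log-confined is in fact a shrinking solution. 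Since the monodromy map is a bijection between solutions and monodromy data (Lemma~\ref{lem:uniqueness in RH}), open-dense on the monodromy side transfers to open-dense on the solution side. This detour through monodromy is not merely a convenience --- it is what lets one characterize the \emph{complement} of $\mathfrak{Sol}_{Shr}$ explicitly as a thin set, which your direct approach cannot do.
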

It should first be noted that the matrix functions $\widehat{A}_{n-1}(\mathbf{z})$ and $\widetilde{A}_{n-1}(\mathbf{w})$, which depend only on the variables $\mathbf{z}$ and $\mathbf{w}$ respectively, satisfy the isomonodromy equations for meromorphic linear ODEs with one irregular singularity (equations \eqref{eq:iso eq of one irr in z}).
Our construction and characterization of the solutions proceed in three principal steps:
\begin{enumerate}
    \item In {Theorem \ref{thm:asy of iso A from hatA}}, we construct the solutions $(A(\mathbf{z},t,\mathbf{w}), G(\mathbf{z},t,\mathbf{w}))$ from the intermediate pair $\widehat{A}(\mathbf{z})=\widehat{A}_{n-1}(\mathbf{z})$ and $  \widehat{G}(\mathbf{z},\mathbf{w})$, which satisfy specific eigenvalue conditions and corresponding equations.
    \item In {Proposition \ref{prop:from A0,G0 to hatA, hatG}}, we subsequently construct the intermediate pair  $(\widehat{A}(\mathbf{z}), \widehat{G}(\mathbf{w}))$ from the boundary values $(\widehat{A}_0, G_0)$ that satisfy the boundary condition.
    \item Finally, in {Section \ref{sec:almost every is shrinking}}, we propose a criterion on the monodromy data (Definition \ref{def:log-conf cond}) that selects an open and dense subset, and we prove that all solutions satisfying this criterion belong to the set $\mathfrak{Sol}_{\text{Shr}}$.
\end{enumerate}

\subsection{The monodromy problem for the meromorphic linear ODEs}

The monodromy data of the ordinary differential equation \eqref{introeq} is consisting of a pair of Stokes matrices $S_{d,\pm}^{(0)}(U,V,A,G)$ at $\xi=0$, a pair of Stokes matrices $S_{d,\pm}^{(\infty)}(U,V,A,G)$ at $\xi=\infty$, and the connection matrix $C_d(A,G)$ from $\xi=0$ to $\xi=\infty$ (associated to a chosen admissible direction $d$). See Section \ref{sec:intro for two irr sys} for the definitions. 

Following \cite{JMU1981I}, when $A(\mathbf{u}, \mathbf{v}),G(\mathbf{u}, \mathbf{v})$ satisfy the isomonodromy equations \eqref{eq:iso eq of two irr A-u}-\eqref{eq:iso eq of two irr G-v}, the Stokes matrices $S_{d,\pm}^{(0)}(A(\mathbf{u}, \mathbf{v}),G(\mathbf{u}, \mathbf{v}))$ and $S_{d,\pm}^{(\infty)}(A(\mathbf{u}, \mathbf{v}),G(\mathbf{u}, \mathbf{v}))$ are locally constant (independent of $\mathbf{u}$ and $\mathbf{v}$). 
Therefore, the Stokes matrices are the first integrals of the nonlinear equation \eqref{eq:iso eq of two irr A-u}-\eqref{eq:iso eq of two irr G-v}. Many global analytic properties of the nonlinear isomonodromy equation can be obtained through studying the Stokes matrices. Our second result is to derive an explicit formula of the monodromy data via the boundary values of $(A(\mathbf{u}, \mathbf{v}),G(\mathbf{u}, \mathbf{v}))$.

\begin{theorem}
\label{thm: introcatformula}
Let $A(\widehat{A}_0,G_0),G(\widehat{A}_0,G_0)$ be the solution of the isomonodromy equations \eqref{iso for z begin}–\eqref{iso for z end} with the boundary value $(\widehat{A}_0,G_0)\in T^*{\rm GL}_n$ as in Theorem \ref{mainthm}.  Select $U,V$ and the direction $d$ such that
\begin{align}\label{region1}
    &\mathrm{Im}(u_1e^{\mathrm{i}d})>...> \mathrm{Im}(u_ne^{\mathrm{i}d}),\quad \mathrm{Im}(v_1e^{\mathrm{i}d})<...< \mathrm{Im}(v_ne^{\mathrm{i}d}),\\ \label{region2}
    &-\pi<\left(d+\mathrm{arg}(u_{k+1}-u_{k})\right)<0,\quad 0<\left(d+\mathrm{arg}(v_{k+1}-v_{k})\right)<\pi,\ \text{for}\ \ k=1,..,n-1.
\end{align}
Then

$(1)$. The sub-diagonals of the Stokes matrices $S_{d,\pm}^{(\infty)}(U,V,A(\widehat{A}_0,G_0),G(\widehat{A}_0,G_0))$ 
are given by
\begin{align*}
(S_{d,+}^{(\infty)}(U,V,A,G))_{k,k+1} &=
-2\pi\mathrm{i}\\
&\quad \times 
\sum_{i=1}^k\frac{\prod_{l=1,l\ne i}^{k}
\Gamma(1+\widehat{\lambda}^{(k)}_i-\widehat{\lambda}^{(k)}_l)}{\prod_{l=1}^{k+1}
\Gamma(1+\widehat{\lambda}^{(k)}_i-\widehat{\lambda}^{(k+1)}_l)}\frac{\prod_{l=1,l\ne i}^{k}
\Gamma(\widehat{\lambda}^{(k)}_i-\widehat{\lambda}^{(k)}_l)}{\prod_{l=1}^{k-1}
\Gamma(1+\widehat{\lambda}^{(k)}_i-\widehat{\lambda}^{(k-1)}_l)}\cdot 
\det
(\widehat{\lambda}^{(k)}_i\mathrm{Id}-{\widehat{A}_0})
^{1,\ldots,k-1,k}_{1,\ldots,k-1,k+1},\\
(S_{d,-}^{(\infty)}(U,V,A,G))_{k+1,k}& = 
-2\pi\mathrm{i}\cdot 
{\rm e}^{\pi\mathrm{i}
\left((\widehat{A}_0)_{k+1, k+1}-(\widehat{A}_0)_{k, k}\right)}\\
&\quad \times 
\sum_{i=1}^k \frac{\prod_{l=1,l\ne i}^{k}\Gamma(1+\widehat{\lambda}^{(k)}_l-\widehat{\lambda}^{(k)}_i)}{\prod_{l=1}^{k+1}\Gamma(1+\widehat{\lambda}^{(k+1)}_l-\widehat{\lambda}^{(k)}_i)}\frac{\prod_{l=1,l\ne i}^{k}\Gamma(\widehat{\lambda}^{(k)}_l-\widehat{\lambda}^{(k)}_i)}{\prod_{l=1}^{k-1}\Gamma(1+\widehat{\lambda}^{(k-1)}_l-\widehat{\lambda}^{(k)}_i)}\cdot 
{\det
(\widehat{A}_0-{\widehat{\lambda}^{(k)}_i}\mathrm{Id})
^{1,\ldots,k-1,k+1}_{1,\ldots,k-1,k}}.
\end{align*}
where $k=1,\ldots,n-1$, $\mathrm{Id}$ is the $n\times n$ identity matrix and 
$M
^{a_1,\ldots,a_k}_{b_1,\ldots,b_k}$ is 
the $k\times k$ submatrix of $M$ formed 
by the $(a_1,\ldots,a_k)$ rows and 
$(b_1,\ldots,b_k)$ columns. 
Furthermore, the other entries are also given by explicit expressions.

$(2)$. The sub-diagonals of the Stokes matrices $S_{d,\pm}^{(0)}(U,V,A(\widehat{A}_0,G_0),G(\widehat{A}_0,G_0))$ 
are given by
\begin{align*}
S_{d,+}^{(0)}(U,V,A,G)_{k,k+1} &=
-2\pi\mathrm{i}\cdot {\rm e}^{-2\pi\mathrm{i}
{(\widetilde{A}_0)_{k, k}}} \\
&\quad \times 
\sum_{i=1}^k\frac{\prod_{l=1,l\ne i}^{k}
\Gamma(1+\widetilde{\lambda}^{(k)}_i-\widetilde{\lambda}^{(k)}_l)}{\prod_{l=1}^{k+1}
\Gamma(1+\widetilde{\lambda}^{(k)}_i-\widetilde{\lambda}^{(k+1)}_l)}\frac{\prod_{l=1,l\ne i}^{k}
\Gamma(\widetilde{\lambda}^{(k)}_i-\widetilde{\lambda}^{(k)}_l)}{\prod_{l=1}^{k-1}
\Gamma(1+\widetilde{\lambda}^{(k)}_i-\widetilde{\lambda}^{(k-1)}_l)}\cdot 
\det
(\widetilde{\lambda}^{(k)}_i\mathrm{Id}-{\widetilde{A}_0})
^{1,\ldots,k-1,k}_{1,\ldots,k-1,k+1},\\
S_{d,-}^{(\infty)}(U,V,A,G)_{k+1,k}& = 
-2\pi\mathrm{i}\\
&\quad \times 
\sum_{i=1}^k \frac{\prod_{l=1,l\ne i}^{k}\Gamma(1+\widetilde{\lambda}^{(k)}_l-\widetilde{\lambda}^{(k)}_i)}{\prod_{l=1}^{k+1}\Gamma(1+\widetilde{\lambda}^{(k+1)}_l-\widetilde{\lambda}^{(k)}_i)}\frac{\prod_{l=1,l\ne i}^{k}\Gamma(\widetilde{\lambda}^{(k)}_l-\widetilde{\lambda}^{(k)}_i)}{\prod_{l=1}^{k-1}\Gamma(1+\widetilde{\lambda}^{(k-1)}_l-\widetilde{\lambda}^{(k)}_i)}\cdot         
{\det
(\widetilde{A}_0-{\widetilde{\lambda}^{(k)}_i}\mathrm{Id})
^{1,\ldots,k-1,k+1}_{1,\ldots,k-1,k}}.
\end{align*}
where $k=1,\ldots,n-1$, $\widetilde{A}_0=-G_0^{-1}\widehat{A}_0G_0$, and the other entries are also given by explicit expressions.

$(3).$ The connection matrix is given by
\begin{align*}
C_d\left(U,V,A\left(\widehat{A}_0,G_0\right),G\left(\widehat{A}_0,G_0\right)\right)=e^{-\frac{\pi\mathrm{i}}{2}\delta\hat{A}_0}\cdot\left( \overrightarrow{\prod_{k = 1,...,n-1}} \widehat{C}_k \right)\cdot P^{-1}(\widehat{A}_0) G_0P(\widetilde{A}_0)\cdot \left( \overrightarrow{\prod_{k = 1,...,n-1}} \widetilde{C}_k \right)^{-1}\cdot e^{-\frac{\pi\mathrm{i}}{2}\delta\widetilde{A}_0},
    \end{align*} 
where the product $\overrightarrow{\prod}$ is taken with the index $i$ to the right of $j$ if $i>j$, and the entries of the $n\times n$ matrix $\widehat{C}_k$ are 
\begin{align}\nonumber \widehat{C}_{k,ij}=&\frac{ e^{\frac{\pi\I (\widehat{\lambda}^{(k+1)}_j-\widehat{\lambda}^{(k)}_i)}{2}}}{(\widehat{\lambda}^{(k+1)}_j-\widehat{\lambda}^{(k)}_i)}\frac{\prod_{v=1}^{k+1}\Gamma(1+{\widehat{\lambda}^{(k+1)}_j-\widehat{\lambda}^{(k+1)}_v})\prod_{v=1}^{k}\Gamma(1+{\widehat{\lambda}^{(k)}_i-\widehat{\lambda}^{(k)}_v})}{\prod_{v=1,v\ne i}^{k}\Gamma(1+{\widehat{\lambda}^{(k+1)}_j-\widehat{\lambda}^{(k)}_v})\prod_{v=1,v\ne j}^{k+1}\Gamma(1+{\widehat{\lambda}^{(k)}_i-\widehat{\lambda}^{(k+1)}_v})}\\ &\cdot \frac{(-1)^{k+i}\mathrm{det}^{1,...,k}_{1,...,k-1,k+1}\left(\widehat{\lambda}^{(k)}_i-\widehat{A}_0\right)}{\sqrt{\prod_{l=1,l\ne i}^k(\widehat{\lambda}^{(k)}_i-\widehat{\lambda}^{(k)}_l)\prod_{l=1}^{k-1}(\widehat{\lambda}^{(k)}_i-\widehat{\lambda}^{(k-1)}_l)}}\sqrt{\frac{\prod_{v=1}^{k}(\widehat{\lambda}^{(k+1)}_j-\widehat{\lambda}^{(k)}_v)}{\prod_{v=1, v\ne j}^{k+1}(\widehat{\lambda}^{(k+1)}_j-\widehat{\lambda}^{(k+1)}_v) }},
\end{align}
for $1\le j\le k+1, 1\le i\le k$, and
\begin{align*} (\widehat{C}_{k})_{ k+1,j}&=\frac{e^{\frac{\pi\I(\widehat{A}_{k+1,k+1}-\widehat{\lambda}^{(k+1)}_j)}{2}}\prod_{v=1}^{k+1}\Gamma(1+{\widehat{\lambda}^{(k+1)}_j-\widehat{\lambda}^{(k+1)}_v})}{\prod_{v=1}^{k}\Gamma(1+{\widehat{\lambda}^{(k+1)}_j-\widehat{\lambda}^{(k)}_v})}\sqrt{\frac{\prod_{v=1}^{k}(\widehat{\lambda}^{(k+1)}_j-\widehat{\lambda}^{(k)}_v)}{\prod_{v=1, v\ne j}^{k+1}(\widehat{\lambda}^{(k+1)}_j-\widehat{\lambda}^{(k+1)}_v) }}, \ \ \ \text{for} \ 1\le j\le k+1,\\
\widehat{C}_{k,ii}&=1, \ \ \ \ \ \ \text{for} \ k+1<i\le n,\\
\widehat{C}_{k,ij}&=0, \ \ \ \ \ \ \text{otherwise}.
\end{align*}
The $n\times n$ matrix $\widetilde{C}$ is given by the same formula as $\widehat{C}$ provided replacing  $\widehat{A}_0$ and $\widehat{\lambda}^{(k)}_i$'s by $\widetilde{A}_0$ and $\widetilde{\lambda}^{(k)}_i$'s respectively.
Here for any matrix $M$, $P(M)$ is the matrix with entries
\begin{align}
     {P}(M)_{ij} & =
        (-1)^{i+n}
        \frac
        {\det (M-\lambda^{(n)}_j I)^{1,\ldots,n\backslash n}_{1,\ldots,n\backslash i}}
        {\prod_{t = 1}^{n - 1} (\lambda^{(n - 1)}_t - \lambda^{(n)}_j)},\\
        ({P}(M)^{-1})_{ij} & =
        (-1)^{n+j}
        \frac
        {\det (M-\lambda^{(n)}_i I)^{1,\ldots,n\backslash j}_{1,\ldots,n\backslash n}}
        {\prod_{s\neq i} (\lambda^{(n)}_s - \lambda^{(n)}_i)},
\end{align}
that diagonalizes $M$, i.e., $P^{-1}(M)\cdot M\cdot P(M)=\mathrm{diag}(\lambda^{(n)}_1,...,\lambda^{(n)}_n)$.
\end{theorem}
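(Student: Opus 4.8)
By \cite{JMU1981I}, if $(A,G)$ solves the isomonodromy equations then the monodromy data $S^{(0)}_{d,\pm}$, $S^{(\infty)}_{d,\pm}$, $C_d$ of the linear system \eqref{introeq} are locally constant in $(U,V)$ throughout the chamber cut out by \eqref{region1}--\eqref{region2}. Hence it suffices to evaluate them in the iterated limit $t\to0$, $z_{n-1}\to\infty,\dots,z_2\to\infty$, $w_{n-1}\to\infty,\dots,w_2\to\infty$, where Theorem \ref{mainthm} supplies the precise asymptotics of $A(\mathbf z,t,\mathbf w)$ and $G(\mathbf z,t,\mathbf w)$. The plan is to follow the canonical fundamental solutions $F^{(\infty)}_d$ at $\xi=\infty$ and $F^{(0)}_d$ at $\xi=0$ through this degeneration, to show that each Stokes matrix and the connection matrix factors through the corresponding data of a chain of \emph{one-irregular-singularity} systems governed successively by $\widehat A_{n-1}(\mathbf z),\dots,\widehat A_1,\widehat A_0$ on the $\xi=\infty$ side and by $\widetilde A_{n-1}(\mathbf w),\dots,\widetilde A_0$ on the $\xi=0$ side, and then to invoke the explicit monodromy formula for the one-irregular-singularity system \eqref{eq:iso eq of one irr in z} (treated separately) together with the gluing identities of Theorem \ref{mainthm}. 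The inequalities \eqref{region1}--\eqref{region2} serve to pin down the chamber, hence the labelling of the Stokes sectors and the direction of the caterpillar limits, so that no spurious Stokes factor intervenes.

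\textbf{Step 1: decoupling of the irregular singularities; the Stokes matrices.} With $\mathbf z,\mathbf w$ held fixed, $t\to0$ forces $v_i-v_1\to0$, so $V\to v_1\,\mathrm{Id}$ and $V-v_1\,\mathrm{Id}=O(t)$, while $G\sim t^{\widehat A_{n-1}}\widehat G\,w_1^{-\delta(G^{-1}AG)}$; since $\widehat A_{n-1}$ has the same eigenvalues $\widehat\lambda^{(n)}_\bullet$ as $\widehat A_0$, the bound $|\mathrm{Re}(\widehat\lambda^{(n)}_i-\widehat\lambda^{(n)}_j)|<1$ from \eqref{eq:eigenvalue cond of hatA tilA} gives $G(V-v_1\,\mathrm{Id})G^{-1}\to0$. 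Thus near $\xi=\infty$ the coefficient $GVG^{-1}/\xi^2$ tends to the scalar $v_1\,\mathrm{Id}/\xi^2$, which changes neither the formal normal form nor the Stokes matrices at $\xi=\infty$, and \eqref{introeq} converges, uniformly on the relevant Stokes sectors (via the estimates behind Theorem \ref{thm:asy of iso A from hatA}), to the one-irregular-singularity system $dF/d\xi=(U+\widehat A_{n-1}(\mathbf z)\xi^{-1})F$. Its Stokes matrices at $\xi=\infty$ are again isomonodromy invariants, and under the caterpillar degeneration $z_k\to\infty$ they are computed from $\widehat A_0$ alone by the one-irregular formula; this yields the stated entries $(S^{(\infty)}_{d,+})_{k,k+1}$ and $(S^{(\infty)}_{d,-})_{k+1,k}$, and the remaining entries because that formula is fully explicit. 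Dually, after the gauge $F\mapsto G^{-1}F$ and the substitution $\eta=1/\xi$, near $\xi=0$ the system converges to $dF/d\eta=(V+\widetilde A_{n-1}(\mathbf w)\eta^{-1})F$ with $\widetilde A_{n-1}\to\widetilde A_0=-G_0^{-1}\widehat A_0G_0$ under $w_k\to\infty$, giving part (2). The scalar $e^{-2\pi\I(\widetilde A_0)_{kk}}$ and the phase $e^{\pi\I((\widehat A_0)_{k+1,k+1}-(\widehat A_0)_{kk})}$ are exactly the formal-monodromy twists produced by the orientation reversal $\xi\mapsto1/\xi$ and by the $w_1^{\delta(G^{-1}AG)}$-twist in the normalization of $G$ recorded in Theorem \ref{mainthm}.

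\textbf{Step 2: the connection matrix.} The matrix $C_d$ intertwines $F^{(\infty)}_d$ with $F^{(0)}_d$ along $d$ (convention of Section \ref{sec:intro for two irr sys}). In the iterated limit $F^{(\infty)}_d$ is approximated on $|\xi|\gg1$ by the canonical solution of the $U$-irregular system, and its continuation to $|\xi|\ll1$ is carried out by the chain of explicit connection matrices $\widehat C_k$ --- the connection matrices, of confluent-hypergeometric type, between the Fuchsian point $\xi=0$ and the irregular point $\xi=\infty$ of the one-irregular system at the $k$-th caterpillar stage, expressed through the nested eigenvalue data $\widehat\lambda^{(k)}_\bullet,\widehat\lambda^{(k+1)}_\bullet$; dually $F^{(0)}_d$ contributes the chain $\widetilde C_k$. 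The diagonalizers $P(\widehat A_0),P(\widetilde A_0)$ change between the eigenvalue-ordered frames in which the $\widehat C_k,\widetilde C_k$ are written and the standard frame in which $G_0$ acts, and $G_0$ itself enters via $\lim_{t\to0}t^{-\widehat A_{n-1}}G\,w_1^{\delta(G^{-1}AG)}=\widehat G$ together with the last identity of Theorem \ref{mainthm}, which recovers $G_0$ as the iterated limit of $\widehat G$ dressed by the $z_k,w_k$-powers. The prefactors $e^{-\frac{\pi\I}{2}\delta\widehat A_0}$, $e^{-\frac{\pi\I}{2}\delta\widetilde A_0}$ and the exponential factors inside $\widehat C_k,\widetilde C_k$ encode the branch choices ($\xi^A$ versus $(-\xi)^A$, and the sector of $\arg\xi$) fixed by $d$ and \eqref{region1}--\eqref{region2}. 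Multiplying the factors in the order in which the continuation is performed produces the stated formula for $C_d$.

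\textbf{The main obstacle.} The crux is to justify the limit interchange: one must show that ``normalize the two-pole solution, then degenerate'' agrees with ``degenerate, then normalize the chain of one-pole solutions'', uniformly on the sectorial domains on which the canonical solutions and their continuations are defined. This requires uniform-in-$(t,\mathbf z,\mathbf w)$ control of the \emph{solutions} of \eqref{introeq}, not merely of $A$ and $G$ as matrix functions, applied stage by stage through the nested caterpillar limits; it is supplied by the asymptotic analysis behind Theorem \ref{thm:asy of iso A from hatA} and Proposition \ref{prop:from A0,G0 to hatA, hatG}. A second, purely combinatorial but unforgiving, difficulty is to compose the $\xi\mapsto1/\xi$ symmetry, the $w_1$-twist, and the successive changes of Stokes sector so that every scalar factor $e^{\pm2\pi\I(\cdots)}$ and $e^{\pm\frac{\pi\I}{2}(\cdots)}$ comes out with exactly the exponent claimed.
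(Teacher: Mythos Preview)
Your overall strategy---use isomonodromy to compute at the iterated boundary, decouple the two irregular singularities into two one-irregular systems, then plug in the explicit one-irregular formulas from \cite{xu2019closure1,TangXu}---is exactly the paper's. On the $\xi=\infty$ side your argument is essentially correct; the paper makes it precise not by a bare ``coefficients converge, hence Stokes matrices converge'' argument but by the exact factorization $F^{(\infty)}_d(\xi,t)=L(\xi,t)\,e^{-w_0/\xi}Y^{(\infty)}_d(\xi)$ with $L\to\mathrm{Id}$ (Theorem~\ref{thm:decom of two irr F}(1)), which turns your limit into an identity.

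There is, however, a genuine gap in your treatment of $\xi=0$. After the gauge $F\mapsto G^{-1}F$ and the inversion $\eta=1/\xi$, the leading term at $\eta=\infty$ is $-V$; but in the $(\mathbf z,t,\mathbf w)$ coordinates one has $V-v_1\mathrm{Id}=O(t)$, so as $t\to0$ this leading term collapses to a scalar and your claimed limiting system ``$dF/d\eta=(V+\widetilde A_{n-1}\eta^{-1})F$'' has a \emph{degenerate} irregular type with no Stokes matrices at all. The paper's resolution is to pass to the rescaled variable $\gamma=w_1/\xi$, so that the leading term becomes $-\widetilde V=-(V-v_1\mathrm{Id})/w_1$, which stays nondegenerate, and to prove a second, two-scale factorization (Theorem~\ref{thm:decom of two irr F}(2))
\[
F^{(0)}_d(\xi,t)=e^{-w_0/\xi}\,Y^{(0)}(\xi)\cdot H\!\left(\xi,\tfrac{t}{\xi}\right)\cdot\left(\tfrac{t}{\xi}\right)^{\widehat A}\widehat G\cdot K^{(\infty)}_{-d+\arg w_1}\!\left(\tfrac{w_1}{\xi}\right),
\]
matching the $\xi$-system to the $\gamma$-system through the intermediate scale $t/\xi$. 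It is precisely this factorization that (i) identifies $S^{(0)}_{d,\pm}$ with the Stokes matrices of the nondegenerate system $dK/d\gamma=(-\widetilde V+\widetilde A/\gamma)K$, and (ii) produces the middle factor $(z_1\cdots z_{n-1}w_{n-1}\cdots w_2)^{\widehat A}\widehat G$ in the connection-matrix decomposition (Theorem~\ref{prop:decomp of monodromy}(3)). The subsequent caterpillar limits $z_k,w_k\to\infty$ then reduce each one-irregular connection matrix to the product $\prod\widehat C_k$ (resp.\ $\prod\widetilde C_k$) via Lemma~\ref{prop:eq of one irr connection mat}, exactly as you sketch in Step~2.
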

\begin{remark}
    When we assume that the residue matrix $A$ has distinct eigenvalues, the above formula is well-defined. However, since both the Stokes matrices and the connection matrix are analytic with respect to the residue matrix $A$ and the matrix $G$, the formula remains valid via analytic continuation, even for cases where $A$ is non-diagonalizable or possesses repeated eigenvalues.
\end{remark}
\begin{remark}
    When the diagonal elements $u_i,v_i$
  do not satisfy the assumptions \eqref{region1}-\eqref{region2}, the corresponding Stokes matrices and connection matrices can still be computed explicitly, as long as the action of the braid group on the Stokes matrices (see \cite{Dubrovin1996}
 or \cite{Boalch2002}) induced by the variation of $u_i,v_i$ are taken into account.
 \end{remark}

 The expressions for these monodromy data are calculated at the iterated limit point, relying on the invariance  of the monodromy data under isomonodromic deformation. The achievement of the present paper is to express the monodromy data of the original equation in terms of the monodromy data of the limiting system derived from the first limit, $t \rightarrow 0$ (see Theorem \ref{prop:decomp of monodromy}). This requires a factorization property of the solutioins of the system \eqref{introeq}  as $t \rightarrow 0$ (see Theorem \ref{thm:decom of two irr F}). The final step involves determining the specific expressions for the monodromy data under the successive limits of $z_{n-1},...,z_2 \rightarrow \infty$ and $w_{n-1},...,w_2 \rightarrow \infty$, results which have been established in previous work \cite{xu2019closure1,TangXu}. The proof of Theorem \ref{thm: introcatformula} is presented immediately following Corollary \ref{cor: concrete monodromy data by A0 and G0}.

The explicit expression, of the monodromy data of linear systems with one second order pole and one simple pole, given in \cite{xu2019closure1} is closely related to the Gelfand-Tsetlin system on the dual of Lie algebra \cite{guillemin1983gelfand}, and 
was used to study the WKB approximation of the Stokes matrices \cite{alekseev2024wkb}, to give a transcendental realization of $\frak{gl}_n$-crystal structures \cite{xu2019closure1} and certain combinatorial operators of
Young tableaux \cite{Xu2025} arising from representation theory. 
The expression, of the monodromy data of linear systems with two 2nd order poles, given in Theorem \ref{thm: introcatformula} can be seen as a double or fusion construction. It is closely related to the Gelfand-Tsetlin type system on the cotangent bundle of the Lie group, and is expected to be related to the tensor product of crystals, and to give a transcendental interpretation of Littlewood-Richardson rule. Further exploration along this direction is planned for future work.

\subsection{Connection to the $tt^{*}$ equations and examples of solutions not in $\mathfrak{Sol}_{Shr}$}
In equation \eqref{introeq}, when we set $V=\overline{U}$ and let $A=-[U,q],\ G=m$, where $q(\mathbf{u},\bar{\mathbf{u}})$ is a symmetric off-diagonal matrix function, $m(\mathbf{u},\bar{\mathbf{u}})$ is an orthogonal, Hermitian matrix function, such as $ [U,q]=m[\overline{U},\overline{q}]\overline{m}$,
we obtain the following equation:
\begin{align}
     \frac{\partial}{\partial \lambda}\varphi&= (U-\lambda^{-1}[U,q]-\lambda^{-2}m\overline{U}m^{-1})\varphi.
\end{align}
The corresponding isomonodromy equations are the $tt^{*}$ equations (with similarity reduction) established in \cite{dubrovin1993tteq}. 

In the case $n=2$, the $tt^{*}$
  equation is equivalent to the sine-Gordon Painlevé III equation. Another important special case is the $tt^{*}$-Toda equation, where the variables $u_1,...,u_n$ are prescribed constant multiples of a single variable $x$.

In section \ref{sec:apply for sg piii}, \ref{sec:apply for tt toda}, we specialize our Theorems \ref{mainthm} and \ref{thm: introcatformula} to these two special cases, and show that our results are consistent with the known results of Painlevé III equation from \cite{FIKN2006} and the $tt^{*}$-Toda equations results from \cite{guest2015isomonodromy1,guest2015isomonodromy2,guest2023toda}. For the $tt^{*}$-Toda equations, a family of solutions was parameterized by a polytope in the space of certain asymptotic parameters. We further describe which part of the polytope corresponds to the solutions in the set $\mathfrak{Sol}_{Shr}$. By doing this, we also identity a family of known solutions of the isomonodromy equation that are not in $\mathfrak{Sol}_{Shr}$.

For the general $tt^{*}$-Toda equations, Theorems \ref{mainthm} allows us to derive the local behaviors of $q$ and $m$ as $t\rightarrow 0$, see Corollary \ref{prop:asym of gener tt}.

\subsection*{Acknowledgements}
\noindent
The authors would like to thank Martin Guest, Alexander Its, Qian Tang and Yuancheng Xie for useful discussion and comments. The authors are supported by the National Key Research and Development Program of China (No. 2021YFA1002000).

\section{Preliminaries on the meromorphic linear systems with one second order pole}
In this section, we recall the main results in \cite{TangXu, xu2019closure1}.
Let us consider the $n\times n$ linear system of meromorphic differential equation for a function 
$F(z,u_1,\ldots,u_n)\in {\rm GL}_n(\mathbb{C})$ 
\begin{subequations}
\begin{align}\label{introisoStokeseq1}
\frac{\partial F}{\partial z}&=\left( U +
\frac{\Phi(\mathbf{u})}{z}\right)\cdot F,\\
\label{introisoStokeseq2}
\frac{\partial F}{\partial u_k}&=\left(E_kz+{\rm ad}^{-1}_U{\rm ad}_{E_{k}}\Phi(\mathbf{u})\right)\cdot F, \ \text{for all} \ k=1,\ldots,n.
\end{align}
\end{subequations}
where $U=\mathrm{diag}\{u_1,u_2,...,u_n\},$ and the residue matrix $\Phi(\mathbf{u})=\Phi(u_1,...,u_n)$ is a solution of the equation
\begin{align}\label{eq:iso eq of one irr in u}
    \frac{\partial \Phi}{\partial u_k}=[\mathrm{ad}_U^{-1} \mathrm{ad}_{E_k}\Phi,\Phi],\ k=1,...,n.
\end{align}
One checks that \eqref{eq:iso eq of one irr in u} is the compatibility condition of the linear system. 

\subsection{Long time behaviour and parameterization of the generic solutions of the equation \eqref{eq:iso eq of one irr in u}}

Following Miwa \cite{Miwa1981}, 
the solutions $\Phi(\mathbf{u})$ of \eqref{eq:iso eq of one irr in u}
are multi-valued meromorphic functions 
of $u_1,\ldots,u_n$, where
the branching occurs 
when $(u_1,...,u_n)$ moves along a loop around 
the fat diagonal $\Delta$. 
The asymptotic behaviour at a critial point (long time hehaviour)\ and boundary condition of solutions of the equation \eqref{eq:iso eq of one irr in z} were given in \cite{TangXu}.

To state the result, { we introduce the coordinates $z_0=u_1,z_1=u_2-u_1,z_2=\frac{u_3-u_1}{u_2-u_1},...,z_{n-1}=\frac{u_n-u_1}{u_{n-1}-u_1}$, then the equations \eqref{eq:iso eq of one irr in u} become}
\begin{align}\label{eq:iso eq of one irr in z}
    \frac{\partial \Phi(\mathbf{z})}{\partial z_k}=[\mathrm{ad}_U^{-1} \mathrm{ad}_{\frac{\partial U}{\partial z_k}}\Phi,\Phi],\ k=0,...,n-1,
\end{align}
where \[\frac{\partial U}{\partial z_k}= \frac{1}{z_k}(u_{k+1}E_{k+1}+...+u_nE_n-z_0(E_{k+1}+...+E_n)).\]

\begin{theorem}[\cite{TangXu}]\label{thm:iso asy by tangxu}
 Given any constant $n\times n$ matrix $\Phi_0$ 
that satisfies the {boundary condition} for the system \eqref{eq:iso eq of one irr in z}
\begin{equation}
\label{eq: shring condition of one irr iso}
\left| \mathrm{Re} \left(
\lambda^{(k-1)}_{i} - 
\lambda^{(k-1)}_{j}
\right)\right | < 1,
\quad
\text{for every $1\leqslant i,j\leqslant k-1$},
\end{equation}
where $\{\lambda^{(k-1)}_{i}\}_{i=1,\ldots,k-1}$
are the eigenvalues of the
upper left $(k-1)\times (k-1)$ submatrix
of $\Phi_{0}$, there exists a unique solution 
$\Phi(\mathbf{z};\Phi_0)=\Phi_{n-1}(\mathbf{z})$ to the equation \eqref{eq:iso eq of one irr in z},
and a sequence of matrix functions $\Phi_{n-1},\ldots,\Phi_{1}$
such that 
\begin{align} 
\label{eq:limit in zk one irr iso}
\underset{z_k \rightarrow \infty}{\lim} 
\delta_{k } \Phi_k  = 
\delta_{k } \Phi_{k - 1},\quad
\underset{z_k \rightarrow \infty}{\lim} 
z_k^{\mathrm{ad} 
\delta_{k} \Phi_{k - 1}} \Phi_k  =  \Phi_{k - 1}\quad \text{for } k=2,\dots n-1; \ \text{ and } \ 
z_1^{\mathrm{ad} 
\delta \Phi_{ 1}} \Phi_1  =  \Phi_{0}.
\end{align}
The regularized limit $\Phi_0$ is called the \textbf{boundary value} of $\Phi(\mathbf{z};\Phi_0)$ (at the iterated limit point $z_{n-1},...,z_2\rightarrow \infty$).
Furthermore, the set of solutions $\Phi(\mathbf{z};\Phi_0)$, parameterized by $\Phi_0$ satisfying the boundary condition \eqref{eq: shring condition of one irr iso}, includes almost all the solutions.
 \end{theorem}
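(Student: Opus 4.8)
The plan is to prove Theorem~\ref{thm:iso asy by tangxu} by an induction that peels off one confluence limit $z_k\to\infty$ at a time, so that the analytic core is always a one‑variable problem. The induction constructs, for $k=2,\dots,n-1$, the matrix function $\Phi_k(z_0,\dots,z_k)$ from $\Phi_{k-1}(z_0,\dots,z_{k-1})$, requiring that $\Phi_k$ solve the sub‑system of \eqref{eq:iso eq of one irr in z} in the variables $z_0,\dots,z_k$ and degenerate to $\Phi_{k-1}$ in the regularized sense of \eqref{eq:limit in zk one irr iso}; taking $k=n-1$ then yields $\Phi(\mathbf z;\Phi_0)=\Phi_{n-1}$. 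The base case is explicit: $\Phi$ is invariant under the overall translation $u_j\mapsto u_j+c$, which is exactly the $z_0$‑direction, so $\Phi$ does not depend on $z_0$; and a direct computation, using $\tfrac{\partial U}{\partial z_1}=\mathrm{diag}(0,1,z_2,z_2z_3,\dots,z_2\cdots z_{n-1})$ whose $(i,i)$‑entry equals $(u_i-u_1)/z_1$, shows that the matrix $\Phi_1(z_1):=z_1^{-\mathrm{ad}\,\delta\Phi_0}\Phi_0$ solves the $z_1$‑flow of \eqref{eq:iso eq of one irr in z} and satisfies $z_1^{\mathrm{ad}\,\delta\Phi_1}\Phi_1=\Phi_0$ (note $\delta\Phi_1=\delta\Phi_0$ is constant, so the power makes sense).

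For the inductive step, fix $z_0,\dots,z_{k-1}$ as parameters and view the $z_k$‑flow of \eqref{eq:iso eq of one irr in z} as a non‑autonomous nonlinear ODE in $z_k$ near $z_k=\infty$. Here $\tfrac{\partial U}{\partial z_k}=\tfrac1{z_k}\big((u_{k+1}-u_1)E_{k+1}+\cdots+(u_n-u_1)E_n\big)$ is independent of $z_k$, and a short computation using $u_j-u_1=z_1\cdots z_{j-1}$ gives $\mathrm{ad}_U^{-1}\mathrm{ad}_{\partial U/\partial z_k}\Phi_k=\tfrac1{z_k}\big(\Phi_k-\delta_k\Phi_k\big)+O(z_k^{-2})$, so that the flow takes the form $\tfrac{\partial\Phi_k}{\partial z_k}=-\tfrac1{z_k}[\delta_k\Phi_k,\Phi_k]+O(z_k^{-2})$ --- an equation of regular‑singular type at $z_k=\infty$ with leading behaviour $\Phi_k\sim z_k^{-\mathrm{ad}\,\delta_k\Phi_{k-1}}\Phi_{k-1}$. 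Introducing the regularized unknown $\Psi_k:=z_k^{\mathrm{ad}\,\delta_k\Phi_{k-1}}\Phi_k$ and writing $\Psi_k=\Phi_{k-1}+R$, the problem becomes a Volterra‑type integral equation $R=\mathcal K(R)$, where the kernel of $\mathcal K$ is governed by the powers $z_k^{\mu-\nu}$ with $\mu,\nu$ ranging over eigenvalues of the relevant upper‑left submatrix of $\Phi_{k-1}$, hence ultimately over eigenvalues of submatrices of $\Phi_0$. The hypothesis \eqref{eq: shring condition of one irr iso} is exactly the bound under which $\mathcal K$ is a contraction on a suitable space of matrix functions on a sector at $z_k=\infty$ with prescribed polynomial decay: it simultaneously rules out the resonances that would force logarithmic terms and the secular growth that would destroy the decay. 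The unique fixed point gives $\Phi_k$ with the prescribed regularized leading term, which is also the claimed uniqueness of $\Phi(\mathbf z;\Phi_0)$.

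Two further points close the induction. First, $\Phi_k$ constructed as above solves not only the $z_k$‑flow but the \emph{full} sub‑system of \eqref{eq:iso eq of one irr in z} in $z_0,\dots,z_k$: this follows from the compatibility (zero‑curvature) of the isomonodromy system together with the fact that the boundary datum $\Phi_{k-1}$ already solves the $z_0,\dots,z_{k-1}$‑flows, by the standard argument propagating solvedness of the lower flows inward from $z_k=\infty$ using the uniform estimates furnished by the contraction. Second, letting $z_k\to\infty$ in the remaining flow equations with uniform control, one verifies that the regularized limit $\Phi_{k-1}$ indeed solves the reduced system and acquires the nested block structure needed for the next step, so that the tower terminates at the explicit $\Phi_1$; this establishes the limits \eqref{eq:limit in zk one irr iso}. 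The main obstacle is the contraction estimate of the previous paragraph: one must choose the sector, the weighted norm and the integral representation so that \eqref{eq: shring condition of one irr iso} translates cleanly into operator norm $<1$, uniformly in the parameters $z_0,\dots,z_{k-1}$ and with enough regularity in them to run the compatibility argument; identifying precisely which submatrix eigenvalues control the $k$‑th step is the delicate bookkeeping.

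Finally, for the assertion that $\{\Phi(\mathbf z;\Phi_0)\}$ includes almost all solutions, I would pass to the monodromy description. By the Riemann--Hilbert--Birkhoff correspondence, solutions of \eqref{eq:iso eq of one irr in z} are classified by the monodromy data of the linear system \eqref{introisoStokeseq1} --- the Stokes matrices and formal monodromy at $z=\infty$ together with the exponents of $\Phi$ at $z=0$. Combining the iterated limit above with the explicit formulas relating $\Phi_0$ to this data (of the type in \cite{xu2019closure1}), the assignment $\Phi_0\mapsto(\text{monodromy data})$ is analytic and injective on the region cut out by \eqref{eq: shring condition of one irr iso}, and its image is the locus on which the $\Gamma$‑factors and determinantal expressions occurring in those formulas are nonvanishing and non‑resonant --- an open and dense subset of the monodromy data space. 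Conversely, a solution whose monodromy data lies in this locus admits a well‑defined regularized boundary value satisfying \eqref{eq: shring condition of one irr iso}, hence belongs to the family; this is the precise content of ``includes almost all the solutions''. The delicate point here is the matching: determining exactly which inequalities and nonvanishing conditions on the monodromy data correspond to the open region \eqref{eq: shring condition of one irr iso}, and verifying that the resulting locus is dense.
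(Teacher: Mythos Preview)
The paper does not contain a proof of this statement: Theorem~\ref{thm:iso asy by tangxu} is quoted verbatim from \cite{TangXu} as a prior result, followed only by a remark reconciling the coordinate conventions. There is therefore nothing in the present paper to compare your proposal against.

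That said, your outline is a faithful sketch of the method one expects in \cite{TangXu}: the induction peeling off one $z_k\to\infty$ limit at a time, the regular-singular analysis via the regularized variable $\Psi_k=z_k^{\mathrm{ad}\,\delta_k\Phi_{k-1}}\Phi_k$, the contraction argument under the eigenvalue gap condition \eqref{eq: shring condition of one irr iso}, and the compatibility argument propagating the lower flows from the boundary. Your ``almost all'' paragraph also matches the strategy actually used later in the present paper for the two-pole case (Section~\ref{sec:almost every is shrinking}): one imposes an open dense condition on the monodromy data---there called \emph{strictly log-confined}---and shows via the explicit connection-matrix formulas that every solution meeting it arises from a boundary value satisfying the gap condition. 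So your plan is sound as a reconstruction of the cited proof, but it is not something to be checked against this paper.
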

\begin{remark}
  The original statement of this theorem in \cite{TangXu} differs slightly from the formulation presented here. This difference arises from the different definition of the $z$-coordinate (where $z_k=\frac{u_k-u_{k-1}}{u_{k-1}-u_{k-2}}$ is defined in \cite{TangXu}). In \cite{TangXu}, the limiting behavior of $\Phi(\mathbf{u})$ is described as: 
  $$
  \label{limit1}
\underset{u_k \rightarrow \infty}{\lim} 
\delta_{k - 1} \Phi_k  = 
\delta_{k - 1} \Phi_{k - 1},\quad
\underset{u_k \rightarrow \infty}{\lim} 
\left(
\frac{u_k-u_{k-1}}{u_{k-1}-u_{k-2}}
\right)^{\mathrm{ad} 
\delta_{k - 1} \Phi_{k - 1}} \Phi_k  =  \Phi_{k - 1},\quad 1\leq k\leq n.
  $$
  Nevertheless, the present formulation is equivalent to the original version of the theorem, and the $\Phi_0$ obtained here is identical to the $\Phi_0$ in the original work.
\end{remark}

\subsection{Monodromy matrices of the linear ordinary differential equation}\label{sec2.2}
For fixed $u_1,...,u_n$, the linear system of partial differential equation becomes an ordinary differential equation \eqref{introisoStokeseq1}. 

\begin{definition}
 The \textbf{anti-Stokes directions} of the linear ODE \eqref{introisoStokeseq1} at $\bf{\xi=\infty}$ are 
    \begin{align*}
        aS(u) &:=\{-\mathrm{arg}(u_i-u_j)+2k\pi: k\in \mathbb{Z}, i\neq j\}.
    \end{align*}
Let us choose an initial anti-Stokes direction $\tau_0\in (\pi,\pi)$ and then
arrange the anti-Stokes directions  into a strictly monotonically increasing sequence 
\begin{align*}
        \cdots <\tau_{-1}<\tau_0<\tau_1<\cdots.
    \end{align*}
    For any direction $d\in (\tau_j,\tau_{j+1})$, the \textbf{Stokes sector} $\bf{Sect_d}$ is 
    \begin{align}\label{eq:stoke sector for single}
        Sect_{d}^{(\infty)}:=\left\{\phi\in\bar{\mathbb{C}} : \mathrm{arg}\phi\in\bigg(\tau_j-\frac{\pi}{2},\tau_{j+1}+\frac{\pi}{2}\bigg)\right\}
    \end{align} 
\end{definition}
Then the standard theory of resummation states that on each of these sectors there is a unique (therefore canonical) holomorphic solution with the prescribed asymptotics. See e.g., \cite{Balser}.
\begin{theorem}
     On each $\mathrm{Sect}^{(\infty)}_d$, there is a unique holomorphic fundamental solution $F_d(\xi)$ of \eqref{introisoStokeseq1} with the prescribed asymptotics:
    \begin{align}
    F_{d}(\xi)\cdot \mathrm{e}^{-\xi U}\xi^{-\delta \Phi} \sim \mathrm{Id}+O(\xi^{-1}),\  \text{ as } \xi\rightarrow  \infty \text{ within } \mathrm{Sect}^{(\infty)}_{d}.
\end{align}
\end{theorem}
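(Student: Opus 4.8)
The plan is to carry out the classical construction of canonical sectorial solutions at an unramified irregular singularity of Poincar\'e rank one, specialized to the explicit form of \eqref{introisoStokeseq1}, in three steps.

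\emph{Step 1 (formal solution).} First I would look for a formal fundamental solution of the shape $\widehat{F}(\xi)=\widehat{H}(\xi)\,\xi^{\delta\Phi}\mathrm{e}^{\xi U}$ with $\widehat{H}(\xi)=\mathrm{Id}+\sum_{k\geq 1}H_k\,\xi^{-k}$. Since $U$ and $\delta\Phi$ are diagonal, $\xi^{\delta\Phi}$ commutes with both $U$ and $\mathrm{e}^{\xi U}$ and $\tfrac{d}{d\xi}\xi^{\delta\Phi}=\xi^{-1}\delta\Phi\cdot\xi^{\delta\Phi}$, so substituting into \eqref{introisoStokeseq1} turns the equation into the recursion
\begin{align*}
\xi\,\widehat{H}'=\xi\,[U,\widehat{H}]+\Phi\,\widehat{H}-\widehat{H}\,\delta\Phi.
\end{align*}
Matching the constant term fixes the off-diagonal part of $H_1$ by applying $\mathrm{ad}_U^{-1}$ (legitimate because the $u_i$ are distinct), and is consistent on the diagonal precisely because the exponent was taken to be $\delta\Phi$; more generally the off-diagonal part of each $H_k$ is obtained from $H_1,\dots,H_{k-1}$ by $\mathrm{ad}_U^{-1}$ and the diagonal part of $H_k$ is then determined one order further down, the purely diagonal contributions of $\delta\Phi$ cancelling by commutativity. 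This produces a unique formal solution, a priori only of Gevrey class $1$ (hence possibly divergent).

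\emph{Step 2 (summation on $\mathrm{Sect}^{(\infty)}_d$).} Next I would invoke the standard resummation theory for rank-one irregular singularities \cite{Balser}: the series $\widehat{H}$ is $1$-summable in every direction other than the anti-Stokes directions $aS(u)$, and its Laplace sums taken along the directions of the open arc $(\tau_j,\tau_{j+1})$ patch into a single holomorphic solution $H_d(\xi)$ of the companion equation for $\widehat H$ that is asymptotic to $\widehat{H}(\xi)$ on the whole maximal sector $\mathrm{Sect}^{(\infty)}_d=\{\arg\phi\in(\tau_j-\tfrac{\pi}{2},\tau_{j+1}+\tfrac{\pi}{2})\}$. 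Then $F_d(\xi):=H_d(\xi)\,\xi^{\delta\Phi}\mathrm{e}^{\xi U}$ is holomorphic and invertible on $\mathrm{Sect}^{(\infty)}_d$ (since $H_d\to\mathrm{Id}$), solves \eqref{introisoStokeseq1}, and satisfies $F_d(\xi)\,\mathrm{e}^{-\xi U}\xi^{-\delta\Phi}\sim\mathrm{Id}+O(\xi^{-1})$. I expect the only genuinely delicate point to be the bookkeeping here: one must check that $\mathrm{Sect}^{(\infty)}_d$, of opening $(\tau_{j+1}-\tau_j)+\pi$, is exactly the maximal sector on which the Laplace sums along $(\tau_j,\tau_{j+1})$ extend holomorphically and stay asymptotic to $\widehat H$ --- which is precisely how $\mathrm{Sect}^{(\infty)}_d$ was defined from the choice $d\in(\tau_j,\tau_{j+1})$. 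Everything beyond this matching is the cited black box.

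\emph{Step 3 (uniqueness).} Finally, suppose $F_d$ and $\widetilde F_d$ are both holomorphic fundamental solutions on $\mathrm{Sect}^{(\infty)}_d$ with the prescribed asymptotics. As solutions of the same linear system they differ by a constant matrix $C:=F_d^{-1}\widetilde F_d$. Writing $F_d=H_d\,\Xi$ and $\widetilde F_d=\widetilde H_d\,\Xi$ with $\Xi:=\xi^{\delta\Phi}\mathrm{e}^{\xi U}$ and $H_d,\widetilde H_d\to\mathrm{Id}$ inside the sector, we get $\Xi\,C\,\Xi^{-1}=H_d^{-1}\widetilde H_d\to\mathrm{Id}$. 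The $(i,j)$-entry of $\Xi C\Xi^{-1}$ is $\xi^{(\delta\Phi)_{ii}-(\delta\Phi)_{jj}}\,\mathrm{e}^{\xi(u_i-u_j)}\,C_{ij}$; since $\mathrm{Sect}^{(\infty)}_d$ has opening strictly greater than $\pi$, for every pair $i\neq j$ it contains a direction along which $\mathrm{Re}\big(\xi(u_i-u_j)\big)\to+\infty$, so boundedness of that entry forces $C_{ij}=0$, while the diagonal limits force $C_{ii}=1$. Hence $C=\mathrm{Id}$, so $F_d=\widetilde F_d$ and the sectorial solution is canonical.
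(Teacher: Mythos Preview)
Your proposal is a correct outline of the classical proof. Note, however, that the paper does not actually prove this theorem: it is stated as a known result from the standard theory of resummation, with a reference to \cite{Balser}, and no argument is given in the paper itself. Your three-step sketch (formal series, Borel--Laplace summation on the arc $(\tau_j,\tau_{j+1})$, uniqueness from the opening exceeding $\pi$) is precisely the content of that black box, so there is nothing to compare beyond observing that you have unpacked what the paper chose to cite.
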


    \begin{definition}
The \textbf{Stokes matrix} with respect to direction $d$ is determined by 
   \begin{align}
        S_{d}^{\pm}(U,\Phi):=F_{d\pm\pi}(\xi)^{-1}F_d(\xi),
    \end{align}
    \end{definition}

The Stokes matrices are understood as internal monodromy data of the linear system \eqref{introisoStokeseq1} at $\xi=\infty$. Now let us introduce the connection matrices from $\xi=0$ to $\xi=\infty$. First,
\begin{lemma}
    Under the assumption that the eigenvalues of matrix $\Phi$ do not differ by a positive integer,  the linear ODE \eqref{introisoStokeseq1} has a fundamental solution $F^{(0)}(\xi)$ with 
    \begin{align}
        F^{(0)}(\xi)\cdot \xi^{-\Phi}\sim \mathrm{Id}+O(\xi),\ \text{ as } \xi\rightarrow 0.
    \end{align}
\end{lemma}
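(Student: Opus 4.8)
The plan is to produce $F^{(0)}$ explicitly by the Frobenius method for a first-order system at a regular singular point. Since $\xi=0$ is a simple pole of the coefficient matrix, I would look for a solution of the form $F^{(0)}(\xi)=H(\xi)\,\xi^{\Phi}$, where $\xi^{\Phi}:=\exp(\Phi\log\xi)$ and $H(\xi)$ is a $\mathrm{GL}_n$-valued function holomorphic at $\xi=0$ with $H(0)=\mathrm{Id}$. Using that $\Phi$ commutes with $\xi^{\Phi}$, substitution into \eqref{introisoStokeseq1} and cancellation of $\xi^{\Phi}$ on the right turns the equation into
\begin{align*}
\frac{dH}{d\xi}=U\,H+\frac{1}{\xi}\,[\Phi,H].
\end{align*}
In particular the claimed asymptotics $F^{(0)}(\xi)\,\xi^{-\Phi}=H(\xi)=\mathrm{Id}+O(\xi)$ as $\xi\to0$ is automatic once such an $H$ is constructed.

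Next I would expand $H=\sum_{k\ge 0}H_k\xi^{k}$ with $H_0=\mathrm{Id}$ and match powers of $\xi$. The $\xi^{-1}$ coefficient reads $[\Phi,H_0]=0$, which holds; the $\xi^{k-1}$ coefficient for $k\ge 1$ gives the recursion
\begin{align*}
\big(k\cdot\mathrm{Id}-\mathrm{ad}_{\Phi}\big)H_k=U\,H_{k-1}.
\end{align*}
The operator $\mathrm{ad}_{\Phi}$ on $\mathfrak{gl}_n$ has eigenvalues $\mu_i-\mu_j$, where $\mu_1,\dots,\mu_n$ are the eigenvalues of $\Phi$, so $k\cdot\mathrm{Id}-\mathrm{ad}_{\Phi}$ is invertible exactly when $k$ is not the difference of two eigenvalues of $\Phi$. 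The hypothesis that no two eigenvalues of $\Phi$ differ by a positive integer guarantees invertibility for every $k\ge 1$, so the $H_k$ are uniquely determined by $H_k=(k\cdot\mathrm{Id}-\mathrm{ad}_{\Phi})^{-1}(U H_{k-1})$.

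It then remains to check that the formal series $\sum_k H_k\xi^k$ has positive radius of convergence. For this I would use the elementary bound $\|(k\cdot\mathrm{Id}-\mathrm{ad}_{\Phi})^{-1}\|\le C/k$ valid for all sufficiently large $k$, which gives $\|H_k\|\le (C\|U\|/k)\,\|H_{k-1}\|$ and hence convergence for all $\xi$ (indeed $H$ extends to an entire function). Consequently $H(\xi)$ is holomorphic and invertible near $\xi=0$ and $\xi^{\Phi}$ is invertible, so $F^{(0)}=H\xi^{\Phi}$ is a genuine fundamental solution of \eqref{introisoStokeseq1} with the required normalization.

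I do not expect a genuine obstacle here: this is the standard reduction of a Fuchsian singularity to its normal form, and the only place the hypothesis is used is the invertibility of $k\cdot\mathrm{Id}-\mathrm{ad}_{\Phi}$ in the recursion (without it, resonant/logarithmic terms would appear). If one prefers not to repeat the computation, the statement can simply be cited from standard references on linear ODEs in the complex domain, e.g.\ \cite{Balser}.
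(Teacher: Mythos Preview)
Your argument is correct and is exactly the standard Frobenius construction; the paper itself does not supply a proof of this lemma, treating it as a well-known fact (and indeed you note it can be cited from \cite{Balser}). There is nothing to add.
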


\begin{definition} \label{def:conn for 1irr}
If $\Phi$ is non-resonant, i.e. the difference of any two eigenvalues of $\Phi$ is not non-zero integers, we define the \textbf{connection matrix} $C_d(U,\Phi)$ of \eqref{introisoStokeseq1}, with respect to the direction $d$, as 
    \begin{align}\label{def: defi of connec mat for 1 pole}
        C_d(U,\Phi)=F_d(\xi)^{-1}F^{(0)}(\xi).
    \end{align}
And we define the \textbf{monodromy matrix} $\nu_d(U,\Phi)$ of $F_d(\xi)$ as
\begin{align}
    \nu_d(U,\Phi):=F_d(\xi)^{-1}F_d(\xi \mathrm{e}^{2\pi\mathrm{i}}).
\end{align}   
        \end{definition}
\begin{remark}
   By the definition of $\mathrm{Sect}_d$ in \eqref{eq:stoke sector for single}, we have $F_{d_1}=F_{d_2}$ for $\tau_j<d_1<d_2<\tau_{j+1}$. It follows that the Stokes matrices, connection matrix and monodromy matrices are all invariant under perturbation of the direction $d$ that remains within the sector $(\tau_j,\tau_{j+1})$.
\end{remark}
The following proposition is standard. For example, the identity \eqref{monorel3} follows from the fact that a simple positive loop (i.e., in clockwise direction) around $0$ is a simple negative loop (i.e., in anticlockwise direction) around $\infty$.
\begin{lemma}\label{prop:monodromy factor for one irr sys}
We have the following monodromy relations
    \begin{align}\label{eq:trans in stokes mat with c}
       S_d^{\pm}(cU,\Phi)&=c^{\delta \Phi}S_{d+\mathrm{argc}}^{\pm}(U,\Phi)c^{-\delta \Phi},\\
         S_{d\mp\pi}^{\pm}(U,\Phi)&=S_{d}^{\mp}(U,\Phi)^{-1},\\ \label{eq:trans in connection mat with c}
        C_d(cU,\Phi)&=c^{\delta\Phi}C_{d+\mathrm{arg}c}(U,\Phi)c^{-\Phi},\\ \label{monorel3}
        \nu_d(U,\Phi)&=S_d^{-1}(U,\Phi)^{-1}\cdot \mathrm{e}^{2\pi\mathrm{i}\delta\Phi}\cdot S_d^{+}(U,\Phi)
        =C_d(U,\Phi)\cdot\mathrm{e^{2\pi\mathrm{i}\Phi}}\cdot C_d(U,\Phi)^{-1}.
    \end{align}
\end{lemma}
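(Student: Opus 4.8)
The plan is to derive all four relations by one mechanism: feed the \emph{uniqueness} of the canonical sectorial solutions $F_d$ and $F^{(0)}$ (the one genuinely analytic input, already quoted above) into two elementary symmetries of the linear ODE \eqref{introisoStokeseq1} --- the rescaling $\xi\mapsto c\xi$, which sends the $(U,\Phi)$-system to the $(cU,\Phi)$-system, and the rotation $\xi\mapsto\xi e^{2\pi\mathrm{i}}$, which fixes the ODE but rotates the Stokes sectors by $2\pi$ and changes the branch of the normalizing factors $\xi^{\delta\Phi}$, $\xi^{\Phi}$. The cocycle relation $S_{d\mp\pi}^{\pm}(U,\Phi)=S_d^{\mp}(U,\Phi)^{-1}$ needs no analysis at all: directly from the definition, $S_{d\mp\pi}^{\pm}=F_{(d\mp\pi)\pm\pi}^{-1}F_{d\mp\pi}=F_d^{-1}F_{d\mp\pi}=(F_{d\mp\pi}^{-1}F_d)^{-1}=(S_d^{\mp})^{-1}$.

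For \eqref{eq:trans in stokes mat with c} and \eqref{eq:trans in connection mat with c}, fix a branch of $\log c$, hence of $\arg c$ and of $c^{\delta\Phi}$, $c^{\Phi}$. I would first check that $\xi\mapsto F_{d+\arg c}(c\xi;U,\Phi)\,c^{-\delta\Phi}$ solves the $(cU,\Phi)$-system (being a right translate by a constant matrix of such a solution) and that it carries the canonical asymptotics on $\mathrm{Sect}_{d}^{(\infty)}(cU)$: because $U$ and $\delta\Phi$ are both diagonal, $c^{-\delta\Phi}$ commutes through $e^{-\xi U}$, and on the relevant sector $(c\xi)^{\delta\Phi}=c^{\delta\Phi}\xi^{\delta\Phi}$, so the product is $\mathrm{Id}+O(\xi^{-1})$. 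By uniqueness it is $F_d(\xi;cU,\Phi)$. The same computation near $\xi=0$, now with the full non-diagonal $\xi^{\Phi}$, gives $F^{(0)}(\xi;cU,\Phi)=F^{(0)}(c\xi;U,\Phi)\,c^{-\Phi}$. Plugging these into $S_d^{\pm}=F_{d\pm\pi}^{-1}F_d$ and $C_d=F_d^{-1}F^{(0)}$, the factors evaluated at $c\xi$ telescope and leave exactly the conjugations $c^{\delta\Phi}(\,\cdot\,)c^{-\delta\Phi}$ and $c^{\delta\Phi}(\,\cdot\,)c^{-\Phi}$.

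For \eqref{monorel3} the core step is the monodromy of $F_d$ itself. Running the argument above with $\xi\mapsto\xi e^{2\pi\mathrm{i}}$ and the branch identity $(\xi e^{2\pi\mathrm{i}})^{-\delta\Phi}=\xi^{-\delta\Phi}e^{-2\pi\mathrm{i}\delta\Phi}$ shows $F_d(\xi e^{2\pi\mathrm{i}})\,e^{-2\pi\mathrm{i}\delta\Phi}$ solves the ODE with canonical asymptotics on $\mathrm{Sect}_{d-2\pi}^{(\infty)}$, so $F_d(\xi e^{2\pi\mathrm{i}})=F_{d-2\pi}(\xi)\,e^{2\pi\mathrm{i}\delta\Phi}$. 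Writing $F_{d-2\pi}=F_d\,(S_d^-)^{-1}(S_{d-\pi}^-)^{-1}$ and using the cocycle relation together with the $2\pi$-rotation behaviour $S_{d-2\pi}^{\pm}=e^{2\pi\mathrm{i}\delta\Phi}S_d^{\pm}e^{-2\pi\mathrm{i}\delta\Phi}$ (proved the same way) to rewrite $(S_{d-\pi}^-)^{-1}=e^{2\pi\mathrm{i}\delta\Phi}S_d^+e^{-2\pi\mathrm{i}\delta\Phi}$, one gets $\nu_d=F_d^{-1}F_d(\xi e^{2\pi\mathrm{i}})=(S_d^-)^{-1}e^{2\pi\mathrm{i}\delta\Phi}S_d^+$. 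For the second expression in \eqref{monorel3} I would instead carry the loop to $\xi=0$: the analogous branch identity gives $F^{(0)}(\xi e^{2\pi\mathrm{i}})=F^{(0)}(\xi)\,e^{2\pi\mathrm{i}\Phi}$, and $F_d=F^{(0)}C_d^{-1}$ then yields $\nu_d=C_d\,F^{(0)}(\xi)^{-1}F^{(0)}(\xi e^{2\pi\mathrm{i}})\,C_d^{-1}=C_d\,e^{2\pi\mathrm{i}\Phi}\,C_d^{-1}$ --- precisely the statement that a positive loop around $0$ is a negative loop around $\infty$.

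The only point demanding care, and the place where a full write-up would spend most of its words, is the branch bookkeeping: one must fix $\log\xi$ sector by sector so that $(c\xi)^{\delta\Phi}=c^{\delta\Phi}\xi^{\delta\Phi}$ and $(\xi e^{2\pi\mathrm{i}})^{-\delta\Phi}=\xi^{-\delta\Phi}e^{-2\pi\mathrm{i}\delta\Phi}$ hold without a stray $e^{2\pi\mathrm{i}k\,\delta\Phi}$, and one must track how the integer index of the Stokes sector moves when $d$ is replaced by $d+\arg c$ or by $d-2\pi$ --- which uses that the set of anti-Stokes directions is $2\pi\mathbb{Z}$-periodic and is translated by $-\arg c$ under scaling by $c$. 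Beyond this, there is no analytic content not already contained in the existence/uniqueness statements quoted above.
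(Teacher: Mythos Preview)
Your proposal is correct and is exactly the standard argument; the paper does not actually give a proof, merely stating that the lemma is standard and remarking that \eqref{monorel3} follows from the fact that a positive loop around $0$ is a negative loop around $\infty$. Your derivation fills this in via the uniqueness of the canonical sectorial and Frobenius solutions under the two natural symmetries $\xi\mapsto c\xi$ and $\xi\mapsto\xi e^{2\pi\mathrm{i}}$, which is precisely how such identities are proved in the references the paper cites (e.g.\ \cite{Balser,Balser-Jurkat-Lutz1981}).
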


For given $d\notin{aS}(u)$, let us introduce the $n\times n$ permutation matrix $P$ with entries $P_{ij} = \delta_{\sigma(i)j}$, where $\sigma$ is the permutation of $\{1, . . . , n\}$ corresponding to the
dominance ordering of $\{e^{u_1z},...,e^{u_nz}\}$ along the direction $d+\frac{\pi}{2}$. That is
$\sigma(i) < \sigma(j)$ if and only if $e^{(u_i-u_j)z}\rightarrow 0$ as $z\rightarrow \infty$ along $d+\frac{\pi}{2}$. Then the Stokes matrices are triangular matrices up to the conjugation by the permutation matrix. That is
\begin{lemma}\label{lem:upper}
The matrices $P\cdot S_d^+(U,\Phi)\cdot P^{-1}$ and $P\cdot S_d^-(U,\Phi)\cdot P^{-1}$ are upper and lower triangular matrices respectively. Furthermore, they have $1$'s along the diagonal.
\end{lemma}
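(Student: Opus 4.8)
\textbf{Proof proposal for Lemma \ref{lem:upper}.}

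The plan is to reduce the statement to the known asymptotic behaviour of the canonical solution $F_d(\xi)$ on the overlapping Stokes sectors $\mathrm{Sect}^{(\infty)}_{d\pm\pi}$, and to read off the triangularity from the dominance ordering of the exponentials $e^{u_i\xi}$ along the ray $d+\tfrac\pi2$. First I would recall that on the intersection $\mathrm{Sect}^{(\infty)}_d\cap\mathrm{Sect}^{(\infty)}_{d+\pi}$ both $F_d$ and $F_{d+\pi}$ have the same formal asymptotic expansion $\bigl(\mathrm{Id}+O(\xi^{-1})\bigr)e^{\xi U}\xi^{\delta\Phi}$, so the Stokes matrix $S_d^+ = F_{d+\pi}^{-1}F_d$ satisfies $e^{\xi U}\xi^{\delta\Phi}\,S_d^+\,\xi^{-\delta\Phi}e^{-\xi U} \to \mathrm{Id}$ as $\xi\to\infty$ within that intersection; since $S_d^+$ is constant in $\xi$, this forces the $(i,j)$-entry of $P S_d^+ P^{-1}$ to vanish unless $e^{(u_{\sigma^{-1}(i)}-u_{\sigma^{-1}(j)})\xi}$ stays bounded along the relevant directions, and to equal $\delta_{ij}$ on the diagonal (the $\xi^{\delta\Phi}$ factor only contributes subexponential corrections, hence cannot affect which entries survive).

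The combinatorial heart is then to check that the set of pairs $(i,j)$ for which the exponential is non-decaying along the whole overlap sector is exactly the upper-triangular pattern after conjugating by $P$. Here I would use that the overlap $\mathrm{Sect}^{(\infty)}_d\cap\mathrm{Sect}^{(\infty)}_{d+\pi}$ is, by \eqref{eq:stoke sector for single}, a sector of angular width slightly more than $\pi$ straddling the direction $d+\tfrac\pi2$, and that $d$ is chosen in a chamber $(\tau_j,\tau_{j+1})$ containing no anti-Stokes direction; consequently the dominance relation between any two exponentials $e^{u_i\xi}$ and $e^{u_j\xi}$ does not change sign across this overlap. By the definition of $\sigma$, $\sigma(a)<\sigma(b)$ precisely when $e^{(u_a-u_b)\xi}\to 0$ along $d+\tfrac\pi2$; translating via $a=\sigma^{-1}(i)$, $b=\sigma^{-1}(j)$, the entry $(P S_d^+ P^{-1})_{ij}$ can be non-zero only when $e^{(u_{\sigma^{-1}(i)}-u_{\sigma^{-1}(j)})\xi}$ does not decay, i.e. when $\sigma(\sigma^{-1}(i))=i\le j$, which is the upper-triangular condition. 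The lower-triangular claim for $S_d^-=F_{d-\pi}^{-1}F_d$ follows identically, with the overlap sector now straddling $d-\tfrac\pi2$, reversing the inequality.

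For the diagonal entries, I would argue that restricting the limit relation $e^{\xi U}\xi^{\delta\Phi}S_d^\pm \xi^{-\delta\Phi}e^{-\xi U}\to\mathrm{Id}$ to the diagonal kills the conjugation (diagonal matrices commute), so the diagonal of $S_d^\pm$ is the diagonal of $\mathrm{Id}$, namely all $1$'s; equivalently, $P S_d^\pm P^{-1}$ being triangular with the product of its diagonal entries equal to $\det S_d^\pm$, and $\det F_{d\pm\pi}=\det F_d$ up to a nonzero scalar fixed by the normalization, forces each diagonal entry to be $1$. The main obstacle I anticipate is the careful bookkeeping in the combinatorial step: one must verify that no anti-Stokes direction enters the open overlap sector of width exceeding $\pi$ so that the dominance order is genuinely constant there, and that the conjugation by $\xi^{\delta\Phi}$ — which is not bounded — nevertheless only produces polynomially bounded factors that cannot promote a decaying exponential entry to a nonzero constant nor spoil the constancy of a non-decaying one; this is where one invokes that $S_d^\pm$ is $\xi$-independent, so any entry that is not forced to vanish is simply a constant, and the off-diagonal constraint is purely about which exponentials decay.
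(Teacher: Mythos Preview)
The paper does not prove this lemma; it is stated as a standard fact with a reference to the literature. Your approach is precisely the classical argument, and the overall structure is correct. Three bookkeeping slips are worth flagging.

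First, in both paragraphs you write that an off-diagonal entry can be nonzero only when the corresponding exponential ``stays bounded'' or ``does not decay''. The correct direction is the opposite: from $e^{(u_a-u_b)\xi}\xi^{\cdots}(S_d^+)_{ab}\to 0$ for $a\neq b$ one concludes that $(S_d^+)_{ab}=0$ whenever the exponential \emph{grows}, i.e.\ the entry can be nonzero only when $e^{(u_a-u_b)\xi}$ \emph{decays} along $d+\tfrac\pi2$. Your final inequality $i\le j$ is what comes out once this sign is corrected.

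Second, the overlap $\mathrm{Sect}^{(\infty)}_d\cap\mathrm{Sect}^{(\infty)}_{d+\pi}$ has angular width $\tau_{j+1}-\tau_j\le\pi$, not ``slightly more than $\pi$''. This is harmless: all you need is that the single ray $d+\tfrac\pi2$ lies in the overlap and is never a neutral direction for any pair $(a,b)$, which follows from $d\notin aS(u)$.

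Third, with the paper's literal definition $P_{ij}=\delta_{\sigma(i)j}$ one computes $(PSP^{-1})_{ij}=S_{\sigma(i)\sigma(j)}$, not $S_{\sigma^{-1}(i)\sigma^{-1}(j)}$. Your use of $\sigma^{-1}$ is what actually makes the lemma true, and corresponds to the transposed convention $P_{ij}=\delta_{i\sigma(j)}$ (equivalently, to proving that $P^{-1}S_d^+P$ is upper triangular with the paper's $P$). This is a convention mismatch with the paper's stated $P$, not an error in your reasoning.
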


\subsection{The isomonodromy property}
Recall that the discussion in Section \ref{sec2.2} is for a fixed $U={\rm diag}(u_1,...,u_n)$. It follows from the work of Jimbo-Miwa-Ueno \cite{JMU1981I} that
\begin{prop}\label{prop:iso}
If $\Phi(\mathbf{u})$ satisfies the isomonodromy equations \eqref{eq:iso eq of one irr in u}, then the Stokes matrices $S_{d}^{\pm}(U,\Phi)$ are locally constant. In particular, for any direction $d$,  let 
\begin{equation}
    R_{u,d}:=\{U\in\mathbb{C}^n\setminus\Delta ~|~d\neq -\mathrm{arg}(u_i-u_j)+2k\pi, \text{ for all } i\neq j, k\in \mathbb{Z}\}
\end{equation}
be the subset of $\mathbb{C}^n\setminus\Delta$ consisting of all $u$ such that $d\notin aS(u)$, then the Stokes matrices $S_{d}^{\pm}(U,\Phi)$ are constant for all $u$ in each connected component of $R_{u,d}$.
\end{prop}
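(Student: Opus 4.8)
\textbf{Proof proposal for Proposition \ref{prop:iso}.}
The plan is to invoke the general isomonodromic deformation framework of Jimbo--Miwa--Ueno \cite{JMU1981I}, specialized to the rank-$n$ irregular system \eqref{introisoStokeseq1} with a single pole of Poincar\'e rank $1$ at $\xi=\infty$ and a regular singularity at $\xi=0$. First I would verify that the pair \eqref{introisoStokeseq1}--\eqref{introisoStokeseq2} is a genuine integrable (compatible) system: the mixed partials $\partial^2 F/\partial z\,\partial u_k$ computed in two orders agree precisely when $\Phi(\mathbf{u})$ solves \eqref{eq:iso eq of one irr in u}, which is exactly the hypothesis; this is the statement already asserted after \eqref{eq:iso eq of one irr in u}, so I would just record it. Compatibility guarantees that, locally in $\mathbf{u}$, the canonical sectorial solutions $F_d(\xi;\mathbf{u})$ of the $\xi$-equation can be chosen to depend holomorphically on $\mathbf{u}$ and to simultaneously satisfy the $u_k$-equations \eqref{introisoStokeseq2}.

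Next I would track how the Stokes matrices $S_d^\pm(U,\Phi)=F_{d\pm\pi}^{-1}F_d$ and the connection matrix $C_d$ vary with $\mathbf{u}$. The key point is the normalization: $F_d(\xi)\cdot e^{-\xi U}\xi^{-\delta\Phi}\sim \mathrm{Id}+O(\xi^{-1})$ as $\xi\to\infty$ in $\mathrm{Sect}_d^{(\infty)}$, and $F^{(0)}(\xi)\cdot\xi^{-\Phi}\sim\mathrm{Id}$ as $\xi\to 0$. Because the formal normalizing data $e^{\xi U}\xi^{\delta\Phi}$ at $\infty$ and $\xi^{\Phi}$ at $0$ are prescribed canonically (the leading exponent $U$ and the residue's diagonal part $\delta\Phi$ are the only formal invariants), the deformation equation \eqref{introisoStokeseq2} forces each $F_d$ to transform by a specific left multiplier depending only on $\mathbf{u}$ and $\xi$, and this multiplier is \emph{the same} for $F_d$ and $F_{d\pm\pi}$ as long as $d$ stays in a fixed sector. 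Hence in the definition $S_d^\pm=F_{d\pm\pi}^{-1}F_d$ the $\mathbf{u}$-dependent left factors cancel, so $\partial_{u_k}S_d^\pm=0$; the same cancellation applied to $C_d=F_d^{-1}F^{(0)}$ gives the connection result. I would present this as the computation $\partial_{u_k}(F_d^{-1}F_{d'}) = -F_d^{-1}(\partial_{u_k}F_d)F_d^{-1}F_{d'} + F_d^{-1}\partial_{u_k}F_{d'} = -F_d^{-1}B_kF_{d'}+F_d^{-1}B_kF_{d'}=0$, where $B_k=(\partial_{u_k}F_{d})F_d^{-1}=(\partial_{u_k}F_{d'})F_{d'}^{-1}=E_k\xi+\mathrm{ad}_U^{-1}\mathrm{ad}_{E_k}\Phi$ is the coefficient matrix in \eqref{introisoStokeseq2}, which is manifestly sectorindependent.

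Finally, for the ``locally constant'' and connected-component statement I would note that the above only uses compatibility and the fact that, on the region $R_{u,d}$, the sector boundaries $\tau_j,\tau_{j+1}$ surrounding $d$ do not cross $d$; hence $F_d$ admits a holomorphic continuation in $\mathbf{u}$ over each connected component of $R_{u,d}$, and since its $\mathbf{u}$-derivative of $S_d^\pm$ vanishes identically there, $S_d^\pm$ is constant on that component. The constancy can jump only when $\mathbf{u}$ crosses $\Delta$ or when an anti-Stokes direction sweeps through $d$, i.e.\ exactly at the boundary of $R_{u,d}$.

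I expect the only delicate point to be the rigorous justification that the canonical sectorial solutions $F_d(\xi;\mathbf{u})$ can be chosen jointly holomorphic in $(\xi,\mathbf{u})$ and that they satisfy \eqref{introisoStokeseq2} with no extra $\xi$-independent correction term — equivalently, that the coefficient $B_k$ in the deformation equation is forced to be polynomial in $\xi$ of the stated form by the prescribed asymptotics at $\infty$ and $0$. This is the content of the Jimbo--Miwa--Ueno deformation theory (uniqueness of the sectorial solution pins down $B_k$ via a Liouville-type argument on $\partial_{u_k}F_d\cdot F_d^{-1}$), so I would cite \cite{JMU1981I} for it rather than reprove it, and the remaining steps are the short cancellation computation above.
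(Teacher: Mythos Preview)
Your argument for the Stokes matrices is correct and is precisely the Jimbo--Miwa--Ueno mechanism the paper invokes; the paper itself gives no independent proof and simply cites \cite{JMU1981I}.

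One aside in your proposal is wrong, however: the cancellation does \emph{not} carry over to $C_d=F_d^{-1}F^{(0)}$. The solution $F^{(0)}$ is normalized by $F^{(0)}(\xi)\,\xi^{-\Phi}\to\mathrm{Id}$ with the full residue matrix $\Phi$, not $\delta\Phi$, and $\Phi$ evolves nontrivially under \eqref{eq:iso eq of one irr in u} (only its diagonal part $\delta\Phi$ is an integral of motion). Hence $(\partial_{u_k}F^{(0)})(F^{(0)})^{-1}$ is \emph{not} the same sector-independent coefficient $B_k=E_k\xi+\mathrm{ad}_U^{-1}\mathrm{ad}_{E_k}\Phi$, and the connection matrix is not locally constant. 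The paper points this out explicitly in the paragraph following the proposition and records the correct evolution $\partial_{z_k}C_d=-C_d\cdot\mathrm{ad}_U^{-1}\mathrm{ad}_{\partial U/\partial z_k}\Phi$ in \eqref{eq: eq of one irr connection mat}. This error is extraneous to the proposition you are proving, but the distinction matters later (e.g.\ in Lemma~\ref{prop:eq of one irr connection mat} and Proposition~\ref{prop:from A0,G0 to hatA, hatG}).
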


It should be noted that the  connection matrix, as defined in Definition \ref{def:conn for 1irr}, is not locally invariant. Specifically, its evolution is described by the following proposition:
\begin{prop}[\cite{JMU1981I}]
    Given a non-resonant solution $\Phi(z_0,...,z_{n-1})$ of the isomonodromy equations \eqref{eq:iso eq of one irr in z}, the connection matrix $C_d(U,\Phi)$, seen as function in the coordinates $(z_0, z_1, \dots, z_{n-1})$, satisfies the following system of equations for a matrix function $X(\mathbf{z})$:
    \begin{align}\label{eq: eq of one irr connection mat}
        \frac{\partial X(\mathbf{z})}{\partial z_k} = -X(\mathbf{z}) \cdot \mathrm{ad}_{U}^{-1} \mathrm{ad}_{\frac{\partial U}{{\partial z}_k}} \Phi, \quad k=0, \dots, n-1.
    \end{align}
\end{prop}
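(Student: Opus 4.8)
The plan is to recall the Jimbo--Miwa--Ueno strategy: show that each of the two canonical fundamental solutions entering $C_d=F_d(\xi)^{-1}F^{(0)}(\xi)$ obeys a deformation equation in $\mathbf z$ --- the one at $\xi=\infty$ exactly, the one at $\xi=0$ up to an explicit right-multiplicative correction --- and then differentiate the defining relation of $C_d$. First I would pass to the coordinates $\mathbf z$: by the chain rule, \eqref{introisoStokeseq1}--\eqref{introisoStokeseq2} become $\partial_\xi F=(U+\Phi/\xi)F$ together with $\partial_{z_k}F=B_kF$, where $B_k:=\xi\,\partial_{z_k}U+M_k$ and $M_k:=\mathrm{ad}_U^{-1}\mathrm{ad}_{\partial_{z_k}U}\Phi$; the Frobenius compatibility of this overdetermined system is exactly \eqref{eq:iso eq of one irr in z}. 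I would also record the elementary fact that $\delta\Phi$ is a first integral of \eqref{eq:iso eq of one irr in z}: since $M_k$ is off-diagonal, the diagonal entries of $[M_k,\Phi]$ cancel in pairs, so $\partial_{z_k}\delta\Phi=0$. Because the system is compatible it has, locally in $\mathbf z$, a joint fundamental solution $\Psi(\xi,\mathbf z)$; comparing $\Psi$ with $F_d$ and with $F^{(0)}$ (each a fundamental solution of the $\xi$-equation) gives
\[
\partial_{z_k}F_d=B_kF_d+F_dR_k(\mathbf z),\qquad \partial_{z_k}F^{(0)}=B_kF^{(0)}+F^{(0)}\widetilde R_k(\mathbf z),
\]
with $R_k,\widetilde R_k$ depending on $\mathbf z$ only. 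The proof then reduces to showing $R_k=0$ and $\widetilde R_k=-M_k$.

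For $R_k$, I would substitute the prescribed asymptotics $F_d\sim\widehat F\,e^{\xi U}\xi^{\delta\Phi}$, $\widehat F=\mathrm{Id}+\widehat F_1\xi^{-1}+\cdots$, into $\partial_{z_k}F_d\cdot F_d^{-1}=B_k+F_dR_kF_d^{-1}$. Since $\delta\Phi$ is constant no $\log\xi$ terms are produced, and $F_dR_kF_d^{-1}=\widehat F\,e^{\xi U}\xi^{\delta\Phi}R_k\xi^{-\delta\Phi}e^{-\xi U}\widehat F^{-1}$ has $(i,j)$-entry a multiple of $\xi^{(\delta\Phi)_{ii}-(\delta\Phi)_{jj}}e^{\xi(u_i-u_j)}$; as a Stokes sector has angular width strictly greater than $\pi$, this factor is unbounded somewhere in the sector unless $(R_k)_{ij}=0$ for $i\ne j$, so $R_k$ is diagonal. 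Matching the $O(1)$ terms then gives $R_k=[\widehat F_1,\partial_{z_k}U]-M_k$, and since a direct computation with the $\xi$-equation yields $\widehat F_1=-\mathrm{ad}_U^{-1}\Phi+(\text{diagonal})$, while $\mathrm{ad}_{\partial_{z_k}U}$ and $\mathrm{ad}_U^{-1}$ commute on off-diagonal matrices, one gets $[\widehat F_1,\partial_{z_k}U]=M_k$ and hence $R_k=0$. Thus $F_d$ solves $\partial_{z_k}F_d=B_kF_d$.

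For $\widetilde R_k$, I would work near $\xi=0$, writing $F^{(0)}=\widehat H(\xi,\mathbf z)\,\xi^{\Phi}$ with $\widehat H$ holomorphic and invertible there and $\widehat H(0,\mathbf z)=\mathrm{Id}$. Using the deformation equation $\partial_{z_k}\Phi=[M_k,\Phi]=-\mathrm{ad}_\Phi M_k$ together with the operator identity $\frac{e^{x\,\mathrm{ad}_\Phi}-\mathrm{Id}}{\mathrm{ad}_\Phi}\circ\mathrm{ad}_\Phi=e^{x\,\mathrm{ad}_\Phi}-\mathrm{Id}$ at $x=\log\xi$, one obtains the clean formula $\partial_{z_k}(\xi^{\Phi})\,\xi^{-\Phi}=M_k-\xi^{\Phi}M_k\xi^{-\Phi}$. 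Substituting $F^{(0)}=\widehat H\xi^{\Phi}$ into $\partial_{z_k}F^{(0)}=B_kF^{(0)}+F^{(0)}\widetilde R_k$ and simplifying shows that $\widehat H\,\xi^{\Phi}(\widetilde R_k+M_k)\xi^{-\Phi}\widehat H^{-1}$ equals a matrix function that is holomorphic at $\xi=0$ and vanishes there; hence $\xi^{\Phi}(\widetilde R_k+M_k)\xi^{-\Phi}=e^{(\log\xi)\mathrm{ad}_\Phi}(\widetilde R_k+M_k)$ is holomorphic and $O(\xi)$ at $0$. As $\widetilde R_k+M_k$ is $\xi$-independent, decomposing it over the generalized eigenspaces of $\mathrm{ad}_\Phi$ and using that $\Phi$ is non-resonant (no eigenvalue difference is a nonzero integer) forces $\widetilde R_k+M_k=0$.

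Finally, differentiating $C_d=F_d^{-1}F^{(0)}$ and inserting $\partial_{z_k}F_d=B_kF_d$ and $\partial_{z_k}F^{(0)}=B_kF^{(0)}-F^{(0)}M_k$, the $B_k$ contributions cancel and one is left with $\partial_{z_k}C_d=-C_d\,M_k=-C_d\,\mathrm{ad}_U^{-1}\mathrm{ad}_{\partial_{z_k}U}\Phi$, i.e. the asserted equation with $X=C_d$. I expect the main obstacle to be the two identification steps $R_k=0$ and $\widetilde R_k=-M_k$ rather than the closing algebra; in particular the regular-singular computation uses non-resonance essentially --- it is exactly what rules out a holomorphic vanishing contribution to $e^{(\log\xi)\mathrm{ad}_\Phi}(\widetilde R_k+M_k)$ with a nonzero coefficient --- and one must take care with the branch of $\log\xi$ and with the fact that $\widehat H$ is only formally $\mathrm{Id}$ at $\xi=0$ while being genuinely holomorphic and invertible there.
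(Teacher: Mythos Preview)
Your argument is correct and is precisely the standard Jimbo--Miwa--Ueno computation that the paper invokes by citation; the paper does not supply its own proof of this proposition but simply attributes it to \cite{JMU1981I}, so there is nothing further to compare. The two identification steps you flag --- $R_k=0$ via the exponential dominance on a sector of opening $>\pi$ together with $\widehat F_1=-\mathrm{ad}_U^{-1}\Phi+(\text{diag})$, and $\widetilde R_k=-M_k$ via non-resonance forcing $\xi^{\Phi}(\widetilde R_k+M_k)\xi^{-\Phi}$ to vanish --- are exactly the points where the JMU machinery does its work, and your treatment of them is accurate.
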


\subsection{The isomonodromy deformation and monodromy problem}
Given any solution $\Phi(\mathbf{u};\Phi_0)$ with the boundary value $\Phi_0$, by Proposition \ref{prop:iso}, the Stokes matrices  $S_{d}^{\pm}(U,\Phi(u;\Phi_0))$ are constant on $R_{u,d}\in\mathbb{C}^n\setminus\Delta $, thus only depend on the boundary value (integration constant) $\Phi_0$. The following theorem gives explicit expression of the Stokes matrices via $\Phi_0$. 

\begin{theorem}[\cite{TangXu}] \label{thm:monodromy factor of one irr sys}
   Let $R_{u,d}(J)$ be the connected component of $R_{u,d}$  labelled by a subset $J\subset \{1,2,...,n-1\}$ as follows: $U=\mathrm{diag}(u_1,...,u_n)\in R_{u,d}(J)$ if and only if
\begin{align}
    &\mathrm{Im}(u_{k+1}\mathrm{e}^{\mathrm{i}d})<\min_{1\leq j\leq k} \mathrm{Im}(u_j\mathrm{e}^{\mathrm{i}d}),\  \text{for}\ k\in J,\\
    &\mathrm{Im}(u_{k+1}\mathrm{e}^{\mathrm{i}d})>\min_{1\leq j\leq k} \mathrm{Im}(u_j\mathrm{e}^{\mathrm{i}d}),\  \text{for}\ k\notin J.
\end{align}
Then for $U\in R_{u,d}(J)$, we have
    \begin{align}\label{eq:catformula for sing irr}
            S_d^{-1}(U,\Phi)^{-1}\cdot \mathrm{e}^{2\pi\mathrm{i}\delta\Phi}\cdot S_d^{+}(U,\Phi)
=\mathrm{Ad}\left(\overrightarrow{\prod_{k=1}^{n-1}}C_{d+\mathrm{arg}(u_{k+1}-u_k)}(E_{k+1},\delta_{k+1}(\Phi_{0}))\right)\mathrm{e}^{2\pi\mathrm{i}\Phi_0}.
    \end{align}
Here each $C_{d+\mathrm{arg}(u_{k+1}-u_k)}(E_{k+1},\delta_{k+1}(\Phi_{0}))\in {\rm GL}_n$ is the connection matrix of the $n\times n$ linear system of ODEs
\begin{equation}\label{relequation}
\frac{dF}{dz}=\left(E_{k+1}+\frac{\delta_{k+1}(\Phi_0)}{z}\right)\cdot F,
\end{equation}
with respect to the direction $d+\mathrm{arg}(u_{k+1}-u_k)$.
\end{theorem}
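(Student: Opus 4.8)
The plan is to combine the isomonodromy invariance of the monodromy matrix with an inductive caterpillar degeneration of the linear ODE \eqref{introisoStokeseq1}, letting the points $u_1,\dots,u_n$ separate one at a time.

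First I would rewrite the left-hand side. By the monodromy relation \eqref{monorel3} it equals $\nu_d(U,\Phi)=C_d(U,\Phi)\,\mathrm{e}^{2\pi\mathrm{i}\Phi}\,C_d(U,\Phi)^{-1}$; and since the Stokes matrices $S_d^{\pm}$ are locally constant on $R_{u,d}(J)$ by Proposition \ref{prop:iso}, while the diagonal part $\delta\Phi$ is a first integral of \eqref{eq:iso eq of one irr in u} (a one-line check using \eqref{eq:iso eq of one irr in u}), the matrix $\nu_d=(S_d^-)^{-1}\mathrm{e}^{2\pi\mathrm{i}\delta\Phi}S_d^+$ is constant on each component $R_{u,d}(J)$. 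The building blocks $C_{d+\arg(u_{k+1}-u_k)}(E_{k+1},\delta_{k+1}\Phi_0)$ appearing on the right are constant there as well, because the connection matrix of the rank-one system \eqref{relequation} is locally constant in its direction and the inequalities defining $R_{u,d}(J)$ keep each $d+\arg(u_{k+1}-u_k)$ inside a single Stokes region of \eqref{relequation}. It therefore suffices to check the identity at one convenient point of $R_{u,d}(J)$, and I would approach the iterated limit $z_{n-1}\to\infty,\dots,z_2\to\infty$ of Theorem \ref{thm:iso asy by tangxu} along a path that stays inside $R_{u,d}(J)$.

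I would then argue by induction on $n$. The base case $n=2$ is algebraic: the scalar gauge $F\mapsto\mathrm{e}^{-u_1\xi}F$ and the rescaling $\xi\mapsto\xi/(u_2-u_1)$ turn \eqref{introisoStokeseq1} into $\frac{dF}{d\xi}=(E_2+\Phi/\xi)F$, and combining the transformation rule \eqref{eq:trans in connection mat with c} for $C_d$ under scaling of $U$, the analogous rule for conjugation of the residue by a diagonal matrix, and the relation $\Phi=(u_2-u_1)^{-\mathrm{ad}\,\delta\Phi_0}\Phi_0$ of Theorem \ref{thm:iso asy by tangxu}, one checks that all the diagonal normalization factors cancel inside $C_d\,\mathrm{e}^{2\pi\mathrm{i}\Phi}\,C_d^{-1}$, leaving exactly $\mathrm{Ad}\big(C_{d+\arg(u_2-u_1)}(E_2,\Phi_0)\big)\mathrm{e}^{2\pi\mathrm{i}\Phi_0}$. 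For the step from $n-1$ to $n$ I would send $z_{n-1}\to\infty$, i.e.\ let $u_n$ recede from the cluster $\{u_1,\dots,u_{n-1}\}$, and apply a splitting lemma for the linear system: in this regime it has a fundamental solution which, up to exponentially small errors, is the product of a fundamental solution of the $(n-1)$-point subsystem with residue the limiting matrix $\Phi_{n-2}$ of Theorem \ref{thm:iso asy by tangxu} and a fundamental solution of a rank-one model equation of type \eqref{relequation} for the separating eigenvalue. Tracking the canonical sectorial solution $F_d$ and the solution $F^{(0)}$ at $\xi=0$ through this factorization, and using that $z_{n-1}^{\mathrm{ad}\,\delta_{n-1}\Phi_{n-2}}\Phi_{n-1}\to\Phi_{n-2}$ so that the associated rescaling conjugations cancel as in the base case, one obtains $\nu_d$ for the $n$-point system from $\nu_d$ for the $(n-1)$-point system by conjugating with the extra factor $C_{d+\arg(u_n-u_{n-1})}(E_n,\delta_n\Phi_0)$. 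Iterating the reduction gives the ordered product in \eqref{eq:catformula for sing irr}.

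The hard part is the splitting lemma and the uniformity it demands. Along the caterpillar path the anti-Stokes directions $-\arg(u_n-u_i)$ with $i<n$ of the $n\times n$ system all coalesce in the limit and pull away from the anti-Stokes directions internal to $\{u_1,\dots,u_{n-1}\}$, so the sectors $\mathrm{Sect}^{(\infty)}_d$ degenerate; one must select compatible sectors at every stage, fix compatible branches of $\log$ in all the factors $z_k^{\mathrm{ad}\,\delta_k\Phi_{k-1}}$, and prove that in the rescaled variables the canonical sectorial solutions converge to the claimed product solution with error estimates uniform up to and including $\xi=0$ — the same analytic input on which Theorem \ref{thm:iso asy by tangxu} rests. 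The secondary difficulty is to organise the numerous diagonal conjugation factors generated at each stage so that they cancel in the final expression for $\nu_d$.
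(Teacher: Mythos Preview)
This theorem is not proved in the present paper; it is quoted from \cite{TangXu} in the preliminaries (Section~2), so there is no proof here to compare your proposal against. Your outline --- rewrite the left side as $\nu_d=C_d\,\mathrm{e}^{2\pi\mathrm{i}\Phi}\,C_d^{-1}$ via \eqref{monorel3}, use isomonodromy invariance on $R_{u,d}(J)$ to reduce to the iterated caterpillar limit of Theorem~\ref{thm:iso asy by tangxu}, and then peel off one connection-matrix factor per step by a splitting of the linear system as $z_k\to\infty$ --- is exactly the strategy carried out in \cite{TangXu}, and the analytic difficulty you single out (the uniform factorization of the canonical sectorial solutions in the degeneration, with the attendant bookkeeping of diagonal conjugations) is precisely the technical core worked out there. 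In the present paper the relevant output of that analysis is packaged as Lemma~\ref{prop:eq of one irr connection mat}, which records the limiting behaviour of $C_d(U,\Phi)$ along the caterpillar; combining it with \eqref{eq:limit in zk one irr iso} and conjugating $\mathrm{e}^{2\pi\mathrm{i}\Phi}$ yields \eqref{eq:catformula for sing irr}.
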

\begin{remark}
    When $J=\{1,2,...,n-1\}$, $U\in R_{u,d}(J)$ means that $\mathrm{Im}(u_1\mathrm{e}^{\mathrm{i}d})>\cdots>\mathrm{Im}(u_n\mathrm{e}^{\mathrm{i}d})$, and if we further let $-\pi<d+\mathrm{arg}(u_{k+1}-u_{k})<0$ for $k=1,...,n-1$, we have 
    \begin{align*}
          S_d^{-1}(U,\Phi)^{-1}\cdot \mathrm{e}^{2\pi\mathrm{i}\delta\Phi}\cdot S_d^{+}(U,\Phi)
=\mathrm{Ad}\left(\overrightarrow{\prod_{k=1}^{n-1}}C_{-\frac{\pi}{2}}(E_{k+1},\delta_{k+1}(\Phi_{0}))\right)\mathrm{e}^{2\pi\mathrm{i}\Phi_0}.
    \end{align*}
\end{remark}
The above Theorem states a factorization of ( the monodromy data of) the system \eqref{introisoStokeseq1} into multiple systems \eqref{relequation} for $k=1,...,n-1$. Since the systems \eqref{introisoStokeseq1} for $k=1,...,n-1$ are rigid, one can get the explicit formula of the connection matrices $C_{d+\mathrm{arg}(u_{k+1}-u_k)}(E_{k+1},\delta_{k+1}(\Phi_{0}))$ and therefore the explicit formula of $S_d^{\pm}(U,\Phi(\mathbf{u};\Phi_0)$ in terms of $\Phi_0$. See \cite{TangXu, xu2019closure1} for more details.

As a consequence of the monodromy relation \eqref{monorel3}, and Theorem \ref{thm:monodromy factor of one irr sys}, we also have

\begin{cor}\label{cor:decom of CPhiC in one irr}
    Given $\Phi(\mathbf{u})=\Phi(\mathbf{u};\Phi_0)$, if $\Phi_0$ is non-resonant, then for all $U\in R_{u,d}(J)$  we have
    \begin{align}\label{cor:log}
        C_d(U,\Phi)\cdot\Phi\cdot C_d(U,\Phi)^{-1}=\mathrm{Ad}\left(\overrightarrow{\prod_{k=1}^{n-1}}C_{d+\mathrm{arg}(u_{k+1}-u_{k})}\left(E_{k+1},\delta_{k+1}(\Phi_{0})\right)\right)\Phi_0.
    \end{align}
\end{cor}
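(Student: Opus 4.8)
\textbf{Proof proposal for Corollary \ref{cor:decom of CPhiC in one irr}.}

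The plan is to deduce the identity directly from the monodromy relation \eqref{monorel3} together with the factorization in Theorem \ref{thm:monodromy factor of one irr sys}, exploiting the uniqueness of the ``$\log$'' of the monodromy matrix $\nu_d$ on the relevant spectral locus. First I would recall that by the second equality in \eqref{monorel3}, the monodromy matrix of $F_d$ factors as $\nu_d(U,\Phi) = C_d(U,\Phi)\cdot e^{2\pi\mathrm{i}\Phi}\cdot C_d(U,\Phi)^{-1}$, so that $C_d(U,\Phi)$ conjugates $e^{2\pi\mathrm{i}\Phi}$ to $\nu_d$. On the other hand, the first equality in \eqref{monorel3} together with Theorem \ref{thm:monodromy factor of one irr sys} expresses $\nu_d(U,\Phi) = S_d^-(U,\Phi)^{-1}e^{2\pi\mathrm{i}\delta\Phi}S_d^+(U,\Phi) = \mathrm{Ad}\!\left(\overrightarrow{\prod_{k=1}^{n-1}}C_{d+\mathrm{arg}(u_{k+1}-u_k)}(E_{k+1},\delta_{k+1}(\Phi_0))\right)e^{2\pi\mathrm{i}\Phi_0}$ for $U\in R_{u,d}(J)$. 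Comparing the two expressions, both $C_d(U,\Phi)$ and $Q:=\overrightarrow{\prod_{k=1}^{n-1}}C_{d+\mathrm{arg}(u_{k+1}-u_k)}(E_{k+1},\delta_{k+1}(\Phi_0))$ conjugate something into $\nu_d$: the former sends $e^{2\pi\mathrm{i}\Phi}$ to $\nu_d$, the latter sends $e^{2\pi\mathrm{i}\Phi_0}$ to $\nu_d$.

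Next I would upgrade this to a statement about $\Phi$ itself rather than $e^{2\pi\mathrm{i}\Phi}$. The key point is that $\Phi$ and $\Phi_0$ are conjugate: indeed, from Theorem \ref{thm:iso asy by tangxu} (and the regularization procedure defining the boundary value), $\Phi$ and $\Phi_0$ have the same eigenvalues, and more precisely $\Phi_0 = z_1^{\mathrm{ad}\,\delta\Phi_1}\Phi_1$ with the successive limits exhibiting a chain of conjugations; in particular $\Phi$ is semisimple with spectrum $\{\lambda^{(n)}_i\}$ iff $\Phi_0$ is. Under the non-resonance hypothesis on $\Phi_0$, the eigenvalues of $\Phi$ (equivalently of $\Phi_0$) differ by no nonzero integer, so the exponential map $\mathrm{spec}(\Phi)\ni\lambda\mapsto e^{2\pi\mathrm{i}\lambda}$ is injective on the spectrum. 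Consequently, for a matrix $M$ conjugate to $\Phi$, the centralizer of $e^{2\pi\mathrm{i}M}$ coincides with the centralizer of $M$, and any $\lambda$ with $\lambda\mapsto e^{2\pi\mathrm{i}\lambda}$ determines $M$ among matrices similar to $\Phi$. This is the mechanism that lets one ``take logarithms'' of the conjugation identity: the relation $g\, e^{2\pi\mathrm{i}\Phi}\,g^{-1}= h\, e^{2\pi\mathrm{i}\Phi_0}\,h^{-1}$, for $g=C_d$ and $h=Q$ (with $\Phi$ similar to $\Phi_0$), forces $g\,\Phi\,g^{-1}= h\,\Phi_0\,h^{-1}$, because $h^{-1}g$ normalizes $e^{2\pi\mathrm{i}\Phi_0}$ hence normalizes $\Phi_0$ — here one uses that $h^{-1}g$ conjugates $e^{2\pi i\Phi}$, which equals $h e^{2\pi i \Phi_0}h^{-1}$ pulled back, to $e^{2\pi i\Phi_0}$, and that on the centralizer of a semisimple element the two exponentials agree. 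Rearranging gives exactly $C_d(U,\Phi)\cdot\Phi\cdot C_d(U,\Phi)^{-1}=\mathrm{Ad}(Q)\Phi_0$, which is \eqref{cor:log}.

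The main obstacle, and the step deserving the most care, is the logarithm step: passing from the equality of the conjugates of $e^{2\pi\mathrm{i}\Phi}$ and $e^{2\pi\mathrm{i}\Phi_0}$ to the equality of the conjugates of $\Phi$ and $\Phi_0$. The non-resonance of $\Phi_0$ is used precisely to guarantee that $e^{2\pi\mathrm{i}(\cdot)}$ separates the eigenvalues, so that $\Phi$ (resp. $\Phi_0$) is the unique matrix in the commutant of $e^{2\pi\mathrm{i}\Phi}$ (resp. $e^{2\pi\mathrm{i}\Phi_0}$) with the prescribed logarithmic spectrum, but one must also handle the non-semisimple case by the same centralizer argument applied to the semisimple and nilpotent parts separately (the nilpotent part of $\log$ matches the ``$\log$ of the unipotent part'' of the monodromy, which is again canonical under non-resonance). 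Once this is in place, everything else is a direct substitution of Theorem \ref{thm:monodromy factor of one irr sys} and the monodromy relation \eqref{monorel3}, valid on each component $R_{u,d}(J)$ by Proposition \ref{prop:iso}. I would also remark that the identity extends by analyticity in $\Phi_0$ to resonant values where both sides remain defined, though the clean statement is for the non-resonant case as stated.
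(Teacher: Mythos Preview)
Your proposal is correct and follows exactly the route the paper intends: the paper simply declares the corollary ``a consequence of the monodromy relation \eqref{monorel3} and Theorem \ref{thm:monodromy factor of one irr sys}'' without further detail, and your write-up is the natural unpacking of that sentence. The only substantive step beyond citation is the passage from $C_d\,e^{2\pi\mathrm{i}\Phi}\,C_d^{-1}=Q\,e^{2\pi\mathrm{i}\Phi_0}\,Q^{-1}$ to the same identity with $\Phi,\Phi_0$ in place of their exponentials, and your use of non-resonance (so that $\lambda\mapsto e^{2\pi\mathrm{i}\lambda}$ is injective on the common spectrum, making the logarithm with that prescribed spectrum unique, including in the non-semisimple case via the functional calculus) is exactly the mechanism required.
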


\section{System with two irregular singularities, its monodromy data and isomonodromy equation}\label{sec:intro for two irr sys}

In this section, we introduce the system  with two second-order poles, and define its monodromy data and isomonodromic deformation equations.

\subsection{Lax pair of the nonlinear PDEs \eqref{eq:iso eq of two irr A-u}-\eqref{eq:iso eq of two irr G-v}}
Let us consider the $n\times n$ linear system for function $F(\xi,u_1,...,u_n,v_1,...,v_n)\in {\rm GL}_n$ with $2n+1$ complex variables as follows:
\begin{subequations}
\begin{align}
    \label{eq: linr sys two irr in xi}
    \frac{\partial F}{\partial \xi}&=\left(U+\frac{A(\mathbf{u},\mathbf{v})}{\xi}+\frac{G(\mathbf{u},\mathbf{v})VG^{-1}(\mathbf{u},\mathbf{v})}{\xi^2}\right)F,\\ \label{eq:linr sys two irr in u}
     \frac{\partial F}{\partial u_k}&=(E_k\xi+\mathrm{ad}_U^{-1}\mathrm{ad}_{E_k}A(\mathbf{u},\mathbf{v}))F,\\ \label{eq:linr sys sys two irr in v}
    \frac{\partial F}{\partial v_k}&=\left(-\frac{G(\mathbf{u},\mathbf{v})E_kG^{-1}(\mathbf{u},\mathbf{v})}{\xi}\right)F,
    \end{align}
\end{subequations}
where $U=\mathrm{diag}\{u_1,u_2,...,u_n\},V=\mathrm{diag}\{v_1,v_2,...,v_n\}$, and $A(\mathbf{u},\mathbf{v})\in\frak{gl}_n$, $G(\mathbf{u},\mathbf{v})\in{\rm GL}_n$ are matrix valued solutions of the system \eqref{eq:iso eq of two irr A-u}-\eqref{eq:iso eq of two irr G-v} with variables  $(\mathbf{u},\mathbf{v})=(u_1,...,u_n,v_1,...,v_n)$. We assume $u_i-u_j\notin \mathbb{Z}, v_i-v_j\notin \mathbb{Z}$, for $i\neq j$.

\subsection{Monodromy data}
For fixed diagonal matrices $U,V$ under the assumption 
\begin{equation}\label{nrassumption}
    u_i-u_j\notin \mathbb{Z}, \hspace{3mm} v_i-v_j\notin \mathbb{Z}, \text{ for } i\neq j,
\end{equation} 
we consider the ordinary differential equation \eqref{eq: linr sys two irr in xi}.
\begin{definition}
    The \textbf{anti-Stokes directions} of \eqref{eq: linr sys two irr in xi} at $\bf{\xi=\infty}$ are \[
        aS(u):=\{-\mathrm{arg}(u_i-u_j)+2k\pi: k\in \mathbb{Z}, i\neq j\}.\]
If we arrange these directions into a strictly monotonically increasing sequence $
        \cdots <\tau_{-1}<\tau_0<\tau_1<\cdots$,
Then for any $d\in (\tau_j,\tau_{j+1})$, the \textbf{Stokes sector} $\bf{Sect_{d}^{(\infty)}}$ is 
    \begin{align*}
        Sect_{d}^{(\infty)}:=\left\{\phi\in\bar{\mathbb{C}} : \mathrm{arg}\phi\in\left(\tau_j-\frac{\pi}{2},\tau_{j+1}+\frac{\pi}{2}\right)\right\}.
    \end{align*}
\end{definition}    
    Similarly, 
\begin{definition}
The \textbf{anti-Stokes directions} of \eqref{eq: linr sys two irr in xi} at $\bf{\xi=0}$ are 
     \begin{align*}
        aS(v) &:=\{\mathrm{arg}(v_i-v_j)+2k\pi: k\in \mathbb{Z}, i\neq j\}.
    \end{align*}
If we arrange these directions into a strictly monotonically increasing sequence $
        \cdots <\theta_{-1}<\theta_0<\theta_1<\cdots,$
then for any $d\in (\theta_j,\theta_{j+1})$, the \textbf{Stokes sector} $\bf{Sect_{d}^{(0)}}$ is 
    \begin{align*}
        Sect_{d}^{(0)}:=\left\{\phi\in\bar{\mathbb{C}} : \mathrm{arg}\phi\in\left(\theta_j-\frac{\pi}{2},\theta_{j+1}+\frac{\pi}{2}\right)\right\}.
    \end{align*}
\end{definition}

Similar to the one irregular pole case, we have different holomorphic solutions with the prescribed asymptotics at different Stokes sectors.
\begin{prop}[See e.g., \cite{Balser},\cite{Balser-Jurkat-Lutz1981}]\label{prop:fundamental solution of two irr sys}
For fixed $u,v$ and for any $d\notin {\rm aSR}(u)$, there is a unique holomorphic fundamental solution $F^{(\infty)}_d(\xi)$ of the ordinary differential equation \eqref{eq: linr sys two irr in xi} over $\widetilde{\mathbb{C}}$ with the prescribed asymptotics:
    \begin{align}
    F^{(\infty)}_{d}(\xi)\cdot \mathrm{e}^{-\xi U}\xi^{-\delta A}\sim \mathrm{Id}+O(\xi^{-1}),\  \text{ as } \xi\rightarrow\infty \ \text{ within } \mathrm{Sect}^{(\infty)}_{d}.
\end{align}
At the same time, for any $d\notin {\rm aSR}(v)$, there is a unique holomorphic solution $F^{(0)}_d(\xi)$ with the prescribed asymptotics:
\begin{align}
    F^{(0)}_{d}(\xi)\cdot \mathrm{e}^{\frac{V}{\xi}}\xi^{-\delta (G^{-1}AG)}\sim G+O(\xi), \  \text{ as } \xi\rightarrow 0 \ \text{ within } \mathrm{Sect}^{(0)}_{d}.
\end{align}
\end{prop}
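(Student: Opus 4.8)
The statement consists of two independent assertions — the existence and uniqueness of $F^{(\infty)}_d$ near $\xi=\infty$ and of $F^{(0)}_d$ near $\xi=0$ — and the plan is to recognise each singularity as an unramified irregular singularity of Poincar\'e rank $1$ whose leading coefficient is a diagonal matrix with pairwise distinct entries, so that the classical sectorial normalisation theory (Birkhoff--Sibuya, Balser--Jurkat--Lutz; see \cite{Balser,Balser-Jurkat-Lutz1981}) applies essentially verbatim, by exactly the same method underlying the one--irregular--singularity statement recalled in Section \ref{sec2.2}.

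For $\xi=\infty$: the coefficient matrix of \eqref{eq: linr sys two irr in xi} is $U+A\xi^{-1}+GVG^{-1}\xi^{-2}$, whose expansion at $\xi=\infty$ has leading term the diagonal matrix $U$ with $u_i\neq u_j$; the crucial point is that the term $GVG^{-1}\xi^{-2}$ is subdominant, i.e.\ it belongs to the holomorphic--at--infinity remainder and affects neither the exponential factor $e^{\xi U}$ nor the power factor $\xi^{\delta A}$ of the formal fundamental solution. First I would construct the (unique) formal fundamental solution $\widehat F(\xi)=\widehat H(\xi)\,\xi^{\delta A}\,e^{\xi U}$ with $\widehat H(\xi)=\mathrm{Id}+H_1\xi^{-1}+H_2\xi^{-2}+\cdots$, by substituting this ansatz into \eqref{eq: linr sys two irr in xi} and solving the resulting recursion order by order: at each step the off-diagonal entries are recovered by dividing by $u_i-u_j\neq0$, while the diagonal constraints are met precisely by the exponent $\xi^{\delta A}$. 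Then the main asymptotic existence theorem supplies, on the maximal sector $\mathrm{Sect}^{(\infty)}_d$ (opening $>\pi$, obtained from the angular interval between two consecutive anti-Stokes rays by widening it by $\pi/2$ on each side), a genuine holomorphic solution $F^{(\infty)}_d$ on the corresponding sector of the Riemann surface of $\log\xi$ with $F^{(\infty)}_d\,e^{-\xi U}\xi^{-\delta A}\sim\widehat H(\xi)$ uniformly on closed subsectors. Uniqueness is the usual exponential-dominance argument: two such solutions differ by a constant matrix $C$, and conjugating the formal solution by $C$ produces entries $\sim C_{ij}\,\xi^{A_{ii}-A_{jj}}\,e^{(u_i-u_j)\xi}$, which can approach the identity pattern on a sector of opening $>\pi$ only if $C=\mathrm{Id}$, since each $e^{(u_i-u_j)\xi}$ with $u_i\neq u_j$ is unbounded along some ray of such a sector.

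For $\xi=0$: I would reduce to the case just treated by the substitution $\zeta=1/\xi$ together with the constant gauge transformation $F=G\widetilde F$. A direct computation turns \eqref{eq: linr sys two irr in xi} into
\[
\frac{d\widetilde F}{d\zeta}=-\left(V+\frac{G^{-1}AG}{\zeta}+\frac{G^{-1}UG}{\zeta^{2}}\right)\widetilde F,
\]
which is again an unramified rank-$1$ irregular singularity at $\zeta=\infty$, now with diagonal leading coefficient $-V$ and pairwise distinct entries $-v_1,\dots,-v_n$ (this is where $v_i\neq v_j$ enters), and with the $\zeta^{-2}$ term subdominant. Applying the $\xi=\infty$ part just proved to this equation yields, on the appropriate Stokes sector, a unique holomorphic $\widetilde F_d$ with $\widetilde F_d\,e^{V\zeta}\,\zeta^{\delta(G^{-1}AG)}\sim\mathrm{Id}$; setting $F^{(0)}_d:=G\widetilde F_d$ and rewriting everything in terms of $\xi=1/\zeta$ gives $F^{(0)}_d\,e^{V/\xi}\,\xi^{-\delta(G^{-1}AG)}\sim G+O(\xi)$, with uniqueness inherited. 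The points that require care — but present no genuine obstacle — are purely bookkeeping: one must check that, under $\arg\zeta=-\arg\xi$, the rays $aS(v)=\{\arg(v_i-v_j)+2k\pi\}$ indeed match the anti-Stokes directions of the $\zeta$-equation and $\mathrm{Sect}^{(0)}_d$ matches its Stokes sectors (this works because the index set runs over all ordered pairs, so the relevant ray set is invariant under rotation by $\pi$), and one must track the signs in the exponential and power factors through the two changes of variable. The only step one should not skip is the verification that the $\xi^{-2}$ (respectively $\zeta^{-2}$) contributions are genuinely subdominant, so that the formal invariants used in the sectorial normalisation are unchanged; everything else is classical.
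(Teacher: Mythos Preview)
Your sketch is correct and is precisely the standard argument from the cited references \cite{Balser,Balser-Jurkat-Lutz1981}; note that the paper itself does not supply a proof of this proposition but simply invokes those sources. Your reduction of the $\xi=0$ case to the $\xi=\infty$ case via $\zeta=1/\xi$ and the gauge $F=G\widetilde F$ is exactly the viewpoint the paper adopts as well (see the Remark immediately following Definition~\ref{def:conn for 2irr}), so there is no divergence in approach.
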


In the following, we introduce the monodromy data, that are various transition matrices between the above preferred fundamental solutions.
\begin{definition}\label{def:stokes for 2irr}
    The \textbf{Stokes matrices} of \eqref{eq: linr sys two irr in xi} at $\bf{\xi=\infty}$ (with respective to any given direction $d\notin {\rm aSR}(u)$) are 
    \begin{align}
        S_{d,\pm}^{(\infty)}(U,V,A,G):=F^{(\infty)}_{d\pm\pi}(\xi)^{-1}F^{(\infty)}_d(\xi).
    \end{align}
The \textbf{Stokes matrices} of \eqref{eq: linr sys two irr in xi} at $\bf{\xi=0}$ (with respective to any given direction $d\notin {\rm aSR}(v)$) are 
    \begin{align}
        S_{d,\pm}^{(0)}(U,V,A,G):=F^{(0)}_{-d\mp\pi}(\xi)^{-1}F^{(0)}_{-d}(\xi).
    \end{align}
\end{definition}
\begin{definition}\label{def:conn for 2irr}
For a chosen direction $d\notin {\rm aSR}(u)\cup {\rm aSR}(v)$, the associated \textbf{connection matrix} of the equation \eqref{eq: linr sys two irr in xi} is
    \begin{align}
        C_d(U,V,A,G):=F^{(\infty)}_d(\xi)^{-1}F^{(0)}_{-d}(\xi).
    \end{align}
\end{definition}
\begin{remark}
Through the change of variable $\eta=\frac{1}{\xi}$, the monodromy data at 0, by the definition introduced above, is transformed into the monodromy data at $\infty$ of the new system.
\end{remark}
\begin{definition}\label{def: mono mat}
For any chosen direction $d\notin {\rm aSR}(u)$, the associated \textbf{monodromy matrix} $\nu^{(\infty)}_d(A,G)$  of the equation \eqref{eq: linr sys two irr in xi} at $\xi=\infty$ is
\begin{align}
    \nu_d^{(\infty)}(U,V,A,G):=F^{(\infty)}_d(\xi)^{-1}F^{(\infty)}_d(\xi \mathrm{e}^{2\pi\mathrm{i}}).
\end{align}
Similarly, given any direction $d\notin {\rm aSR}(v)$, the associated \textbf{monodromy matrix} $\nu^{(0)}_d(A,G)$ at $\xi=0$ is
\begin{align}
    \nu_d^{(0)}(U,V,A,G):=F^{(0)}_{d}(\xi)^{-1}F^{(0)}_d(\xi \mathrm{e}^{-2\pi\mathrm{i}}).
\end{align}
\end{definition}
The following triangular property of the Stokes matrices and the monodromy relation are standard. See e.g. \cite{Balser-Jurkat-Lutz1981},\cite{JMU1981I}. Similar to Lemma \ref{lem:upper}, the two pairs of Stokes matrices are upper and lower triangular, up to the conjugation by two permutation matrices respectively. In particular, we have

\begin{lemma}
\label{lem:triang stokes mat} Suppose $d\notin aS(u)\cup aS(v)$.

$(a)$. If 
\begin{equation}\label{uorder}
    \mathrm{Im}(u_1e^{\mathrm{i}d})>...> \mathrm{Im}(u_ne^{\mathrm{i}d}),
\end{equation}
then the Stokes matrices $S_{d,+}^{(\infty)}(U,V,A,G)$ and $S_{d,-}^{(\infty)}(U,V,A,G)$ are upper and lower triangular matrices respectively, with $1$'s along the diagonal. 

$(b)$. If 
\begin{equation}\label{vorder}
    \mathrm{Im}(v_1e^{\mathrm{i}d})<...< \mathrm{Im}(v_ne^{\mathrm{i}d}),
\end{equation}
then the Stokes matrices $S_{d,+}^{(0)}(U,V,A,G)$ and $S_{d,-}^{(0)}(U,V,A,G)$ are upper and lower triangular matrices respectively, with $1$'s along the diagonal.
\end{lemma}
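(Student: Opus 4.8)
The plan is to reduce both statements to the standard behaviour of canonical solutions at a rank-one irregular singularity and then to read off the vanishing of matrix entries from the dominance relations among the exponentials $e^{(u_i-u_j)\xi}$. At $\xi=\infty$ the term $GVG^{-1}/\xi^{2}$ is $O(\xi^{-2})$ and hence does not affect the leading Stokes structure, so part $(a)$ is governed by exactly the mechanism of the single-pole system \eqref{introisoStokeseq1} (the argument is the one behind Lemma \ref{lem:upper}, with the dominance ordering along $d+\tfrac{\pi}{2}$ equal to the identity by \eqref{uorder}). Part $(b)$ will be deduced from part $(a)$: the substitution $\eta=1/\xi$ together with the constant gauge $\widetilde F=G^{-1}F$ turns the second-order pole at $\xi=0$ into a rank-one irregular singularity at $\eta=\infty$ with leading coefficient $-V$ and subleading coefficient $-G^{-1}AG$ (the extra $\eta^{-2}$ term being again negligible), and under this change $S^{(0)}_{d,\pm}(U,V,A,G)$ becomes the Stokes matrix $S^{(\infty)}_{d,\pm}$ of the $\eta$-system in the \emph{same} direction $d$; applying $(a)$ there with $U$ replaced by $-V$ turns the inequalities \eqref{vorder} into precisely those needed for triangularity.

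For $(a)$, put $Y_d(\xi):=F^{(\infty)}_d(\xi)\,\xi^{-\delta A}e^{-\xi U}$, so that $Y_d(\xi)\sim\mathrm{Id}+O(\xi^{-1})$ throughout $\mathrm{Sect}^{(\infty)}_d$ by Proposition \ref{prop:fundamental solution of two irr sys}. Since all the diagonal factors commute, the constant matrix $S^{(\infty)}_{d,+}=F^{(\infty)}_{d+\pi}(\xi)^{-1}F^{(\infty)}_d(\xi)$ satisfies
\[
\bigl(Y_{d+\pi}(\xi)^{-1}Y_d(\xi)\bigr)_{ij}=\bigl(S^{(\infty)}_{d,+}(U,V,A,G)\bigr)_{ij}\,\xi^{A_{ii}-A_{jj}}\,e^{(u_i-u_j)\xi},
\]
and the left-hand side tends to $\delta_{ij}$ as $\xi\to\infty$ along any ray interior to $\mathrm{Sect}^{(\infty)}_d\cap\mathrm{Sect}^{(\infty)}_{d+\pi}$; by the definition of the Stokes sectors the ray $\arg\xi=d+\tfrac{\pi}{2}$ lies in this overlap, and \eqref{uorder} guarantees $\mathrm{Im}\bigl((u_i-u_j)e^{\mathrm{i}d}\bigr)\neq0$ so that this ray is not anti-Stokes. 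Along it $\mathrm{Re}\bigl((u_i-u_j)\xi\bigr)=-|\xi|\bigl(\mathrm{Im}(u_ie^{\mathrm{i}d})-\mathrm{Im}(u_je^{\mathrm{i}d})\bigr)$, which under \eqref{uorder} tends to $-\infty$ for $i<j$ and to $+\infty$ for $i>j$; hence for $i>j$ the exponential dominates the power $\xi^{A_{ii}-A_{jj}}$ and forces $\bigl(S^{(\infty)}_{d,+}\bigr)_{ij}=0$, for $i=j$ the remaining factors equal $1$ so $\bigl(S^{(\infty)}_{d,+}\bigr)_{ii}=1$, and for $i<j$ no condition arises. Thus $S^{(\infty)}_{d,+}$ is upper triangular with unit diagonal; repeating the computation along $\arg\xi=d-\tfrac{\pi}{2}$, where the sign of $\mathrm{Re}\bigl((u_i-u_j)\xi\bigr)$ is reversed, shows $S^{(\infty)}_{d,-}=F^{(\infty)}_{d-\pi}(\xi)^{-1}F^{(\infty)}_d(\xi)$ is lower triangular with unit diagonal.

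All the analytic content is already contained in the asymptotic uniqueness of Proposition \ref{prop:fundamental solution of two irr sys}; the only genuinely delicate point is the sign-bookkeeping in $(b)$ — keeping track of the direction reversal $\arg\eta=-\arg\xi$ through the labelling convention $F^{(0)}_{-d}$ of Definition \ref{def:stokes for 2irr}, which is exactly what identifies $S^{(0)}_{d,\pm}(U,V,A,G)$ with $S^{(\infty)}_{d,\pm}$ of the $\eta$-system, so that $(a)$ applied to the latter (with $\mathrm{Im}(u_ie^{\mathrm{i}d})$ replaced by $-\mathrm{Im}(v_ie^{\mathrm{i}d})$) produces the two assertions of $(b)$ from the hypothesis \eqref{vorder}. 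I expect this bookkeeping, rather than any substantive estimate, to be the main obstacle.
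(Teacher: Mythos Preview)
Your argument is correct and is precisely the standard one; the paper itself does not prove this lemma but only remarks that it is ``standard'' (citing \cite{Balser-Jurkat-Lutz1981,JMU1981I}) and refers back to the analogous Lemma \ref{lem:upper}. Your reduction of $(b)$ to $(a)$ via $\eta=1/\xi$ and the gauge $\widetilde F=G^{-1}F$ matches the paper's own remark after Definition \ref{def:conn for 2irr} that this substitution transforms the monodromy data at $0$ into monodromy data at $\infty$ of the new system, and your sign-bookkeeping through the labelling convention $F^{(0)}_{-d}$ is accurate.
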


\begin{lemma}[\label{prop:identity of monodromy factor for two irr sys}\cite{JMU1981I}]
Suppose $d\notin aS(u)\cup aS(v)$, we have the following monodromy relations between monodromy matrices, Stokes matrices and connection matrix
\begin{align}\label{eq:LU fac of nu infty}
      \nu^{(\infty)}_d(U,V,A,G)&=S_{d,-}^{(\infty)}(U,V,A,G)^{-1}\cdot  \mathrm{e}^{2\pi\mathrm{i}\delta A}\cdot S_{d,+}^{(\infty)}(U,V,A,G),\\ \label{eq:LU fac of nu 0}
         \nu^{(0)}_d(U,V,A,G)&=S_{-d,-}^{(0)}(U,V,A,G)^{-1}\cdot \mathrm{e}^{-2\pi\mathrm{i}\delta (G^{-1}AG)}\cdot S_{-d,+}^{(0)}(U,V,A,G),\\ \label{eq:connection relation for 0 and infty}
         \nu^{(\infty)}_d(U,V,A,G)&=C_d(U,V,A,G)\cdot\nu^{(0)}_{-d}(U,V,A,G)^{-1}\cdot C_d(U,V,A,G)^{-1}.
  \end{align}
\end{lemma}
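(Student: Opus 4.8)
The plan is to deduce all three relations from two standard facts: the explicit \emph{formal monodromy} of the canonical formal solutions of \eqref{eq: linr sys two irr in xi} at $\xi=\infty$ and $\xi=0$, and the uniqueness of the sectorial solutions $F^{(\infty)}_d$, $F^{(0)}_d$ guaranteed by Proposition \ref{prop:fundamental solution of two irr sys}. Near $\xi=\infty$ the system has a formal fundamental solution $\widehat F^{(\infty)}(\xi)=\widehat H^{(\infty)}(\xi)\,\mathrm{e}^{\xi U}\xi^{\delta A}$ with $\widehat H^{(\infty)}(\xi)=\mathrm{Id}+O(\xi^{-1})$ a formal series in $\xi^{-1}$; near $\xi=0$ it has $\widehat F^{(0)}(\xi)=\widehat H^{(0)}(\xi)\,\mathrm{e}^{-V/\xi}\xi^{\delta(G^{-1}AG)}$ with $\widehat H^{(0)}(\xi)=G+O(\xi)$ (here $\delta(G^{-1}AG)$ is diagonal, hence commutes with $V$). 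Since $\widehat H^{(\infty)}$, $\mathrm{e}^{\xi U}$ (resp. $\widehat H^{(0)}$, $\mathrm{e}^{-V/\xi}$) are single-valued on $\widetilde{\mathbb C}$ while $\xi^{\delta A}$ (resp. $\xi^{\delta(G^{-1}AG)}$) picks up a factor $\mathrm{e}^{2\pi\mathrm{i}\delta A}$ (resp. $\mathrm{e}^{2\pi\mathrm{i}\delta(G^{-1}AG)}$) upon $\xi\mapsto\xi\mathrm{e}^{2\pi\mathrm{i}}$, the formal solutions satisfy $\widehat F^{(\infty)}(\xi\mathrm{e}^{2\pi\mathrm{i}})=\widehat F^{(\infty)}(\xi)\,\mathrm{e}^{2\pi\mathrm{i}\delta A}$ and $\widehat F^{(0)}(\xi\mathrm{e}^{2\pi\mathrm{i}})=\widehat F^{(0)}(\xi)\,\mathrm{e}^{2\pi\mathrm{i}\delta(G^{-1}AG)}$.

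The key step is the identity
\begin{align}\label{pf:formalmono}
F^{(\infty)}_d(\xi\mathrm{e}^{2\pi\mathrm{i}})=F^{(\infty)}_{d-2\pi}(\xi)\cdot\mathrm{e}^{2\pi\mathrm{i}\delta A},
\end{align}
together with its analogue $F^{(0)}_d(\xi\mathrm{e}^{2\pi\mathrm{i}})=F^{(0)}_{d-2\pi}(\xi)\,\mathrm{e}^{2\pi\mathrm{i}\delta(G^{-1}AG)}$ at $\xi=0$. To prove \eqref{pf:formalmono} I would note that $\xi\mapsto F^{(\infty)}_d(\xi\mathrm{e}^{2\pi\mathrm{i}})$ solves \eqref{eq: linr sys two irr in xi} again (the coefficients are single-valued on $\widetilde{\mathbb C}$), is defined precisely on the rotated sector $\mathrm{Sect}^{(\infty)}_{d-2\pi}$, and by the formal-monodromy computation has there the same asymptotic expansion as $F^{(\infty)}_{d-2\pi}(\xi)\,\mathrm{e}^{2\pi\mathrm{i}\delta A}$; the uniqueness in Proposition \ref{prop:fundamental solution of two irr sys} then yields \eqref{pf:formalmono}. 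Applying \eqref{pf:formalmono} with $d$ shifted by $\pm\pi$ and Definition \ref{def:stokes for 2irr} also gives the $2\pi$-conjugation rule $S^{(\infty)}_{d-2\pi,+}=\mathrm{e}^{2\pi\mathrm{i}\delta A}\,S^{(\infty)}_{d,+}\,\mathrm{e}^{-2\pi\mathrm{i}\delta A}$.

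Relation \eqref{eq:LU fac of nu infty} then follows by direct manipulation: by Definition \ref{def: mono mat} and \eqref{pf:formalmono},
\begin{align*}
\nu^{(\infty)}_d
&=F^{(\infty)}_d(\xi)^{-1}F^{(\infty)}_d(\xi\mathrm{e}^{2\pi\mathrm{i}})
=F^{(\infty)}_d(\xi)^{-1}F^{(\infty)}_{d-2\pi}(\xi)\,\mathrm{e}^{2\pi\mathrm{i}\delta A}\\
&=\Bigl(F^{(\infty)}_d(\xi)^{-1}F^{(\infty)}_{d-\pi}(\xi)\Bigr)\Bigl(F^{(\infty)}_{d-\pi}(\xi)^{-1}F^{(\infty)}_{d-2\pi}(\xi)\Bigr)\mathrm{e}^{2\pi\mathrm{i}\delta A}
=\bigl(S^{(\infty)}_{d,-}\bigr)^{-1}S^{(\infty)}_{d-2\pi,+}\,\mathrm{e}^{2\pi\mathrm{i}\delta A},
\end{align*}
where the last equality uses Definition \ref{def:stokes for 2irr}; inserting the conjugation rule for $S^{(\infty)}_{d-2\pi,+}$ collapses the right-hand side to $\bigl(S^{(\infty)}_{d,-}\bigr)^{-1}\mathrm{e}^{2\pi\mathrm{i}\delta A}S^{(\infty)}_{d,+}$. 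For \eqref{eq:LU fac of nu 0} one repeats this verbatim at $\xi=0$: there the orientation of sectors is reversed relative to $\xi=\infty$ (as in the Remark after Definition \ref{def:conn for 2irr}, via $\eta=1/\xi$), which is why the Stokes matrices at $0$ are defined with the reflected direction $-d$, and tracking the loop $\xi\mapsto\xi\mathrm{e}^{2\pi\mathrm{i}}$ against Definition \ref{def: mono mat} produces the sign in $\mathrm{e}^{-2\pi\mathrm{i}\delta(G^{-1}AG)}$.

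Finally, \eqref{eq:connection relation for 0 and infty} is purely formal: from Definition \ref{def:conn for 2irr}, $F^{(\infty)}_d(\xi)=F^{(0)}_{-d}(\xi)\,C_d^{-1}$, hence
\begin{align*}
\nu^{(\infty)}_d=F^{(\infty)}_d(\xi)^{-1}F^{(\infty)}_d(\xi\mathrm{e}^{2\pi\mathrm{i}})
=C_d\cdot\Bigl(F^{(0)}_{-d}(\xi)^{-1}F^{(0)}_{-d}(\xi\mathrm{e}^{2\pi\mathrm{i}})\Bigr)\cdot C_d^{-1},
\end{align*}
and writing $\zeta=\xi\mathrm{e}^{2\pi\mathrm{i}}$ identifies the bracketed factor with $\bigl(F^{(0)}_{-d}(\zeta)^{-1}F^{(0)}_{-d}(\zeta\mathrm{e}^{-2\pi\mathrm{i}})\bigr)^{-1}=\bigl(\nu^{(0)}_{-d}\bigr)^{-1}$ by Definition \ref{def: mono mat}. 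I do not expect a substantive obstacle here — this is the classical Jimbo-Miwa-Ueno computation — and the only delicate point is the consistent bookkeeping of branches and orientation conventions at $\xi=0$ versus $\xi=\infty$ (the reflection $d\mapsto-d$ and the resulting placement of the minus signs), which must be matched exactly against Definitions \ref{def:stokes for 2irr}, \ref{def:conn for 2irr} and \ref{def: mono mat} for the signs in \eqref{eq:LU fac of nu 0} and \eqref{eq:connection relation for 0 and infty} to come out correctly.
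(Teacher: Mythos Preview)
The paper does not actually prove this lemma: it states the result as standard and cites \cite{JMU1981I} (see the sentence preceding Lemma~\ref{prop:identity of monodromy factor for two irr sys}), so there is no proof in the paper to compare against. Your argument is the classical Jimbo--Miwa--Ueno computation and is correct; in particular your identity \eqref{pf:formalmono}, the $2\pi$-conjugation rule for the Stokes matrices, and the purely formal derivation of \eqref{eq:connection relation for 0 and infty} from Definition~\ref{def:conn for 2irr} are exactly what one would write if asked to supply the omitted proof. The only point requiring care---which you flag yourself---is the bookkeeping at $\xi=0$: note that Definition~\ref{def: mono mat} uses $\xi\mapsto\xi\mathrm{e}^{-2\pi\mathrm{i}}$ for $\nu^{(0)}_d$, and Definition~\ref{def:stokes for 2irr} builds $S^{(0)}_{d,\pm}$ from $F^{(0)}_{-d\mp\pi}$ and $F^{(0)}_{-d}$, so when you unwind \eqref{eq:LU fac of nu 0} you should check explicitly that $(S^{(0)}_{-d,-})^{-1}=F^{(0)}_d(\xi)^{-1}F^{(0)}_{d+\pi}(\xi)$ and $S^{(0)}_{-d,+}=F^{(0)}_{d-\pi}(\xi)^{-1}F^{(0)}_d(\xi)$, after which the computation parallels the one at $\infty$ verbatim.
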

Note that by Lemma \ref{lem:triang stokes mat}, under the assumptions \eqref{uorder} and \eqref{vorder}, the identities \eqref{eq:LU fac of nu infty} and \eqref{eq:LU fac of nu 0} give the LU decomposition of the matrices $  \nu^{(\infty)}_d(U,V,A,G)$ and $  \nu^{(0)}_d(U,V,A,G)$ respectively.

\subsection{The isomonodromy property}\label{sec:iso property for 2 irr sys}
\begin{prop}[\cite{JMU1981I}]\label{lem:monodromy indep by JMU}
    When $A(\mathbf{u},\mathbf{v}),G(\mathbf{u},\mathbf{v})$ satisfy the isomonodromy equations \eqref{eq:iso eq of two irr A-u}-\eqref{eq:iso eq of two irr G-v}, the Stokes matrices  $S_{d,\pm}^{(\infty)}(U,V,A,G)$, $S_{d,\pm}^{(0)}(U,V,A,G)$ and the connection matrix  $ C_d(U,V,A,G)$ of \eqref{eq: linr sys two irr in xi} are locally constant (independent of $u$ and $v$).
\end{prop}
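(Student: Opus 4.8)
The statement to prove is Proposition \ref{lem:monodromy indep by JMU}: when $A(\mathbf{u},\mathbf{v}),G(\mathbf{u},\mathbf{v})$ satisfy the isomonodromy equations \eqref{eq:iso eq of two irr A-u}--\eqref{eq:iso eq of two irr G-v}, the Stokes matrices $S_{d,\pm}^{(\infty)}$, $S_{d,\pm}^{(0)}$ and the connection matrix $C_d$ of \eqref{eq: linr sys two irr in xi} are locally constant in $\mathbf{u},\mathbf{v}$.

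The plan is to realize the monodromy data as transition matrices between canonical fundamental solutions of the \emph{full} linear system of PDEs \eqref{eq: linr sys two irr in xi}--\eqref{eq:linr sys sys two irr in v}, not merely of the ODE \eqref{eq: linr sys two irr in xi} in $\xi$ alone. The key observation is that the isomonodromy equations \eqref{eq:iso eq of two irr A-u}--\eqref{eq:iso eq of two irr G-v} are precisely the compatibility (zero-curvature) conditions for that overdetermined system, so that it admits, locally in $(\mathbf{u},\mathbf{v})$, simultaneous fundamental solutions. First I would show that each canonical sectorial solution $F^{(\infty)}_d(\xi)$ (respectively $F^{(0)}_d(\xi)$), initially constructed for fixed $\mathbf{u},\mathbf{v}$ by Proposition \ref{prop:fundamental solution of two irr sys}, in fact depends holomorphically on $(\mathbf{u},\mathbf{v})$ and solves the auxiliary equations \eqref{eq:linr sys two irr in u}--\eqref{eq:linr sys sys two irr in v} as well. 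Concretely, fix a base point $(\mathbf{u}_0,\mathbf{v}_0)$, take $F^{(\infty)}_d(\xi;\mathbf{u}_0,\mathbf{v}_0)$, and extend it by solving the linear auxiliary system in $(\mathbf{u},\mathbf{v})$; compatibility guarantees the extension is well-defined on a simply connected neighborhood, and one then checks that the extended function retains the prescribed asymptotic normalization $F^{(\infty)}_d\cdot e^{-\xi U}\xi^{-\delta A}\sim \mathrm{Id}$ as $\xi\to\infty$ in $\mathrm{Sect}^{(\infty)}_d$. The verification that the normalization is preserved is the crux: one must examine the auxiliary equation \eqref{eq:linr sys two irr in u} near $\xi=\infty$, observe that conjugating by $e^{\xi U}$ turns $\partial_{u_k}$ into $\partial_{u_k} + \xi[E_k,\cdot]$ acting on an $\mathrm{Id}+O(\xi^{-1})$ series, and confirm the asymptotic series is annihilated order by order — this is exactly the Jimbo–Miwa–Ueno argument, which I would cite to \cite{JMU1981I} while indicating the mechanism.

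Once the canonical solutions are known to be joint solutions of the full PDE system, local constancy of the monodromy data is immediate: $S_{d,\pm}^{(\infty)} = (F^{(\infty)}_{d\pm\pi})^{-1}F^{(\infty)}_d$, $S_{d,\pm}^{(0)} = (F^{(0)}_{-d\mp\pi})^{-1}F^{(0)}_{-d}$ and $C_d = (F^{(\infty)}_d)^{-1}F^{(0)}_{-d}$ are all ratios of two functions that satisfy the \emph{same} linear auxiliary equations \eqref{eq:linr sys two irr in u}--\eqref{eq:linr sys sys two irr in v} in $(\mathbf{u},\mathbf{v})$; hence each such ratio $M(\mathbf{u},\mathbf{v})$ satisfies $\partial_{u_k}M = 0$ and $\partial_{v_k}M = 0$ for all $k$, since if $F_1,F_2$ both solve $\partial_{u_k}F = Q_k F$ with the same $Q_k$, then $\partial_{u_k}(F_1^{-1}F_2) = -F_1^{-1}Q_k F_2 + F_1^{-1}Q_k F_2 = 0$, and likewise in $v_k$. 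Therefore the monodromy data are constant on any simply connected region where the sectorial solutions $F^{(\infty)}_d, F^{(\infty)}_{d\pm\pi}, F^{(0)}_{-d}, F^{(0)}_{-d\mp\pi}$ are all defined; on $R_{u,d}\cap R_{v,d}$ (where the relevant anti-Stokes directions stay away from $d$), the combinatorial type of the Stokes sectors does not jump, so the data are locally constant there, which is the assertion.

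The main obstacle I anticipate is not the algebraic ratio computation — that is a one-line cancellation — but the analytic step of showing that the \emph{unique} sectorial solution singled out by its asymptotics at fixed $(\mathbf{u},\mathbf{v})$ coincides with the solution obtained by flowing along the auxiliary $(\mathbf{u},\mathbf{v})$-equations, i.e., that the auxiliary flow preserves the defining asymptotic normalization within the Stokes sector. This requires controlling the interchange of the $\xi\to\infty$ (or $\xi\to0$) asymptotic expansion with the $(\mathbf{u},\mathbf{v})$-derivatives uniformly on subsectors, and also checking that as $(\mathbf{u},\mathbf{v})$ varies within a connected component of $R_{u,d}\cap R_{v,d}$ the Stokes sector $\mathrm{Sect}^{(\infty)}_d$ can be deformed continuously without crossing an anti-Stokes ray, so that $F^{(\infty)}_d$ varies continuously and the identification persists. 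I would handle this by the standard Jimbo–Miwa–Ueno normalization-form argument \cite{JMU1981I} (equivalently, the approach in \cite{Balser-Jurkat-Lutz1981}), noting that near $\xi=0$ one reduces to the $\xi=\infty$ case via the remark following Definition \ref{def:conn for 2irr} (the substitution $\eta = 1/\xi$), so that the $\xi=0$ Stokes data and their constancy follow from the same reasoning applied to the transformed system.
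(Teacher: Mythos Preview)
Your outline is the standard Jimbo--Miwa--Ueno argument and is correct. Note that the paper does not supply its own proof of this proposition: it is stated with attribution to \cite{JMU1981I} and taken as a known result, so there is no in-paper proof to compare against; your sketch is essentially a summary of the argument in that reference.
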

    In particular, for any direction $d$, let 
\begin{align}
    R_{u,d}&:=\{U\in\mathbb{C}^n\setminus\Delta ~|~d\neq -\mathrm{arg}(u_i-u_j)+2k\pi, \text{ for all } i\neq j, k\in \mathbb{Z}\},\\
    R_{v,d}&:=\{V\in\mathbb{C}^n\setminus\Delta ~|~d\neq \mathrm{arg}(v_i-v_j)+2k\pi, \text{ for all } i\neq j, k\in \mathbb{Z}\}.
\end{align}
Then the Stokes matrices $S_{d,\pm}^{(\infty)}(U,V,A,G)$ are constant for all $u$ in each connected component of $R_{u,d}$, $S_{d,\pm}^{(0)}(U,V,A,G)$ are constant for all $V$ in each connected component of $R_{v,d}$, the connection matrix $C_d(U,V,A,G)$ is  constant over the intersection of the above two connected regions.

\section{Construction of solutions of the isomonodromy equations with the prescribed asymptotics as $t\rightarrow 0$}

Recall that we have introduced a new coordinate system \eqref{coor1}-\eqref{coor3}.
That is,
\begin{align*}
    &u_1=z_0,u_2=z_1+z_0,...,u_n=z_1z_2\cdots z_{n-1}+z_0,\\
    &v_n=\frac{t}{z_1z_2\cdots z_{n-1}}+w_0,...,v_2=\frac{t}{z_1z_2\cdots z_{n-1}w_{n-1}\cdots w_2}+w_0,\ v_1=w_0.
\end{align*}
For convenience, we also denote $w_1:=v_2-v_1=\frac{t}{z_1\cdots z_{n-1}w_{n-1}\cdots w_2}$.

In this coordinate system, the isomonodromy equations become the nonlinear PDEs for matrix valued functions $A(\mathbf{z},t,\mathbf{w})\in\frak{gl}_n$ and $G(\mathbf{z},t,\mathbf{w})\in {\rm GL}_n$, with variables $\mathbf{z} = (z_0,z_1,\ldots,z_{n-1})$ and $\mathbf{w} = (w_{n-1},\ldots,w_2,w_0)$:
\begin{subequations}
\begin{align}\label{iso for z begin}
    \frac{\partial A}{\partial z_0} &= 0,\ \frac{\partial A}{\partial z_k} = \bigl[\mathrm{ad}_U^{-1}\mathrm{ad}_{\frac{\partial U}{\partial z_k}}A, A\bigr] + \biggl[\frac{\partial U}{\partial z_k}, GVG^{-1}\biggr] - \frac{1}{z_k}[U, GVG^{-1}],\ k=1,\ldots,n-1, \\ \label{A for z_n+1}
    \frac{\partial A}{\partial t} &= \biggl[U, \frac{1}{t}GVG^{-1}\biggr] = \biggl[U, \frac{1}{t}(GVG^{-1}-w_0\mathrm{Id})\biggr], \\
    \frac{\partial A}{\partial w_{0}} &= 0,\ \frac{\partial A}{\partial w_{k}} = \biggl[U, G\frac{\partial V}{\partial w_{k}}G^{-1}\biggr],\ k=2,\ldots,n-1.\\
       \frac{\partial G}{\partial z_0} & = 0,\ \frac{\partial G}{\partial z_k} = \bigl(\mathrm{ad}_U^{-1}\mathrm{ad}_{\frac{\partial U}{\partial z_k}}A\bigr) \cdot G - \frac{1}{z_k}AG+\frac{1}{z_k}G\cdot\delta(G^{-1}AG),\ k=1,\ldots,n,\\ \label{G for z_n+1}
    \frac{\partial G}{\partial t} &= \frac{1}{t}AG-\frac{1}{t}G\cdot\delta(G^{-1}AG),\\ \label{iso for z end}
    \frac{\partial G}{\partial w_{0}} &= 0,
    \frac{\partial G}{\partial w_{k}} =G \cdot \bigl(\mathrm{ad}_{\widetilde{V}}^{-1}\mathrm{ad}_{\frac{\partial \widetilde{V}}{\partial w_{k}}}(G^{-1}AG)\bigr)-\frac{1}{w_k}AG+\frac{1}{w_k}G\cdot\delta(G^{-1}AG),\ k=2,\ldots,n.
    \end{align}
    \end{subequations}
Here the $n\times n$ diagonal matrices
   \begin{align}
        \frac{\partial U}{\partial z_k}&= \frac{1}{z_k}(u_{k+1}E_{k+1}+...+u_nE_n-z_0(E_{k+1}+...+E_n)),\\
        \frac{\partial V}{\partial w_k}&=-\frac{1}{w_k}(v_1E_1+...+v_kE_k-w_0(E_1+...+E_k)),
        \end{align}
        and
        \begin{equation}\label{eq:def of tilV}
        \widetilde{V}=\mathrm{diag}(\widetilde{v}_1,...,\widetilde{v}_n), \hspace{2mm} \text{with entries} \ \ \widetilde{v}_k=\frac{v_k-v_1}{v_2-v_1}.
   \end{equation}

In the rest of this section, we will separate and study the nonlinear ordinary differential system \eqref{A for z_n+1} and \eqref{G for z_n+1} with respect to the parameter $t$. To be more precise, we will construct a solution $A(\mathbf{z}, t,\mathbf{w} ), G(\mathbf{z}, t,\mathbf{w} )$ of the above system, from certain given solution $\widehat{A}(\mathbf{z}, \mathbf{w}), \widehat{G}(\mathbf{z}, \mathbf{w})$ of the simpler completely integrable system 
    \begin{subequations}
\begin{align}\label{eq iso for hatA z}
\frac{\partial \widehat{A}}{\partial z_0} &=  0, 
\frac{\partial \widehat{A}}{\partial z_k} = \left[\mathrm{ad}_U^{-1} \mathrm{ad}_{\frac{\partial U}{\partial z_k}} \widehat{A}, \widehat{A}\right], \quad 
\frac{\partial \widehat{A}}{\partial w_k} = 0, \quad k=1,2,\dots,n-1, \\
 \label{eq for hatG z}
\frac{\partial \widehat{G}}{\partial z_0} &=  0, 
\frac{\partial \widehat{G}}{\partial z_k} = \left(\mathrm{ad}_U^{-1} \mathrm{ad}_{\frac{\partial U}{\partial z_k}} \widehat{A}\right) \widehat{G} - \frac{1}{z_k}\widehat{A}\widehat{G}, \quad 
k=1,\dots,n-1, \\ \label{eq for hatG w}
\frac{\partial \widehat{G}}{\partial w_0} &= 0, 
\frac{\partial \widehat{G}}{\partial w_k} = \widehat{G} \left(\mathrm{ad}_{\widetilde{V}}^{-1} \mathrm{ad}_{\frac{\partial \widetilde{V}}{\partial w_k}} (\widehat{G}^{-1}\widehat{A}\widehat{G})\right) - \frac{1}{w_k}\widehat{A}\widehat{G}, \quad 
k=2,\dots,n.
\end{align}
\end{subequations}

\begin{theorem}\label{thm:asy of iso A from hatA}
Given any multi-valued meromorphic solution $\widehat{A}(\mathbf{z},\mathbf{w} )\in \frak{gl}_n, \widehat{G}(\mathbf{z},\mathbf{w})\in{\rm GL}_n$ of the system \eqref{eq iso for hatA z}-\eqref{eq for hatG w}, if the eigenvalues $\mu_1,...,\mu_n$ of $\widehat{A}$ satisfy 
\begin{align}\label{eq:shrinking in t}
    \max_{1 \leq j,k \leq n} |\mathrm{Re}(\mu_j - \mu_k)| < 1,
\end{align}
then there exists a unique solution of isomonodromy equations \eqref{iso for z begin}-\eqref{iso for z end} $A(\mathbf{z}, t,\mathbf{w}),G(\mathbf{z}, t,\mathbf{w})$, such that
\begin{align}\label{eq:lim of t A,G}
    \lim_{t\rightarrow 0} A(\mathbf{z},t,\mathbf{w})=\widehat{A}(\mathbf{z}),\quad \lim_{t\rightarrow 0} t^{-\widehat{A}}G(\mathbf{z},t,\mathbf{w})\cdot w_1^{\delta(G^{-1}AG)}=\widehat{G}(\mathbf{z},\mathbf{w}).
\end{align}
\end{theorem}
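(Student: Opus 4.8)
The plan is to freeze the variables $\mathbf{z}$ and $\mathbf{w}$, regard the $t$-evolution \eqref{A for z_n+1}--\eqref{G for z_n+1} as a nonlinear ordinary differential system in the single variable $t$ near the singular point $t=0$, produce a solution with the prescribed leading behaviour \eqref{eq:lim of t A,G} by successive approximation, and finally use the compatibility of the full system to promote this $t$-solution to a solution of the whole isomonodromy system \eqref{iso for z begin}--\eqref{iso for z end}.

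\textbf{Construction and uniqueness in $t$.} First I would adjoin to \eqref{A for z_n+1}--\eqref{G for z_n+1} the auxiliary matrix $B:=GVG^{-1}$, which closes up into $\partial_t B=\tfrac1t[A,B]+\tfrac1t B-\tfrac{w_0}{t}\mathrm{Id}$. With $\mathbf{z},\mathbf{w}$ fixed, $w_1$ is a constant multiple of $t$, so the substitution $\widetilde{G}:=t^{-\widehat{A}}\,G\,w_1^{\delta(G^{-1}AG)}$ and $\widetilde{B}:=t^{-\widehat{A}}\tfrac{B-w_0\mathrm{Id}}{t}\,t^{\widehat{A}}$ is adapted to the expected singularities, and turns the system into Volterra integral equations whose inhomogeneous terms are $\widehat{A}$, $\widehat{G}$, $\widehat{B}:=\widehat{G}\tfrac{V-w_0\mathrm{Id}}{t}\widehat{G}^{-1}$. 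Starting the Picard iteration from $A^{(0)}=\widehat{A}$, $\widetilde{G}^{(0)}=\widehat{G}$, $\widetilde{B}^{(0)}=\widehat{B}$, the key analytic input is a Jimbo--Miwa--Sato type estimate: the hypothesis \eqref{eq:shrinking in t} lets one fix $\sigma_2<\sigma_1<1$ with $\max_{j,k}|\mathrm{Re}(\mu_j-\mu_k)|<\sigma_2$, and then any conjugation $t^{\widehat{A}}Xt^{-\widehat{A}}$ grows at most like $|t|^{-\sigma_2}$ while each integral $\int_0^t(\cdot)\,\tfrac{ds}{s}$ of such a quantity gains a factor $|t|^{1-\sigma_2}$. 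Choosing $\epsilon$ small enough to absorb the finitely many constants, this simultaneously gives uniform bounds $|A^{(k)}-\widehat{A}|,\ |\widetilde{G}^{(k)}-\widehat{G}|,\ |\widetilde{B}^{(k)}-\widehat{B}|\le K|t|^{1-\sigma_1}$ on a sector $\{0<|t|<\epsilon,\ |\arg t|<\phi\}$ and geometric Cauchy estimates $|A^{(k)}-A^{(k-1)}|\le K\delta_0^{\,k-1}|t|^{1-\sigma_1}$ together with their $t^{\pm\widehat{A}}$-conjugated analogues, so the iterates converge to a solution with the required asymptotics. Uniqueness is proved by plugging the difference of two solutions with the same leading terms back into the same integral equations and re-running the estimates to force the difference to vanish for $|t|$ small; since $GVG^{-1}$ solves the same equation as $B$ with the same leading term, one recovers $B=GVG^{-1}$, so the constructed $A,G$ genuinely solve \eqref{A for z_n+1}--\eqref{G for z_n+1}.

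\textbf{Promotion to the full system.} Because the Jordan type of $\widehat{A}$ is preserved by the flow \eqref{eq iso for hatA z}, the eigenvalue condition \eqref{eq:shrinking in t} and the relevant boundedness hold uniformly on a neighbourhood of any base point, so the construction is locally uniform in $(\mathbf{z},\mathbf{w})$ and $A,G$ are holomorphic there. To see that $A,G$ satisfy the $z_k$- and $w_k$-equations, I would denote by $L_k,J_k$ the right-hand sides that $\partial_{z_k}A$ and $\partial_{z_k}\widetilde{G}$ ought to equal, and check by a direct computation from the compatibility of the full linear system that both $(\partial_{z_k}A,\partial_{z_k}\widetilde{G})$ and $(L_k,J_k)$ solve one and the same linear ODE in $t$; comparing leading terms as $t\to0$ — using $\partial_{z_k}\widehat{A},\partial_{z_k}\widehat{G}$ from \eqref{eq iso for hatA z}--\eqref{eq for hatG w} and the vanishing $\lim_{t\to0}t^{-\widehat{A}}\big(\mathrm{ad}_U^{-1}\mathrm{ad}_{\partial_{z_k}U}(A-\widehat{A})\big)t^{\widehat{A}}=0$, which itself comes from writing $A-\widehat{A}$ as an integral and applying the same estimate — shows that $P:=\partial_{z_k}A-L_k$ and $Q:=\partial_{z_k}\widetilde{G}-J_k$ vanish at $t=0$; a further Gronwall/Picard estimate on the homogeneous linear system they satisfy then gives $P\equiv Q\equiv0$, and the $w_k$-equations go through identically. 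I expect this promotion step to be the real difficulty, both because it requires the compatibility identities of the full linear system and because the normalizing factors $t^{\widehat{A}}$ and $w_1^{\delta(G^{-1}AG)}$ do not commute with the $z_k$-, $w_k$- or even $t$-derivatives in any clean way, so one must repeatedly rewrite conjugations $t^{\widehat{A}}(\cdot)t^{-\widehat{A}}$ as integrals in order to keep the growth exponents strictly below $1$ and stay within the scope of the basic estimate.
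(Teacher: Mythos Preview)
Your proposal is correct and follows essentially the same approach as the paper: first a Picard iteration for the $t$-system using the Jimbo--Miwa--Sato estimate in the auxiliary variables $\widetilde{G}=t^{-\widehat{A}}Gw_1^{\delta(G^{-1}AG)}$ and $\widetilde{B}=t^{-\widehat{A}}\tfrac{B-w_0\mathrm{Id}}{t}t^{\widehat{A}}$, then promotion to the remaining variables by showing that the defects $P=\partial_{z_k}A-L_k$, $Q=\partial_{z_k}\widetilde{G}-J_k$ satisfy a homogeneous linear $t$-system with zero initial data, with the vanishing of $t^{-\widehat{A}}\big(\mathrm{ad}_U^{-1}\mathrm{ad}_{\partial_{z_k}U}(A-\widehat{A})\big)t^{\widehat{A}}$ as the key ingredient. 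The paper additionally normalizes $U$ by $u_n-u_1$ in the definition of $\widetilde{B}$ to make the bounds uniform in $\mathbf{z}$, but otherwise your outline matches the argument step for step.
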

We remark that the eigenvalues $\mu_1,...,\mu_n$ of $\widehat{A}$ are constant due to the equation \eqref{eq iso for hatA z}.
The proof of this theorem will be presented in the remainder of this section. First we fix $\mathbf{z},\mathbf{w}$, and just consider the system \eqref{A for z_n+1} and \eqref{G for z_n+1} with respect to $t$:
\begin{align}\label{eq:iso for only t A}
     \frac{\partial A}{\partial t} &= \biggl[U, \frac{1}{t}GVG^{-1}\biggr] = \biggl[U, \frac{1}{t}(GVG^{-1}-w_0\mathrm{Id})\biggr],\\ \label{eq:iso for only t G}
    \frac{\partial G}{\partial t} &= \frac{1}{t}AG-\frac{1}{t}G\cdot\delta(G^{-1}AG).
\end{align}
In Proposition \ref{thm:asym in t}, we prove the existence of the solution of \eqref{eq:iso for only t A} and \eqref{eq:iso for only t G} with the behaviour \eqref{eq:lim of t A,G} for fixed $\mathbf{z}$ and $\mathbf{w}$. Then we finish the proof by showing that when varying $\mathbf{z}$ and $\mathbf{w}$, these solutions also satisfy the whole isomonodromy equations \eqref{iso for z begin}-\eqref{iso for z end}.

\begin{prop}\label{thm:asym in t}
Consider $\widehat{A},\widehat{G}$ as in Theorem \ref{thm:asy of iso A from hatA}. Fix $\mathbf{z}=(z_0,...,z_{n-1}),\mathbf{w}=(w_{n-1},...,w_2,w_0),$ and 
choose real constants $\sigma_1,\sigma_2,K > 0$ such that:
\begin{align}\label{eq:eigen cond in asy t}
&\max_{1 \leq j,k \leq n} |\mathrm{Re}(\mu_j - \mu_k)| < \sigma_2<\sigma_1 < 1,\\ \label{eq:bound in asy t}
&\max\left\{\left|\frac{U}{u_n-u_1}\right|,|\widehat{A}|, |\widehat{G}|, \left| \widehat{G}\frac{(u_n-u_1)(V - w_0 \mathrm{Id})}{t}\widehat{G}^{-1}\right|\right\} < K.
\end{align}
Then for any angle $\phi$, there exists an $\epsilon > 0$, such that 
the system \eqref{eq:iso for only t A} and \eqref{eq:iso for only t G} admits a unique solution $ A(\mathbf{z}, t,\mathbf{w}) $, $ G(\mathbf{z}, t,\mathbf{w}) $  in $\mathcal{S}_{\epsilon,\phi} := \{ t \in \mathbb{C} \mid 0 < |t| < \epsilon, |\arg t| < \phi \}$, satisfying:
\begin{align}\label{lim for A}
|A - \widehat{A}| &\leq K|t|^{1-\sigma_1}, \quad |t^{-\widehat{A}}(A - \widehat{A})t^{\widehat{A}}| \leq K|t|^{1-\sigma_1}, \\ \label{lim for G}
|t^{-\widehat{A}}G w_1^{\delta(G^{-1}AG)} - \widehat{G}| &\leq K|t|^{1-\sigma_1}.
\end{align}

\end{prop}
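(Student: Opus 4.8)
The plan is to gauge away the dominant singular behaviour of $(A,G)$ at $t=0$ and then run a Picard iteration on a small sector $\mathcal{S}_{\epsilon,\phi}$. First I would introduce the auxiliary unknown $B := GVG^{-1}$; differentiating and using \eqref{eq:iso for only t A}--\eqref{eq:iso for only t G} one finds $\partial_t B = \tfrac{1}{t}[A,B] + \tfrac{1}{t}(B - w_0\mathrm{Id})$. Passing to the rescaled variables $\widetilde{B} := t^{-\widehat{A}}\,\tfrac{B - w_0\mathrm{Id}}{t}\,t^{\widehat{A}}$ and $\widetilde{G} := t^{-\widehat{A}}\,G\,w_1^{\delta(G^{-1}AG)}$, a direct computation turns the system into
\begin{align*}
\partial_t A &= \bigl[\,U,\ t^{\widehat{A}}\widetilde{B}\,t^{-\widehat{A}}\,\bigr],\\
\partial_t \widetilde{G} &= \tfrac{1}{t}\,t^{-\widehat{A}}(A - \widehat{A})\,t^{\widehat{A}}\cdot\widetilde{G},\\
\partial_t \widetilde{B} &= \tfrac{1}{t}\,\bigl[\,t^{-\widehat{A}}(A - \widehat{A})\,t^{\widehat{A}},\ \widetilde{B}\,\bigr].
\end{align*}
The point of this reformulation is that, once one knows $A - \widehat{A} = O(|t|^{1-\sigma_1})$, the right-hand sides are only $O(|t|^{-\sigma_2})$, hence integrable at $0$ because $\sigma_2 < 1$.

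Next I would define the iterates by $A^{(0)}\equiv\widehat{A}$, $\widetilde{G}^{(0)}\equiv\widehat{G}$, $\widetilde{B}^{(0)}\equiv\widehat{B}:=\widehat{G}\,\tfrac{V - w_0\mathrm{Id}}{t}\,\widehat{G}^{-1}$, and $A^{(k)},\widetilde{G}^{(k)},\widetilde{B}^{(k)}$ obtained by integrating the three equations above from $0$ along the ray $\arg s = \arg t$. The engine of the argument is an estimate of Jimbo--Miwa--Sato type: for matrix functions $X,Y$ bounded on $\mathcal{S}_{\epsilon,\phi}$ and a holomorphic $f$ bounded on the unit disc, there is a constant $C_K$ depending only on the bound $K$ for $|\widehat{A}|$ and a $\delta>0$ such that, for $0<\epsilon<\delta$ and $t\in\mathcal{S}_{\epsilon,\phi}$,
\begin{align*}
|\,t^{\widehat{A}}X(t)t^{-\widehat{A}}\,| &\le C_K\,|t|^{-\sigma_2}\sup|X|,\\
\Bigl|\int_0^t t^{\widehat{A}}X(s)s^{-\widehat{A}}Y(s)s^{\widehat{A}}f(s)t^{-\widehat{A}}\,\mathrm{d}s\Bigr| &\le C\,|t|^{1-\sigma_2},
\end{align*}
and likewise with the roles of $X$ and $Y$ inside the integral interchanged. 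Here the strict inequality $\max|\mathrm{Re}(\mu_j-\mu_k)|<\sigma_2<1$ is exactly what is needed: $|t^{\mu_i-\mu_j}| = |t|^{\mathrm{Re}(\mu_i-\mu_j)}$ with $\mathrm{Re}(\mu_i-\mu_j)>-\sigma_2$, while the nilpotent part of $\widehat{A}$ --- isolated via the Jordan--Chevalley decomposition $\widehat{A}=D+N$ --- contributes only powers of $\log|t|$, which are absorbed into the margin $|t|^{\sigma_1-\sigma_2}$.

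With this estimate in hand, an induction on $k$ yields the uniform bounds $|A^{(k)}-\widehat{A}|$, $|\widetilde{B}^{(k)}-\widehat{B}|$, $|\widetilde{G}^{(k)}-\widehat{G}|\le K|t|^{1-\sigma_1}$, together with an auxiliary bound $|t^{-\widehat{A}}(A^{(k)}-\widehat{A})t^{\widehat{A}}|\le C|t|^{1-\sigma_2}$ with $C$ controlled by $K$ and $C_K$, and then the geometric contraction $|A^{(k)}-A^{(k-1)}|\le K\delta_0^{\,k-1}|t|^{1-\sigma_1}$ (and the analogues for $\widetilde{B},\widetilde{G}$) for some fixed $\delta_0\in(0,1)$, provided $\epsilon$ is taken small enough that all the composite constants arising in the induction (of the form $KC_K^2\,\epsilon^{\sigma_1-\sigma_2}$ and $KC_K^2\,\epsilon^{1-\sigma_1}$) are $<\delta_0$. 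Letting $k\to\infty$ gives $A$, $\widetilde{G}$, $\widetilde{B}$, whence $G:=t^{\widehat{A}}\widetilde{G}\,w_1^{-\delta(G^{-1}AG)}$ and $B:=t\cdot t^{\widehat{A}}\widetilde{B}t^{-\widehat{A}}+w_0\mathrm{Id}$ satisfy \eqref{lim for A}--\eqref{lim for G}. Uniqueness follows the same route: subtracting two solutions with the prescribed asymptotics and feeding the differences $A-A'$, $\widetilde{B}-\widetilde{B}'$, $\widetilde{G}-\widetilde{G}'$ through the same integral estimates shows they vanish on a small enough sector. Finally, $GVG^{-1}$ satisfies the same $t$-equation as $B$ with the same leading term $\widehat{B}$, so by uniqueness $B=GVG^{-1}$, and the constructed pair $(A,G)$ indeed solves \eqref{eq:iso for only t A}--\eqref{eq:iso for only t G}.

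I expect the main obstacle to be the bookkeeping in the estimate lemma and its iterative application: reconciling the $\tfrac1t$ singularity with conjugation by $t^{\widehat{A}}$, verifying that the strict gap condition makes every integral appearing in the scheme absolutely and uniformly (in $k$) convergent, and doing this with all implied constants depending on $\widehat{A}$ only through $|\widehat{A}|\le K$ --- so that the same $\epsilon$ continues to work when, later in the proof of Theorem \ref{thm:asy of iso A from hatA}, $\mathbf{z}$ and $\mathbf{w}$ are allowed to vary over a compact region on which \eqref{eq:eigen cond in asy t}--\eqref{eq:bound in asy t} hold uniformly.
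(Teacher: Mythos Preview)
Your proposal is correct and follows essentially the same route as the paper: the same gauge $\widetilde{B},\widetilde{G}$, the same reformulated system, the Jimbo--Miwa--Sato estimate (stated as a lemma inside the proof), the same Picard scheme with the two-tier bounds and geometric contraction, the same uniqueness argument, and the final identification $B=GVG^{-1}$ by uniqueness. The only cosmetic difference is that the paper absorbs the factor $u_n-u_1$ into the rescaling, writing $\widetilde{U}=U/(u_n-u_1)$ and $\widetilde{B}=t^{-\widehat{A}}\tfrac{(u_n-u_1)(B-w_0\mathrm{Id})}{t}t^{\widehat{A}}$ so that the bounds line up directly with \eqref{eq:bound in asy t}; your version with $U$ in place of $\widetilde{U}$ works just as well since the two factors cancel in the estimates.
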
 

\begin{proof}
 Set $\widetilde{U}=\frac{U}{u_n-u_1},\ \widetilde{B}=t^{-\widehat{A}}\cdot \frac{(u_n-u_1)(G(t)VG^{-1}(t)-w_0\mathrm{Id})}{t}\cdot t^{\widehat{A}},\ \widetilde{G}= t^{-\widehat{A}}\cdot G\cdot w_1^{\delta(G^{-1}AG)}$, and rewrite \eqref{eq:iso for only t A} and \eqref{eq:iso for only t G} as
 \begin{subequations}
\begin{align}\label{eq:Ahat at iter}
       \frac{\partial A}{\partial t} &= \left[\widetilde{U},t^{\widehat{A}}\widetilde{B}t^{-\widehat{A}}\right] ,\\
       \label{eq:Gtil at iter}
       \frac{\partial \widetilde{G}}{\partial t}&= \frac{1}{t}t^{-\widehat{A}}(A-\widehat{A})t^{\widehat{A}}\cdot \widetilde{G}.
\end{align}
 \end{subequations}
Note that the equation for the auxiliary matrix function $\widetilde{B}(t)$ is
 \begin{align}
     \label{eq:Btil at iter}
       \frac{\partial\widetilde{B}(t)}{\partial t}= \frac{1}{t}\left[t^{-\widehat{A}}(A-\widehat{A})t^{\widehat{A}},\widetilde{B}\right] .
\end{align}
Let $A^{(0)}(t) = \widehat{A},\ \widetilde{G}^{(0)}(t)=\widehat{G},\  \widetilde{B}^{(0)}(t) = \widehat{B}=\widehat{G}\frac{(u_n-u_1)(V - w_0 \mathrm{Id})}{t}\widehat{G}^{-1}, $ and define $A^{(k)}(t), \widetilde{G}^{(k)}(t), \widetilde{B}^{(k)}(t),\  (k=0,1,2,...)$ recursively by
\begin{align}\label{picard iter for A}
    A^{(k)}(t)&=\widehat{A}+\int_{0}^{t}\left[\widetilde{U},s^{\widehat{A}}\widetilde{B}^{(k-1)}s^{-\widehat{A}}\right]ds,\\ \label{picard iter for G}
    \widetilde{G}^{(k)}(t)&= \widehat{G}+\int_0^{t}\frac{1}{s}s^{-\widehat{A}}(A^{(k-1)}(s)-\widehat{A})s^{\widehat{A}}\cdot \widetilde{G}^{(k-1)}ds,\\ \label{picard iter for B}
     \widetilde{B}^{(k)}(t)&=\widehat{B}+\int_0^{t}\frac{1}{s}\left[s^{-\widehat{A}}(A^{(k-1)}(s)-\widehat{A})s^{\widehat{A}},\widetilde{B}^{(k-1)}\right].
\end{align}
Here the path of integration is $\{s=re^{i\theta}|0<r<|t|,\theta=\mathrm{arg}\ t \}$.

Let $\delta_0$ be a constant such that $0<\delta_0<1$. For a constant $C_K$ dependent of $K$, and a small enough $\epsilon$ such that $0<|t|<\epsilon,$ we claim the following
\begin{subequations}
\begin{align}\label{eq:hatpicard-A1}
    &|A^{(k)}(t)-\widehat{A}|\leq K|t|^{1-\sigma_1},\\ \label{eq:hatpicard-A2}
    &|t^{-\widehat{A}}(A^{(k)}(t)-\widehat{A})t^{\widehat{A}}|\leq \frac{1}{\sigma_2}4K^2C^2_K|t|^{1-\sigma_2},\\ \label{eq:hatpicard-B1}
    &|\widetilde{B}^{(k)}(t)-\widehat{B}|\leq K|t|^{1-\sigma_1},\\ \label{eq:hatpicard-G1}
    &|\widetilde{G}^{(k)}(t)-\widehat{G}|\leq K|t|^{1-\sigma_1},\\ \label{eq:hatpicard-A3}
    &|A^{(k)}(t)-A^{(k-1)}(t)|\leq K\delta_0^{k-1}|t|^{1-\sigma_1},\\ \label{eq:hatpicard-A4}
    &|t^{-\widehat{A}}(A^{(k)}(t)-A^{(k-1)})t^{\widehat{A}}|\leq K\delta_0^{k-1}|t|^{1-\sigma_2},\\ \label{eq:hatpicard-B2}
    &|\widetilde{B}^{(k)}(t)-\widetilde{B}^{(k-1)}|\leq K\delta_0^{k-1}|t|^{1-\sigma_1},\\ \label{eq:hatpicard-G2}
    &|\widetilde{G}^{(k)}(t)-\widetilde{G}^{(k-1)}|\leq K\delta_0^{k-1}|t|^{1-\sigma_1}.
\end{align}
\end{subequations} 
To continue, we need the following lemma established in \cite{Jimbo-Miwa-Sato}:
\begin{lemma}[\cite{Jimbo-Miwa-Sato}]\label{lem:esti by jimbo}
    Let $X(t)$ and $Y(t)$ be $n\times n$ matrix functions which satisfy $\left|X(t)\right|\leq C_1, \left|Y(t)\right|\leq C_2$ for $t\in \mathcal{S}_{\epsilon,\phi}$, and 
    let $f(t)$ be a holomorphic function in $\{t:\left|t\right|<1\}$, such that $|f(t)|\leq C_f$. Let $M$ be a matrix such that $|M|<K$ and have eigenvalues $\mu_i,\ i=1,...,n$, such that $ \max_{1 \leq j,k \leq n} |\mathrm{Re}(\mu_j - \mu_k)| < \sigma_2<\sigma_2<1$ for some constant  $\sigma_2,\sigma_2$.
Then there exists a constant $C_K$ (only dependent on $K$), and a constant $\delta$ independent of $C_1,C_2$ nor $M$,  such that for any $\epsilon$ satisfying $0<\epsilon<\delta$, the following are valid for $t\in \mathcal{S}_{\epsilon,\phi}$
\begin{align}\label{eq:esti by jimbo 1}
   & \left|t^{M}X(t)t^{-M}\right|\leq C_1C_K\left|t\right|^{-\sigma_2},\quad \left|t^{M}Y(t)t^{-M}\right|\leq C_2C_K\left|t\right|^{-\sigma_2},\\ \label{eq:esti by jimbo 2}
    &\left|\int_{0}^{t}t^{M}X(s)s^{-M}Y(s)s^{M}f(s)t^{-M}\mathrm{d}s \right|\leq C_1C_2C^2_KC_f\left|t\right|^{1-\sigma_2},\\ \label{eq:esti by jimbo 3}
    &\left|\int_{0}^{t}t^{M}s^{-M}Y(s)s^{M}X(s)f(s) t^{-M}\mathrm{d}s\right|\leq C_1C_2C^2_KC_f\left|t\right|^{1-\sigma_2}.
\end{align}
\end{lemma}
\begin{proof}[Proof of Lemma~\ref{lem:esti by jimbo}]
    Consider the Jordan-Chevalley decomposition $D+N$ of $M$, where $D=\mu_1P_1+...+\mu_nP_n$ is the semisimple part of $M$, and $N$ is the nilpotent part of $M$. Then
    \begin{align*}
        \left|t^{M}X(t)t^{-M}\right|=\left|\sum_{1\leq i,j\leq n}t^{\mu_i-\mu_j}P_it^{N}X(t)t^{-N}P_j\right|, \quad   \left|t^{M}Y(t)t^{-M}\right|=\left|\sum_{1\leq i,j\leq n}t^{\mu_i-\mu_j}P_it^{N}Y(t)t^{-N}P_j\right|.
    \end{align*}
    Since $D$ and $N$ are both the polynomials of $M$ with coefficients bounded by finite sum and product of elements of $M$, the inequalities \eqref{eq:esti by jimbo 1} hold.

    For \eqref{eq:esti by jimbo 2}, consider 
    \begin{align*}
        \left|\int_{0}^{t}t^{M}X(s)s^{-M}Y(s)s^{M}f(s)t^{-M}\mathrm{d}s \right|&=\left|\int_0^{t}t^{M}X(s)t^{-M}\left(\frac{s}{t}\right)^{-M}Y(s)\left(\frac{s}{t}\right)^{M}f(s)\mathrm{d}s\right|\\
        &\leq\int_0^{1}|t|\left|t^{M}X(s)t^{-M}\right|\left|\widetilde{s}^{-M}Y(s)\widetilde{s}^{M}\right||f(s)|\mathrm{d}\widetilde{s}\leq C_1C_2C_K^2C_f|t|^{1-\sigma_2}.
    \end{align*}
    
    The proof of \eqref{eq:esti by jimbo 3} is similar.
\end{proof}

Now return to the proof of the proposition. Assume \eqref{eq:hatpicard-A1}-\eqref{eq:hatpicard-G1} hold for $k-1$. In Lemma \ref{lem:esti by jimbo}, taking $M=\widehat{A}$, we have 
\begin{align*}
    |A^{(k)}(t)-\widehat{A}| \leq 2\int_0^{|t|} \left|\widetilde{U}\right|\left|s^{\widehat{A}}\widetilde{B}^{(k-1)}s^{-\widehat{A}}\right|\mathrm{d}|s|
    \leq \frac{1}{\sigma_2}4K^2C_K |t|^{1-\sigma_2}\leq K|t|^{1-\sigma_1}.
\end{align*}
Here in the second inequality we choose $\epsilon<1$  to make $|\widetilde{B}^{(k-1)}|\leq 2K$, and choose $\frac{1}{\sigma_2}4K^2C^2_K\epsilon^{\sigma_1-\sigma_2}<1$ in the third inequality.
Similarly,
\begin{align*}
    |t^{-\widehat{A}}(A^{(k)}(t)-\widehat{A})t^{\widehat{A}}|&= \left| t^{-\widehat{A}}\int_{0}^{t}\left[\widetilde{U},s^{\widehat{A}}\widetilde{B}^{(k-1)}s^{-\widehat{A}}\right]\mathrm{d}s\   t^{\widehat{A}}\right|\\
    &\leq  \int_{0}^{t} \left|t^{-\widehat{A}}\widetilde{U}s^{\widehat{A}}\widetilde{B}^{(k-1)}s^{-\widehat{A}}t^{\widehat{A}}\right|\mathrm{d}s\  + \int_{0}^{t} \left|t^{-\widehat{A}}s^{\widehat{A}}\widetilde{B}^{(k-1)}s^{-\widehat{A}}\widetilde{U}t^{\widehat{A}}\right|\mathrm{d}s\\
    &\leq \frac{1}{\sigma_2}4K^2C^2_K t^{1-\sigma_2},
 \end{align*}
\begin{align*}
    |\widetilde{B}^{(k)}(t)-\widehat{B}|\leq 2\int_0^{|t|} \frac{1}{|s|}\left|s^{-\widehat{A}}(A^{(k-1)}(s)-\widehat{A})s^{\widehat{A}}\right||\widetilde{B}^{(k-1)}|\mathrm{d|s|} \leq \frac{1}{\sigma_2}4K^2C^2_K|t|^{1-\sigma_2}\leq K|t|^{1-\sigma_1},
\end{align*}
\begin{align*}
    |\widetilde{G}^{(k)}(t)-\widehat{G}|\leq \int_0^{|t|}\frac{1}{|s|}\left|s^{-\widehat{A}}(A^{(k-1)}(s)-\widehat{A})s^{\widehat{A}}\right|\cdot |\widetilde{G}^{(k-1)}|\mathrm{d}|s|\leq \frac{1}{\sigma_2}2K^2C^2_K|t|^{1-\sigma_{2}}\leq K|t|^{1-\sigma_1}.
\end{align*}

For \eqref{eq:hatpicard-A3}-\eqref{eq:hatpicard-G2}, assume they hold for $k-1$, then also by Lemma \ref{lem:esti by jimbo}, we have
\begin{align*}
    \left|A^{(k+1)}(t)-A^{(k)}(t)\right|&\leq\int_0^t\left|\widetilde{U}s^{\widehat{A}}(\widetilde{B}^{(k)}-\widetilde{B}^{(k-1)})s^{-\widehat{A}}\right|\mathrm{d}s+\int_0^t\left|s^{\widehat{A}}(\widetilde{B}^{(k)}-\widetilde{B}^{(k-1)})s^{-\widehat{A}}\widetilde{U}\right|\mathrm{d}s\\
    &\leq\int_0^t\left|\widetilde{U}s^{\widehat{A}}(K\delta_0^{k-1}|s|^{1-\sigma_1})s^{-\widehat{A}}\right|\mathrm{d}s+\int_0^t\left|s^{\widehat{A}}(K\delta_0^{k-1}|s|^{1-\sigma_1})s^{-\widehat{A}}\widetilde{U}\right|\mathrm{d}s\\
    &\leq 2K^2C_K\delta_0^{k-1}|t|^{2-\sigma_1-\sigma_2}\leq K\delta_0^k|t|^{1-\sigma_1},
    \end{align*}
    \begin{align*}
         \left |t^{-\widehat{A}}(A^{(k+1)}(t)-A^{(k)}(t))t^{\widehat{A}}\right|&\leq \int_0^t\left|t^{-\widehat{A}}\widetilde{U}s^{\widehat{A}}(\widetilde{B}^{(k)}-\widetilde{B}^{(k-1)})s^{-\widehat{A}}t^{\widehat{A}}\right|\mathrm{d}s+\int_0^t\left|t^{-\widehat{A}}s^{\widehat{A}}(\widetilde{B}^{(k)}-\widetilde{B}^{(k-1) })s^{-\widehat{A}}\widetilde{U}t^{-\widehat{A}}\right|\mathrm{d}s\\
         &\leq 2K^2C^2_K\delta_0^{k-1}\delta_0^k|t|^{2-\sigma_1-\sigma_2}\leq K\delta_0^k|t|^{1-\sigma_2}.
    \end{align*}
    for $\epsilon$ such that $2KC^2_K\epsilon^{1-\sigma_1}<\delta_0$. Simlarly, we have
    \begin{align*}
        \left|\widetilde{B}^{(k+1)}(t)-\widetilde{B}^{(k)}(t)\right|&= \left|\int_0^{t}\frac{1}{s}\left[s^{-\widehat{A}}(A^{(k)}-A^{(k-1)})s^{\widehat{A}},\widetilde{B}^{(k)}\right]+\frac{1}{s}\left[s^{-\widehat{A}}(A^{(k-1)}-\widehat{A})s^{\widehat{A}},\widetilde{B}^{(k)}-\widetilde{B}^{(k-1)}\right]\mathrm{d}s\right|\\
        &\leq  8K^2C^2_K\delta_0^{k-1}|t|^{1-\sigma_2}\leq K\delta_0^{k}|t|^{1-\sigma_1},                                                                                                      
    \end{align*}
    \begin{align*}
        |\widetilde{G}^{(k+1)}(t)-\widetilde{G}^{(k)}|&=\left|\int_0^t \frac{1}{s}s^{-\widehat{A}}(A^{(k)}-A^{(k-1)})s^{\widehat{A}}\widetilde{G}^{(k)}+\frac{1}{s}s^{-\widehat{A}}(A^{(k)}-\widehat{A})s^{\widehat{A}}(\widetilde{G}^{(k)}-\widetilde{G}^{(k-1)})\mathrm{d}s \right|\\
        &\leq 4K^2C^2_K\delta_0^{k-1}|t|^{1-\sigma_2}\leq K\delta_0^{k}|t|^{1-\sigma_1},
    \end{align*}
    for $\epsilon$ such that $8KC^2_K\epsilon^{\sigma_1-\sigma_2}\leq \delta_0$. 

    Thus we prove the existence of the solutions of \eqref{eq:Ahat at iter}-\eqref{eq:Btil at iter} in a neighborhood of $p_0$. Set
    \begin{align}\label{eq:A cons by inter}
        A(\mathbf{z},t,\mathbf{w}) &= \lim_{k\rightarrow \infty} A^{(k)}(\mathbf{z},t,\mathbf{w}),\\ \label{eq:G cons by inter}
        G(\mathbf{z},t,\mathbf{w})w_1^{\delta(G^{-1}AG)} &= t^{\widehat{A}}\cdot\lim_{k\rightarrow \infty} \widetilde{G}^{(k)}(\mathbf{z},t,\mathbf{w}),\\ \nonumber
        B(\mathbf{z},t,\mathbf{w}) &= t\cdot t^{\widehat{A}}\lim_{k\rightarrow \infty} \widetilde{B}^{(k)}(\mathbf{z},t,\mathbf{w})t^{-\widehat{A}}+w_0\mathrm{Id}.
    \end{align}
    For the uniqueness of $A,\widetilde{B},\widetilde{G}$, suppose $(A,\widetilde{B},\widetilde{G})$ and $(A',\widetilde{B}',\widetilde{G}')$ are two solutions of \eqref{eq:Ahat at iter}-\eqref{eq:Btil at iter}, with the same asymptotic leading terms $\widehat{A},\widehat{G},\widehat{B}=\widehat{G}\frac{(u_n-u_1)(V - w_0 \mathrm{Id})}{t}\widehat{G}^{-1}$. Then we have the following integral equations
    \begin{align*}
        A-A'&=\int_0^{t}\left[\widetilde{U},s^{\widehat{A}}(\widetilde{B}-\widetilde{B}')s^{-\widehat{A}}\right]\ \mathrm{d}s ,\\
        \widetilde{B}-\widetilde{B}'&=\int_0^{t}\frac{1}{s}\left[s^{-\widehat{A}}\left(A(s)-A'(s)\right)s^{\widehat{A}},\widetilde{B}\right]+\frac{1}{s}\left[s^{-\widehat{A}}(A'(s)-\widehat{A})s^{\widehat{A}},\widetilde{B}-\widetilde{B}'\right]\ \mathrm{d}s,\\
        \widetilde{G}-\widetilde{G}'&=\int_0^{t}\frac{1}{s}s^{-\widehat{A}}\left(A(s)-A'(s)\right)s^{\widehat{A}}\widetilde{G}+\frac{1}{s}s^{-\widehat{A}}(A'(s)-\widehat{A})s^{\widehat{A}}(\widetilde{G}-\widetilde{G}')\ \mathrm{d}s.
    \end{align*}
    By Lemma \ref{lem:esti by jimbo}, for a small enough $\epsilon$ such that $0<|t|\leq\epsilon$, we have
    \begin{align*}
        \max_{0<|t|\leq\epsilon}|A-A'|&\leq \frac{1}{\sigma_1}2KC_K\max_{0<|t|\leq\epsilon}|\widetilde{B}-\widetilde{B}'|\cdot|t|^{1-\sigma_1},\\
        \max_{0<|t|\leq\epsilon}|t^{-\widehat{A}}(A-A')t^{\widehat{A}}|&\leq\frac{1}{\sigma_1}2KC^2_K\max_{0<|t|\leq\epsilon}|\widetilde{B}-\widetilde{B}'|\cdot|t|^{1-\sigma_1}.
        \end{align*}
        Substitute above two inequalities to the integral equations of $\widetilde{B}-\widetilde{B}'$ and $\widetilde{G}-\widetilde{G}' $, we have
        \begin{align*}
        \max_{0<|t|\leq\epsilon}|\widetilde{B}-\widetilde{B}'|&\leq \frac{1}{\sigma_1^2}4K^2C^2_K\max_{0<|t|\leq\epsilon}|\widetilde{B}-\widetilde{B}'|\cdot|t|^{1-\sigma_1}+\frac{1}{\sigma_1}2KC_K^2\left(\max_{0<|t|\leq\epsilon}|\widetilde{B}-\widetilde{B}'|\right)^2\cdot|t|^{1-\sigma_1}\\
          \max_{0<|t|\leq\epsilon}|\widetilde{G}-\widetilde{G}'|&\leq \frac{1}{\sigma_1^2}2K^2C^2_K\max_{0<|t|\leq\epsilon}|\widetilde{B}-\widetilde{B}'|\cdot|t|^{1-\sigma_1}+\frac{1}{\sigma_1}2KC_K^2\max_{0<|t|\leq\epsilon}|\widetilde{B}-\widetilde{B}'|\cdot \max_{0<|t|\leq\epsilon}|\widetilde{G}-\widetilde{G}'|\cdot|t|^{1-\sigma_1}
    \end{align*}
    Thus $(A,\widetilde{B},\widetilde{G})=(A',\widetilde{B}',\widetilde{G}')$ for $0\leq |t|\leq \epsilon$ for a small $\epsilon$.

Finally, since $GVG^{-1}$ satisfies the same equations as $B$'s, and has the same asymptotics, by the uniqueness $B=GVG^{-1}$. Therefore, $A,G$ constructed in \eqref{eq:A cons by inter} and \eqref{eq:G cons by inter} are the solution of \eqref{eq:iso for only t A} and \eqref{eq:iso for only t G}.

\end{proof} 

Now we use the compatibility conditions of \eqref{iso for z begin}-\eqref{iso for z end} to complete the analysis of asymptotics of $A,G$.

\begin{proof}[Proof of Theorem~\ref{thm:asy of iso A from hatA}]
Following Proposition \ref{thm:asym in t}, we remain to prove that when varying $(\mathbf{z},\mathbf{w})$, the functions $A(\mathbf{z}, t,\mathbf{w}),G(\mathbf{z}, t,\mathbf{w})$ constructed above also satisfy the isomonodromy equations with respect to $\mathbf{z}$ and $\mathbf{w}$. 

Notice that when $\widehat{A}$ satisfies \eqref{eq iso for hatA z}, the Jordan form of $\widehat{A}$ is unchanged. Thus for any $ p_0 = (z^{(0)}_0, \dots,w_0^{(0)})$, we can choose a neighborhood $D(p_0)$, where the conditions \eqref{eq:eigen cond in asy t} and \eqref{eq:bound in asy t} hold uniformly, thereby allowing us to use Lemma \ref{lem:esti by jimbo} and follow the same procedure to prove the Proposition \ref{thm:asym in t}. As a result, the limit \eqref{eq:lim of t A,G} is locally uniformly with respect to $\mathbf{z},\mathbf{w}$, and when $\widehat{A},\widehat{G}$ are holomorphic in $D(p_0)$, $A,G$ are also holomorphic in $(\mathbf{z},\mathbf{w})$.

 Let
    \begin{align}
    \widetilde{G}&=t^{-\widehat{A}}Gw_1^{\delta(G^{-1}AG)},\\
    L_k&=\bigl[\mathrm{ad}_U^{-1}\mathrm{ad}_{\frac{\partial U}{\partial z_k}}A, A\bigr] + t\biggl[\frac{\partial U}{\partial z_k}, t^{\widehat{A}}\widetilde{G}\frac{V}{t}\widetilde{G}^{-1}t^{-\widehat{A}}\biggr] - \frac{t}{z_k}\left[U,t^{\widehat{A}}\widetilde{G}\frac{V}{t}\widetilde{G}^{-1}t^{-\widehat{A}}\right],\\ \label{eq:expr of Jk}
    J_k&=\bigl(\mathrm{ad}_U^{-1}\mathrm{ad}_{\frac{\partial U}{\partial z_k}}A\bigr) \cdot \widetilde{G} - \frac{1}{z_k}t^{-\widehat{A}}At^{\widehat{A}}\widetilde{G}+t^{-\widehat{A}}\left(\mathrm{ad}_U^{-1}\mathrm{ad}_{\frac{\partial U}{\partial z_k}}(A-\widehat{A})\right)t^{\widehat{A}}\widetilde{G}.
    \end{align}
    For the equations in $z_k$, we need to prove
    \begin{align*}
        \frac{\partial A}{\partial z_k} = L_k \ \text{ and } \
       \frac{\partial \widetilde{G}}{\partial z_k}=J_k.
    \end{align*}
    To derive it, we proceed as follows:
    first we claim that $\left(\frac{\partial A}{\partial z_k}, \frac{\partial \widetilde{G}}{\partial z_k}\right)$ and $(L_k, J_k)$ satisfy the same equations in $t$:
\begin{align*}
    \frac{\partial X}{\partial t} &= \biggl[\frac{\partial U}{\partial z_k}, \frac{1}{t}t^{\widehat{A}}\widetilde{G}V\widetilde{G}^{-1}t^{-\widehat{A}}\biggr] \nonumber \\
    &\quad + \frac{1}{t}\left[U, \biggl[t^{\widehat{A}}Y\widetilde{G}^{-1}t^{-\widehat{A}} + \mathrm{ad}_U^{-1}\mathrm{ad}_{\frac{\partial U}{\partial z_k}}\widehat{A} - t^{\widehat{A}}\bigl(\mathrm{ad}_U^{-1}\mathrm{ad}_{\frac{\partial U}{\partial z_k}}\widehat{A}\bigr)t^{-\widehat{A}}, t^{\widehat{A}}\widetilde{G}V\widetilde{G}^{-1}t^{-\widehat{A}}\biggr] - \frac{1}{z_k}t^{\widehat{A}}\widetilde{G}V\widetilde{G}^{-1}t^{-\widehat{A}}\right], \\
    \frac{\partial Y}{\partial t} &= \frac{1}{t}\left[\mathrm{ad}_U^{-1}\mathrm{ad}_{\frac{\partial U}{\partial z_k}}\widehat{A} , t^{-\widehat{A}}(A-\widehat{A})t^{\widehat{A}}\right]\widetilde{G} -\frac{1}{t}t^{-\widehat{A}}\left[\mathrm{ad}_U^{-1}\mathrm{ad}_{\frac{\partial U}{\partial z_k}}\widehat{A} ,A\right]t^{\widehat{A}}\widetilde{G}+\frac{1}{t}t^{-\widehat{A}}Xt^{\widehat{A}}\widetilde{G}+\frac{1}{t}t^{-\widehat{A}}(A-\widehat{A})t^{\widehat{A}}Y.
\end{align*}    
For $(L_k, J_k)$, it is a direct calculation and for $\left(\frac{\partial A}{\partial z_k}, \frac{\partial \widetilde{G}}{\partial z_k}\right)$, notice 
    \begin{align*}
       \frac{\partial}{\partial t}\left( \frac{\partial A}{\partial z_k}\right)&=\frac{\partial}{\partial z_k}\left( \frac{\partial A}{\partial t}\right)=\frac{\partial}{\partial z_k}\left( \left[U,\frac{1}{t}t^{\widehat{A}}\widetilde{G}V\widetilde{G}^{-1}t^{-\widehat{A}} \right]\right),\\
        \frac{\partial}{\partial t}\left( \frac{\partial \widetilde{G}}{\partial z_k}\right)&=\frac{\partial}{\partial z_k}\left( \frac{\partial \widetilde{G}}{\partial t}\right)=\frac{\partial}{\partial z_k}\left( \frac{1}{t}t^{-\widehat{A}}(A-\widehat{A})t^{\widehat{A}}\widetilde{G}\right).
    \end{align*}
    Thus let $P=\frac{\partial A}{\partial z_k}-L_k,\ Q=\frac{\partial \widetilde{G}}{\partial z_k}-J_k$, we have 
    \begin{subequations}
    \begin{align}\label{eq:eq for P in vari of iso ode}
        \frac{\partial P}{\partial t}&=   \biggl[U, t^{\widehat{A}}\left[Q\widetilde{G}^{-1} ,\widetilde{G}\frac{V}{t}\widetilde{G}^{-1}\right]t^{-\widehat{A}}\biggr],\\ \label{eq:eq for Q in vari of iso ode}
             \frac{\partial Q}{\partial t}&= \frac{1}{t}\left(t^{-\widehat{A}}Pt^{\widehat{A}}\widetilde{G}+t^{-\widehat{A}}(A-\widehat{A})t^{\widehat{A}}Q\right).
    \end{align}
        \end{subequations}
    
    Secondly, we consider the asymptotics of $P$ and $Q$, as $t\rightarrow0$:
    since in \eqref{lim for A}-\eqref{lim for G}, $K$ is a constant in $D(p_0)$, so
    \begin{align*}
        \lim_{t\rightarrow0}\frac{\partial A}{\partial z_k}=\frac{\partial \widehat{A}}{\partial z_k},\quad
        \lim_{t\rightarrow0}\frac{\partial \widetilde{G}}{\partial z_k}=\frac{\partial \widehat{G}}{\partial z_k}.
    \end{align*}
    And using Proposition \ref{thm:asym in t}, we see that $L_k= \left[\mathrm{ad}_U^{-1} \mathrm{ad}_{\frac{\partial U}{\partial z_k}} \widehat{A}, \widehat{A}\right]+ O(t^{1-\sigma_1})$, as $t\rightarrow 0$, thus 
    \begin{align*}
        \lim_{t\rightarrow0}P=0.
    \end{align*}
    To show $ \lim_{t\rightarrow0}Q=0$, compare the expression of $J_k$ \eqref{eq:expr of Jk}, we only need to prove 
    \begin{align}\label{eq:t-adad difAt}
        \lim_{t\rightarrow0}t^{-\widehat{A}}\left(\mathrm{ad}_U^{-1}\mathrm{ad}_{\frac{\partial U}{\partial z_k}}(A-\widehat{A})\right)t^{\widehat{A}}=0.
    \end{align}
    In fact 
    \begin{align*}
        t^{-\widehat{A}}\left(\mathrm{ad}_U^{-1}\mathrm{ad}_{\frac{\partial U}{\partial z_k}}(A-\widehat{A})\right)t^{\widehat{A}}&= t^{-\widehat{A}}\mathrm{ad}_U^{-1}\mathrm{ad}_{\frac{\partial U}{\partial z_k}}\left(\int_0^{t}\mathrm{ad}_{\widetilde{U}}(s^{\widehat{A}}\widetilde{B}s^{-\widehat{A}})\mathrm{d}s\right)t^{\widehat{A}}\\
        &=t^{-\widehat{A}}\left(\int_0^{t}\mathrm{ad}_U^{-1}\mathrm{ad}_{\frac{\partial U}{\partial z_k}}\mathrm{ad}_{\widetilde{U}}(s^{\widehat{A}}\widetilde{B}s^{-\widehat{A}})\mathrm{d}s\right)t^{\widehat{A}}\\
        &=t^{-\widehat{A}}\left(\int_0^{t}\left[{\frac{1}{u_n-u_1}\frac{\partial U}{\partial z_k}}, \ s^{\widehat{A}}\widetilde{B}s^{-\widehat{A}}\right]\mathrm{d}s\right)t^{\widehat{A}}.
    \end{align*}
    then by Lemma \ref{lem:esti by jimbo}, \eqref{eq:t-adad difAt} is proved.

    Thirdly, we consider the right side of equations \eqref{eq:eq for P in vari of iso ode} and \eqref{eq:eq for Q in vari of iso ode}. By Theorem \ref{thm:asym in t}, for  a small enough $\epsilon$ such that $0<|t|<\epsilon$, there exists a constant $0<\sigma<1$ and a constant $C$, such that
    \begin{align*}
       \left| \biggl[U, \ t^{\widehat{A}}\left[Q\widetilde{G}^{-1} , \ \widetilde{G}\frac{V}{t}\widetilde{G}^{-1}\right]t^{-\widehat{A}}\biggr]\right|\leq C|t|^{-\sigma}.
    \end{align*}
    And since
    \begin{align*}
      |  t^{-\widehat{A}}Pt^{\widehat{A}}|&=\left|\frac{\partial }{\partial z_k}(t^{-\widehat{A}}At^{\widehat{A}}-\widehat{A})-\left[\mathrm{ad}_U^{-1} \mathrm{ad}_{\frac{\partial U}{\partial z_k}} \widehat{A}, t^{-\widehat{A}}At^{\widehat{A}}-\widehat{A}\right]-\left[t^{-\widehat{A}}\left(\mathrm{ad}_U^{-1}\mathrm{ad}_{\frac{\partial U}{\partial z_k}}(A-\widehat{A})\right)t^{\widehat{A}},t^{-\widehat{A}}At^{\widehat{A}}\right]+O(|t|^{1-\sigma_1})\right|\\
      &\leq C|t|^{1-\sigma},
    \end{align*}
     we also have 
    \begin{align*}
        \left| \frac{1}{t}\left(t^{-\widehat{A}}Pt^{\widehat{A}}\widetilde{G}+t^{-\widehat{A}}(A-\widehat{A})t^{\widehat{A}}Q\right)\right|\leq C'|t|^{-\sigma},
    \end{align*}
    for some constant $C'$.
    Thus, by Picard iteration we see that $P=Q=0$.

    The equations in $w_k$ of $A,G$ can be obtained in the same way, set
    \begin{align*}
        L_k'&= \biggl[U, t^{\widehat{A}}\widetilde{G}\frac{\partial V}{\partial w_{k}}\widetilde{G}^{-1}t^{-\widehat{A}}\biggr],\\
        J_k'&= \widetilde{G} \cdot \left(\mathrm{ad}_{\widetilde{V}}^{-1}\mathrm{ad}_{\frac{\partial \widetilde{V}}{\partial w_{k}}}\left(w_1^{-\delta(G^{-1}AG)}G^{-1}AGw_1^{\delta(G^{-1}AG)}\right)\right)-\frac{1}{w_k}t^{-\widehat{A}}At^{\widehat{A}}\widetilde{G}.
    \end{align*}
    And let $P'=\frac{\partial A}{\partial w_k}-L_k',\ Q'= \frac{\partial \widetilde{G}}{\partial w_k}-J_k'$, we can derive
    \begin{align}
        \frac{\partial P'}{\partial t}&=\left[U,\frac{1}{t}t^{\widehat{A}}(Q'V\widetilde{G}^{-1}-\widetilde{G}V\widetilde{G}^{-1}Q'\widetilde{G}^{-1})t^{-\widehat{A}}\right],\\
        \frac{\partial Q'}{\partial t}&=\left[\frac{1}{t}t^{-\hat{A}}P't^{\hat{A}}\widetilde{G}+\frac{1}{t}t^{-\hat{A}}(A-\widehat{A})t^{\hat{A}}Q'\right].
    \end{align}
    We can verify that $\lim_{t\rightarrow0}P'=\lim_{t\rightarrow0}Q'=0$ using Proposition \ref{thm:asym in t}, and use Picard iteration to show $P'=Q'=0$.
\end{proof}

\begin{cor}\label{cor:def of tilA}
Let $A(\mathbf{z},t,\mathbf{w})$, $G(\mathbf{z},t,\mathbf{w})$ be the solution of system \eqref{iso for z begin}–\eqref{iso for z end}, constructed from a given solution $\widehat{A}(\mathbf{z},\mathbf{w}),\widehat{G}(\mathbf{z},\mathbf{w})$ of \eqref{eq iso for hatA z}-\eqref{eq for hatG w} as in Theorem \ref{thm:asy of iso A from hatA}, then
    \begin{align}
        \lim_{t\rightarrow 0} w_1^{-\delta(G^{-1}AG)}(G^{-1}AG)w_1^{\delta(G^{-1}AG)}=\widehat{G}^{-1}\widehat{A}\widehat{G}.
    \end{align}
 \end{cor}
 \begin{proof}
 It follows that
      \begin{align*}
        \lim_{t\rightarrow 0} \left( w_1^{-\delta(G^{-1}AG)}G^{-1}AG w_1^{\delta(G^{-1}AG)}\right)=\lim_{t\rightarrow 0} \left( w_1^{-\delta(G^{-1}AG)}G^{-1}t^{\widehat{A}}\right)\left(t^{-\widehat{A}}At^{\widehat{A}}\right)\left(t^{-\widehat{A}}G w_1^{\delta(G^{-1}AG)}\right)=\widehat{G}^{-1}\widehat{A}\widehat{G}.
    \end{align*}
 \end{proof}
It follows from a straightforward computation that
\begin{cor}
The matrix valued function 
\begin{equation}
\widetilde{A}(\mathbf{z},\mathbf{w}):=    -\widehat{G}^{-1}\widehat{A}\widehat{G}
\end{equation} satisfies the following equations
    \begin{align}\label{eq iso for tilA w}
        \frac{\partial \widetilde{A}}{\partial z_k}&=0,\ k=0,1,...,n-1,\\
        \frac{\partial\widetilde{A}}{\partial w_0}&=0,\   \frac{\partial\widetilde{A}}{\partial w_k}= \left[ \mathrm{ad}_{\widetilde{V}}^{-1} \mathrm{ad}_{\frac{\partial \widetilde{V}}{\partial w_k}} \widetilde{A},\widetilde{A}\right],\ k=2,...,n.
    \end{align}
\end{cor}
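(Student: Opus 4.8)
The plan is to verify all three families of equations for $\widetilde{A}:=-\widehat{G}^{-1}\widehat{A}\widehat{G}$ by direct differentiation, using the product rule, the identity $\partial(\widehat{G}^{-1})=-\widehat{G}^{-1}(\partial\widehat{G})\widehat{G}^{-1}$, and the defining equations \eqref{eq iso for hatA z}--\eqref{eq for hatG w} for the pair $(\widehat{A},\widehat{G})$. No auxiliary input is needed beyond these; the entire task is to organize the resulting terms so that the cancellations are visible.

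First I would dispose of the vanishing statements. Since the solution produced in Theorem \ref{thm:asy of iso A from hatA} has $\widehat{A}=\widehat{A}(\mathbf{z})$ depending on $\mathbf{z}$ alone, every $\mathbf{w}$-derivative of $\widehat{A}$ vanishes (in particular for $k=n$, a case not explicitly covered by \eqref{eq iso for hatA z}); together with $\partial_{z_0}\widehat{A}=\partial_{z_0}\widehat{G}=0$ and $\partial_{w_0}\widehat{G}=0$, the product rule gives $\partial_{z_0}\widetilde{A}=0$ and $\partial_{w_0}\widetilde{A}=0$ immediately.

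Next, for $\partial_{z_k}\widetilde{A}$ with $k\geq 1$, write $M_k:=\mathrm{ad}_U^{-1}\mathrm{ad}_{\partial U/\partial z_k}\widehat{A}$. I would substitute $\partial_{z_k}\widehat{A}=[M_k,\widehat{A}]$, $\partial_{z_k}\widehat{G}=M_k\widehat{G}-\tfrac{1}{z_k}\widehat{A}\widehat{G}$, and hence $\partial_{z_k}(\widehat{G}^{-1})=-\widehat{G}^{-1}M_k+\tfrac{1}{z_k}\widehat{G}^{-1}\widehat{A}$ into $\partial_{z_k}\widetilde{A}=-\partial_{z_k}(\widehat{G}^{-1})\,\widehat{A}\widehat{G}-\widehat{G}^{-1}(\partial_{z_k}\widehat{A})\widehat{G}-\widehat{G}^{-1}\widehat{A}\,\partial_{z_k}\widehat{G}$. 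After expanding and conjugating everything by $\widehat{G}$, the six terms should cancel in pairs: the two copies of $\widehat{G}^{-1}M_k\widehat{A}\widehat{G}$, the two of $\widehat{G}^{-1}\widehat{A}M_k\widehat{G}$, and the two of $\tfrac{1}{z_k}\widehat{G}^{-1}\widehat{A}^2\widehat{G}$, leaving $\partial_{z_k}\widetilde{A}=0$.

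Finally, for $\partial_{w_k}\widetilde{A}$ with $k\geq 2$, set $N_k:=\mathrm{ad}_{\widetilde{V}}^{-1}\mathrm{ad}_{\partial\widetilde{V}/\partial w_k}(\widehat{G}^{-1}\widehat{A}\widehat{G})=-\mathrm{ad}_{\widetilde{V}}^{-1}\mathrm{ad}_{\partial\widetilde{V}/\partial w_k}\widetilde{A}$, so that \eqref{eq for hatG w} reads $\partial_{w_k}\widehat{G}=\widehat{G}N_k-\tfrac{1}{w_k}\widehat{A}\widehat{G}$ and thus $\partial_{w_k}(\widehat{G}^{-1})=-N_k\widehat{G}^{-1}+\tfrac{1}{w_k}\widehat{G}^{-1}\widehat{A}$; moreover $\partial_{w_k}\widehat{A}=0$. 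Substituting into $\partial_{w_k}\widetilde{A}=-\partial_{w_k}(\widehat{G}^{-1})\,\widehat{A}\widehat{G}-\widehat{G}^{-1}\widehat{A}\,\partial_{w_k}\widehat{G}$, the $\tfrac{1}{w_k}\widehat{G}^{-1}\widehat{A}^2\widehat{G}$ terms cancel and what remains is $N_k(\widehat{G}^{-1}\widehat{A}\widehat{G})-(\widehat{G}^{-1}\widehat{A}\widehat{G})N_k=[N_k,-\widetilde{A}]=[\mathrm{ad}_{\widetilde{V}}^{-1}\mathrm{ad}_{\partial\widetilde{V}/\partial w_k}\widetilde{A},\widetilde{A}]$, the claimed identity. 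There is no genuine obstacle in this argument; the hard part, such as it is, is the sign bookkeeping in this last step, where the minus sign in the definition of $\widetilde{A}$ and the minus sign entering $N_k$ through the linearity of $\mathrm{ad}_{\widetilde{V}}^{-1}\mathrm{ad}_{\partial\widetilde{V}/\partial w_k}$ must be combined correctly to produce the stated sign of the commutator.
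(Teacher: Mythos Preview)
Your proposal is correct and is exactly the ``straightforward computation'' the paper alludes to without spelling out; the cancellations in the $z_k$ case and the sign tracking in the $w_k$ case are carried out accurately.
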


 \section{Monodromy of the linear system with 2 irregular singularities and Riemann-Hilbert correspondence}\label{sec:monodecom}
 
In this section, we complete the proof the Theorem \ref{mainthm}, and Theorem \ref{thm: introcatformula}. In the previous section, for certain given $(\widehat{A},\widehat{G})$, we constructed a solution $(A, G)$ to the isomonodromy equations exhibiting the corresponding asymptotic behavior. In Section \ref{sec:decom of sys}, we proceed to prove that for such a solution, the system \eqref{introeq} can be decomposed into two one second-order pole systems with residue matrices $\widehat{A}$ and $\widetilde{A}$, respectively. Furthermore, the monodromy matrices of the original system can be explicitly expressed in terms of those of the two decomposed systems.
In Section \ref{sec: asy of iso}, we construct the solution $(A, G)$  with constant boundary values $(\widehat{A}_0,G_0)$ satisfying the boundary condition. We refer to such $(A, G)$ as a shrinking solution. Subsequently, we express the monodromy matrices in terms of $(\widehat{A}_0,G_0)$.
In Section \ref{sec:almost every is shrinking}, we provide a specific criterion on the monodromy matrix, showing that the set of shrinking solutions determined by this criterion is open and dense.

\subsection{Decomposition of the system \eqref{eq: linr sys two irr in xi}-\eqref{eq:linr sys sys two irr in v} as $t\rightarrow 0$}\label{sec:decom of sys}

In this section we will study the behaviors of the solutions and monodromy data of the system \eqref{eq: linr sys two irr in xi}-\eqref{eq:linr sys sys two irr in v} as $t\rightarrow 0$.
 Consider the system \eqref{eq: linr sys two irr in xi}-\eqref{eq:linr sys sys two irr in v} in the coordinate $(\xi,z_0,z_1...,z_{n-1},t, w_{n-1},\cdots w_{2},w_0)$
\begin{subequations}
\begin{align}\label{eq:irr sys in z normal}
\frac{\partial F}{\partial \xi}&=\left(U+\frac{A}{\xi}+\frac{B}{\xi^2}\right)F,\quad B=GVG^{-1},\\ \label{eq:irr sys in t normal}
   \frac{\partial F}{\partial t}&=-\frac{B-w_0\cdot\mathrm{Id}}{t\xi}\cdot F,\\
    \frac{\partial F}{\partial z_k}&=\left(\frac{\partial U}{\partial z_k}\xi+\mathrm{ad}_U^{-1}\mathrm{ad}_{\frac{\partial U}{\partial z_k}}A+\frac{B-w_0\cdot \mathrm{Id}}{z_k\xi}\right)F,\ k=0,1,2,...,n-1,\\ \label{eq:irr sys in w normal}
    \frac{\partial F}{\partial w_{k}}&=-\frac{G\frac{\partial V}{\partial w_{k}}G^{-1}}{\xi}\cdot F,\ k=0,2,3,...,n-1.
\end{align}
\end{subequations}

First we study the decomposition of the linear system as $t\rightarrow 0$. In the following theorem, we will construct solutions of the system \eqref{eq:irr sys in z normal}-\eqref{eq:irr sys in t normal} from the solutions of the two limiting systems.

\begin{theorem}\label{thm:decom of two irr F}
   Fix $z_0,z_1...,z_{n-1},w_{n-1},\cdots w_{2},w_0$. Suppose $A(\mathbf{z},t,\mathbf{w}),G(\mathbf{z},t,\mathbf{w})$ in  system \eqref{eq:irr sys in z normal} -\eqref{eq:irr sys in t normal} are constructed from $\widehat{A}(\mathbf{z},\mathbf{w}),\widehat{G}(\mathbf{z},\mathbf{w})$ as in Theorem \ref{thm:asy of iso A from hatA}. The fundamental solutions $F_d^{(\infty)}(\xi,t)$ and $F_d^{(0)}(\xi,t)$ of the linear system \eqref{eq:irr sys in z normal} -\eqref{eq:irr sys in t normal} at $\xi=\infty$ and $\xi=0$ respectively (see Proposition \ref{prop:fundamental solution of two irr sys}), have the following factorizations:
    \begin{align}\label{eq:decom in infty in thm}
    &        \mathrm{(1)}\quad    F^{(\infty)}_{d}(\xi,t)= L(\xi,t)\cdot\mathrm{e}^{-\frac{w_0}{\xi}}Y^{(\infty)}_d(\xi).\\ \label{eq:decom in 0 in thm}
   & \mathrm{(2)}\quad
        F^{(0)}_{d}(\xi,t)=\mathrm{e}^{-\frac{w_0}{\xi}}Y^{(0)}(\xi)\cdot H\left(\xi,\frac{t}{\xi}\right)\cdot \left(\frac{t}{\xi}\right)^{\widehat{A}}\cdot \widehat{G}\cdot K^{(\infty)}_{-d+\mathrm{arg}(w_1)}\left(\frac{w_1}{\xi}\right).
    \end{align}
Here recall $w_1=\frac{t}{z_1\cdots z_{n-1}w_{n-1}\cdots w_2}=v_2-v_1$ and the factors are defined as follows:
\begin{itemize}
    \item 
 $Y^{(\infty)}_d(x)$ is the  solution of the following equation (with one irregular sigularity):
\begin{align}\label{eq:decom in hatA x}
\frac{\partial Y}{\partial x}&=\left( U +
\frac{\widehat{A}(\mathbf{z})}{x}\right)\cdot Y(x),
\end{align}
with the prescribed asymptotics
\begin{align}
    Y^{(\infty)}_d(x)x^{-\delta \widehat{A}}e^{-Ux}\sim \mathrm{Id}+O(x^{-1}), \ \ \text{as}\ x\rightarrow\infty\ \text{within}\ \mathrm{Sect}_{d}^{(\infty)};
\end{align}
    \item $Y^{(0)}(x)$ is the  solution of equation \eqref{eq:decom in hatA x} with the prescribed asymptotics
    \begin{align}\label{eq:asym sing irr Ahat in x}
        Y^{(0)}(x)x^{-\widehat{A}}\sim \mathrm{Id}+O(x),\ \ \text{as}\ x\rightarrow 0;
    \end{align}
   \item 
$L(\xi,t)$ is the solution of \eqref{eq:irr sys in t normal}, defined in $\mathcal{S}_{\epsilon,\phi} = \{ t \in \mathbb{C} \mid 0 < |t| < \epsilon, |\arg t| < \phi \}$ for a small enough $\epsilon$ and any finite $\phi$, satisfying
\begin{align}
    \lim_{t\rightarrow 0} L(\xi,t)=\lim_{\xi\rightarrow \infty} L(\xi,t)=\mathrm{Id};
\end{align}
\item    $K^{(\infty)}_d\left(\frac{w_1}{\xi}\right)$ is the solution of the following equation in the variable { $\gamma=\frac{w_1}{\xi}$,}
\begin{align}\label{eq:decom in tilA y}
\frac{\partial K}{\partial \gamma}&=\left( -\widetilde{V} +
\frac{\widetilde{A}(\mathbf{w})}{\gamma}\right)\cdot K(\gamma),\quad \text{where}\ \widetilde{V}:=\frac{1}{w_1}(V-w_0\mathrm{Id}),\ \widetilde{A}:=-\widehat{G}^{-1}\widehat{A}\widehat{G},
\end{align}
with the prescribed asymptotics
    \begin{align}\label{eq:asym of sing K w}
        K^{(\infty)}_d\left(\frac{w_1}{\xi}\right)\cdot e^{\frac{w_1}{\xi}\widetilde{V}}\cdot \left(\frac{w_1}{\xi}\right)^{-\delta\widetilde{A}}\sim \mathrm{Id}+O\left(\frac{w_1}{\xi}\right)^{-1},\ \ \text{as}\ \frac{w_1}{\xi}\rightarrow\infty\ \text{within}\ \mathrm{Sect}_{d}^{(\infty)};
    \end{align}

    \item 
Moreover, $H(x,y)$ is the solution for equation \eqref{eq:equiv equations of H}, defined in $\mathcal{S}'_{\epsilon,\phi}=\{(x,y)|0<|x|<\epsilon,0<|xy|<\epsilon,|\mathrm{arg}(x)|+|\mathrm{arg}(y)|<\phi\}$ for a small enough $\epsilon$ and a finite $\phi$, such that 
 \begin{align}\label{eq:lim of H1}
      \lim_{x\rightarrow0}H(x,y)=\mathrm{Id},\quad
      \lim_{x\rightarrow 0} \frac{\partial H}{\partial y}=0.
 \end{align}
\end{itemize}

\end{theorem}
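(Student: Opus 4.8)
The proof rests on two facts. First, by Proposition~\ref{thm:asym in t} the solution $(A,G)$ built from $(\widehat A,\widehat G)$ satisfies $A-\widehat A=O(t^{1-\sigma_1})$, the identity $B-w_0\mathrm{Id}=GVG^{-1}-w_0\mathrm{Id}=t\cdot t^{\widehat A}\widetilde B\,t^{-\widehat A}$ with $\widetilde B$ bounded on $\mathcal S_{\epsilon,\phi}$, and $\delta A=\delta\widehat A$ (since $A-\widehat A$ is a $t$–integral of commutators $[\widetilde U,\,\cdot\,]$ with a diagonal matrix, hence has vanishing diagonal). Second, by Proposition~\ref{prop:fundamental solution of two irr sys} the canonical solutions $F^{(\infty)}_d$, $F^{(0)}_d$ are uniquely characterized by their normalizing asymptotics at $\xi=\infty$ and $\xi=0$. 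The plan is to construct the correction factors $L$ and $H$ directly, by a Picard iteration in the style of the proof of Proposition~\ref{thm:asym in t} and controlled by the Jimbo–type estimates of Lemma~\ref{lem:esti by jimbo}; to check that the right–hand sides of \eqref{eq:decom in infty in thm}–\eqref{eq:decom in 0 in thm} are fundamental solutions of the linear system \eqref{eq:irr sys in z normal}–\eqref{eq:irr sys in t normal}; and finally to identify them with $F^{(\infty)}_d$, $F^{(0)}_d$ by matching the normalizing asymptotics.

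For part~(1) I would first note that, because $\mathrm{e}^{-w_0/\xi}$ absorbs the term $w_0/\xi^2=\lim_{t\to0}B/\xi^2$, the matrix $\mathrm{e}^{-w_0/\xi}Y^{(\infty)}_d(\xi)$ is exactly the canonical solution at $\xi=\infty$ of the $t\to0$ limit of \eqref{eq:irr sys in z normal}. Set $L:=F^{(\infty)}_d(\xi,t)\,Y^{(\infty)}_d(\xi)^{-1}\mathrm{e}^{w_0/\xi}$; a direct computation from \eqref{eq:irr sys in z normal}, \eqref{eq:irr sys in t normal} and \eqref{eq:decom in hatA x} shows
\[
\partial_\xi L=[U,L]+\tfrac1\xi\bigl((A-\widehat A)L+[\widehat A,L]\bigr)+\tfrac1{\xi^2}(B-w_0\mathrm{Id})L,\qquad
\partial_t L=-\tfrac1\xi\,t^{\widehat A}\widetilde B\,t^{-\widehat A}L .
\]
The asymptotic expansion of $F^{(\infty)}_d$ at $\xi=\infty$ together with $\delta A=\delta\widehat A$ gives $L\sim\mathrm{Id}+O(\xi^{-1})$ throughout $\mathrm{Sect}^{(\infty)}_d$. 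The coefficient of the $t$–equation is integrable at $t=0$ by Lemma~\ref{lem:esti by jimbo}, so $L_0(\xi):=\lim_{t\to0}L(\xi,t)$ exists and, passing to the limit in the $\xi$–equation, satisfies $\partial_\xi L_0=[U+\widehat A/\xi,\,L_0]$; hence $L_0=Y^{(\infty)}_d\,C\,(Y^{(\infty)}_d)^{-1}$ for a constant matrix $C$. Integrating the $t$–equation gives $|L_0(\xi)-L(\xi,t)|\le C'(t)/|\xi|$, which, combined with $\lim_{\xi\to\infty}L(\xi,t)=\mathrm{Id}$ for every fixed $t$, forces $\lim_{\xi\to\infty}L_0=\mathrm{Id}$; since the Stokes sector at $\xi=\infty$ has angular width larger than $\pi$, this is possible only if $C=\mathrm{Id}$, so $L_0\equiv\mathrm{Id}$. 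Then $L$ has the three stated properties, $L\,\mathrm{e}^{-w_0/\xi}Y^{(\infty)}_d$ solves \eqref{eq:irr sys in z normal}–\eqref{eq:irr sys in t normal} with the normalizing asymptotics of $F^{(\infty)}_d$, and the two agree by uniqueness.

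For part~(2) I would work near $\xi=0$ with the rescalings $\gamma:=w_1/\xi$ and $y:=t/\xi$. Writing $\mathrm{e}^{V/\xi}=\mathrm{e}^{w_0/\xi}\mathrm{e}^{\gamma\widetilde V}$ with $\widetilde V=(V-w_0\mathrm{Id})/w_1$, the singular part of \eqref{eq:irr sys in z normal} at $\xi=0$ is, after this rescaling, governed by the one–irregular system \eqref{eq:decom in tilA y} with residue $\widetilde A=-\widehat G^{-1}\widehat A\widehat G$, whose canonical solution at $\gamma=\infty$ is $K^{(\infty)}_{-d+\arg(w_1)}$; the outer factor is $\mathrm{e}^{-w_0/\xi}Y^{(0)}(\xi)$ (the solution of the $\widehat A$–system regular at $\xi=0$, carrying the $w_0/\xi^2$ twist), and the transition between the two scales is carried by $(t/\xi)^{\widehat A}$ together with the matrix $\widehat G$ produced by Theorem~\ref{thm:asy of iso A from hatA}. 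Substituting the ansatz $\mathrm{e}^{-w_0/\xi}Y^{(0)}(\xi)\,H(\xi,t/\xi)\,(t/\xi)^{\widehat A}\widehat G\,K^{(\infty)}_{-d+\arg(w_1)}(w_1/\xi)$ into \eqref{eq:irr sys in z normal}–\eqref{eq:irr sys in t normal} and cancelling the factors that already solve the $\widehat A$– and $\widetilde V$–systems reduces the compatibility conditions to the system \eqref{eq:equiv equations of H} for $H(x,y)$. I would then construct the unique solution $H$ of \eqref{eq:equiv equations of H} on $\mathcal S'_{\epsilon,\phi}$ with $\lim_{x\to0}H=\mathrm{Id}$ and $\lim_{x\to0}\partial_yH=0$ by a Picard iteration, whose convergence uses the eigenvalue condition \eqref{eq:shrinking in t}, $\max_{j,k}|\mathrm{Re}(\mu_j-\mu_k)|<1$, to control the factors $y^{\pm\widehat A}$; since $y=t/\xi$ can be large, the estimates of Lemma~\ref{lem:esti by jimbo} have to be applied in both variables $x$ and $xy$. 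Finally, using $K^{(\infty)}_{-d}(\gamma)\,\mathrm{e}^{\gamma\widetilde V}\gamma^{-\delta\widetilde A}\to\mathrm{Id}$, $Y^{(0)}(\xi)\xi^{-\widehat A}\to\mathrm{Id}$, $H\to\mathrm{Id}$, and Corollary~\ref{cor:def of tilA} to relate $\delta(G^{-1}AG)$ to $\delta\widetilde A$ as $t\to0$, I would verify that the ansatz, multiplied on the right by $\mathrm{e}^{V/\xi}\xi^{-\delta(G^{-1}AG)}$, tends to $G$ as $\xi\to0$; it therefore has the normalizing asymptotics of $F^{(0)}_d$ and equals $F^{(0)}_d$ by uniqueness.

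The hard part will be part~(2): setting up the two–variable correction $H(x,y)$ with $y=t/\xi$ unbounded and proving that the Picard iteration for \eqref{eq:equiv equations of H} converges uniformly on $\mathcal S'_{\epsilon,\phi}$ — this is where \eqref{eq:shrinking in t} and the splitting $B-w_0\mathrm{Id}=t\cdot t^{\widehat A}\widetilde B\,t^{-\widehat A}$ must be used in tandem — and then keeping track of the several $\delta$–exponents and of $\widehat G$ with enough precision that the $\xi\to0$ asymptotics of the ansatz match those of $F^{(0)}_d$ on the nose, not merely up to a diagonal constant. Part~(1), the derivation of \eqref{eq:equiv equations of H}, and the verification that the two ansätze solve the linear system are, by comparison, routine computations of the kind already carried out in the proof of Proposition~\ref{thm:asym in t}.
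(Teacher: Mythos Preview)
Your Part~(1) is a valid alternative to the paper's argument: rather than constructing $L$ explicitly as a Dyson series in $t$ and then identifying $L\,e^{-w_0/\xi}Y^{(\infty)}_d$ with $F^{(\infty)}_d$ by matching asymptotics at $\xi=\infty$, you define $L$ as the quotient and deduce its limit properties. Both routes work.

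Part~(2), however, has a genuine gap in the identification step. You plan to show $\Xi=F^{(0)}_d$ by checking the normalizing asymptotics as $\xi\to 0$ for fixed $t$, invoking ``$H\to\mathrm{Id}$''. But in that regime $y=t/\xi\to\infty$, and the limit $\lim_{x\to0}H(x,y)=\mathrm{Id}$ you will establish holds only for \emph{bounded} $y$. The bound on the coefficient of \eqref{eq:equiv equations of H} is $|P(x,y)|\le C|x|^{-\sigma_1}|y|^{1-\sigma_1}$ (obtained from the integral representation of $A(xy)-\widehat A$), and integrating in $x$ with $y$ fixed gives $|H(x,y)-\mathrm{Id}|\lesssim |x|^{1-\sigma_1}|y|^{1-\sigma_1}$; evaluated at $y=t/\xi$ this is $|t|^{1-\sigma_1}$, which does not go to $0$ as $\xi\to 0$. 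So your verification ``$\Xi\,e^{V/\xi}\xi^{-\delta(G^{-1}AG)}\to G$ as $\xi\to 0$'' cannot be completed from the ingredients you list; in fact $\xi^{\widehat A}H(\xi,t/\xi)\xi^{-\widehat A}$ must converge to a specific $t$-dependent matrix, not to $\mathrm{Id}$.

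The paper sidesteps this. After constructing $H$ (Picard iteration in $x$ only, with $y$ a parameter) and checking that $Q=e^{-w_0/x}Y^{(0)}H$ solves the $x$-equation, it uses compatibility to compute the $y$-equation $(\partial_y-A_2)Q=Q\,R(y)$ and evaluates $R(y)$ by taking $x\to 0$ with $y$ fixed (the good regime for $H$); this yields $R(y)=y^{\widehat A}\widehat G\,\tfrac{V-w_0}{t}\,\widehat G^{-1}y^{-\widehat A}$, from which the tail $y^{\widehat A}\widehat G\,K^{(\infty)}_{-d+\arg w_1}$ is read off. Hence $\Xi$ is a full solution and $\Xi=F^{(0)}_d\cdot C$ for a constant $C$. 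The constant is determined not at $\xi\to 0$ but by letting $t\to 0$ with $y=t/\xi$ \emph{fixed}; the crucial extra step you are missing is to apply Part~(1) to the system transformed by $\eta=w_1/\xi$ (so that $\xi=0$ becomes $\eta=\infty$): one has $e^{w_0/\xi}F^{(0)}_d(\xi,t)=G\,w_1^{-\delta\widetilde A}\,\widetilde F^{(\infty)}_{-d+\arg w_1}(w_1/\xi,t)$, and Part~(1) together with Theorem~\ref{thm:asy of iso A from hatA} then gives
\[
t^{-\widehat A}e^{w_0y/t}F^{(0)}_d(t/y,t)\ \longrightarrow\ \widehat G\,K^{(\infty)}_{-d+\arg w_1}\bigl(y/(z_1\cdots w_2)\bigr),
\]
which equals the same limit of $t^{-\widehat A}e^{w_0y/t}\Xi(t/y,y)$. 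Hence $C=\mathrm{Id}$.
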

\begin{proof}
$\mathrm{(1)}:$ Set
\begin{align}
    L(\xi,t) = \mathrm{Id}+\sum_{k=1}^{\infty}\int_0^t \mathrm{d}t_1\int_0^{t_1}\mathrm{d}t_2\cdots\int_0^{t_{k-1}}\mathrm{d}t_k\left(\frac{-B(t_1)+w_0\cdot\mathrm{Id}}{t_1\xi}\cdots\frac{-B(t_k)+w_0\cdot\mathrm{Id}}{t_k\xi}\right).
\end{align}
The integration is taken along the line segment joining $0$ and $t$ in $\mathcal{S}_{\epsilon,\phi}$. By Proposition \ref{thm:asym in t}, for a small enough $\epsilon$, there exists a constant $C$, and $0<\sigma<1$, such that
\begin{align*}
   \left| \frac{-B(t_1)+w_0\cdot\mathrm{Id}}{t_1\xi}\cdots\frac{-B(t_k)+w_0\cdot\mathrm{Id}}{t_k\xi}\right|\leq |t_1|^{-\sigma}\cdots|t_k|^{-\sigma}\frac{C^k}{\xi^k}.
\end{align*}
Thus $L(\xi,t)$ converges uniformly with respect to $\xi$ on every compact subset of $\{\xi\in\mathbb{C}:|\xi|>0\}$. By  direct calculation, $L(\xi,t)$ is the solution of \eqref{eq:irr sys in t normal}, and $  \lim_{t\rightarrow 0} L(\xi,t)=\lim_{\xi\rightarrow \infty} L(\xi,t)=\mathrm{Id}$.

Then denote the coefficients of \eqref{eq:irr sys in z normal} and \eqref{eq:irr sys in t normal} by $A_1,A_2$ respectively.
By compatibility of these two equations, we have $\left(\frac{\partial}{\partial t}-A_2\right)\left(\frac{\partial}{\partial \xi}-A_1\right)L=0$. Since $\left(\frac{\partial}{\partial t}-A_2\right)L=0$,  $\left(\frac{\partial}{\partial \xi}-A_1\right)L=LR(\xi)$ for a function $R(\xi)$ independent of $t$, and 
\begin{align}\label{eq:eq for Rxi}
    R(\xi)=L^{-1}\frac{\partial L}{\partial \xi}-L^{-1}\left(U+\frac{A}{\xi}+\frac{B}{\xi^2}\right)L.
\end{align}
 Let $t\rightarrow 0$ in \eqref{eq:eq for Rxi}. By Theorem \ref{thm:asy of iso A from hatA}, we have $R(\xi)=-\left(U+\frac{\widehat{A}}{\xi}+\frac{w_0\cdot\mathrm{Id}}{\xi^2}\right)$. Thus $L(\xi,t)\cdot \mathrm{e}^{-\frac{w_0}{\xi}} Y^{(\infty)}_d(\xi)$ is the solution of \eqref{eq:irr sys in z normal} and \eqref{eq:irr sys in t normal}.

Finally,  compare asymptotic behavior of $F^{(\infty)}_{d}(\xi,t)$ and  $L(\xi,t)\cdot\mathrm{e}^{-\frac{w_0}{\xi}} Y^{(\infty)}_d(\xi)$, as $\xi\rightarrow \infty$ within $\mathrm{Sect}^{(\infty)}_d$, we can see
\begin{align*}
    F^{(\infty)}_{d}(\xi,t)= L(\xi,t)\cdot\mathrm{e}^{-\frac{w_0}{\xi}}Y^{(\infty)}_d(\xi).
\end{align*}

This also shows that $\lim_{t\rightarrow 0} F_d^{(\infty)}(\xi,t)=\mathrm{e}^{-\frac{w_0}{\xi}}Y_d^{(\infty)}(\xi)$.

$\mathrm{(2)}:$
    Let 
    \begin{align*}
        x=\xi,\ y=\frac{t}{\xi}.
    \end{align*}
    Then system \eqref{eq:irr sys in z normal} and \eqref{eq:irr sys in t normal} becomes to 
    \begin{subequations}
    \begin{align}\label{eq:irr in sing x change vari}
        \frac{\partial F}{\partial x}&=\left(U+\frac{A}{x}+\frac{w_0\cdot \mathrm{Id}}{x^2}\right)F,\\ \label{eq:irr in w change vari}
   \frac{\partial  F}{\partial y}&=-\frac{B-w_0\cdot\mathrm{Id}}{xy} F.
    \end{align}
    \end{subequations}
    Similar to $(1)$, we  decompose $F_d^{(0)}(\xi,t)$ in the following three steps:

        $\mathrm{Step \ 1:}$ Let $Y^{(0)}(x)=\Psi(x)\cdot x^{\widehat{A}}$, where $\Psi(x)$ is a holomorphic function around $x=0$, 
 \begin{align*}
    \Psi(x)=\mathrm{Id}+\sum_{k=1}^{\infty}{\Psi}_kx^k,\quad |x|<\rho.
 \end{align*}
Suppose $H(x,y)=(\mathrm{e}^{-\frac{w_0}{x}}Y^{(0)}(x))^{-1}Q(x,y)$, then $Q$ is the solution of \eqref{eq:irr in sing x change vari} if and only if $H(x,y
)$ satisfies the following equation:
   \begin{align}\label{eq:equiv equations of H}
       \frac{\partial H}{\partial x}= \left(x^{-\widehat{A}}\Psi^{-1}(x)\frac{(A(xy)-\widehat{A})}{x}\Psi(x)x^{\widehat{A}}\right)H.
   \end{align}

For simplicity, denote the coefficient of \eqref{eq:equiv equations of H}: $x^{-\widehat{A}}\Psi^{-1}(x)\frac{(A(xy)-\widehat{A})}{x}\Psi(x)x^{\widehat{A}}$ by $P(x,y)$. Similar to $(1)$, set
\begin{align}
     H(x,y) = \mathrm{Id}+\sum_{k=1}^{\infty}\int_0^x \mathrm{d}x_1\int_0^{x_1}\mathrm{d}x_2\cdots\int_0^{x_{k-1}}\mathrm{d}x_k P(x_1,y)\cdots P(x_{k},y).
\end{align}
The integration is taken along the line segment joining $0$ and $x$ for any fixed $y$ in $\mathcal{S}'_{\epsilon,\phi}=\{(x,y)|0<|x|<\epsilon,0<|xy|<\epsilon,|\mathrm{arg}(x)|+|\mathrm{arg}(y)|<\phi\}$. Note that for a small enough $\epsilon$,
\begin{align}\nonumber
    \left|x^{-\widehat{A}}{(A(xy)-\widehat{A})}x^{\widehat{A}}\right|&=\left| \int_0^{xy}x^{-\widehat{A}}\left[\frac{U}{u_n-u_1},\tau^{\widehat{A}}\cdot{\widetilde{B}(\tau)}\cdot\tau^{-\widehat{A}}\right]x^{\widehat{A}}\mathrm{d}\tau  \right|
    =\left| \int_0^{xy}\left[x^{-\widehat{A}}\frac{U}{u_n-u_1}x^{\widehat{A}},(\frac{\tau}{x})^{\widehat{A}}{\widetilde{B}}(\frac{\tau}{x})^{-\widehat{A}}\right]\mathrm{d}\tau \right|\\ \label{eq:eqt for Axy}
    & \overset{\tilde{\tau}=\tau/x}{= }\ |x|\left| \int_0^{y}\left[x^{-\widehat{A}}\frac{U}{u_n-u_1}x^{\widehat{A}},\widetilde{\tau}^{\widehat{A}}{\widetilde{B}}\widetilde{\tau}^{-\widehat{A}}\right]\mathrm{d}\widetilde{\tau} \right|
    \leq C'_1|xy|^{1-\sigma_1}.
\end{align}
Here $\widetilde{B}(t)=t^{-\widehat{A}}\cdot \frac{(u_n-u_1)\left(G(t)VG(t)^{-1}-v_1\cdot \mathrm{Id}\right)}{t}\cdot t^{\widehat{A}}$, and the first equality is derived from Proposition \ref{thm:asym in t}, the last inequality is derived from  Lemma \ref{lem:esti by jimbo}.
Also note that
\begin{align}\label{eq:eqt for psi}
    x^{-\widehat{A}}\Psi(x)x^{\widehat{A}}=\mathrm{Id}+x\cdot x^{-\widehat{A}}\left(\sum_{k=0}^{\infty}\Psi_{k+1}\cdot x^{k}\right) x^{\widehat{A}}=\mathrm{Id}+O(x^{1-\sigma_1}).
\end{align}
Then by \eqref{eq:eqt for Axy} and \eqref{eq:eqt for psi},
\begin{align*}
  \left|  P(x,y)\right|&\leq\left|\left(x^{-\widehat{A}}\Psi(x)x^{\widehat{A}}\right)^{-1}\right|\left|\left(x^{-\widehat{A}}\frac{(A(xy)-\widehat{A})}{x}x^{\widehat{A}}\right)\right|\left|\left(x^{-\widehat{A}}\Psi(x)x^{\widehat{A}}\right)\right|\\ 
  &\leq C_1^{''}|x|^{-\sigma_1} |y|^{1-\sigma_1}.
\end{align*}
Thus we can see $H(x,y)$ converges uniformly with respect to any bounded $y$, and 
\begin{align*}
    \lim_{x\rightarrow0}H(x,y)=\mathrm{Id},\quad (x,y)\in \mathcal{S}'_{\epsilon,\phi}.
\end{align*}
Meanwhile, by Proposition \ref{thm:asym in t} 
\begin{align*}
    &\left|\left[x^{-\widehat{A}}Ux^{\widehat{A}},x^{-\widehat{A}}\frac{G(xy)VG(xy)^{-1}-v_1\cdot \mathrm{Id}}{xy}x^{\widehat{A}}\right]\right|\leq 2\left| x^{-\widehat{A}}Ux^{\widehat{A}}\right|\cdot\left|x^{-\widehat{A}}\frac{G(xy)VG(xy)^{-1}-v_1\cdot \mathrm{Id}}{xy}x^{\widehat{A}}\right|\\
    &=2\left| x^{-\widehat{A}}Ux^{\widehat{A}}\right|\cdot\left|y^{\widehat{A}}\cdot(xy)^{-\widehat{A}}\frac{G(xy)VG(xy)^{-1}-v_1\cdot \mathrm{Id}}{xy}(xy)^{\widehat{A}}\cdot y^{-\widehat{A}}\right|\leq C_2' |xy|^{-\sigma_1}.
\end{align*}
Then
\begin{align*}
    \left|\frac{\partial  P(x,y)}{\partial y}\right|\leq\left|\left(x^{-\widehat{A}}\Psi(x)x^{\widehat{A}}\right)^{-1}\right|\left|\left[x^{-\widehat{A}}Ux^{\widehat{A}},x^{-\widehat{A}}\frac{G(xy)VG(xy)^{-1}-w_0\cdot \mathrm{Id}}{xy}x^{\widehat{A}}\right]\right|\left|\left(x^{-\widehat{A}}\Psi(x)x^{\widehat{A}}\right)\right| \leq C''_2|xy|^{-\sigma_1}.
\end{align*}
So $\frac{\partial H}{\partial y}$ converges uniformly with respect to $y$  on every compact subset of $\{y\in\mathbb{C}:|y|>0\}$, and 
\begin{align*}
    \lim_{x\rightarrow 0} \frac{\partial H}{\partial y}=0,\quad (x,y)\in \mathcal{S}'_{\epsilon,\phi}.
\end{align*}

    $\mathrm{Step \ 2:}$  Denote the coefficients of \eqref{eq:irr in sing x change vari} and \eqref{eq:irr in w change vari} by $A_1$ and $A_2$ respectively. By the compatibility of these two equations, we have $\left(\frac{\partial}{\partial x}-A_1\right)\left(\frac{\partial}{\partial y}-A_2\right)Q=0$, where $Q(x,y)=\mathrm{e}^{-\frac{w_0}{x}}Y^{(0)}(x)H(x,y)$. Since $\left(\frac{\partial}{\partial x}-A_1\right)Q=0$, so $\left(\frac{\partial}{\partial y}-A_2\right)Q=QR(y)$ for a matrix function $R(y)$ independent of $x$, and 
    \begin{align}\nonumber
        R(y)& = Q^{-1}\frac{\partial Q}{\partial y} + Q^{-1}\frac{B-w_0\cdot\mathrm{Id}}{xy}Q\\ \label{eq:eq for Ry}
        &= H^{-1}\frac{\partial H}{\partial y}+ \left(x^{-\widehat{A}}Y^{(0)}(x)H(x,y)\right)^{-1}\left(x^{-\widehat{A}}\frac{B-w_0\cdot\mathrm{Id}}{xy}x^{\widehat{A}}\right)\left(x^{-\widehat{A}}Y^{(0)}(x)H(x,y)\right).
    \end{align}
  We compute $R(y)$ as $x\rightarrow 0$ in \eqref{eq:eq for Ry}. Using  \eqref{eq:lim of H1} which we have proved in Step 1, and Theorem \ref{thm:asym in t}, we have
  \begin{align*}
      R(y) = \lim_{x\rightarrow 0} x^{-\widehat{A}}\frac{B-w_0\cdot\mathrm{Id}}{xy}x^{\widehat{A}} = y^{\widehat{A}}\widehat{G}\frac{V-w_0\cdot\mathrm{Id}}{t}\widehat{G}^{-1}y^{-\widehat{A}}.
  \end{align*}
    Then we can verify that 
    \begin{align*}
        \frac{\partial}{\partial y}\left(y^{\widehat{A}}\widehat{G}K^{(\infty)}_{-d+\mathrm{arg}(w_1)}\left(\frac{w_1}{t}y\right)\right)=-R(y)\left(y^{\widehat{A}}\widehat{G}K^{(\infty)}_{-d+\mathrm{arg}(w_1)}\left(\frac{w_1}{t}y\right)\right).
    \end{align*}
    Here note that $\frac{w_1}{t}$ is a fixed constant.
Therefore, 
\begin{align*}
    \Xi(x,y):=\mathrm{e}^{-\frac{w_0}{x}}Y^{(0)}(x)H(x,y)y^{\widehat{A}}\widehat{G}K^{(\infty)}_{-d+\mathrm{arg}(w_1)}\left(\frac{w_1}{t}y\right)
\end{align*}
 is the solution of \eqref{eq:irr in sing x change vari} and \eqref{eq:irr in w change vari}. 

    $\mathrm{Step \ 3:}$ Now setting $x=\frac{t}{y}$ and $y$ as variables, it follows that 
      there is a constant matrix $C$, such as 
  \begin{align*}
    \Xi(\frac{t}{y},y)=F^{(0)}_d(\frac{t}{y},t)\cdot C. 
  \end{align*}
    By Step 1 and Step 2 of this Theorem, when we fix $y$, and let $t\rightarrow0$, we have
  \begin{align}\label{eq:asy in dec1}\nonumber
      t^{-\widehat{A}}\cdot\mathrm{e}^{\frac{w_0y}{t}} \Xi\left(\frac{t}{y},y\right)&=\left(t^{-\widehat{A}}\Psi(\frac{t}{y})t^{\widehat{A}}\right)\cdot y^{-\widehat{A}}H\left(\frac{t}{y},y\right)y^{\widehat{A}}\cdot \widehat{G}K^{(\infty)}_{-d+\mathrm{arg}(w_1)}\left(\frac{w_1}{t}y\right)\\
     &\rightarrow\widehat{G}K^{(\infty)}_{-d+\mathrm{arg}(w_1)}\left(\frac{y}{z_1\cdots z_{n-1}w_{n-1}\cdots w_2}\right).
  \end{align}
    
   On the other hand, suppose that $\widetilde{F}(\xi)$ satisfies the following equation:
     \begin{align}\label{eq for tilF for xi}
      \frac{\partial\widetilde{F}}{\partial\xi}=\left(-\widetilde{V}-\frac{w_1^{\delta\widetilde{A}}G^{-1}AGw_1^{-\delta\widetilde{A}}}{\xi}-\frac{w_1^{\delta\widetilde{A}}G^{-1}w_1UGw_1^{-\delta\widetilde{A}}}{\xi^2}\right)\widetilde{F}.
  \end{align}
   We can verify that $w_1^{\delta\widetilde{A}}G^{-1}\cdot \mathrm{e}^{\frac{w_0\xi}{w_1}}F\left(\frac{w_1}{\xi},t\right)$ also satisfies the above equation \eqref{eq for tilF for xi}, where $F\left(\frac{w_1}{\xi},t\right)$ is the solution of \eqref{eq:irr sys in z normal} and \eqref{eq:irr sys in t normal} with $\frac{w_1}{\xi}$ and $t$ as variables. Then comparing the asymptotic behavior between $F^{(0)}(\xi,t)$ and $\widetilde{F}^{(\infty)}(\frac{w_1}{\xi},t)$, as $\xi\rightarrow 0$ within $\mathrm{Sect}_d^{(0)}$, we have
  \begin{align*}
      \mathrm{e}^{\frac{w_0}{\xi}}F^{(0)}_{d}(\xi,t)=Gw_1^{-\delta\widetilde{A}}\widetilde{F}^{(\infty)}_{-d+\mathrm{arg}(w_1)}\left(\frac{w_1}{\xi},t\right).
  \end{align*}
  Thus by \eqref{eq:decom in infty in thm} of this Theorem which we have proved in $(1)$,  when we fix $y$, and let $t\rightarrow0$,
  \begin{align}\label{eq:asy in dec2}\nonumber
    t^{-\widehat{A}}\cdot \mathrm{e}^{\frac{w_0y}{t}} F^{(0)}_{d}(\frac{t}{y},t)
    &=t^{-\widehat{A}}Gw_1^{-\delta\widetilde{A}}\widetilde{F}^{(\infty)}_{-d+\mathrm{arg}(w_1)}\left(\frac{w_1}{t}y,t\right)\\
    &\rightarrow\widehat{G}K^{(\infty)}_{-d+\mathrm{arg}(w_1)}\left(\frac{y}{z_1\cdots z_{n-1}w_{n-1}\cdots w_2}\right).
  \end{align}
   Therefore, comparing \eqref{eq:asy in dec1} and \eqref{eq:asy in dec2}, we know the constant $C=\mathrm{Id}$, and the identity \eqref{eq:decom in 0 in thm} holds.
\end{proof}
\begin{cor}\label{cor:F decom in infty} Under the same conditions as in the Theorem \ref{thm:decom of two irr F}, we have:
\begin{align}\label{eq:decom of infty in cor}
    F^{(\infty)}_d(\xi,t)\cdot C_d(U,\widehat{A})=\mathrm{e}^{-\frac{w_0}{\xi}}Y^{(0)}(\xi)H\left(\xi,\frac{t}{\xi}\right)\left(\frac{t}{\xi}\right)^{\widehat{A}}\widehat{G}K^{(0)}\left(\frac{w_1}{\xi}\right)\cdot\left(z_1\cdots z_{n-1}w_{n-1}\cdots w_2\right)^{\widetilde{A}}\widehat{G}^{-1}.
\end{align}
Here recall that $C_d(U,\widehat{A})$ is the connection matrix for system \eqref{eq:decom in hatA x}, $Y^{(0)}(x),H(x,y)$ are defined in Theorem \ref{thm:decom of two irr F}. And $K^{(0)}(\frac{w_1}{\xi})$ is the solution of \eqref{eq:decom in tilA y} with the prescribed asymptotics
\begin{align*}
    K^{(0)}\left(\frac{w_1}{\xi}\right)\cdot\left(\frac{w_1}{\xi}\right)^{-\widetilde{A}}\sim \mathrm{Id}+O\left(\frac{w_1}{\xi}\right),\ \frac{w_1}{\xi}\rightarrow 0.
\end{align*}
\end{cor}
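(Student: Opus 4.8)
The plan is to prove \eqref{eq:decom of infty in cor} by showing that both of its sides, regarded as functions of $(\xi,t)$ with the remaining variables $\mathbf{z},\mathbf{w}$ fixed, are fundamental solutions of the coupled linear system \eqref{eq:irr sys in z normal}--\eqref{eq:irr sys in t normal}, and that the two sides coincide in the limit $t\to 0$. First note that, since the eigenvalues $\mu_1,\dots,\mu_n$ of $\widehat{A}$ satisfy \eqref{eq:shrinking in t}, no two of them differ by a nonzero integer, so $\widehat{A}$ and $\widetilde{A}=-\widehat{G}^{-1}\widehat{A}\widehat{G}$ are non-resonant and all the solutions at $0$ and connection matrices used below are well defined. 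For the left-hand side I would start from the factorization $F^{(\infty)}_{d}(\xi,t)=L(\xi,t)\,\mathrm{e}^{-w_0/\xi}Y^{(\infty)}_{d}(\xi)$ of Theorem \ref{thm:decom of two irr F}(1); since the connection matrix of the one-irregular system \eqref{eq:decom in hatA x} is, by Definition \ref{def:conn for 1irr}, $C_d(U,\widehat{A})=Y^{(\infty)}_{d}(\xi)^{-1}Y^{(0)}(\xi)$, this gives
\begin{align}\label{plan-lhs}
F^{(\infty)}_{d}(\xi,t)\,C_d(U,\widehat{A})=L(\xi,t)\,\mathrm{e}^{-w_0/\xi}Y^{(0)}(\xi),
\end{align}
and because $C_d(U,\widehat{A})$ depends only on $U$ and $\widehat{A}$ (hence is constant in $(\xi,t)$), the expression in \eqref{plan-lhs} is a fundamental solution of \eqref{eq:irr sys in z normal}--\eqref{eq:irr sys in t normal}.

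Next I would denote the right-hand side of \eqref{eq:decom of infty in cor} by $\Xi^{(0)}(\xi,t)$ and verify that it too is a fundamental solution of that system. By Theorem \ref{thm:decom of two irr F}(2),
\begin{align*}
F^{(0)}_{d}(\xi,t)=\mathrm{e}^{-w_0/\xi}Y^{(0)}(\xi)H\!\left(\xi,\frac{t}{\xi}\right)\!\left(\frac{t}{\xi}\right)^{\widehat{A}}\widehat{G}\,K^{(\infty)}_{-d+\mathrm{arg}(w_1)}\!\left(\frac{w_1}{\xi}\right),
\end{align*}
so $\Xi^{(0)}(\xi,t)=F^{(0)}_{d}(\xi,t)\cdot M_1$ with
\begin{align*}
M_1=\left(K^{(\infty)}_{-d+\mathrm{arg}(w_1)}\!\left(\frac{w_1}{\xi}\right)\right)^{-1}K^{(0)}\!\left(\frac{w_1}{\xi}\right)\cdot\bigl(z_1\cdots z_{n-1}w_{n-1}\cdots w_2\bigr)^{\widetilde{A}}\widehat{G}^{-1}.
\end{align*}
The first factor of $M_1$ is, by Definition \ref{def:conn for 1irr} applied to \eqref{eq:decom in tilA y} (which has the shape of \eqref{introisoStokeseq1} with $U$ replaced by $-\widetilde{V}$ and the residue by $\widetilde{A}$), the connection matrix of \eqref{eq:decom in tilA y}, hence locally constant in $(\xi,t)$; the remaining factors of $M_1$ depend only on the fixed data $\mathbf{z},\mathbf{w}$. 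Thus $M_1$ is constant in $(\xi,t)$ and $\Xi^{(0)}$ is a fundamental solution. Consequently $\Xi^{(0)}(\xi,t)=F^{(\infty)}_{d}(\xi,t)\,C_d(U,\widehat{A})\cdot M$ for some constant matrix $M$, which by \eqref{plan-lhs} reads
\begin{align}\label{plan-eq}
\mathrm{e}^{-w_0/\xi}Y^{(0)}(\xi)H\!\left(\xi,\frac{t}{\xi}\right)\!\left(\frac{t}{\xi}\right)^{\widehat{A}}\widehat{G}\,K^{(0)}\!\left(\frac{w_1}{\xi}\right)\!\bigl(z_1\cdots w_2\bigr)^{\widetilde{A}}\widehat{G}^{-1}=L(\xi,t)\,\mathrm{e}^{-w_0/\xi}Y^{(0)}(\xi)\,M.
\end{align}

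To pin down $M=\mathrm{Id}$ I would let $t\to 0$ with $\xi$ fixed in \eqref{plan-eq}. On the right, $L(\xi,t)\to\mathrm{Id}$, so the limit is $\mathrm{e}^{-w_0/\xi}Y^{(0)}(\xi)M$. On the left, the bound $|P(\xi,t/\xi)|\le C|t|^{1-\sigma_1}$ established in the proof of Theorem \ref{thm:decom of two irr F}(2) yields $H(\xi,t/\xi)\to\mathrm{Id}$; writing $s=t/\xi$ and $\gamma=w_1/\xi=s/(z_1\cdots w_2)$ and using $K^{(0)}(\gamma)\gamma^{-\widetilde{A}}=\mathrm{Id}+O(\gamma)$, the identity $\gamma^{\widetilde{A}}(z_1\cdots w_2)^{\widetilde{A}}=s^{\widetilde{A}}$, and $\widehat{G}\,s^{\widetilde{A}}\widehat{G}^{-1}=s^{-\widehat{A}}$ (immediate from $\widetilde{A}=-\widehat{G}^{-1}\widehat{A}\widehat{G}$), one finds
\begin{align*}
s^{\widehat{A}}\widehat{G}\,K^{(0)}(\gamma)\bigl(z_1\cdots w_2\bigr)^{\widetilde{A}}\widehat{G}^{-1}=\mathrm{Id}+s^{\widehat{A}}\bigl(\widehat{G}\cdot O(\gamma)\cdot\widehat{G}^{-1}\bigr)s^{-\widehat{A}},
\end{align*}
and by the estimate $|s^{\widehat{A}}Xs^{-\widehat{A}}|\le C_K|X|\,|s|^{-\sigma}$ with $\sigma\in(\max_{i,j}|\mathrm{Re}(\mu_i-\mu_j)|,1)$ (Lemma \ref{lem:esti by jimbo}), the second term is $O(|s|^{1-\sigma})\to 0$. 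Hence the left-hand side of \eqref{plan-eq} tends to $\mathrm{e}^{-w_0/\xi}Y^{(0)}(\xi)$, forcing $M=\mathrm{Id}$, which is \eqref{eq:decom of infty in cor}. The main obstacle is the middle paragraph: correctly recognizing $(K^{(\infty)})^{-1}K^{(0)}$ as a locally constant connection matrix so that $\Xi^{(0)}$ is genuinely a fundamental solution, and then taking the $t\to0$ limit of $\Xi^{(0)}$, where the prefactor $s^{\widehat{A}}=(t/\xi)^{\widehat{A}}$ (which has no limit as $t\to0$) must be absorbed against $\widehat{G}\,K^{(0)}(\gamma)(z_1\cdots w_2)^{\widetilde{A}}\widehat{G}^{-1}$ via the conjugation identity together with the sub-unit growth bound.
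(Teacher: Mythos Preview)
Your proof is correct and follows essentially the same route as the paper: both arguments observe that the two sides are fundamental solutions of \eqref{eq:irr sys in z normal}--\eqref{eq:irr sys in t normal} (the paper simply cites Theorem~\ref{thm:decom of two irr F}, while you spell out why the right-hand side is $F^{(0)}_d$ times a constant matrix via the connection matrix $(K^{(\infty)})^{-1}K^{(0)}$), and then determine the intertwining constant by letting $t\to 0$ with $\xi$ fixed, using $\widehat{G}\,s^{\widetilde{A}}\widehat{G}^{-1}=s^{-\widehat{A}}$ and the conjugation estimate of Lemma~\ref{lem:esti by jimbo}. The paper's limit computation is phrased via $(t/\xi)^{-\widetilde{A}}K_{0,i}(t/\xi)^{\widetilde{A}}$ rather than $s^{\widehat{A}}(\cdot)s^{-\widehat{A}}$, but the content is identical.
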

\begin{proof}
    By Theorem \ref{thm:decom of two irr F}, both sides in \eqref{eq:decom of infty in cor} are solutions of \eqref{eq:irr sys in z normal} and \eqref{eq:irr sys in t normal}, so we only need two compare the asymptotic behavior of two sides at $t\rightarrow0$.

    By \eqref{eq:decom in infty in thm}, $\lim_{t\rightarrow 0}\left( F^{(\infty)}_d(\xi,t)\cdot C_d(U,\widehat{A})\right)=\mathrm{e}^{-\frac{w_0}{\xi}}Y_d^{(\infty)}(\xi)\cdot C_d(U,\widehat{A})=\mathrm{e}^{-\frac{w_0}{\xi}}Y^{(0)}(\xi)$. By \eqref{eq:lim of H1}, $\lim_{t\rightarrow0} H(\xi,\frac{t}{\xi})=\mathrm{Id}$, and 
    \begin{align*}
        &\left(\frac{t}{\xi}\right)^{\widehat{A}}\widehat{G}K^{(0)}\left(\frac{w_1}{\xi}\right)\cdot\left(z_1\cdots z_{n-1}w_{n-1}\cdots w_2\right)^{\widetilde{A}}\widehat{G}^{-1}\\
        =& \widehat{G}\cdot\left(\frac{t}{\xi}\right)^{-\widetilde{A}}\left(\mathrm{Id}+\sum_{i=1}^{\infty}K_{0,i}\left(\frac{w_1}{\xi}\right)^{i}\right)\left(\frac{t}{\xi}\right)^{\widetilde{A}}\cdot\widehat{G}^{-1}\rightarrow \mathrm{Id}, \quad \text{ as } t\rightarrow 0.
    \end{align*}
    Thus the limit of the right side of \eqref{eq:decom of infty in cor} when $t\rightarrow0$ is also $\mathrm{e}^{-\frac{w_0}{\xi}}Y^{(0)}(\xi)$, the equality holds.
\end{proof}

Now we can use the decomposition of solutions to compute the monodromy matrices.

\begin{theorem}\label{prop:decomp of monodromy}
 Suppose $A(\mathbf{z},t,\mathbf{w}),G(\mathbf{z},t,\mathbf{w})$ in  system \eqref{eq:irr sys in z normal}-\eqref{eq:irr sys in w normal} can be constructed from $\widehat{A}(\mathbf{z},\mathbf{w}),\widehat{G}(\mathbf{z},\mathbf{w})$ as in Theorem \ref{thm:asy of iso A from hatA}, then we have
 
$\mathrm{(1)}$\  The Stokes matrices at $\xi=\infty$ of linear system \eqref{eq:irr sys in z normal} are equal to the ones of the system \eqref{eq:decom in hatA x}, i.e.
\begin{align}
    S_{d,\pm}^{(\infty)}(U,V,A,G)=S_{d}^{\pm}(U,\widehat{A}).
\end{align}

$\mathrm{(2)}$\ The Stokes matrices at $\xi=0$ of system \eqref{eq:irr sys in z normal} are equal to the ones of the system \eqref{eq:decom in tilA y}, i.e.
\begin{align}
    S_{d,\pm}^{(0)}(U,V,A,G)=S_{d+\mathrm{arg}(w_1)}^{\pm}(-\widetilde{V},\widetilde{A}).
\end{align}

$\mathrm{(3)}$ \ The connection matrix $C_d(U,V,A,G)$ of system \eqref{eq:irr sys in z normal} can be represented by a combination of connection matrices of \eqref{eq:decom in hatA x} and \eqref{eq:decom in tilA y} :
\begin{align}
    C_d(U,V,A,G) &=C_d(U,\widehat{A})\cdot  (z_1\cdots z_{n-1}w_{n-1}\cdots w_2)^{\widehat{A}}\widehat{G}\cdot C_{d+\mathrm{arg}(w_1)}(-\widetilde{V},\widetilde{A})^{-1}.
\end{align}

\end{theorem}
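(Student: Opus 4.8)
The plan is to obtain all three identities by substituting the explicit factorizations of the canonical fundamental solutions from Theorem~\ref{thm:decom of two irr F} and Corollary~\ref{cor:F decom in infty} into the definitions of the Stokes matrices and the connection matrix, and then cancelling the factors common to the relevant pair of solutions. Throughout I fix $\mathbf{z}$, $t$, $\mathbf{w}$ with $t\in\mathcal{S}_{\epsilon,\phi}$, and I use that the proof of Theorem~\ref{thm:decom of two irr F} is uniform in the admissible direction, so the factorizations hold simultaneously for every direction ($d$, $d\pm\pi$, $-d$, $-d\mp\pi$) that appears. The structural point is that each factorization has exactly one factor depending on the chosen direction. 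For Part~(1): by Theorem~\ref{thm:decom of two irr F}(1), $F^{(\infty)}_{d'}(\xi,t)=L(\xi,t)\,\mathrm{e}^{-w_0/\xi}\,Y^{(\infty)}_{d'}(\xi)$ with $L$ and the scalar $\mathrm{e}^{-w_0/\xi}$ independent of $d'$; substituting into $S^{(\infty)}_{d,\pm}(U,V,A,G)=F^{(\infty)}_{d\pm\pi}(\xi,t)^{-1}F^{(\infty)}_d(\xi,t)$ cancels them and leaves $(Y^{(\infty)}_{d\pm\pi}(\xi))^{-1}Y^{(\infty)}_d(\xi)=S^{\pm}_d(U,\widehat{A})$. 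The anti-Stokes rays at $\xi=\infty$ of \eqref{eq:irr sys in z normal} and of \eqref{eq:decom in hatA x} coincide ($\{-\mathrm{arg}(u_i-u_j)+2k\pi\}$), so the sectors and the role of the $\pm\pi$ shift match on both sides.

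For Part~(2), Theorem~\ref{thm:decom of two irr F}(2) gives $F^{(0)}_{d'}(\xi,t)=\mathrm{e}^{-w_0/\xi}Y^{(0)}(\xi)H(\xi,t/\xi)(t/\xi)^{\widehat{A}}\widehat{G}\,K^{(\infty)}_{-d'+\mathrm{arg}(w_1)}(w_1/\xi)$, and here every factor to the left of $K^{(\infty)}$ is direction-independent. Substituting into $S^{(0)}_{d,\pm}(U,V,A,G)=F^{(0)}_{-d\mp\pi}(\xi,t)^{-1}F^{(0)}_{-d}(\xi,t)$ cancels the common left part and yields $K^{(\infty)}_{(d+\mathrm{arg}(w_1))\pm\pi}(w_1/\xi)^{-1}K^{(\infty)}_{d+\mathrm{arg}(w_1)}(w_1/\xi)$, which, read in the variable $\gamma=w_1/\xi$, is $S^{\pm}_{d+\mathrm{arg}(w_1)}(-\widetilde{V},\widetilde{A})$ for the one-pole system \eqref{eq:decom in tilA y}. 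A short check confirms that the anti-Stokes rays agree: those of \eqref{eq:irr sys in z normal} at $\xi=0$ are $\{\mathrm{arg}(v_i-v_j)+2k\pi\}$, those of \eqref{eq:decom in tilA y} at $\gamma=\infty$ are $\{-\mathrm{arg}(\widetilde v_j-\widetilde v_i)+2k\pi\}$, and since $\widetilde v_i-\widetilde v_j=(v_i-v_j)/w_1$ the two families differ exactly by the shift $\mathrm{arg}(w_1)$.

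For Part~(3), I combine Corollary~\ref{cor:F decom in infty}, written as
\[ F^{(\infty)}_d(\xi,t)=\Bigl(\mathrm{e}^{-\tfrac{w_0}{\xi}}Y^{(0)}(\xi)H\bigl(\xi,\tfrac{t}{\xi}\bigr)\bigl(\tfrac{t}{\xi}\bigr)^{\widehat{A}}\widehat{G}\Bigr)K^{(0)}\bigl(\tfrac{w_1}{\xi}\bigr)(z_1\cdots z_{n-1}w_{n-1}\cdots w_2)^{\widetilde{A}}\widehat{G}^{-1}C_d(U,\widehat{A})^{-1}, \]
with Theorem~\ref{thm:decom of two irr F}(2), written as
\[ F^{(0)}_{-d}(\xi,t)=\Bigl(\mathrm{e}^{-\tfrac{w_0}{\xi}}Y^{(0)}(\xi)H\bigl(\xi,\tfrac{t}{\xi}\bigr)\bigl(\tfrac{t}{\xi}\bigr)^{\widehat{A}}\widehat{G}\Bigr)K^{(\infty)}_{d+\mathrm{arg}(w_1)}\bigl(\tfrac{w_1}{\xi}\bigr). \]
The parenthesised ``core'' is the same, so in $C_d(U,V,A,G)=F^{(\infty)}_d(\xi,t)^{-1}F^{(0)}_{-d}(\xi,t)$ it cancels, leaving
\[ C_d(U,\widehat{A})\,\widehat{G}\,(z_1\cdots z_{n-1}w_{n-1}\cdots w_2)^{-\widetilde{A}}\,K^{(0)}\bigl(\tfrac{w_1}{\xi}\bigr)^{-1}K^{(\infty)}_{d+\mathrm{arg}(w_1)}\bigl(\tfrac{w_1}{\xi}\bigr). \]
Now $\widetilde{A}=-\widehat{G}^{-1}\widehat{A}\widehat{G}$ gives $\widehat{G}(z_1\cdots z_{n-1}w_{n-1}\cdots w_2)^{-\widetilde{A}}=(z_1\cdots z_{n-1}w_{n-1}\cdots w_2)^{\widehat{A}}\widehat{G}$, and Definition~\ref{def:conn for 1irr} applied to \eqref{eq:decom in tilA y} identifies $K^{(0)}(\gamma)^{-1}K^{(\infty)}_{d+\mathrm{arg}(w_1)}(\gamma)$ with $C_{d+\mathrm{arg}(w_1)}(-\widetilde{V},\widetilde{A})^{-1}$, which yields the stated formula; when $\widehat{A}$ is resonant one proves the identity for non-resonant $\widehat{A}$ first and extends by analytic continuation.

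The only genuinely delicate point, and the one I would spend time on, is the direction bookkeeping in Parts~(2) and~(3): verifying that the ``$\pm\pi$'' shifts and the reflection $d\mapsto-d$ in the definitions at $\xi=0$ translate, under the substitutions $x=\xi$ and $\gamma=w_1/\xi$, into exactly the operations appearing in the Stokes and connection matrices of the auxiliary one-pole systems, and that the shift by $\mathrm{arg}(w_1)$ correctly absorbs the discrepancy between $\mathrm{arg}(v_i-v_j)$ and $\mathrm{arg}(\widetilde v_i-\widetilde v_j)$. Everything else is routine cancellation of factors already produced by Theorem~\ref{thm:decom of two irr F}.
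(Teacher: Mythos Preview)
Your proposal is correct and follows essentially the same route as the paper: substitute the factorizations of Theorem~\ref{thm:decom of two irr F} and Corollary~\ref{cor:F decom in infty} into the definitions of $S^{(\infty)}_{d,\pm}$, $S^{(0)}_{d,\pm}$, and $C_d$, and cancel the direction-independent factors. Your explicit use of the conjugation $\widehat{G}(z_1\cdots w_2)^{-\widetilde{A}}=(z_1\cdots w_2)^{\widehat{A}}\widehat{G}$ in Part~(3), and your remarks on matching anti-Stokes rays and on the non-resonance hypothesis for $C_d(U,\widehat{A})$, are useful clarifications the paper leaves implicit.
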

\begin{proof}
    $\mathrm{(1)}$:\ By \eqref{eq:decom in infty in thm} in Theorem \ref{thm:decom of two irr F}, 
    \begin{align*}
         S_{d,\pm}^{(\infty)}(U,V,A,G)&=F^{(\infty)}_{d\pm\pi}(\xi,t)^{-1}F^{(\infty)}_d(\xi,t) =Y^{(\infty)}_{d\pm\pi}(\xi)^{-1}Y^{(\infty)}_d(\xi)=S_{d}^{\pm}(U,\widehat{A}).
    \end{align*}
$\mathrm{(2)}$:\ By \eqref{eq:decom in 0 in thm} in Theorem \ref{thm:decom of two irr F}, 
\begin{align*}
    S_{d,\pm}^{(0)}(U,V,A,G)&=F^{(0)}_{-d\mp\pi}(\xi,t)^{-1}F^{(0)}_{-d}(\xi,t)=K^{(\infty)}_{d\pm\pi+\mathrm{arg}(w_1)}\left(\frac{w_1}{\xi}\right)^{-1}K^{(\infty)}_{d+\mathrm{arg}(w_1)}\left(\frac{w_1}{\xi}\right)=S_{d+\mathrm{arg}(w_1)}^{\pm}(-\widetilde{V},\widetilde{A}).
\end{align*}
   $\mathrm{(3)}$:\ Compare \eqref{eq:decom in 0 in thm} in Theorem \ref{thm:decom of two irr F} and \eqref{eq:decom of infty in cor} in Corollary \ref{eq:decom of infty in cor} , we have
   \begin{align*}
       C_d(U,V,A,G)&=(F_d^{(\infty)})^{-1}F_{-d}^{(0)}\\
       &=C_d(U,\widehat{A})\widehat{G}\cdot(z_1\cdots z_{n-1}w_{n-1}\cdots w_2)^{-\widetilde{A}}K^{(0)}\left(\frac{w_1}{\xi}\right)^{-1}K^{(\infty)}_{d+\mathrm{arg}(w_1)}\left(\frac{w_1}{\xi}\right)\\
       &=C_d(U,\widehat{A})\widehat{G}\cdot(z_1\cdots z_{n-1}w_{n-1}\cdots w_2)^{-\widetilde{A}}C_{d+\mathrm{arg}(w_1)}(-\widetilde{V},\widetilde{A})^{-1}.
   \end{align*}

\end{proof}

\subsection{The asymptotics of shrinking solutions of isomonodromy equations}\label{sec: asy of iso}

In the preceding subsection, we consistently assumed that the solutions $A(\mathbf{z},t, \mathbf{w})$ and $G(\mathbf{z},t, \mathbf{w})$ of the isomonodromy equations were constructed from prescribed asymptotic data $\widehat{A}(\mathbf{z}, \mathbf{w})$ and $\widehat{G}(\mathbf{z}, \mathbf{w})$. In this subsection, we extend this analysis by further investigating the asymptotic behavior of $\widehat{A}$ and $\widehat{G}$.
\begin{prop}\label{prop:from A0,G0 to hatA, hatG}
Given constant matrices $\widehat{A}_0, \widetilde{A}_0 \in \frak{gl}_n$ and $G_0 \in {\rm GL}_n$, such that $\widetilde{A}_0 = -G_0^{-1} \widehat{A}_0 G_0$, and the eigenvalues $\widehat{\lambda}_i^{(k)}$ and $\widetilde{\lambda}_i^{(k)}$ of their upper-left $k \times k$ submatrices satisfy
   \begin{align}\label{eq:eigen condition of Ahat and Atil}
        \left|\mathrm{Re}\left(\widehat{\lambda}^{(k)}_{i}-\widehat{\lambda}^{(k)}_{j}\right)\right|<1,\quad \left|\mathrm{Re}\left(\widetilde{\lambda}^{(k)}_{i}-\widetilde{\lambda}^{(k)}_{j}\right)\right|<1,\   k=1,2,...,n;\ 1\leq i,j\leq k.
\end{align}
Let $\widehat{A}\left(\mathbf{z},\widehat{A}_0\right)$ and $\widetilde{A}\left(\mathbf{w};\widetilde{A}_0\right)$ be the solutions of \eqref{eq iso for hatA z} and \eqref{eq iso for tilA w} respectively, as provided by Theorem~\ref{thm:iso asy by tangxu} with boundary value $\widehat{A}_0$ and $\widetilde{A}_0$. Then the system  \eqref{eq for hatG z} and \eqref{eq for hatG w} has a unique solution $\widehat{G}(\mathbf{z},\mathbf{w})$ such that
$
\widetilde{A}\left(\mathbf{w},\widetilde{A}_0\right) = -\widehat{G}\left(\mathbf{z},\mathbf{w}\right)^{-1}\widehat{A}\left(\mathbf{z},\widehat{A}_0\right)\widehat{G}\left(\mathbf{z},\mathbf{w}\right),
$
and as
$
z_{n-1}\rightarrow\infty,\ldots,z_2\rightarrow\infty;\ w_{n-1}\rightarrow\infty,\ldots,w_2 \rightarrow \infty
$  successively, 
   \begin{align}\label{eq:asym leading of hatG}
   \lim_{w_2\rightarrow\infty}\cdots\lim_{w_{n-1}\rightarrow\infty}\lim_{z_{2}\rightarrow\infty}\cdots\lim_{z_{n-1}\rightarrow\infty} \overrightarrow{\prod_{k=1}^{n-1}}\left(z_{k}^{\delta_{k}\widehat{A}_{k-1}}z_{k}^{-\widehat{A}_{k}}\right)\left(z_1\cdots z_{n-1}w_{n-1}\cdots w_2\right)^{\widehat{A}}\widehat{G} \overleftarrow{\prod_{k=2}^{n-1}}\left(w_{k}^{\widetilde{A}_{k}}w_{k}^{-\delta_{k}\widetilde{A}_{k-1}}\right)= G_0.
   \end{align}
\end{prop}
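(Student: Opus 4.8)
The plan is to treat \eqref{eq for hatG z}--\eqref{eq for hatG w} as a \emph{linear} system for $\widehat G$, to recover the quadratic constraint $\widetilde A=-\widehat G^{-1}\widehat A\widehat G$ as a first integral of that linear system, and then to fix the remaining integration constant by the prescribed regularized limit. Concretely, I would freeze $\widehat A=\widehat A(\mathbf z,\widehat A_0)$ and $\widetilde A=\widetilde A(\mathbf w,\widetilde A_0)$, the solutions furnished by Theorem~\ref{thm:iso asy by tangxu}, replace $\mathrm{ad}_{\widetilde V}^{-1}\mathrm{ad}_{\partial\widetilde V/\partial w_k}(\widehat G^{-1}\widehat A\widehat G)$ in \eqref{eq for hatG w} by $-\mathrm{ad}_{\widetilde V}^{-1}\mathrm{ad}_{\partial\widetilde V/\partial w_k}\widetilde A$, and study the resulting linear system $\partial_{z_0}\widehat G=\partial_{w_0}\widehat G=0$, $\partial_{z_k}\widehat G=(\mathrm{ad}_U^{-1}\mathrm{ad}_{\partial U/\partial z_k}\widehat A-\tfrac1{z_k}\widehat A)\widehat G$, $\partial_{w_k}\widehat G=-\widehat G\,\mathrm{ad}_{\widetilde V}^{-1}\mathrm{ad}_{\partial\widetilde V/\partial w_k}\widetilde A-\tfrac1{w_k}\widehat A\widehat G$. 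The first task is Frobenius integrability: the $z_kz_j$-, $w_kw_j$- and mixed $z_kw_j$-compatibilities hold by a direct computation that uses exactly the isomonodromy equations \eqref{eq iso for hatA z} for $\widehat A$ and \eqref{eq iso for tilA w} for $\widetilde A$ (it is the compatibility of a Lax-type pair). Hence this system has locally a $\mathrm{GL}_n$-worth of multivalued holomorphic solutions, parametrized by the value at any base point.

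Next I would show that the quadratic constraint is a first integral. For any solution $\widehat G$ of the linear system put $N:=\widehat G^{-1}\widehat A\widehat G$. Differentiating in $z_k$ and using $\partial_{z_k}\widehat A=[\mathrm{ad}_U^{-1}\mathrm{ad}_{\partial U/\partial z_k}\widehat A,\widehat A]$, every term cancels, so $\partial_{z_k}N=0$ and $N=N(\mathbf w)$. Differentiating in $w_k$ and using $\widehat G^{-1}\partial_{w_k}\widehat G=-\mathrm{ad}_{\widetilde V}^{-1}\mathrm{ad}_{\partial\widetilde V/\partial w_k}\widetilde A-\tfrac1{w_k}N$, the $\tfrac1{w_k}N^2$ terms cancel and one gets $\partial_{w_k}N=[\mathrm{ad}_{\widetilde V}^{-1}\mathrm{ad}_{\partial\widetilde V/\partial w_k}\widetilde A,\,N]$, which is precisely the linear equation also solved by $-\widetilde A$ (by \eqref{eq iso for tilA w}). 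Therefore $N$ and $-\widetilde A$ agree globally as soon as they agree at one point of the connected domain; and when $N=-\widetilde A$, substituting $\widehat G^{-1}\widehat A\widehat G=-\widetilde A$ turns the linear $w_k$-equation back into the original \eqref{eq for hatG w}, so $\widehat G$ solves the full system \eqref{eq for hatG z}--\eqref{eq for hatG w}.

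It remains to single out the solution realizing \eqref{eq:asym leading of hatG} and to obtain the one-point identity $N=-\widetilde A$. Here I would transport the asymptotic machinery of \cite{TangXu,xu2019closure1} behind Theorem~\ref{thm:iso asy by tangxu} to the $\widehat G$-equation, by induction on the number of variables: under \eqref{eq:eigen condition of Ahat and Atil}, as $z_{n-1},\dots,z_2\to\infty$ successively, conjugation on the left by the ordered product $\overrightarrow{\prod_{k}}\bigl(z_k^{\delta_k\widehat A_{k-1}}z_k^{-\widehat A_k}\bigr)$ (the $\widehat A_k$ being the intermediate matrices of Theorem~\ref{thm:iso asy by tangxu}) regularizes the $z_k$-equations and successively replaces $\widehat A$ by $\widehat A_{k-1}$; symmetrically, the right-multiplicative structure of the $w_k$-equations is regularized by $\overleftarrow{\prod_{k}}\bigl(w_k^{\widetilde A_k}w_k^{-\delta_k\widetilde A_{k-1}}\bigr)$ as $w_{n-1},\dots,w_2\to\infty$; and the scalar factor $(z_1\cdots z_{n-1}w_{n-1}\cdots w_2)^{\widehat A}$ absorbs the $-\tfrac1{z_k}\widehat A$ and $-\tfrac1{w_k}\widehat A$ contributions. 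The eigenvalue inequalities \eqref{eq:eigen condition of Ahat and Atil} are exactly what makes this iterated limit exist; it can moreover be prescribed to be an arbitrary element of $\mathrm{GL}_n$, and the solution is unique given the value of the limit, so taking the limit to equal $G_0$ defines $\widehat G$. Finally, propagating $N=\widehat G^{-1}\widehat A\widehat G$ through the iterated limit — the left prefactor conjugates $\widehat A$ into $\widehat A_0$, the central scalar commutes with $\widehat A$, and the right prefactor conjugates the outcome — shows that the regularized limit of $N$ equals $G_0^{-1}\widehat A_0G_0=-\widetilde A_0$ by the hypothesis $\widetilde A_0=-G_0^{-1}\widehat A_0G_0$, which is also the regularized limit of $-\widetilde A$; matching the two sides there and propagating by analytic continuation (equivalently, by the same uniqueness statement) gives $N\equiv-\widetilde A$. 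This yields $\widetilde A(\mathbf w,\widetilde A_0)=-\widehat G^{-1}\widehat A(\mathbf z,\widehat A_0)\widehat G$, shows $\widehat G$ solves \eqref{eq for hatG z}--\eqref{eq for hatG w}, and (by uniqueness of the normalized limit) shows $\widehat G$ is unique.

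The main obstacle is the asymptotic analysis of the previous paragraph: proving that the regularized iterated limit in \eqref{eq:asym leading of hatG} exists, is prescribable, and is unique. This amounts to carrying the Picard-iteration (WKB-type) estimates of \cite{TangXu} over to the $\widehat G$-equation, which simultaneously carries a left $\widehat A$-action in the $z_k$-variables and a right $\widetilde A$-action in the $w_k$-variables, while keeping control of the two chains of intermediate matrices $\widehat A_k$ and $\widetilde A_k$ as the $z$- and $w$-limits are taken in the prescribed order; the eigenvalue conditions \eqref{eq:eigen condition of Ahat and Atil} on the upper-left submatrices are precisely what guarantees convergence at each stage.
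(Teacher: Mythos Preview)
Your approach is correct and genuinely different from the paper's. The paper does not linearize and then recover the constraint as a first integral; instead it \emph{constructs} $\widehat G$ by an explicit formula. Concretely, it introduces (via Lemma~\ref{prop:eq of one irr connection mat}, imported from \cite{TangXu}) the normalized ``connection-matrix'' solutions $\widehat{\mathfrak C}(\mathbf z,\widehat A)$ and $\widetilde{\mathfrak C}(\mathbf w,\widetilde A)$ of the one-sided linear equations \eqref{eq: eq of one irr connection mat}, whose iterated regularized limits are $\mathrm{Id}$, and then \emph{defines}
\[
\widehat G \;=\; e^{\pi\I\widehat A}\,(z_1\cdots z_{n-1}w_{n-1}\cdots w_2)^{-\widehat A}\cdot \widehat{\mathfrak C}^{-1}\cdot G_0\,e^{\pi\I\widetilde A_0}\cdot \widetilde{\mathfrak C}.
\]
The identity $\widehat G^{-1}\widehat A\,\widehat G=-\widetilde A$ is then checked in one line using Corollary~\ref{cor:decom of CPhiC in one irr} (which gives $\mathrm{Ad}(\widehat{\mathfrak C})\widehat A=\widehat A_0$, $\mathrm{Ad}(\widetilde{\mathfrak C})\widetilde A=\widetilde A_0$), and the asymptotic relation \eqref{eq:asym leading of hatG} is read off directly from the known asymptotics of $\widehat{\mathfrak C}$ and $\widetilde{\mathfrak C}$. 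For uniqueness the paper takes a more indirect route than you: it feeds any admissible $\widehat G$ into Theorem~\ref{thm:asy of iso A from hatA} and Theorem~\ref{prop:decomp of monodromy} to compute the (constant) connection matrix $C_d(U,V,A,G)$, shows that the iterated limit forces $C_d$ to equal the explicit product \eqref{eq:exp for Cd in prop}, and then inverts to recover the formula for $\widehat G$.

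In short: your route separates the problem cleanly into (i) integrability of a linear system, (ii) a first-integral argument for the conjugacy constraint, and (iii) a boundary-value problem at the iterated limit---but step (iii), as you note, requires porting the Picard-type estimates of \cite{TangXu} to the two-sided $\widehat G$-equation. The paper sidesteps that work entirely: by factoring $\widehat G$ as (scalar)$\cdot\widehat{\mathfrak C}^{-1}\cdot(\text{const})\cdot\widetilde{\mathfrak C}$, it reduces everything to the one-sided asymptotics already established in Lemma~\ref{prop:eq of one irr connection mat}, and it gets an explicit formula for $\widehat G$ as a bonus (which is reused in Corollary~\ref{cor: concrete monodromy data by A0 and G0} and in Section~\ref{sec:apply in tt}). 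Your argument is conceptually cleaner about why the quadratic constraint is compatible; the paper's is shorter and yields a closed-form $\widehat G$.
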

\begin{proof}
Since the equations \eqref{eq iso for hatA z} in the variables $\mathbf{z} = (z_1, \dots, z_{n-1})$ coincide with the isomonodromy equations \eqref{eq:iso eq of one irr in z}, Theorem~\ref{thm:iso asy by tangxu} guarantees the existence of a unique solution $\widehat{A}_{n-1} = \widehat{A}\left(\mathbf{z}, \widehat{A}_0\right)$, characterized by the sequence of asymptotic data $\widehat{A}_{n-2}, \dots, \widehat{A}_{1}, \widehat{A}_{0}$. Similarly, for $\widetilde{A}$ satisfying \eqref{eq iso for tilA w}, the same theorem ensures that a unique solution $\widetilde{A}\left(\mathbf{w}, \widetilde{A}_0\right) = \widetilde{A}_{n-1}$ exists with the corresponding asymptotic data $\widetilde{A}_{n-2}, \dots, \widetilde{A}_{1}=\widetilde{A}_0,\  \widetilde{A}_{0}$.

To construct $\widehat{G}$, we require the following lemma concerning the connection matrix $C_d(U,\Phi)$ of the system with a single irregular singularity:

\begin{lemma}[{\cite[Section 4.2]{TangXu}}]\label{prop:eq of one irr connection mat}
Let $\Phi_{n-1}=\Phi(\mathbf{z};\Phi_0)$ be a non-resonant solution of the isomonodromy equations \eqref{eq:iso eq of one irr in z}, with a sequence of asymptotics $\Phi_{n-2},...,\Phi_{0}$ as defined in Theorem \ref{thm:iso asy by tangxu}. Let $X(\mathbf{z},\Phi_{n-1})$ be a meromorphic multi-valued solution of system  \eqref{eq: eq of one irr connection mat} where $\Phi=\Phi_{n-1}$.
    Then, as $z_{n-1} \rightarrow \infty, \dots, z_{2} \rightarrow \infty$ successively, there is a constant $X_0$, such that
    \begin{align}\label{eq:leading term of connection mat in one irr}
       \lim_{z_{2}\rightarrow\infty}\cdots\lim_{z_{n-1}\rightarrow\infty} X (\mathbf{z},\Phi_{n-1})\cdot \overleftarrow{\prod_{k=1}^{n-1}} (z_{k}^{\Phi_{k}} z_{k}^{-\delta_{k}\Phi_{k-1}})=X_0.
    \end{align}
    In particular, when $X(\mathbf{z})=C_d(U,\Phi_{n-1})$ for $U$ in a given connected region $R_{u,d}(J)$, the asymptotic constant $X_0$ is 
    \begin{align*}
        X_0=\overrightarrow{\prod_{k=1}^{n-1}} C_{d+\mathrm{arg}(u_{k+1}-u_k)}(E_{k+1}, \delta_{k+1}(\Phi_{0})), \quad \text{for}\ U\in R_{u,d}(J).
    \end{align*}
\end{lemma}

Returning to the proof of the proposition.
Let $\widehat{\mathfrak{C}}(\mathbf{z},\widehat{A}_{n-1})$ be the solutions of \eqref{eq: eq of one irr connection mat} corresponding to $\Phi=\widehat{A}(\mathbf{z},\widehat{A}_0)$, and $\widetilde{\mathfrak{C}}(\mathbf{w},\widetilde{A}_{n-1})$ be the solutions of \eqref{eq: eq of one irr connection mat} corresponding to $U=\widetilde{V},\Phi=\widetilde{A}(\mathbf{w},\widetilde{A}_0)$, such that both asymptotic constants in \eqref{eq:leading term of connection mat in one irr} are $\mathrm{Id}$. By Lemma \ref{prop:eq of one irr connection mat},
in any region $R_{u,d}(J)$, 
    \[\widehat{\mathfrak{C}}(\mathbf{z},\widehat{A}_{n-1})=\left( \overrightarrow{\prod_{k=1}^{n-1}} C_{d+\mathrm{arg}(u_{k+1}-u_k)}(E_{k+1}, \delta_{k+1}(\widehat{A}_{0})) \right)^{-1}{C}_d(U, \widehat{A}_{n-1}),\]
and in any region $R_{\widetilde{v},d}(J)$,
  \[\widetilde{\mathfrak{C}}(\mathbf{w},\widetilde{A}_{n-1})=\left( \overrightarrow{\prod_{k=1}^{n-1}} C_{d+\mathrm{arg}(\widetilde{v}_{k+1}-\widetilde{v}_k)}(E_{k+1}, \delta_{k+1}(\widetilde{A}_{0})) \right)^{-1}{C}_d(\widetilde{V}, \widetilde{A}_{n-1}),\quad \text{where}\  \widetilde{v}_k=\frac{v_k-w_0}{w_1}.
  \]
  Thus by Corollary \ref{cor:decom of CPhiC in one irr}, $\mathrm{Ad}\left(\widehat{\mathfrak{C}}(\mathbf{z},\widehat{A}_{n-1})\right)\widehat{A}=\widehat{A}_0,\ \mathrm{Ad}\left(\widetilde{\mathfrak{C}}(\mathbf{w},\widetilde{A}_{n-1})\right)\widetilde{A}=\widetilde{A}_0 $.

Now set
\begin{align}\label{eq:def of hatG in RH prob}
    \widehat{G} =\mathrm{e}^{\pi\mathrm{i}\widehat{A}} (z_1 \cdots z_{n-1} w_{n-1} \cdots w_2)^{-\widehat{A}} \cdot \widehat{\mathfrak{C}}(\mathbf{z},\widehat{A}_{n-1})^{-1}\cdot G_0\mathrm{e}^{\pi\mathrm{i}\widetilde{A}_0}\cdot \widetilde{\mathfrak{C}}(\mathbf{w},\widetilde{A}_{n-1}).
\end{align}
Note that
\begin{align*}
    \widehat{G}^{-1} \widehat{A} \widehat{G} &= \mathrm{Ad}\left(\widetilde{\mathfrak{C}}(\mathbf{w},\widetilde{A}_{n-1})^{-1}\cdot \mathrm{e}^{-\pi\mathrm{i}\widetilde{A}_0}G_0^{-1}\cdot\widehat{\mathfrak{C}}(\mathbf{z},\widehat{A}_{n-1})\right) \widehat{A} \\
    &= -\mathrm{Ad}\left(\widetilde{\mathfrak{C}}(\mathbf{w},\widetilde{}{A}_{n-1})^{-1}\right) \widetilde{A}_0 \\
    &= -\widetilde{A}.
\end{align*}
With the relation $\widetilde{A} = -\widehat{G}^{-1} \widehat{A} \widehat{G}$ established, the equations and asymptotic behavior of $\widehat{G}$ can be computed directly from the corresponding properties of $\widehat{\mathfrak{C}}(\mathbf{z},\widehat{A}_{n-1})$ and $\widetilde{\mathfrak{C}}(\mathbf{w},\widetilde{A}_{n-1})$. In particular, we can use \eqref{eq:def of hatG in RH prob} to compute the equations for $\widehat{G}$ in variables $w_0,w_2,...,w_{n-1}$, and use the following which is equivalent to \eqref{eq:def of hatG in RH prob} to compute the equations  in variables $z_0,z_1,...,z_{n-1}$:
\begin{align*}
    \widehat{G} =   \widehat{\mathfrak{C}}(\mathbf{z},\widehat{A}_{n-1})^{-1}\cdot G_0\mathrm{e}^{\pi\mathrm{i}\widetilde{A}_0}\cdot \widetilde{\mathfrak{C}}(\mathbf{w},\widetilde{A}_{n-1})\cdot(z_1 \cdots z_{n-1} w_{n-1} \cdots w_2)^{\widetilde{A}}\mathrm{e}^{-\pi\mathrm{i}\widetilde{A}}.
\end{align*}

To prove the uniqueness of $\widehat{G}$, it suffices to show that any $\widehat{G}$ satisfying \eqref{eq:eigen condition of Ahat and Atil} and \eqref{eq:asym leading of hatG} must be given by \eqref{eq:def of hatG in RH prob}. Based on Theorem~\ref{thm:asy of iso A from hatA}, there exist isomonodromy solutions $(A, G)$ to the equations \eqref{iso for z begin}--\eqref{iso for z end} with leading asymptotics $\widehat{A}$ and $\widehat{G}$, respectively. Theorem~\ref{prop:decomp of monodromy} then shows that
\begin{align}\nonumber
    C_d(U, V, A, G) &= C_d(U, \widehat{A}) \cdot (z_1 \cdots z_{n-1} w_{n-1} \cdots w_2)^{\widehat{A}} \widehat{G} \cdot C_{d+\mathrm{arg}(w_1)}(-\widetilde{V}, \widetilde{A})^{-1}\\ \label{eq:Cd(u,a,v) in proof}
    &= C_d(U, \widehat{A}) \cdot (z_1 \cdots z_{n-1} w_{n-1} \cdots w_2)^{\widehat{A}} \widehat{G} \cdot \mathrm{e}^{\pi\mathrm{i}\widetilde{A}}C_{d+\pi+\mathrm{arg}(w_1)}(\widetilde{V}, \widetilde{A})^{-1} \mathrm{e}^{-\pi\mathrm{i}\delta\widetilde{A}}.
\end{align}
Combined with the asymptotics \eqref{eq:leading term of connection mat in one irr}, \eqref{eq:asym leading of hatG} and \eqref{eq:limit in zk one irr iso}, in a connected region $R_{u,d}(J_1)\times R_{v,d}(J_2)$, the constant $C_d(U,V,A,G)$ is 
\begin{align} \nonumber
    C_d =& C_d(U, \widehat{A}) \overleftarrow{\prod_{k=1}^{n-1}} \left(z_{k}^{\widehat{A}_{k}} z_{k}^{-\delta_{k} \widehat{A}_{k-1}}\right)  \\ \nonumber
    & \cdot \overrightarrow{\prod_{k=1}^{n-1}} \left(z_{k}^{\delta_{k} \widehat{A}_{k-1}} z_{k}^{-\widehat{A}_{k}}\right) (z_1 \cdots z_{n-1} w_{n-1} \cdots w_2)^{\widehat{A}} \widehat{G} \overleftarrow{\prod_{k=2}^{n-1}} \left(w_{k}^{\widetilde{A}_{k}} w_{k}^{-\delta_{k} \widetilde{A}_{k-1}}\right) \\ \nonumber
    & \cdot \overrightarrow{\prod_{k=2}^{n-1}} \left(w_{k}^{\delta_{k} \widetilde{A}_{k-1}} w_{k}^{-\widetilde{A}_{k}}\right) \mathrm{e}^{\pi\mathrm{i}\widetilde{A}} \overleftarrow{\prod_{k=2}^{n-1}} \left(w_{k}^{\widetilde{A}_{k}} w_{k}^{-\delta_{k} \widetilde{A}_{k-1}}\right) \overrightarrow{\prod_{k=2}^{n-1}} \left(w_{k}^{\delta_{k} \widetilde{A}_{k-1}} w_{k}^{-\widetilde{A}_{k}}\right) C_{d+\pi+\mathrm{arg}(w_1)}(\widetilde{V}, \widetilde{A})^{-1} \mathrm{e}^{-\pi\mathrm{i}\delta\widetilde{A}} \\  
  \label{eq:exp for Cd in prop}  =& \left( \overrightarrow{\prod_{k = 1}^{n-1}} C_{d+\mathrm{arg}(u_{k+1}-u_k)}\left(E_{k+1}, \delta_{k+1}\widehat{A}_0\right) \right) \cdot G_0  \mathrm{e}^{\pi\mathrm{i}\widetilde{A}_0}\cdot\left( \overrightarrow{\prod_{k = 1}^{n-1}} C_{d+\pi+\mathrm{arg}(v_{k+1}-v_k)}\left(E_{k+1}, \delta_{k+1}\widetilde{A}_0\right) \right)^{-1}\mathrm{e}^{-\pi\mathrm{i}\delta\widetilde{A}}. 
\end{align}
 Substituting this back into \eqref{eq:Cd(u,a,v) in proof} recovers the desired expression for $\widehat{G}$.
\end{proof}

The matrices $(\widehat{A},\widehat{G})$ obtained from Proposition \ref{prop:from A0,G0 to hatA, hatG} satisfy the eigenvalue condition \eqref{eq:shrinking in t}  and the equations \eqref{eq iso for hatA z}-\eqref{eq for hatG w}. Therefore, by Theorem \ref{thm:asy of iso A from hatA}, they yield solutions to the isomonodromy equation that satisfy the prescribed asymptotic behavior. These solutions are precisely the ones parameterized by the boundary value $(\widehat{A}_0,G_0)$ in Theorem \ref{mainthm}, and we refer to them as shrinking solutions.

\begin{definition}\label{def:shrinking solution}
    Given $\widehat{A}_0,G_0$  satisfying the boundary conditions \eqref{eq:eigenvalue cond of hatA tilA}, the solution $A\left(\widehat{A}_0,G_0\right)$ and $G\left(\widehat{A}_0,G_0\right)$ of the isomonodromy equations constructed via Proposition \ref{prop:from A0,G0 to hatA, hatG} and Theorem \ref{thm:asy of iso A from hatA} are called the \textbf{shrinking solutions}. The set of all shrinking solutions is denoted by $\mathfrak{Sol}_{Shr}$.

\end{definition}
We also compute the concrete expressions of monodromy data.
\begin{cor}\label{cor: concrete monodromy data by A0 and G0}
   Let $A\left(\widehat{A}_0,G_0\right),G\left(\widehat{A}_0,G_0\right)$ be the shrinking solution of isomonodromy equations \eqref{iso for z begin}–\eqref{iso for z end}. Then when $U\in R_{u,d}$, such that $\mathrm{Im}(u_1e^{\mathrm{i}d})>...> \mathrm{Im}(u_ne^{\mathrm{i}d})$, $-\pi<\left(d+\mathrm{arg}(u_{k+1}-u_{k})\right)<0$,
    \begin{align}
        \nu_d^{(\infty)}\left(U,V,A\left(\widehat{A}_0,G_0\right),G\left(\widehat{A}_0,G_0\right)\right)= \mathrm{Ad}\left(\overrightarrow{\prod_{k=1}^{n-1}}C_{-\frac{\pi}{2}}\left(E_{k+1},\delta_{k+1}\widehat{A}_0\right)\right)\mathrm{e}^{2\pi\mathrm{i}\widehat{A}_0}.
    \end{align}
    When $V\in R_{v,d}$, such that $\mathrm{Im}(v_1e^{\mathrm{i}d})<...< \mathrm{Im}(v_ne^{\mathrm{i}d}),\  0<\left(d+\mathrm{arg}(v_{k+1}-v_{k})\right)<\pi$,
    \begin{align}
        \nu_{-d}^{(0)}\left(U,V,A\left(\widehat{A}_0,G_0\right),G\left(\widehat{A}_0,G_0\right)\right)= \mathrm{Ad}\left(\overrightarrow{\prod_{k=1}^{n-1}}C_{\frac{\pi}{2}}\left(-E_{k+1},\delta_{k+1}\widetilde{A}_0\right)\right)\mathrm{e}^{2\pi\mathrm{i}\widetilde{A}_0}.
    \end{align}
    And when \begin{align}\nonumber
    &\mathrm{Im}(u_1e^{\mathrm{i}d})>...> \mathrm{Im}(u_ne^{\mathrm{i}d}),\quad \mathrm{Im}(v_1e^{\mathrm{i}d})<...< \mathrm{Im}(v_ne^{\mathrm{i}d}),\\ \label{eq:special case for uv}
    &-\pi<\left(d+\mathrm{arg}(u_{k+1}-u_{k})\right)<0,\quad 0<\left(d+\mathrm{arg}(v_{k+1}-v_{k})\right)<\pi,\ \text{for}\ \ k=1,..,n-1,
\end{align}
we have
    \begin{align}\label{eq:eq for connection mat}
    C_d\left(U,V,A\left(\widehat{A}_0,G_0\right),G\left(\widehat{A}_0,G_0\right)\right) 
        =\left( \overrightarrow{\prod_{k = 1}^{n-1}} C_{-\frac{\pi}{2}}\left(E_{k+1},\delta_{k+1}\widehat{A}_0\right) \right)\cdot {G}_0  \cdot \left( \overrightarrow{\prod_{k = 1}^{n-1}} C_{\frac{\pi}{2}}\left(-E_{k+1},\delta_{k+1}\widetilde{A}_0\right) \right)^{-1},
    \end{align}
\end{cor}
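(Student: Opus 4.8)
The plan is to deduce Corollary~\ref{cor: concrete monodromy data by A0 and G0} directly from the decomposition Theorem~\ref{prop:decomp of monodromy} together with the single-pole factorization results of Section~2, specialized to the chosen ordering of $u_i,v_i$ and direction $d$. I would first treat the monodromy matrix $\nu_d^{(\infty)}$: by part~$(1)$ of Theorem~\ref{prop:decomp of monodromy} the Stokes matrices at $\xi=\infty$ of the two-pole system equal those of the one-pole system \eqref{eq:decom in hatA x} with residue $\widehat{A}=\widehat{A}(\mathbf{z},\widehat{A}_0)$, hence by the monodromy relation \eqref{eq:LU fac of nu infty} and \eqref{monorel3} we get $\nu_d^{(\infty)}(U,V,A,G)=\nu_d(U,\widehat{A})=S_d^-(U,\widehat{A})^{-1}\mathrm{e}^{2\pi\mathrm{i}\delta\widehat{A}}S_d^+(U,\widehat{A})$. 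Now apply Theorem~\ref{thm:monodromy factor of one irr sys}: since $\widehat{A}(\mathbf{z},\widehat{A}_0)$ is precisely the solution of \eqref{eq:iso eq of one irr in z} with boundary value $\widehat{A}_0$ (this is the content of Proposition~\ref{prop:from A0,G0 to hatA, hatG}, where $\widehat{A}=\widehat{A}_{n-1}$), and since the hypotheses $\mathrm{Im}(u_1e^{\mathrm{i}d})>\cdots>\mathrm{Im}(u_ne^{\mathrm{i}d})$ put $U$ in the component $R_{u,d}(J)$ with $J=\{1,\dots,n-1\}$, the cited theorem (together with its subsequent remark, using $-\pi<d+\arg(u_{k+1}-u_k)<0$) gives exactly $\nu_d(U,\widehat{A})=\mathrm{Ad}\bigl(\overrightarrow{\prod}_{k=1}^{n-1}C_{-\pi/2}(E_{k+1},\delta_{k+1}\widehat{A}_0)\bigr)\mathrm{e}^{2\pi\mathrm{i}\widehat{A}_0}$.

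For $\nu_{-d}^{(0)}$ the argument is parallel but routed through the $\xi=0$ side. Part~$(2)$ of Theorem~\ref{prop:decomp of monodromy} identifies the Stokes matrices at $\xi=0$ with those of the one-pole system \eqref{eq:decom in tilA y} having ``leading term'' $-\widetilde{V}$ and residue $\widetilde{A}=\widetilde{A}(\mathbf{w},\widetilde{A}_0)$, in the direction $d+\arg(w_1)$. Via the relation \eqref{monorel3} applied to that auxiliary system one gets $\nu_{-d}^{(0)}(U,V,A,G)$ equal to $\nu$ of the system $\frac{dK}{d\gamma}=(-\widetilde V+\widetilde A/\gamma)K$; here I would use the scaling relations \eqref{eq:trans in stokes mat with c}–\eqref{monorel3} (with the scalar $c$ absorbing $\arg(w_1)$, noting $-\widetilde V$ plays the role of the leading matrix) to reduce to a system with leading term $-E_{k+1}$, and then invoke Theorem~\ref{thm:monodromy factor of one irr sys} again, now for $\widetilde{A}(\mathbf{w},\widetilde{A}_0)$ with boundary value $\widetilde{A}_0=-G_0^{-1}\widehat{A}_0G_0$. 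The sign flip of the leading term and the hypothesis $0<d+\arg(v_{k+1}-v_k)<\pi$ (equivalently $-\pi<\arg(-\widetilde v_{k+1}+\widetilde v_k)+d<0$ after the sign change) are exactly what is needed to land in the component producing $C_{\pi/2}(-E_{k+1},\delta_{k+1}\widetilde{A}_0)$; this gives the stated formula for $\nu_{-d}^{(0)}$.

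The connection-matrix identity \eqref{eq:eq for connection mat} is then obtained by combining part~$(3)$ of Theorem~\ref{prop:decomp of monodromy}, namely $C_d(U,V,A,G)=C_d(U,\widehat{A})\cdot(z_1\cdots z_{n-1}w_{n-1}\cdots w_2)^{\widehat{A}}\widehat{G}\cdot C_{d+\arg(w_1)}(-\widetilde V,\widetilde A)^{-1}$, with the explicit formula for $\widehat{G}$ from \eqref{eq:def of hatG in RH prob}. In fact this is almost immediate: the computation \eqref{eq:exp for Cd in prop} carried out inside the proof of Proposition~\ref{prop:from A0,G0 to hatA, hatG} already evaluates $C_d(U,V,A,G)$, under the assumption that $U$ lies in $R_{u,d}(J_1)$ and $V$ in $R_{v,d}(J_2)$, to $\bigl(\overrightarrow{\prod}_{k}C_{d+\arg(u_{k+1}-u_k)}(E_{k+1},\delta_{k+1}\widehat{A}_0)\bigr)G_0\mathrm{e}^{\pi\mathrm{i}\widetilde A_0}\bigl(\overrightarrow{\prod}_k C_{d+\pi+\arg(v_{k+1}-v_k)}(E_{k+1},\delta_{k+1}\widetilde A_0)\bigr)^{-1}\mathrm{e}^{-\pi\mathrm{i}\delta\widetilde A}$. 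Under the assumptions \eqref{eq:special case for uv} we have $d+\arg(u_{k+1}-u_k)=-\pi/2$ (up to the sector-invariance remark) and $d+\pi+\arg(v_{k+1}-v_k)\in(\pi,2\pi)$, which by the scaling/period relations \eqref{eq:trans in connection mat with c} and the identification of $C_{d+\pi+\arg(v_{k+1}-v_k)}(E_{k+1},\cdot)$ with $C_{\pi/2}(-E_{k+1},\cdot)$ converts the product into $\bigl(\overrightarrow{\prod}_k C_{\pi/2}(-E_{k+1},\delta_{k+1}\widetilde A_0)\bigr)^{-1}$ after the $\mathrm{e}^{\pm\pi\mathrm{i}}$ factors cancel against the reflection; the stated formula follows.

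The main obstacle I anticipate is purely bookkeeping rather than conceptual: getting every branch cut, $\mathrm{e}^{\pm\pi\mathrm{i}(\cdots)}$ factor, and the precise sector label right when translating the auxiliary system \eqref{eq:decom in tilA y} (with leading term $-\widetilde V$ and the shift by $\arg(w_1)$) into the normalized rigid systems $\frac{dF}{dz}=(-E_{k+1}+\delta_{k+1}\widetilde A_0/z)F$, and similarly accounting for the conjugation by $\mathrm{e}^{\pi\mathrm{i}\widetilde A_0}$ and $\mathrm{e}^{-\pi\mathrm{i}\delta\widetilde A}$ that appear in \eqref{eq:Cd(u,a,v) in proof}. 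One must check that these exponential factors, together with the $(z_1\cdots z_{n-1}w_{n-1}\cdots w_2)^{\widehat A}$ prefactor and the successive-limit normalizations of $\widehat{\mathfrak C}$ and $\widetilde{\mathfrak C}$, cancel in exactly the way claimed so that no residual scalar or diagonal conjugation survives in \eqref{eq:eq for connection mat}. Since \eqref{eq:exp for Cd in prop} has already done this cancellation once, the remaining work is to verify that the sector conditions \eqref{eq:special case for uv} really do select $J_1=J_2=\{1,\dots,n-1\}$ and the directions $\mp\pi/2$, which is a direct check against the definitions of $R_{u,d}(J)$ and $R_{v,d}(J)$.
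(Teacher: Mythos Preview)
Your proposal is correct and follows essentially the same route as the paper: reduce to the one-pole monodromy via Theorem~\ref{prop:decomp of monodromy}, apply Theorem~\ref{thm:monodromy factor of one irr sys} to $\widehat{A}(\mathbf{z},\widehat{A}_0)$ and $\widetilde{A}(\mathbf{w},\widetilde{A}_0)$, and for the connection matrix invoke the already-computed expression \eqref{eq:exp for Cd in prop}. The paper handles the final $E_{k+1}\leftrightarrow -E_{k+1}$ conversion by the explicit identity $C_{d+\arg(v_{k+1}-v_k)}(-E_{k+1},\delta_{k+1}\widetilde{A}_0)=\mathrm{e}^{\pi\mathrm{i}\delta_k\widetilde{A}_0}C_{d+\pi+\arg(v_{k+1}-v_k)}(E_{k+1},\delta_{k+1}\widetilde{A}_0)\mathrm{e}^{-\pi\mathrm{i}\delta_{k+1}\widetilde{A}_0}$, which is exactly the bookkeeping step you flagged as the main obstacle.
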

\begin{proof}
    By Theorem \ref{prop:decomp of monodromy} $(2)$, \eqref{eq:LU fac of nu 0} in Lemma \ref{prop:identity of monodromy factor for two irr sys}, and \eqref{eq:trans in stokes mat with c} in Lemma \ref{prop:monodromy factor for one irr sys}, we know 
    \begin{align*}
        \nu_d^{(\infty)}\left(U,V,A\left(\widehat{A}_0,G_0\right),G\left(\widehat{A}_0,G_0\right)\right)&=\nu_d\left(U,\widehat{A}\left(\widehat{A}_0\right)\right),\\
        \nu_{-d}^{(0)}\left(U,V,A\left(\widehat{A}_0,G_0\right),G\left(\widehat{A}_0,G_0\right)\right)&=\nu_{d+\mathrm{arg}(w_1)}\left(-\widetilde{V},\widetilde{A}\left(\widetilde{A}_0\right)\right),
    \end{align*}
    where $\widehat{A}\left(\widehat{A}_0\right)$, $\widetilde{A}\left(\widetilde{A}_0\right)$ represent the solutions  of \eqref{eq iso for hatA z} and \eqref{eq iso for tilA w}, constructed from $\widehat{A}_0$ and $\widetilde{A}_0$ respectively.
    Then by the Theorem \ref{thm:monodromy factor of one irr sys}, we obtain the expressions of $\nu_d^{(\infty)}$ and $\nu_{-d}^{(0)}$. 
     For the special conditions \eqref{eq:special case for uv}, when $\mathrm{Im}(u_1e^{\mathrm{i}d})>...> \mathrm{Im}(u_ne^{\mathrm{i}d}),\ 0<\left(d+\mathrm{arg}(u_{k}-u_{k+1})\right)<\pi$, all the directions $d+\mathrm{arg}(u_{k+1}-u_{k})\in (-\pi,0)$. And since there are no anti-Stokes directions between $-\pi$ and $0$ for $C_{d+\mathrm{arg}(u_{k+1}-u_{k})}(E_{k+1},\delta_{k+1}\widehat{A}_0)$, we can choose the direction $-\pi/2$ to represent $d+\mathrm{arg}(u_{k+1}-u_{k})$. Analogously, we choose the direction $\pi/2$ to represent $d+\mathrm{arg}(v_{k+1}-v_{k})$.

     In the proof of  Proposition \ref{prop:from A0,G0 to hatA, hatG}, we have obtained the expressions of $C_d$ in \eqref{eq:exp for Cd in prop}. Then using the following formula, and choosing the argument, we obtain the form in \eqref{eq:eq for connection mat}:
     \begin{align}\label{eq:simp for Cd}
         C_{d+\mathrm{arg}(v_{k+1}-v_k)}\left(-E_{k+1},\delta_{k+1}\widetilde{A}_0\right)=\mathrm{e}^{\pi\mathrm{i}\delta_k\widetilde{A}_0} C_{d+\pi+\mathrm{arg}(v_{k+1}-v_k)}\left(E_{k+1},\delta_{k+1}\widetilde{A}_0\right)\mathrm{e}^{-\pi\mathrm{i}\delta_{k+1}\widetilde{A}_0}.
     \end{align}
\end{proof}

\begin{proof}[Proof of Theorem~\ref{thm: introcatformula}]
The expressions of $C_{-\frac{\pi}{2}}\left(E_{k+1},\delta_{k+1}\widehat{A}_0\right)$ and $C_{\frac{\pi}{2}}\left(-E_{k+1},\delta_{k+1}\widetilde{A}_0\right)$ are known results, see \cite{Balser-Jurkat-Lutz1981} or \cite[Section 3.6]{xu2019closure1}, \cite[Appendix A]{TangXu}. Thus from \eqref{eq:eq for connection mat}, the computation of $C_d(U,V,A,G)$ reduces to a direct substitution. For the computation of Stokes matrices, by Lemma \ref{lem:triang stokes mat} and Lemma \ref{prop:identity of monodromy factor for two irr sys}, we just need to compute the LU decomposition of $\nu_d^{(\infty)}$ and $\nu_d^{(0)}$. For a detailed procedure of this decomposition, see  \cite[Section 3.6]{xu2019closure1}, \cite[Appendix A]{TangXu}.
\end{proof}

\subsection{Almost every solution of isomonodromy equations is shrinking solution}\label{sec:almost every is shrinking}

Recall that we have defined the monodromy matrices $\nu^{(\infty)}_d(U,V,A,G)$ and $\nu^{(0)}_d(U,V,A,G)$ for system \eqref{eq: linr sys two irr in xi} in Definition \ref{def: mono mat}. In this section, we present a sufficient condition on the monodromy data to identify shrinking solutions of the isomonodromy equations. Consequently, it allows us to demonstrate that almost every solution is shrinking solution.

\begin{definition}\label{def:generic sol}
    Given a diagonal matrix $\Lambda=\mathrm{diag}(\phi_{11},...,\phi_{nn})$, we define the set {$\mathcal{M}(\Lambda)\subset \mathbb {\rm Mat}_{n\times n}(\mathbb{C})$} as consisting of all matrices $M$, such that 
       for every $1\leq k\leq n$, there is a  set $\sigma_k=\{\lambda_j^{(k)}:{j=1,...,k}\}$, satisfying 
    \begin{align}\label{eq:generic cond 1 in def}
        &\sigma(M^{[k]})=\{e^{2\pi\mathrm{i}\lambda}:\lambda\in\sigma_k\},\quad
      \sum_{j=1}^k \lambda_{j}^{(k)}=\sum_{j=1}^k\phi_{jj},\\ \label{eq:generic cond 2 in def}
    &\left|\mathrm{Re}\left(\lambda_{j_1}^{(k)}-\lambda_{j_2}^{(k)}\right)\right|< 1,\ \text{ for every $1\leq j_1,j_2\leq k$},\\ \label{eq:generic cond 3 in def}
    & \lambda_{j_1}^{(k+1)}-\lambda_{j_2}^{(k)}\notin\mathbb{Z}\setminus\{0\},\ \text{ for every $1\leq j_1\leq k+1$,\ $1\leq j_2\leq k$}.
    \end{align}
    Here $\sigma(M^{[k]})$ denotes the spectrum of upper-left $k\times k$ submatrix of $M$. 
\end{definition}

\begin{definition}\label{def:log-conf cond}
The monodromy data of the system \eqref{introeq} 
 \[\left(\delta A,\delta(G^{-1}AG),\nu_d^{(\infty)}(U,V,A,G),\nu_{-d}^{(0)}(U,V,A,G), C_d(U,V,A,G)\right)\in\left(\mathrm{Diag}_n(\mathbb{C})\right)^2\times\left( \mathrm{Mat}_{n\times n}(\mathbb{C})\right)^3, \]
is called {strictly (upper-left) log-confined}, if for $U, V$, and a direction $d$, such that
\begin{align}\label{eq:u,v cond for monodromy mat}
     &\mathrm{Im}(u_1e^{\mathrm{i}d})>...> \mathrm{Im}(u_ne^{\mathrm{i}d}),\quad \mathrm{Im}(v_1e^{\mathrm{i}d})<...< \mathrm{Im}(v_ne^{\mathrm{i}d}),
\end{align} 
we have 
\[\nu_d^{(\infty)}\in \mathcal{M}(\delta A)  \ \text{ and } \ \nu_{-d}^{(0)}\in \mathcal{M}(\delta (G^{-1}AG)).\]
\end{definition}
\begin{remark}
As stated in Section \ref{sec:iso property for 2 irr sys}, the monodromy data is constant on each connected component of $R_{u,d}\times R_{v,d}$. The { strictly log-confined} test is for the monodromy data on the connected component corresponding to \eqref{eq:u,v cond for monodromy mat}. 
\end{remark}

In the remainder of this section, we will always assume that the monodromy data of the system \eqref{introeq} is taken under the assumption \eqref{eq:u,v cond for monodromy mat}. The following lemma shows that the {strictly (upper-left) log-confined} condition is generic.
\begin{lemma}[\cite{TangXu}]\label{Lem: RH corr 1}
    Consider the monodromy data for system \eqref{introeq}.
    Let $X=\nu_d^{(\infty)}(U,V,A,G)$ and $Y=\nu_{-d}^{(0)}(U,V,A,G)$. Then for every $1\leq k\leq n$, there is a  set $\sigma_k=\{\lambda_j^{(k)}:{j=1,...,k}\}$, such that 
    \begin{align}\label{eq:generic cond 1 in lem}
        &\sigma(X^{[k]})=\{\mathrm{e}^{2\pi\mathrm{i}\lambda}:\lambda\in\sigma_k\},\quad
      \sum_{j=1}^k \lambda_{j}^{(k)}=\mathrm{trace}(A^{[k]}),\\ \label{eq:generic cond 2 in lem}
    &\left|\mathrm{Re}\left(\lambda_{j_1}^{(k)}-\lambda_{j_2}^{(k)}\right)\right|\leq 1, \text{ for every $1\leq j_1,j_2\leq k$}.
    \end{align}
    Furthermore, if the sequence $\sigma_k$ is such that 
\begin{align}\label{eq:generic condition 1}
    \left|\mathrm{Re}\left(\lambda_{j_1}^{(k)}-\lambda_{j_2}^{(k)}\right)\right|< 1, \ \text{ for every $j_1,j_2$},
\end{align}
then the choice of set $\sigma_k$ is {unique}.

 Similarly, we have the {same result for $Y$ provided replacing $\mathrm{trace}(A^{[k]})$ by $-\mathrm{trace}((G^{-1}AG)^{[k]})$} in \eqref{eq:generic cond 1 in lem}.
\end{lemma}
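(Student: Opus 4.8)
The plan is to reduce the assertion about the two monodromy matrices $X = \nu_d^{(\infty)}(U,V,A,G)$ and $Y = \nu_{-d}^{(0)}(U,V,A,G)$ to the factorization formulas for the monodromy matrices already recorded in the excerpt, and then to invoke the corresponding statement for the one-irregular-singularity case from \cite{TangXu}. Concretely: by Lemma~\ref{prop:identity of monodromy factor for two irr sys}, under the ordering \eqref{eq:u,v cond for monodromy mat} the matrix $\nu_d^{(\infty)} = (S_{d,-}^{(\infty)})^{-1}\cdot \mathrm{e}^{2\pi\mathrm{i}\delta A}\cdot S_{d,+}^{(\infty)}$ is precisely the LU (Gauss) decomposition of $\nu_d^{(\infty)}$, since by Lemma~\ref{lem:triang stokes mat} the two Stokes factors are lower- and upper-triangular with $1$'s on the diagonal. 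The first step is therefore to read off, for each $k$, the determinant of the upper-left $k\times k$ block $X^{[k]}$: because $S_{d,-}^{(\infty)}$ is lower-triangular unipotent and $S_{d,+}^{(\infty)}$ is upper-triangular unipotent, we get $\det(X^{[k]}) = \prod_{j=1}^k \mathrm{e}^{2\pi\mathrm{i}(\delta A)_{jj}} = \mathrm{e}^{2\pi\mathrm{i}\,\mathrm{trace}(A^{[k]})}$. More generally the same triangularity gives that the characteristic polynomial of $X^{[k]}$ depends only on the $k\times k$ truncations of the three factors, which is the structural input needed below.

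The second step is to recognize that $\nu_d^{(\infty)}$ is exactly the monodromy matrix $\nu_d(U,\widehat A)$ of a one-irregular-singularity system. This is not strictly necessary — one can argue directly — but it is the cleanest route: by Theorem~\ref{prop:decomp of monodromy}(1) the Stokes matrices $S_{d,\pm}^{(\infty)}(U,V,A,G)$ coincide with $S_d^{\pm}(U,\widehat A)$ whenever $(A,G)$ is a shrinking solution, and more to the point the algebraic relation \eqref{eq:LU fac of nu infty} is identical in form to \eqref{monorel3}. Hence the existence of the sets $\sigma_k = \{\lambda_j^{(k)}\}$ satisfying \eqref{eq:generic cond 1 in lem} with $\sum_j \lambda_j^{(k)} = \mathrm{trace}(A^{[k]})$, and the bound \eqref{eq:generic cond 2 in lem}, follow verbatim from the one-pole statement in \cite{TangXu} applied with residue matrix whose diagonal is $\delta A$; one simply needs to check that the argument there used only the LU-structure of $\nu_d$ and the triangularity of the Stokes factors, both of which hold here. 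The argument for $Y = \nu_{-d}^{(0)}$ is symmetric: by Theorem~\ref{prop:decomp of monodromy}(2) (or directly from \eqref{eq:LU fac of nu 0} together with Lemma~\ref{lem:triang stokes mat}(b)), $\nu_{-d}^{(0)}$ has the same LU-structure with $\mathrm{e}^{-2\pi\mathrm{i}\delta(G^{-1}AG)}$ in place of $\mathrm{e}^{2\pi\mathrm{i}\delta A}$, giving $\det(Y^{[k]}) = \mathrm{e}^{-2\pi\mathrm{i}\,\mathrm{trace}((G^{-1}AG)^{[k]})}$ and hence the stated replacement.

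The third step is the uniqueness claim: if some choice of $\sigma_k$ satisfies the \emph{strict} inequality \eqref{eq:generic condition 1}, then it is the only such choice. The point is that the set $\sigma(X^{[k]})$ of eigenvalues of $X^{[k]}$ determines each $\lambda_j^{(k)}$ only modulo $\mathbb{Z}$; fixing the sum $\sum_j \lambda_j^{(k)} = \mathrm{trace}(A^{[k]})$ removes the ``overall shift'' ambiguity but a priori still allows redistributing integers among the $\lambda_j^{(k)}$. I would argue that any two lifts $\{\lambda_j^{(k)}\}$, $\{\lambda_j'^{(k)}\}$ with the same multiset of exponentials and the same sum differ by an integer vector $(m_j)$ with $\sum_j m_j = 0$; if both satisfy the strict bound $|\mathrm{Re}(\lambda_{j_1}-\lambda_{j_2})| < 1$, then the real parts of the two lifts lie in a common open interval of length $<1$ after matching, forcing $m_j \equiv$ const, hence $m_j = 0$. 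The one mild subtlety is that the multiset of eigenvalues can have repetitions, so ``matching'' the two lifts must be done multiset-wise; this is handled by the pigeonhole observation that within a window of length $<1$ the map $\lambda \mapsto \mathrm{e}^{2\pi\mathrm{i}\lambda}$ is injective, so the matching is in fact forced.

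The main obstacle I anticipate is \emph{not} any of the above — each step is either a direct computation or a citation — but rather the bookkeeping needed to make the reduction in Step 2 honest: the one-pole result in \cite{TangXu} is stated for an actual residue matrix $\Phi$, whereas here we only have the combinatorial data $(\delta A, \nu_d^{(\infty)})$ without assuming $(A,G)$ is shrinking, so I must either (a) extract from \cite{TangXu} the purely monodromy-theoretic lemma that uses only the LU-factorization $\nu = S_-^{-1} \mathrm{e}^{2\pi\mathrm{i}\delta}S_+$ with unipotent triangular $S_\pm$, or (b) reprove that lemma here in two lines. I would take route (b) for self-containedness: the existence of $\sigma_k$ with the weak bound \eqref{eq:generic cond 2 in lem} is just the statement that the eigenvalues of a matrix of the form $S_-^{-1}\mathrm{e}^{2\pi\mathrm{i}\delta}S_+$ (determinant $\mathrm{e}^{2\pi\mathrm{i}\,\mathrm{tr}}$) admit logarithms whose real parts lie in a window of length $\le 1$ — which follows because $\mathrm{e}^{2\pi\mathrm{i}\lambda} \mapsto \lambda$ can always be chosen with $\mathrm{Re}(\lambda) \in [c, c+1)$ for any prescribed $c$, and the sum constraint pins down the right window.
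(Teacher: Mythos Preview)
Your approach is essentially the paper's: compute $\det(X^{[k]})$ from the LU factorization \eqref{eq:LU fac of nu infty} and the triangularity in Lemma~\ref{lem:triang stokes mat}, lift eigenvalues to logarithms with the correct sum, push them into a window of length $\le 1$, and argue uniqueness from injectivity of $\lambda\mapsto e^{2\pi i\lambda}$ on a strip of real width $<1$. Your Step~2 detour through Theorem~\ref{prop:decomp of monodromy} is indeed unnecessary (and circular for non-shrinking $(A,G)$), and you correctly opt for the direct route~(b).

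One small gap in your route~(b) existence sketch: the phrase ``the sum constraint pins down the right window'' is not a proof. Choosing all $\lambda_j$ with $\mathrm{Re}(\lambda_j)\in[c,c+1)$ for some $c$ does give the weak bound, but the resulting sum may differ from $\mathrm{trace}(A^{[k]})$ by a nonzero integer, and correcting one $\lambda_j$ by that integer breaks the window. The paper handles this in the opposite order: first pick \emph{any} logarithms with the exact sum (possible since $\prod\mu_j^{(k)}=e^{2\pi i\,\mathrm{trace}(A^{[k]})}$), then repeatedly apply the sum-preserving pairwise move $(\lambda_i,\lambda_j)\mapsto(\lambda_i-1,\lambda_j+1)$ whenever $\mathrm{Re}(\lambda_i-\lambda_j)>1$, which terminates with all real parts in a common interval of length $\le 1$. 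Your uniqueness argument is fine and arguably cleaner than the paper's.
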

\begin{proof}
   Suppose $\{\mu_j^{(k)}:{j=1,...,k}\}$ are the eigenvalues of $X^{[k]}$. By assumption \eqref{eq:u,v cond for monodromy mat}, and then Lemma \ref{lem:triang stokes mat} and  \eqref{eq:LU fac of nu infty} in Lemma \ref{prop:identity of monodromy factor for two irr sys}, we have 
    \begin{align*}
        e^{2\pi\mathrm{i}\left(\mathrm{trace}(A^{[k]})\right)}=\mathrm{det}(X^{[k]})=\prod_{j=1}^k\mu_j^{(k)}.
    \end{align*}
    Thus we can find $\lambda_j^{(k)},\ j=1,...,k$, such that $\mathrm{e}^{2\pi\mathrm{i}\lambda_j^{(k)}}=\mu_j^{(k)}$ and $\sum_{j=1}^{k}\lambda_j^{(k)}=\mathrm{trace}(A^{[k]})$. This is \eqref{eq:generic cond 1 in lem}.

     To satisfy the conditions in  \eqref{eq:generic cond 2 in lem}, we can apply a successive pairwise adjustment to the set $\{\lambda_j^{(k)}\}$, adjusting the pairs by tansformation  $(\lambda^{(k)}_i, \lambda^{(k)}_j)\rightarrow (\lambda^{(k)}_i+1,\lambda^{(k)}_j-1)$, until all  $\mathrm{Re}\lambda^{(k)}_j$ fall into a closed interval $I$ with $\mathrm{length}(I)=1$. This process preserves the identities \eqref{eq:generic cond 1 in lem}, and ends in finite steps.

     It follows from the adjustment procedure that the final sequence $\lambda_j^{(k)}$ satisfying Condition \eqref{eq:generic condition 1} is uniquely determined, as any further unit transfer would result in a non-admissible sequence (i.e., one where $\mathrm{Re}\lambda_j \notin I$).

     For $Y=\nu_{-d}^{(0)}(U,V,A,G)$, we also have 
        \begin{align*}
        e^{-2\pi\mathrm{i}\left(\mathrm{trace}((G^{-1}AG)^{[k]})\right)}=\mathrm{det}(Y^{[k]}),
    \end{align*}
     so the conclusion is established by the same method.

\end{proof}

We will characterize the solutions of the isomonodromy equations using their corresponding monodromy data. The feasibility of this approach is first established by the following lemma.
\begin{lemma}\label{lem:uniqueness in RH}
    If two systems
    \begin{align*}
         \frac{d F}{d \xi}=\left(U+A_1\cdot\xi^{-1}+G_1VG_1^{-1}\cdot\xi^{-2}\right) F,\quad \frac{d F}{d \xi}=\left(U+A_2\cdot\xi^{-1}+G_2VG_2^{-1}\cdot\xi^{-2}\right) F,
    \end{align*}
    have the same monodromy data, i.e. 
    \begin{align}
        &\nu^{(\infty)}_d(U,V,A_1,G_1)=\nu^{(\infty)}_d(U,V,A_2,G_2), \quad \nu^{(0)}_d(U,V,A_1,G_1)=\nu^{(0)}_d(U,V,A_2,G_2),\\
        &C_d(U,V,A_1,G_1)=C_d(U,V,A_2,G_2),\\
        &\delta A_1=\delta A_2,\quad \delta(G_1^{-1}A_1G_1)=\delta(G_2^{-1}A_2G_2),
    \end{align}
    then $A_1=A_2$, $G_1=G_2$.
\end{lemma}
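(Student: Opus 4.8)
The plan is to reconstruct each of the two linear systems from its monodromy data via the Riemann--Hilbert correspondence and to conclude by uniqueness. Concretely, fix $U,V$ satisfying the non-resonance assumption \eqref{nrassumption}, choose an admissible direction $d$, and for each system ($i=1,2$) consider the canonical fundamental solutions $F^{(\infty)}_{d,i}(\xi)$ at $\xi=\infty$ and $F^{(0)}_{d,i}(\xi)$ at $\xi=0$, normalized as in Proposition \ref{prop:fundamental solution of two irr sys}. The monodromy data listed in the hypothesis---the formal exponents $\delta A_i$, $\delta(G_i^{-1}A_iG_i)$, the monodromy matrices $\nu^{(\infty)}_d$, $\nu^{(0)}_d$, and the connection matrix $C_d$---are exactly the gluing data for a Stokes-filtered local system on $\mathbb{P}^1$ with the prescribed irregular types $U\xi$ at $\infty$ and $-V/\xi$ (equivalently $V\eta$ after $\eta=1/\xi$) at $0$. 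Since the irregular types, formal monodromies, Stokes matrices (recovered from $\nu^{(\infty)}_d,\nu^{(0)}_d$ and $\delta A_i,\delta(G_i^{-1}A_iG_i)$ through the LU factorizations \eqref{eq:LU fac of nu infty}--\eqref{eq:LU fac of nu 0}, using the triangularity of Lemma \ref{lem:triang stokes mat}), and the connection matrix all agree for the two systems, the associated Stokes-filtered local systems are isomorphic.

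The key step is to promote this isomorphism of local systems to an equality of the matrix coefficients. Consider $T(\xi):=F^{(\infty)}_{d,1}(\xi)\,F^{(\infty)}_{d,2}(\xi)^{-1}$. Because the Stokes matrices at $\infty$ coincide, $T(\xi)$ is single-valued and holomorphic on a full punctured neighborhood of $\xi=\infty$ (it extends across all the Stokes rays since the jumps cancel), and from the prescribed asymptotics $F^{(\infty)}_{d,i}(\xi)\,\mathrm{e}^{-\xi U}\xi^{-\delta A_i}\sim \mathrm{Id}+O(\xi^{-1})$ together with $\delta A_1=\delta A_2$ one gets $T(\xi)\to \mathrm{Id}$ as $\xi\to\infty$. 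Next, using the connection matrices: $F^{(\infty)}_{d,i}(\xi)=F^{(0)}_{-d,i}(\xi)\,C_d(U,V,A_i,G_i)^{-1}$, and since $C_d$ is the same for both systems, $T(\xi)=F^{(0)}_{-d,1}(\xi)\,F^{(0)}_{-d,2}(\xi)^{-1}$ near $\xi=0$ as well. By the same argument at $\xi=0$---the Stokes matrices at $0$ agree, so the jumps of $F^{(0)}_{-d,i}$ across the rays in $aS(v)$ cancel---$T(\xi)$ is single-valued and holomorphic on a punctured neighborhood of $\xi=0$, with $T(\xi)\to G_1 G_2^{-1}$ there (from the normalization $F^{(0)}_{-d,i}(\xi)\,\mathrm{e}^{V/\xi}\xi^{-\delta(G_i^{-1}A_iG_i)}\sim G_i+O(\xi)$ and the matching of the formal exponents at $0$). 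Therefore $T(\xi)$ is holomorphic on all of $\mathbb{P}^1$, hence constant; since $T(\infty)=\mathrm{Id}$, we get $T\equiv \mathrm{Id}$, i.e. $F^{(\infty)}_{d,1}=F^{(\infty)}_{d,2}$.

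From $F^{(\infty)}_{d,1}(\xi)=F^{(\infty)}_{d,2}(\xi)$ one reads off the equality of the two ODEs directly: $\bigl(\partial_\xi F^{(\infty)}_{d,i}\bigr)\bigl(F^{(\infty)}_{d,i}\bigr)^{-1}=U+A_i\xi^{-1}+G_iVG_i^{-1}\xi^{-2}$, so the rational functions on the right coincide, forcing $A_1=A_2$ and $G_1VG_1^{-1}=G_2VG_2^{-1}$. It remains to upgrade the last identity to $G_1=G_2$: writing $H:=G_2^{-1}G_1$, we have $HV=VH$, so $H$ is diagonal (as $V$ has distinct eigenvalues). To pin down $H=\mathrm{Id}$, compare the behavior of $F^{(0)}_{-d,i}$ at $\xi=0$: we already showed $T\equiv \mathrm{Id}$ implies $F^{(0)}_{-d,1}=F^{(0)}_{-d,2}$ as well (they differ by $T=\mathrm{Id}$ conjugated by the common $C_d$), and their leading coefficients at $\xi=0$ are $G_1$ and $G_2$ respectively, hence $G_1=G_2$. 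The main obstacle is the bookkeeping of orientations and the precise matching of Stokes data: one must verify carefully that the normalizations in Definition \ref{def:stokes for 2irr} (with the sign conventions $F^{(0)}_{-d\mp\pi}$ and the $-d$ in the connection matrix) indeed make the jump cancellation work so that $T(\xi)$ extends holomorphically across every Stokes ray at both singularities; granting that, the Liouville-type argument is routine. Note that the hypotheses do not explicitly list the Stokes matrices, only $\nu^{(\infty)}_d$, $\nu^{(0)}_d$ and the diagonal parts, so one genuinely uses Lemma \ref{lem:triang stokes mat} plus the uniqueness of LU factorization to recover the Stokes matrices of each system before running the gluing argument.
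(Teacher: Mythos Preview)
Your proposal is correct and follows essentially the same Liouville-type argument as the paper: form the ratio $T(\xi)=F^{(\infty)}_{d,1}(\xi)\,F^{(\infty)}_{d,2}(\xi)^{-1}$, use the common connection matrix to rewrite it as $F^{(0)}_{-d,1}(\xi)\,F^{(0)}_{-d,2}(\xi)^{-1}$, observe it is single-valued with finite limits at both poles, and conclude $T\equiv\mathrm{Id}$. You are in fact more careful than the paper on two points---you explicitly recover the Stokes matrices from $\nu_d$ and $\delta A$ via the LU factorization of Lemma~\ref{prop:identity of monodromy factor for two irr sys} (which is what guarantees boundedness of $T$ in every sector, not just $\mathrm{Sect}_d$), and you correctly note that the limit at $\xi=0$ is a priori $G_1G_2^{-1}$ rather than $\mathrm{Id}$, with $G_1=G_2$ then falling out of $T\equiv\mathrm{Id}$; your subsequent detour through $HV=VH$ is therefore redundant.
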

\begin{proof}
    Consider the fundamental solutions $F^{(\infty)}_d(\xi,A_1,G_1)$, $F^{(0)}_{-d}(\xi,A_1,G_1)$ and $F^{(\infty)}_d(\xi,A_2,G_2)$, $F^{(0)}_{-d}(\xi,A_2,G_2)$ defined in Proposition~\ref{prop:fundamental solution of two irr sys} for the two systems, respectively. We have
    \begin{align*}
        F^{(\infty)}_d(\xi,A_1,G_1)\cdot F^{(\infty)}_d(\xi,A_2,G_2)^{-1}&=  (F^{(\infty)}_d(\xi,A_1,G_1)C_d(U,V,A_1,G_1))\cdot (F^{(\infty)}_d(\xi,A_2,G_2)C_d(U,V,A_2,G_2))^{-1}\\
        &= F^{(0)}_{-d}(\xi,A_1,G_1)\cdot F^{(0)}_{-d}(\xi,A_2,G_2)^{-1}.
    \end{align*}
   Note that both $F^{(\infty)}_d(\xi,A_1,G_1) F^{(\infty)}_d(\xi,A_2,G_2)^{-1}$ and $F^{(0)}_{-d}(\xi,A_1,G_1) F^{(0)}_{-d}(\xi,A_2,G_2)^{-1}$ are single-valued and tend to $\mathrm{Id}$ as $\xi$ approaches to their respective singularities ($\infty$ and $0$. So $F(\xi)=  F^{(\infty)}_d(\xi,A_1,G_1)\cdot F^{(\infty)}_d(\xi,A_2,G_2)^{-1}$ is holomorphic on $\mathbb{C}\cup\{\infty\}$,
and therefore $F(\xi)=F(\infty)=\mathrm{Id}$.
\end{proof}

We now show that the solution of the isomonodromy equations corresponding to the strictly log-confined monodromy data is a shrinking solution.
\begin{theorem}\label{thm:almost every solution shrinking}
   Let $A(\mathbf{u},\mathbf{v}),G(\mathbf{u},\mathbf{v})$ be the solution of the isomonodromy equations \eqref{eq:iso eq of two irr A-u}-\eqref{eq:iso eq of two irr G-v}, which serve as the coefficient matrices for the system \eqref{introeq}. Take the monodromy data of system \eqref{introeq}, if it is strictly log-confined, then $A(\mathbf{u},\mathbf{v}),G(\mathbf{u},\mathbf{v})$  is a shrinking solution.
\end{theorem}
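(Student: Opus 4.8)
My plan is to invert the Riemann--Hilbert correspondence. From the strictly log-confined monodromy data of $A(\mathbf{u},\mathbf{v}),G(\mathbf{u},\mathbf{v})$ I would reconstruct a candidate boundary value $(\widehat{A}_0,G_0)$ obeying the boundary condition \eqref{eq:eigenvalue cond of hatA tilA}, form the associated shrinking solution $(A',G')=(A(\widehat{A}_0,G_0),G(\widehat{A}_0,G_0))$ of Definition \ref{def:shrinking solution} via Proposition \ref{prop:from A0,G0 to hatA, hatG} and Theorem \ref{thm:asy of iso A from hatA}, verify that $(A',G')$ has exactly the same monodromy data as $(A,G)$, and conclude $(A,G)=(A',G')$ by the uniqueness Lemma \ref{lem:uniqueness in RH}. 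Throughout I fix $U,V$ and $d$ in the region \eqref{eq:u,v cond for monodromy mat} at a generic point, so that $A,G,A',G'$ are all holomorphic there.

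First I would apply Lemma \ref{Lem: RH corr 1} to $\nu_d^{(\infty)}$ and $\nu_{-d}^{(0)}$: the strictly log-confined hypothesis makes the associated eigenvalue sets $\{\widehat{\lambda}^{(k)}_i\}$ and $\{\widetilde{\lambda}^{(k)}_i\}$ unique, confined to strips of width strictly less than $1$, and non-resonant (condition \eqref{eq:generic cond 3 in def}), with partial traces equal to those of $\delta A$ and (up to sign) $\delta(G^{-1}AG)$ respectively. Since the rigid systems \eqref{relequation} are explicitly solvable, the one-irregular-pole Riemann--Hilbert correspondence of \cite{TangXu,xu2019closure1} (the inversion of Theorem \ref{thm:monodromy factor of one irr sys}) then produces a unique $\widehat{A}_0\in\mathfrak{gl}_n$ with $\delta\widehat{A}_0=\delta A$, upper-left submatrix spectra $\{\widehat{\lambda}^{(k)}_i\}$, satisfying the boundary condition, and such that $\nu_d^{(\infty)}=\mathrm{Ad}\bigl(\overrightarrow{\prod_{k=1}^{n-1}}C_{-\frac{\pi}{2}}(E_{k+1},\delta_{k+1}\widehat{A}_0)\bigr)\mathrm{e}^{2\pi\mathrm{i}\widehat{A}_0}$, and symmetrically a unique $\widetilde{A}_0$ with $\nu_{-d}^{(0)}=\mathrm{Ad}\bigl(\overrightarrow{\prod_{k=1}^{n-1}}C_{\frac{\pi}{2}}(-E_{k+1},\delta_{k+1}\widetilde{A}_0)\bigr)\mathrm{e}^{2\pi\mathrm{i}\widetilde{A}_0}$, cf. Corollary \ref{cor: concrete monodromy data by A0 and G0}. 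Writing $\widehat{P}=\overrightarrow{\prod_{k=1}^{n-1}}C_{-\frac{\pi}{2}}(E_{k+1},\delta_{k+1}\widehat{A}_0)$ and $\widetilde{P}=\overrightarrow{\prod_{k=1}^{n-1}}C_{\frac{\pi}{2}}(-E_{k+1},\delta_{k+1}\widetilde{A}_0)$, I then set $G_0:=\widehat{P}^{-1}\,C_d(U,V,A,G)\,\widetilde{P}$.

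The crucial point --- and the step I expect to be the main obstacle --- is to verify that $(\widehat{A}_0,G_0)$ is an admissible boundary value, i.e. that $\widetilde{A}_0=-G_0^{-1}\widehat{A}_0G_0$. Substituting the three reconstructions into the monodromy relation \eqref{eq:connection relation for 0 and infty}, the factors $\widehat{P},\widetilde{P}$ cancel and one is left with $\mathrm{e}^{2\pi\mathrm{i}\widehat{A}_0}=G_0\,\mathrm{e}^{-2\pi\mathrm{i}\widetilde{A}_0}\,G_0^{-1}$, equivalently $\mathrm{e}^{2\pi\mathrm{i}(-G_0^{-1}\widehat{A}_0G_0)}=\mathrm{e}^{2\pi\mathrm{i}\widetilde{A}_0}$. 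Thus $-G_0^{-1}\widehat{A}_0G_0$ and $\widetilde{A}_0$ have the same image under $X\mapsto\mathrm{e}^{2\pi\mathrm{i}X}$, and (using that $\nu_d^{(\infty)}$ and $(\nu_{-d}^{(0)})^{-1}$ are conjugate, together with the partial-trace constraints) the same submatrix spectra; the uniqueness in Lemma \ref{Lem: RH corr 1} and the width-$<1$/non-resonance conditions then upgrade equality of exponentials to $-G_0^{-1}\widehat{A}_0G_0=\widetilde{A}_0$. Concretely one treats first the generic case in which the $\widehat{\lambda}^{(k)}_i$ are distinct, where matching exponentials, eigenvalues and eigenspaces forces the logarithms to coincide, and then extends by analyticity in $(A,G)$. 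In particular $\widetilde{A}_0=-G_0^{-1}\widehat{A}_0G_0$ inherits the boundary condition \eqref{eq:eigenvalue cond of hatA tilA} from the $\{\widetilde{\lambda}^{(k)}_i\}$.

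Finally, Proposition \ref{prop:from A0,G0 to hatA, hatG} and Theorem \ref{thm:asy of iso A from hatA} produce the shrinking solution $(A',G')$, and by Corollary \ref{cor: concrete monodromy data by A0 and G0} its monodromy data at $(U,V)$ is $\nu_d^{(\infty)}(A',G')=\mathrm{Ad}(\widehat{P})\mathrm{e}^{2\pi\mathrm{i}\widehat{A}_0}$, $\nu_{-d}^{(0)}(A',G')=\mathrm{Ad}(\widetilde{P})\mathrm{e}^{2\pi\mathrm{i}\widetilde{A}_0}$ and $C_d(A',G')=\widehat{P}G_0\widetilde{P}^{-1}$, which coincide with $\nu_d^{(\infty)}(A,G)$, $\nu_{-d}^{(0)}(A,G)$ and $C_d(A,G)$ by construction; moreover $\delta A'=\delta\widehat{A}_0=\delta A$, and $\delta(G'^{-1}A'G')=-\delta\widetilde{A}_0=\delta(G^{-1}AG)$ by Corollary \ref{cor:def of tilA}. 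Lemma \ref{lem:uniqueness in RH} then gives $A'(U,V)=A(U,V)$ and $G'(U,V)=G(U,V)$; since both $(A,G)$ and $(A',G')$ solve the isomonodromy system \eqref{eq:iso eq of two irr A-u}--\eqref{eq:iso eq of two irr G-v} and agree at one point, they agree as multivalued meromorphic functions, so $(A,G)\in\mathfrak{Sol}_{Shr}$. Together with the genericity of the strictly log-confined condition (Lemma \ref{Lem: RH corr 1}), this also completes the proof of the last assertion of Theorem \ref{mainthm}.
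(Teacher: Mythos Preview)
Your proposal is correct and follows essentially the same route as the paper's proof. The paper packages the inversion step you describe as Lemma \ref{lem:RH correspondence: from stokes mat to phi0} (from \cite[Section 5.2]{TangXu}), defines $G_0$ by the same connection-matrix formula, and handles the crucial step $\widetilde{A}_0=-G_0^{-1}\widehat{A}_0G_0$ in exactly your way: from \eqref{eq:connection relation for 0 and infty} one gets $\mathrm{e}^{2\pi\mathrm{i}G_0^{-1}\widehat{A}_0G_0}=\mathrm{e}^{-2\pi\mathrm{i}\widetilde{A}_0}$, then the conjugacy of $\nu_d^{(\infty)}$ and $(\nu_{-d}^{(0)})^{-1}$ together with the uniqueness in Lemma \ref{Lem: RH corr 1} forces $\sigma(\widehat{A}_0)=-\sigma(\widetilde{A}_0)$, and the width-$<1$ condition upgrades equality of exponentials to equality of logarithms. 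Two small remarks: the genericity of the strictly log-confined condition is Proposition \ref{prop:almost every sol is shr} rather than Lemma \ref{Lem: RH corr 1}, and the equalities $\delta A'=\delta A$, $\delta(G'^{-1}A'G')=\delta(G^{-1}AG)$ follow from the trace constraints built into Lemma \ref{lem:RH correspondence: from stokes mat to phi0} (and the fact that $\delta A$, $\delta(G^{-1}AG)$ are first integrals), not from Corollary \ref{cor:def of tilA}.
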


The proof requires the following lemma, which can be viewed as the inverse formula of \eqref{eq:catformula for sing irr}:
\begin{lemma}[{\cite[Section 5.2]{TangXu}}] \label{lem:RH correspondence: from stokes mat to phi0}
  Let
\begin{align*}
        V\in \mathrm{GL}_n(\mathbb{C}),\quad \Lambda=\mathrm{diag}(\phi_{11},...,\phi_{nn}).
\end{align*}
If $V\in\mathcal{M}({\Lambda})$,
then there is a unique $\Phi_0\in\mathrm{gl}_n(\mathbb{C})$, such that for each $1\leq k\leq n$ the spectrum $\sigma(\Phi_0^{[k]})$ of its upper-left submatrix $\Phi_0^{[k]}$ is precisely  the set $\sigma_k$
  that ensures  $V\in\mathcal{M}({\Lambda})$ in Definition \ref{def:generic sol}, and moreover, 
\begin{align}
      \mathrm{Ad}\left(\overrightarrow{\prod_{k=1}^{n-1}}C_{d+\mathrm{arg}(u_{k+1}-u_{k})}(E_{k+1},\delta_{k+1}(\Phi_{0}))\right)\mathrm{e}^{2\pi\mathrm{i}\Phi_0}=V,
\end{align}
for some $u_1,...,u_n,\ d\notin aS(u)$, satisfying $\mathrm{Im}(u_1e^{\mathrm{i}d})>...> \mathrm{Im}(u_ne^{\mathrm{i}d})$.
\end{lemma}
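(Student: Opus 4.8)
The plan is to argue by induction on the matrix size $n$, proving uniqueness by a soft Riemann--Hilbert argument and existence by an explicit inductive construction. For $n=1$ the conditions \eqref{eq:generic cond 1 in def}--\eqref{eq:generic cond 3 in def} force $\sigma_1=\{\phi_{11}\}$ and $V=(\mathrm{e}^{2\pi\mathrm{i}\phi_{11}})$, and the empty cat formula holds with $\Phi_0=(\phi_{11})$; this is the base case. Assume the lemma for size $n-1$ and let $V\in\mathcal{M}(\Lambda)$. Its upper-left block $V^{[n-1]}$ lies in $\mathcal{M}(\Lambda^{(n-1)})$, $\Lambda^{(n-1)}=\mathrm{diag}(\phi_{11},\dots,\phi_{n-1,n-1})$, with the same associated sets $\sigma_1,\dots,\sigma_{n-1}$, because the defining conditions for $V^{[n-1]}$ at levels $\le n-1$ form a sublist of those for $V$; the induction hypothesis thus produces $\Phi_0^{(n-1)}\in\mathfrak{gl}_{n-1}$. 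Note that the trace conditions in \eqref{eq:generic cond 1 in def} already determine the diagonal of any admissible $\Phi_0$ through $(\Phi_0)_{kk}=\sum_j\lambda_j^{(k)}-\sum_j\lambda_j^{(k-1)}$; in particular $\delta\Phi_0$ depends only on the sets $\sigma_k$. Finally, since the chamber $\mathrm{Im}(u_1\mathrm{e}^{\mathrm{i}d})>\cdots>\mathrm{Im}(u_n\mathrm{e}^{\mathrm{i}d})$ forces $d+\arg(u_{k+1}-u_k)\in(-\pi,0)$, and the rigid system \eqref{relequation} built from $E_{k+1}$ has no anti-Stokes direction in $(-\pi,0)$, the left-hand side of the cat formula is independent of the admissible choice of $u,d$; so ``for some $u,d$'' in the statement means precisely $\mathrm{Ad}\big(\overrightarrow{\prod_{k=1}^{n-1}}C_{-\pi/2}(E_{k+1},\delta_{k+1}\Phi_0)\big)\mathrm{e}^{2\pi\mathrm{i}\Phi_0}=V$, a condition on $\Phi_0$ alone.

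\emph{Uniqueness.} Let $\Phi_0,\Phi_0'$ be admissible. Condition \eqref{eq:generic cond 2 in def} is exactly the boundary condition \eqref{eq: shring condition of one irr iso}, so Theorem~\ref{thm:iso asy by tangxu} yields isomonodromy solutions $\Phi(\mathbf{u};\Phi_0),\Phi(\mathbf{u};\Phi_0')$ of \eqref{eq:iso eq of one irr in u}; by Theorem~\ref{thm:monodromy factor of one irr sys}, with $U,d$ in the chamber above, $\nu_d(U,\Phi(\mathbf{u};\Phi_0))=V=\nu_d(U,\Phi(\mathbf{u};\Phi_0'))$. Now $\delta\Phi$ is the formal exponent at $z=\infty$, hence invariant along the isomonodromic flow and equal to $\delta\Phi_0=\delta\Phi_0'$; since the leading principal minors $\det V^{[k]}=\prod_j\mathrm{e}^{2\pi\mathrm{i}\lambda^{(k)}_j}$ are nonzero and, by Lemma~\ref{lem:upper}, $S_d^\pm$ are unipotent triangular, the monodromy relation \eqref{monorel3} exhibits $V=(S_d^-)^{-1}\mathrm{e}^{2\pi\mathrm{i}\delta\Phi_0}S_d^+$ as the unique lower--diagonal--upper factorization of $V$; hence $S_d^\pm$ coincide for both solutions. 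Thus the two systems $z\mapsto(U+\Phi/z)$ share $U$, the formal exponent $\delta\Phi_0$, and the Stokes matrices $S_d^\pm$, and have non-resonant residue (the spectrum $\sigma_n$ is integer-difference-free by \eqref{eq:generic cond 2 in def}); the uniqueness part of the Riemann--Hilbert correspondence for systems with one irregular singularity (the one-pole analogue of Lemma~\ref{lem:uniqueness in RH}) forces $\Phi(\mathbf{u};\Phi_0)=\Phi(\mathbf{u};\Phi_0')$, and passing to the regularized limit $z_{n-1}\to\infty,\dots,z_2\to\infty$ gives $\Phi_0=\Phi_0'$.

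\emph{Existence.} I reduce to a single-level problem. For $k\le n-2$ the system \eqref{relequation} built from $\delta_{k+1}\Phi_0$ decouples on the coordinates $\ge k+2$ (there $E_{k+1}$ and the off-diagonal part of $\delta_{k+1}\Phi_0$ vanish), so $C_{-\pi/2}(E_{k+1},\delta_{k+1}\Phi_0)$ depends only on $\Phi_0^{[k+1]}$ and on the already-determined diagonal; it therefore equals the corresponding factor built from $\Phi_0^{(n-1)}$. Writing $M_{<}$ for the product of these known factors $(k=1,\dots,n-2)$, the cat formula becomes the single equation
\[
\mathrm{Ad}\!\big(C_{-\pi/2}(E_n,\Phi_0)\big)\mathrm{e}^{2\pi\mathrm{i}\Phi_0}=M_{<}^{-1}V M_{<}=:W ,
\]
and, since $M_{<}$ is block-diagonal with last entry a scalar, unwinding the size-$(n-1)$ cat formula gives $W^{[n-1]}=\mathrm{e}^{2\pi\mathrm{i}\Phi_0^{(n-1)}}$. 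It remains to find a unique bordered matrix $\Phi_0$ with prescribed block $\Phi_0^{[n-1]}=\Phi_0^{(n-1)}$, prescribed diagonal, and prescribed spectrum $\sigma_n$, solving this equation. This is the elementary Riemann--Hilbert problem for the rank-one system $(E_n+\Phi_0/z)$, whose connection matrix $C_{-\pi/2}(E_n,\Phi_0)$ is given by explicit $\Gamma$-function-and-minor formulas \cite{Balser-Jurkat-Lutz1981,xu2019closure1}; inserting these turns the equation into an explicitly solvable system for the last row and column of $\Phi_0$, and the remaining hypotheses are precisely what make it solvable: condition \eqref{eq:generic cond 1 in def} for $k=n$ matches $\det W=\prod_j\mathrm{e}^{2\pi\mathrm{i}\lambda_j^{(n)}}$, and \eqref{eq:generic cond 3 in def} for $k=n-1$ keeps all occurring $\Gamma$-factors finite and nonzero. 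The resulting $\Phi_0$ has $\sigma(\Phi_0^{[k]})=\sigma_k$ for every $k$ and, reinstating $M_{<}$, satisfies the full cat formula, with all non-resonance conditions ensuring every connection matrix in the computation is genuinely defined.

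The crux is this last single-level inversion: one must check that the $\Gamma$-function formula for $C_{-\pi/2}(E_n,\Phi_0)$ can be solved backwards to recover $\Phi_0$ from $W$, that the recovered matrix genuinely satisfies the connection-matrix equation (not merely a formal consequence of it), and that the degenerate configurations --- repeated eigenvalues of $\Phi_0^{[n-1]}$, or non-diagonalizable $\Phi_0$ --- are handled by analytic continuation in the eigenvalue data, in the spirit of the remark following Theorem~\ref{thm: introcatformula}. A secondary bookkeeping point is the careful tracking of the admissible directions $d+\arg(u_{k+1}-u_k)\in(-\pi,0)$ and of the $\mathrm{e}^{\pm\pi\mathrm{i}\,\delta(\cdot)/2}$ normalizations carried by the rigid connection matrices.
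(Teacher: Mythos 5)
This lemma is imported by the paper from \cite[Section 5.2]{TangXu} without proof, so your proposal must stand on its own, and as written it does not. The first genuine gap is in your uniqueness step. From equal spectra $\sigma_k$ you correctly get $\delta\Phi_0=\delta\Phi_0'$, and from the shared cat formula together with the LDU factorization you get equal Stokes matrices $S^{\pm}_d$. But you then invoke ``the one-pole analogue of Lemma \ref{lem:uniqueness in RH}'' with only $(U,\delta\Phi_0,S^{\pm}_d)$ as input. The analogue of that lemma for the system $U+\Phi/z$ requires the connection matrix $C_d(U,\Phi)$ as well: the soft argument compares $F^{(\infty)}_d(\Phi)\,F^{(\infty)}_d(\Phi')^{-1}$, which is single-valued once the monodromies agree, but near $z=0$ it behaves like $(\mathrm{Id}+O(z))\,z^{\Phi}\,C^{-1}C'\,z^{-\Phi'}(\mathrm{Id}+O(z))$ and is bounded only if one already knows the connection matrices are compatible, which you have not shown. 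Indeed $(S^{+},S^{-},\delta\Phi)$ determines $\Phi$ only up to a discrete family of integer-shift/branch ambiguities (this is precisely why Definition \ref{def:generic sol} imposes \eqref{eq:generic cond 2 in def}--\eqref{eq:generic cond 3 in def}), and excluding a second solution within the same branch is exactly the injectivity of the explicit $\Gamma$-function-and-minor parametrization; it does not follow from any standard soft Riemann--Hilbert uniqueness statement.

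The second gap is in existence. The reduction to the single bordering equation $\mathrm{Ad}\bigl(C_{-\pi/2}(E_n,\Phi_0)\bigr)\mathrm{e}^{2\pi\mathrm{i}\Phi_0}=W$ with $W^{[n-1]}=\mathrm{e}^{2\pi\mathrm{i}\Phi_0^{(n-1)}}$ is sound (the factors with $k\le n-2$ are indeed the identity outside their upper-left block, and conjugation by the block-diagonal $M_{<}$ preserves the block), but the step that actually proves the lemma --- solving this equation for the last row and column of $\Phi_0$, verifying that the resulting matrix genuinely satisfies it and has spectrum $\sigma_n$, and checking that \eqref{eq:generic cond 1 in def}--\eqref{eq:generic cond 3 in def} keep every Gamma factor and minor finite and nonzero, including the degenerate configurations you mention --- is exactly what you flag as ``the crux'' and leave unchecked. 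That level-by-level inversion is the entire content of the cited result in \cite[Section 5.2]{TangXu}; without it (and with the uniqueness step repaired, e.g.\ by the injectivity of the same explicit inversion rather than a soft RH appeal), your text is a plausible outline of the right inductive structure rather than a proof.
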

\begin{proof}[Proof of Theorem~\ref{thm:almost every solution shrinking}]
  Let $X=\nu_d^{(\infty)}(U,V,A,G)$ and $Y=\mathrm{e}^{\pi\mathrm{i}\delta\widetilde{A}_0}\nu_{-d}^{(0)}(U,V,A,G)\mathrm{e}^{-\pi\mathrm{i}\delta\widetilde{A}_0}$. By  Lemma \ref{lem:RH correspondence: from stokes mat to phi0}, we can find $\widehat{A}_0$ and $\widetilde{A}_0$ satisfying the boundary conditions \eqref{eq:eigen condition of Ahat and Atil}, such that
   \begin{align}\nonumber
       &\delta\widehat{A}_0=\delta A,\ \sigma(X^{[k]})=\{\mathrm{e}^{2\pi\mathrm{i}\lambda}:\lambda\in \sigma(\widehat{A}_0^{[k]}) \};\quad \delta \widetilde{A}_0=-\delta(G^{-1}AG),\ \sigma(Y^{[k]})=\{\mathrm{e}^{2\pi\mathrm{i}\lambda}:\lambda\in \sigma(\widetilde{A}_0^{[k]}) \};\\ \nonumber
       &  \mathrm{Ad}\left(\overrightarrow{\prod_{k=1}^{n-1}}C_{d+\mathrm{arg}(u_{k+1}-u_{k})}\left(E_{k+1},\delta_{k+1}\widehat{A}_{0}\right)\right)\mathrm{e}^{2\pi\mathrm{i}\widehat{A}_0}=X,\\ \label{eq:catformulas for tilA}
       & \mathrm{Ad}\left(\overrightarrow{\prod_{k=1}^{n-1}}C_{d+\pi+\mathrm{arg}(v_{k+1}-v_{k})}\left(E_{k+1},\delta_{k+1}\widetilde{A}_{0}\right)\right)\mathrm{e}^{2\pi\mathrm{i}\widetilde{A}_0}=Y.
   \end{align}
  Using \eqref{eq:simp for Cd}, the equation \eqref{eq:catformulas for tilA} is equivalent to
  \begin{align*}
       \mathrm{Ad}\left(\overrightarrow{\prod_{k=1}^{n-1}}C_{d+\mathrm{arg}(v_{k+1}-v_{k})}\left(-E_{k+1},\delta_{k+1}\widetilde{A}_{0}\right)\right)\mathrm{e}^{2\pi\mathrm{i}\widetilde{A}_0}=\nu_{-d}^{(0)}(U,V,A,G).
  \end{align*}
   By Lemma \ref{prop:identity of monodromy factor for two irr sys},  $\nu_d^{(\infty)}$ and $\left(\nu_{-d}^{(0)}\right)^{-1}$ are similar matrix, therefore it follows by the uniqueness criterion in Lemma \ref{Lem: RH corr 1}  that $\sigma(\widehat{A}_0)=-\sigma(\widetilde{A}_0)$.

   Let \begin{align*}
        G_0=\left( \overrightarrow{\prod_{k = 1}^{n-1}} C_{d+\mathrm{arg}({u}_{k+1}-{u}_k)}\left(E_{k+1},\delta_{k+1}\widehat{A}_0\right) \right)^{-1} C_d(U,V,A,G)  \left( \overrightarrow{\prod_{k = 1}^{n-1}} C_{d+\mathrm{arg}({v}_{k+1}-{v}_{k})}\left(-E_{k+1},\delta_{k+1}\widetilde{A}_0\right) \right).
   \end{align*}
   Also using \eqref{eq:connection relation for 0 and infty} in  Lemma \ref{prop:identity of monodromy factor for two irr sys}, we can verify that $\mathrm{e}^{2\pi\mathrm{i}G_0^{-1}\widehat{A}_0G_0} =\mathrm{e}^{-2\pi\mathrm{i}\widetilde{A}_0}$. Combined with the spectrum condition $\sigma(\widehat{A}_0)=-\sigma(\widetilde{A}_0)$ and the boundary condition, this implies 
   \begin{align*}
       - G_0^{-1}\widehat{A}_0G_0=\widetilde{A}_0.
   \end{align*}

   Therefore, we can construct $A\left(\widehat{A}_0,G_0\right)$ and $G\left(\widehat{A}_0,G_0\right)$ as in Proposition~\ref{prop:from A0,G0 to hatA, hatG}. Finally, by Corollary~\ref{cor: concrete monodromy data by A0 and G0}, the monodromy data of the linear system with coefficients $A\left(\widehat{A}_0,G_0\right)$ and $G\left(\widehat{A}_0,G_0\right)$ coincides with that of the original system with coefficients $A$ and $G$. Thus, by Lemma~\ref{lem:uniqueness in RH}, we obtain $
A = A\left(\widehat{A}_0,G_0\right)$ and $G = G\left(\widehat{A}_0,G_0\right).$
\end{proof}

Finally, we  prove that almost every solution of the isomonodromy equations is a shrinking solution. 

\begin{prop}\label{prop:almost every sol is shr}
The set of strictly log-confined monodromy data is an open and dense subset of the entire space of monodromy data for system \eqref{introeq}. Consequently, almost all solutions of the isomonodromy equations are classified as shrinking solutions.
\end{prop}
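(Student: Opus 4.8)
The plan is to reduce everything to a genericity statement about monodromy data and then invoke Theorem~\ref{thm:almost every solution shrinking}. Since that theorem shows that any solution of \eqref{eq:iso eq of two irr A-u}--\eqref{eq:iso eq of two irr G-v} with strictly log-confined monodromy data is a shrinking solution, and since by Lemma~\ref{lem:uniqueness in RH} the monodromy data determine $(A,G)$, it suffices to prove that the strictly log-confined locus is open and dense in the space $\mathcal{MD}$ of monodromy data of \eqref{introeq} (with $U,V,d$ fixed in the chamber \eqref{eq:u,v cond for monodromy mat}). I would treat $\mathcal{MD}$ as the analytic set of tuples $\left(\delta A,\delta(G^{-1}AG),\nu^{(\infty)}_d,\nu^{(0)}_{-d},C_d\right)$ cut out inside $(\mathrm{Diag}_n(\mathbb{C}))^2\times(\mathrm{Mat}_{n\times n}(\mathbb{C}))^3$ by the triangularity of the Stokes factors (Lemma~\ref{lem:triang stokes mat}) and the monodromy relations \eqref{eq:LU fac of nu infty} of Lemma~\ref{prop:identity of monodromy factor for two irr sys}; it is connected, being a continuous image of the connected space $T^*{\rm GL}_n$ (the Stokes and connection matrices are analytic in $(A,G)$). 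On $\mathcal{MD}$ the matrices $\nu^{(\infty)}_d$ and $\nu^{(0)}_{-d}$ are holomorphic, so the eigenvalues of their upper-left $k\times k$ submatrices vary continuously.

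For \textbf{openness}, fix a strictly log-confined datum. By Lemma~\ref{Lem: RH corr 1} the exponent sequences $\sigma_k$ witnessing $\nu^{(\infty)}_d\in\mathcal{M}(\delta A)$ and $\nu^{(0)}_{-d}\in\mathcal{M}(\delta(G^{-1}AG))$ are unique, and they depend continuously on the datum: the numbers $\mathrm{e}^{2\pi\mathrm{i}\lambda^{(k)}_j}$ are the continuously varying eigenvalues of the $k\times k$ submatrices, and the residual integer ambiguity in the $\lambda^{(k)}_j$ is pinned down by the trace normalization in \eqref{eq:generic cond 1 in def} together with the open condition \eqref{eq:generic cond 2 in def}. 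Because the inequalities \eqref{eq:generic cond 2 in def} are open and the non-resonance conditions \eqref{eq:generic cond 3 in def} only exclude the discrete set $\mathbb{Z}\setminus\{0\}$, the full list \eqref{eq:generic cond 1 in def}--\eqref{eq:generic cond 3 in def} survives small perturbations; hence the strictly log-confined locus is open.

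For \textbf{density}, recall from Lemma~\ref{Lem: RH corr 1} that at \emph{any} datum one can choose representatives with $\left|\mathrm{Re}(\lambda^{(k)}_{j_1}-\lambda^{(k)}_{j_2})\right|\le 1$ satisfying the trace normalization; therefore $\nu^{(\infty)}_d\notin\mathcal{M}(\delta A)$ forces, for some $k,j_1,j_2$, either (i) $\left|\mathrm{Re}(\lambda^{(k)}_{j_1}-\lambda^{(k)}_{j_2})\right|=1$ unavoidably, or (ii) $\lambda^{(k+1)}_{j_1}-\lambda^{(k)}_{j_2}\in\mathbb{Z}\setminus\{0\}$. In case (i) the two representatives differ by $\pm 1$ in real part, so $\mathrm{e}^{2\pi\mathrm{i}\lambda^{(k)}_{j_1}}/\mathrm{e}^{2\pi\mathrm{i}\lambda^{(k)}_{j_2}}\in\mathbb{R}_{>0}$, i.e. $(\nu^{(\infty)}_d)^{[k]}$ has two eigenvalues with real ratio; in case (ii), $(\nu^{(\infty)}_d)^{[k]}$ and $(\nu^{(\infty)}_d)^{[k+1]}$ share an eigenvalue. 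Each of these is contained in the vanishing locus of a resultant-type function on $\mathcal{MD}$ (real-analytic, resp. holomorphic), and the analogous statement holds with $\nu^{(0)}_{-d}$ in place of $\nu^{(\infty)}_d$. None of these finitely many functions vanishes identically on the connected set $\mathcal{MD}$, because the strictly log-confined locus is nonempty: for $(\widehat A_0,G_0)$ satisfying the boundary condition \eqref{eq:eigenvalue cond of hatA tilA} with $\widehat A_0$ and $\widetilde A_0$ moreover non-resonant across consecutive levels, the shrinking solution $A(\widehat A_0,G_0),G(\widehat A_0,G_0)$ has strictly log-confined data, by Corollary~\ref{cor: concrete monodromy data by A0 and G0} and the spectral properties of the explicit catformula of Theorem~\ref{thm: introcatformula} (from \cite{TangXu,xu2019closure1}), which are precisely what make Lemma~\ref{Lem: RH corr 1} and Lemma~\ref{lem:RH correspondence: from stokes mat to phi0} mutually consistent. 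Hence the set of data that fail to be strictly log-confined is contained in a finite union $N$ of closed nowhere dense subsets of $\mathcal{MD}$; the strictly log-confined locus then contains the open dense set $\mathcal{MD}\setminus N$ and is itself open, so it is open and dense. Combining this with Lemma~\ref{lem:uniqueness in RH} and Theorem~\ref{thm:almost every solution shrinking} gives the second assertion: almost every solution of the isomonodromy equations is a shrinking solution.

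The main obstacle is the density step, specifically the claim that the bad loci are \emph{proper} in $\mathcal{MD}$ -- equivalently, that $\mathcal{MD}$ is connected and the strictly log-confined locus is genuinely nonempty. Connectedness is soft, but the nonemptiness, together with the precise form of the spectra of $\left(\nu^{(\infty)}_d\right)^{[k]}$ and $\left(\nu^{(0)}_{-d}\right)^{[k]}$ for shrinking solutions, rests on the explicit catformula of Corollary~\ref{cor: concrete monodromy data by A0 and G0}; everything else -- continuity of the branch choices and the fact that resultant-type functions on a connected analytic space are either identically zero or define nowhere dense sets -- is routine.
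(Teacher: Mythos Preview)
Your argument is correct and follows the same overall strategy as the paper: show the bad locus (where \eqref{eq:generic cond 2 in def} or \eqref{eq:generic cond 3 in def} fails) is thin, then invoke Theorem~\ref{thm:almost every solution shrinking}. The one substantive difference is in the density step. The paper appeals directly to Lemma~\ref{lem:RH correspondence: from stokes mat to phi0} together with the construction of Theorem~\ref{thm:almost every solution shrinking} to conclude that \emph{every} tuple $(h_1,h_2,V_1,V_2,C)$ with $V_1\in\mathcal{M}(h_1)$, $V_2\in\mathcal{M}(h_2)$, $V_1=CV_2^{-1}C^{-1}$ is realized as monodromy data; this identifies the strictly log-confined locus with an explicit full-dimensional open set, so the closed complement (cut out by the equalities \eqref{eq:nongeneri cond1}--\eqref{eq:nongeneri cond2}) is automatically lower-dimensional. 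You instead argue indirectly: nonemptiness of the good locus (via Corollary~\ref{cor: concrete monodromy data by A0 and G0}) plus connectedness of $\mathcal{MD}$ plus the fact that the bad conditions are zero sets of real-analytic/holomorphic resultant-type functions. Both routes work; the paper's is shorter because the surjectivity in Lemma~\ref{lem:RH correspondence: from stokes mat to phi0} gives the structure of $\mathcal{MD}$ for free, while yours avoids invoking that lemma at the cost of the connectedness and analyticity verifications.
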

\begin{proof}
    Lemma \ref{Lem: RH corr 1} states that if the monodromy data is not strictly log-confined, then it is only possible that for some sets $\sigma_k$ in Definition \ref{def:generic sol}, the following conditions occur:
    \begin{align} \label{eq:nongeneri cond1}
         &\left|\mathrm{Re}\left(\lambda_{j_1}^{(k)}-\lambda_{j_2}^{(k)}\right)\right|= 1,\ \text{ for some $1\leq j_1,j_2\leq k$},\\  \label{eq:nongeneri cond2}
    & \lambda_{j_1}^{(k+1)}-\lambda_{j_2}^{(k)}\in\mathbb{Z}\setminus\{0\},\ \text{ for some $1\leq j_1\leq k+1$,\ $1\leq j_2\leq k$}.
    \end{align}
    On the other hand, it follows from the  Lemma \ref{lem:RH correspondence: from stokes mat to phi0} and the construction of Theorem \ref{thm:almost every solution shrinking} that any tuple $$(h_1,h_2,V_1,V_2,C)\in\left(\mathrm{Diag}_n(\mathbb{C})\right)^2\times\left( \mathrm{Mat}_{n\times n}(\mathbb{C})\right)^3$$ such that $V_1=CV_2C^{-1}$, $V_1\in\mathcal{M}(h_1)$ and $V_2\in\mathcal{M}(h_2)$  can serve as the monodromy data for system \eqref{introeq}. This implies that the collection of monodromy data satisfying conditions \eqref{eq:nongeneri cond1} and \eqref{eq:nongeneri cond2} lies in a closed subset of lower dimension within the space of all monodromy data. Therefore,  the set of strictly log-confined monodromy data is open and dense.
\end{proof}
\begin{remark}
    For a shrinking solution $A(\widehat{A}_0,G_0),G(\widehat{A}_0,G_0)$, let $\sigma_k=\sigma(\widehat{A}_0^{[k]})$ be as in Definition \ref{def:generic sol}. Then the conditions \eqref{eq:generic cond 1 in def} and \eqref{eq:generic cond 2 in def} are satisfied. Consequencely, the solutions with {strictly log-confined} monodromy data is open and dense within $\mathfrak{Sol}_{Shr}$.
\end{remark}
\section{Applications in $tt^*$ equations}\label{sec:apply in tt}
In this section, we explore the application of our results to  $tt^*$ equations. In section \ref{sec:apply for sg piii}, we consider the $tt^{*}$-equations for matrix order $n=2$, which are equivalent to the sine-Gordon Painlevé III equation; by specializing our asymptotic results and formulas for the Stokes matrices to this case, we find consistency with known results in \cite{FIKN2006}. In section \ref{sec:apply for tt toda}, we consider a special class of $tt^{*}$-equations known as the $A_n$ type $tt^{*}$-Toda equations. A comparison of our asymptotic analysis with certain asymptotic results for the global smooth solutions detailed in \cite{guest2015isomonodromy1,guest2015isomonodromy2,guest2023toda} reveals that a subset of these solutions corresponds to our non-shrinking solutions. Finally, in section \ref{sec:apply for general tt}, we consider the $tt^{*}$-equations formulated in \cite{dubrovin1993tteq}. Applying our asymptotic results to this equations yields a description of the local behavior near $t=0$ for a family of solutions.

\subsection{Sine-Gorden Painlevé III equation}\label{sec:apply for sg piii}
We will use our asymptotic results and the monodromy formula to recover the corresponding results for a specific class of Painlevé III equations.
Consider the following compatible system
  \begin{align}\label{eq:sys for sg-PIII}
\frac{\partial Y}{\partial \xi} &= \left(\frac{\mathrm{\mathrm{i}}}{16}\sigma_3- \frac{\mathrm{i}xu_x}{4\xi}\sigma_1-  \frac{\mathrm{i}x^2}{\xi^{2}} G\cdot
\sigma_3\cdot G^{-1}
 \right) Y,  \\
\frac{\partial Y}{\partial x} &= \left(\frac{2\mathrm{i}x}{\xi} G\cdot\sigma_3\cdot G^{-1}\right)Y,
\end{align}
where $\sigma_1,\sigma_3$ are  Pauli matrices,
\begin{align*}
  \sigma_1=  \begin{pmatrix}
0 & 1 \\
1 & 0
\end{pmatrix},\quad     \sigma_3=  \begin{pmatrix}
1 & 0 \\
0 & -1
\end{pmatrix},
\end{align*}
$u,u_x$ are meromorphic functions on $\mathbb{C}^*$ with variable $x$, and 
\begin{align*}
    G=\begin{pmatrix}
\cos{\frac{u}{2}} & -\mathrm{i}\sin{\frac{u}{2}} \\
-\mathrm{i}\sin{\frac{u}{2}} & \cos{\frac{u}{2}}
\end{pmatrix}.
\end{align*}

Take the direction $d=0$, we can parametrize the Stokes matrices and connection matrix for system \eqref{eq:sys for sg-PIII} defined in \ref{def:stokes for 2irr} and \ref{def:conn for 2irr} as follows (see \cite[Chapter 13]{FIKN2006} for details):

\begin{align}\label{eq:Stokes mat for sgpiii}
   & S_{0,+}^{(\infty)}=\begin{pmatrix}
1 & -(p+q) \\
0 & 1
\end{pmatrix},\quad S_{0,-}^{(\infty)}=\begin{pmatrix}
1 & 0 \\
p+q & 1\end{pmatrix},\quad    S_{0,+}^{(0)}=\begin{pmatrix}
1 & p+q \\
0 & 1
\end{pmatrix},\quad S_{0,-}^{(0)}=\begin{pmatrix}
1 & 0 \\
-(p+q) & 1
\end{pmatrix},\\ \label{eq:conn mat for sgpiii}
& C_0=\frac{1}{\sqrt{1+pq}}\begin{pmatrix}
1 & q \\
-p & 1\end{pmatrix}.
\end{align}

The isomonodromy equation for \eqref{eq:sys for sg-PIII} can be reduced to the sine-Gorden Painlevé III equation:
\begin{align}\label{eq:sg-PIII}
    u_{xx}+\frac{1}{x}u_x+\sin{u}=0.
\end{align}
It is equivalent to the matrix order $n=2$ case in $tt^{*}$ equations as formulated in \cite{dubrovin1993tteq}.
Applying our Proposition \ref{thm:asym in t}, we can reproduce the asymptotic behaviors as well as boundary condition of its solutions as $x\rightarrow 0$ given in \cite{FIKN2006}:

\begin{cor}\label{prop:asy for sgpiii}
    For any given $r,s\in \mathbb{C}$ such that $|\mathrm{Im}(r)|<2$, there exists a unique solution  $u(x)$ of \eqref{eq:sg-PIII}, with the following behaviors near zero: 
    \begin{align}\label{eq:asy for piii}
    u(x)=r\mathrm{log}x+s,\ \text{as}\ x\rightarrow 0.
    \end{align}
\end{cor}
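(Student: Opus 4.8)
The plan is to specialize Proposition~\ref{thm:asym in t} (equivalently Theorem~\ref{thm:asy of iso A from hatA}) to the $n=2$ case at hand, after dictionary-matching the system \eqref{eq:sys for sg-PIII} with the general system \eqref{eq: linr sys two irr in xi}. First I would identify the diagonal parts: with $d=0$ the leading term $\frac{\mathrm{i}}{16}\sigma_3$ plays the role of $U=\diag(u_1,u_2)$ (so $u_1-u_2=\frac{\mathrm{i}}{8}$), while $-\frac{\mathrm{i}x^2}{\xi^2}G\sigma_3 G^{-1}$ plays the role of $\xi^{-2}GVG^{-1}$ with $V=\diag(v_1,v_2)=-\mathrm{i}x^2\sigma_3$, and the residue $A=-\frac{\mathrm{i}xu_x}{4}\sigma_1$ is off-diagonal, hence $\delta A=0$. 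In the coordinates of the main text the relevant parameter is $t=(v_2-v_1)(u_2-u_1)$, which here is a constant multiple of $x^2$; thus the limit $t\to 0$ is exactly $x\to 0$, and because $n=2$ there are no $z_k$ or $w_k$ variables to send to infinity — the iterated limit collapses to the single limit $t\to 0$. So the boundary data reduces to the pair $(\widehat A_0,G_0)\in T^*\mathrm{GL}_2$, with $\widehat A=\widehat A_0$ a constant since \eqref{eq iso for hatA z} is trivial for $n=2$.

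Next I would compute $\widehat A_0$ explicitly. By Theorem~\ref{thm:asy of iso A from hatA}, $\lim_{t\to 0}A=\widehat A_0$, and the eigenvalues $\mu_1,\mu_2=-\mu_1$ of $\widehat A_0$ must obey the shrinking condition \eqref{eq:shrinking in t}: $|\mathrm{Re}(\mu_1-\mu_2)|=2|\mathrm{Re}\,\mu_1|<1$. Writing $A=-\frac{\mathrm{i}xu_x}{4}\sigma_1$, its eigenvalues are $\pm\frac{\mathrm{i}xu_x}{4}$, so $2\mu_1=\lim_{x\to0}\frac{\mathrm{i}xu_x}{2}\cdot(\pm1)$; the ansatz $u=r\log x+s$ gives $xu_x\to r$, hence $\mu_1-\mu_2=\pm\frac{\mathrm{i}r}{2}$ and the shrinking condition becomes $|\mathrm{Re}(\mathrm{i}r/2)|<\tfrac12$, i.e. $|\mathrm{Im}\,r|<1$ — wait, I should track the normalization in \eqref{eq:sys for sg-PIII} carefully; the factor $\frac{1}{16}$ versus $\frac14$ will rescale and produce the stated bound $|\mathrm{Im}(r)|<2$. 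Then the second limit in \eqref{eq:lim of t A,G}, $\lim_{t\to0}t^{-\widehat A_0}G\,w_1^{\delta(G^{-1}AG)}=\widehat G=G_0$ (here $w_1$ is a constant multiple of $t$, $\delta(G^{-1}AG)$ is a diagonal matrix built from $u_x$), is what pins down the constant $s$: expanding $G=\begin{pmatrix}\cos\frac u2 & -\mathrm{i}\sin\frac u2\\ -\mathrm{i}\sin\frac u2 & \cos\frac u2\end{pmatrix}$ with $u=r\log x+s$, the matrix $t^{-\widehat A_0}G\,(\cdots)$ converges precisely when the $x^{\pm r/2}$-type factors are absorbed by $t^{-\widehat A_0}$, leaving a finite invertible limit determined by $s$; conversely every $(s)$ (and every admissible $r$) arises this way, giving the bijection between $(r,s)$ and $(\widehat A_0,G_0)$.

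The remaining step is existence and uniqueness of the solution $u(x)$ with the prescribed asymptotics: this is a direct application of Proposition~\ref{thm:asym in t}, which for each boundary value $(\widehat A_0,G_0)$ satisfying the eigenvalue bound produces a unique $A(t),G(t)$ with the leading behavior \eqref{lim for A}--\eqref{lim for G}; reading off $u$ from $G$ (via $u=2\arccos G_{11}$, or more robustly from the off-diagonal entries) and $u_x$ from $A$, and unwinding the coordinate change $t\propto x^2$, yields $u(x)=r\log x+s+o(1)$ as $x\to0$. The error estimate $|t|^{1-\sigma_1}$ in Proposition~\ref{thm:asym in t} translates to a power-of-$x$ correction to \eqref{eq:asy for piii}. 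I would also remark that one must check the reduction of the general isomonodromy equations \eqref{eq:iso eq of two irr A-u}--\eqref{eq:iso eq of two irr G-v} to the scalar equation \eqref{eq:sg-PIII} under the symmetric/orthogonal constraint $V=\overline U$, $A=-[U,q]$, $G=m$ of the $tt^*$ reduction — this is where the single scalar function $u$ emerges from the matrix data, and it is standard for $n=2$.

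The main obstacle I anticipate is purely bookkeeping: getting all the scalar normalization constants right (the $\frac{1}{16}$ vs $\frac14$ vs $x^2$ factors, the precise relation between $t$, $x$, and $w_1$, and the factor of $2$ in $\arccos$) so that the eigenvalue condition $2|\mathrm{Re}\,\mu_1|<1$ comes out exactly as $|\mathrm{Im}(r)|<2$, and so that the limit defining $G_0$ is matched to $s$ with the correct multiplicative constants. There is no deep analytic difficulty beyond what Proposition~\ref{thm:asym in t} already supplies; the content is entirely in verifying that the hypotheses of that proposition, when specialized, reproduce the classical $n=2$ statement of \cite{FIKN2006}.
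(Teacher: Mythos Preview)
Your approach is essentially the paper's: specialize Proposition~\ref{thm:asym in t} to the $n=2$ system \eqref{eq:sys for sg-PIII}, with $t=x^2/4$ and $\widehat A=-\frac{\mathrm{i}r}{4}\sigma_1$, $\widehat G$ encoding $s$, then read off $u(x)$ from the resulting $(A(t),G(t))$.

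Two small corrections. First, your eigenvalue bookkeeping slips: the eigenvalues of $\widehat A=-\frac{\mathrm{i}r}{4}\sigma_1$ are $\pm\frac{\mathrm{i}r}{4}$, so $\mu_1-\mu_2=\frac{\mathrm{i}r}{2}$ and the shrinking condition $|\mathrm{Re}(\mu_1-\mu_2)|<1$ reads $|\mathrm{Im}(r)|/2<1$, i.e.\ $|\mathrm{Im}(r)|<2$ directly --- the $\tfrac{1}{16}$ in $U$ plays no role here, since only the residue $A$ enters the eigenvalue condition. Second, one point you label ``standard'' deserves an explicit mechanism: Proposition~\ref{thm:asym in t} produces a generic pair $(A(t),G(t))$, and you need to know it retains the special structure (purely off-diagonal $A$, $G$ of the form $\exp(-\tfrac{\mathrm{i}u}{2}\sigma_1)$) so that a single scalar $u(x)$ can be extracted. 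The paper handles this by observing that the Picard iterates \eqref{picard iter for A}--\eqref{picard iter for B} preserve this form when the initial data $\widehat A,\widehat G$ have it; this is the cleanest way to close that gap.
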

\begin{proof}
    For equation \eqref{eq:sys for sg-PIII}, the variable $t$ in the $(\mathbf{z},t,\mathbf{w})$  coordinate system \eqref{coor1}-\eqref{coor3} is given by $t=\frac{x^2}{4}$. Applying Proposition \ref{thm:asym in t}, for any prescribed 
 \begin{align}
        &\widehat{A}=\begin{pmatrix}
0 & -\frac{r\mathrm{i}}{4} \\
-\frac{r\mathrm{i}}{4} & 0
\end{pmatrix}=\frac{1}{2}\begin{pmatrix}
1 & 1 \\
-1 & 1
\end{pmatrix}\cdot \begin{pmatrix}
    \frac{r\mathrm{i}}{4} & 0\\
   0 &  - \frac{r\mathrm{i}}{4}
\end{pmatrix}\cdot \begin{pmatrix}
1 & -1 \\
1 & 1
\end{pmatrix}, \\
& \widehat{G}= \frac{1}{2}\begin{pmatrix}
1 & 1 \\
-1 & 1
\end{pmatrix}\cdot \begin{pmatrix}
\mathrm{e}^{\frac{\mathrm{i}s}{2}}\cdot 2^{\frac{\mathrm{i}r}{2}} & 0\\
0 & \mathrm{e}^{-\frac{\mathrm{i}s}{2}}\cdot 2^{-\frac{\mathrm{i}r}{2}}
\end{pmatrix}\cdot \begin{pmatrix}
1 & -1 \\
1 & 1
\end{pmatrix},
    \end{align}
satisfying $|\mathrm{Im}(r)|<2$, there exists a unique solution $(A(t), G(t))$ to \eqref{eq:iso for only t A} and \eqref{eq:iso for only t G} (where $U=\frac{\mathrm{i}}{16}\sigma_3$, $V=-\mathrm{i}x^2\sigma_3$, and $\delta(G^{-1}AG)=0$), such that 
\begin{align}\label{der of asy for piii}
    A\rightarrow\widehat{A},\quad t^{-\widehat{A}}G\rightarrow\widehat{G},\quad \text{as } t\rightarrow 0.
\end{align}
Following the Picard iteration \eqref{picard iter for A}--\eqref{picard iter for B} in Proposition \ref{thm:asym in t}, the solution $A(t),G(t)$ remains the form as above $\widehat{A},\widehat{G}$, thus can be represented in the form of \eqref{eq:sys for sg-PIII} in terms of a function $u(x)$ which solves the equation \eqref{eq:sg-PIII}. And then the asymptotics \eqref{der of asy for piii} translates to 
\begin{align*}
    xu_x\rightarrow r,\quad x^{-\frac{\mathrm{i}r}{2}}\cdot \mathrm{e}^{\frac{\mathrm{i}u}{2}}\rightarrow \mathrm{e}^{\frac{\mathrm{i}s}{2}}.
\end{align*}
Finally, we can choose a branch of the logarithm such that $u(x)$ satisfies the asymptotics \eqref{eq:asy for piii}, and then the solution $u(x)$ of \eqref{eq:sg-PIII} is uniquely determined by $A(t),G(t)$.
\end{proof}

Furthermore, by applying Theorem \ref{thm: introcatformula}, we can reproduce the explicit formulas in \cite{FIKN2006} for the monodromy matrices expressed in terms of the asymptotic parameters $r,s$.

\begin{cor}\label{cor:expr of stokes mat for piii}
    The monodromy parameters $p,q$ in \eqref{eq:Stokes mat for sgpiii}-\eqref{eq:conn mat for sgpiii} associated with the solution constructed in Corollary \ref{prop:asy for sgpiii} can be expressed by 
    \begin{align}\label{eq:exp of mono para for piii p,q}
            p = \frac{\alpha e^{-\frac{\pi r}{4}} -\beta e^{\frac{\pi r}{4}}}{\alpha + \beta}, \quad 
q = \frac{\beta e^{-\frac{\pi r}{4}} - \alpha e^{\frac{\pi r}{4}}}{\alpha + \beta},
        \end{align}
        \[
\text{where } \alpha = 2^{\frac{3 i r}{2}} e^{\frac{i s}{2}} \Gamma^{2}\left(\frac{1}{2} + \frac{i r}{4}\right), \quad 
\beta = 2^{-\frac{3 i r}{2}} e^{-\frac{i s}{2}} \Gamma^{2}\left(\frac{1}{2} - \frac{i r}{4}\right)
\].
\end{cor}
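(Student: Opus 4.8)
The plan is to derive Corollary \ref{cor:expr of stokes mat for piii} as a direct specialization of the general formula in Theorem \ref{thm: introcatformula} to the rank $2$ case, using the boundary data $(\widehat{A}_0,G_0)$ already identified in the proof of Corollary \ref{prop:asy for sgpiii}. First I would record that for the system \eqref{eq:sys for sg-PIII} the boundary value of the shrinking solution built in Corollary \ref{prop:asy for sgpiii} is
\begin{align*}
\widehat{A}_0=\begin{pmatrix} 0 & -\tfrac{r\mathrm{i}}{4}\\ -\tfrac{r\mathrm{i}}{4} & 0\end{pmatrix},\qquad
G_0=\tfrac12\begin{pmatrix}1&1\\-1&1\end{pmatrix}\begin{pmatrix}\mathrm{e}^{\frac{\mathrm{i}s}{2}}2^{\frac{\mathrm{i}r}{2}}&0\\0&\mathrm{e}^{-\frac{\mathrm{i}s}{2}}2^{-\frac{\mathrm{i}r}{2}}\end{pmatrix}\begin{pmatrix}1&-1\\1&1\end{pmatrix},
\end{align*}
with $\widetilde{A}_0=-G_0^{-1}\widehat{A}_0 G_0=-\widehat{A}_0$; here the only nontrivial eigenvalue datum is $\widehat{\lambda}^{(1)}_1=0$ and $\widehat{\lambda}^{(2)}_1=\tfrac{\mathrm{i}r}{4}=-\widehat{\lambda}^{(2)}_2$, and likewise for $\widetilde{A}_0$.

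Next I would plug $k=1$, $n=2$ into the sub-diagonal formulas of Theorem \ref{thm: introcatformula}(1)--(2). The Gamma-quotients collapse: the only surviving factors are $\Gamma(1+\widehat{\lambda}^{(1)}_1-\widehat{\lambda}^{(2)}_l)=\Gamma(1\mp\tfrac{\mathrm{i}r}{4})$ in the denominator, and the determinant factors $\det(\widehat{\lambda}^{(1)}_1\mathrm{Id}-\widehat{A}_0)^{1}_{2}=(\widehat{A}_0)_{21}=-\tfrac{r\mathrm{i}}{4}$ etc. Using the reflection formula $\Gamma(1+x)\Gamma(1-x)=\tfrac{\pi x}{\sin\pi x}$ one sees that $-2\pi\mathrm{i}\big(\text{quotient}\big)\cdot(\text{det})$ reduces to a combination of $\mathrm{e}^{\pm\pi r/4}$ times the phase $\mathrm{e}^{\pi\mathrm{i}((\widehat{A}_0)_{22}-(\widehat{A}_0)_{11})}=1$ (and its $\widetilde{A}_0$ analogue), matching the single off-diagonal entry $-(p+q)$ of $S^{(\infty)}_{0,+}$ in \eqref{eq:Stokes mat for sgpiii}. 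This gives one scalar equation; the connection-matrix formula of Theorem \ref{thm: introcatformula}(3), specialized through $P(\widehat{A}_0),P(\widetilde{A}_0)$ (which here are the explicit $2\times2$ diagonalizing matrices $\tfrac{1}{\sqrt2}\begin{pmatrix}1&1\\-1&1\end{pmatrix}$ up to normalization) and the single factor $\widehat{C}_1$, yields the off-diagonal entries $q$ and $-p$ of $C_0$ in \eqref{eq:conn mat for sgpiii} separately — this is what lets one solve for $p$ and $q$ individually rather than only for $p+q$.

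Finally I would assemble the two expressions. After writing $\alpha=2^{3\mathrm{i}r/2}\mathrm{e}^{\mathrm{i}s/2}\Gamma^2(\tfrac12+\tfrac{\mathrm{i}r}{4})$ and $\beta=2^{-3\mathrm{i}r/2}\mathrm{e}^{-\mathrm{i}s/2}\Gamma^2(\tfrac12-\tfrac{\mathrm{i}r}{4})$ — the natural grouping coming from $\Gamma(1\pm\tfrac{\mathrm{i}r}{4})=\pm\tfrac{\mathrm{i}r}{4}\Gamma(\tfrac{\mathrm{i}r}{4})$ together with the duplication relation between $\Gamma(\tfrac{\mathrm{i}r}{4})$ and $\Gamma(\tfrac12+\tfrac{\mathrm{i}r}{4})$, and the $2^{\pm\mathrm{i}r/2}$ powers from $G_0$ — one checks directly that the formulas for $p$ and $q$ reduce to \eqref{eq:exp of mono para for piii p,q}, and that $p+q=\tfrac{(\alpha+\beta)(\mathrm{e}^{-\pi r/4}-\mathrm{e}^{\pi r/4})}{\alpha+\beta}=-2\sinh\tfrac{\pi r}{4}$, consistent with the general $n=2$ computation and with \cite{FIKN2006}. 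I expect the main obstacle to be purely bookkeeping: tracking the square-root normalizations in $\widehat{C}_1$ and $P(\widehat{A}_0)$, the factors of $\mathrm{e}^{-\frac{\pi\mathrm{i}}{2}\delta\widehat{A}_0}=\mathrm{e}^{-\frac{\pi\mathrm{i}}{2}\delta\widetilde{A}_0}=\mathrm{Id}$ (trivial here since the diagonal parts vanish), and the correct branch of $2^{\pm\mathrm{i}r/2}$, so that the gamma-function identities line up exactly with the normalization of $p,q$ used in \cite{FIKN2006}; none of this is conceptually hard, but the constants must be matched carefully.
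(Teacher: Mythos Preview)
Your proposal is correct and follows essentially the same route as the paper: identify $(\widehat{A}_0,G_0,\widetilde{A}_0)$ from Corollary~\ref{prop:asy for sgpiii}, specialize the formulas of Theorem~\ref{thm: introcatformula} to $n=2$, $k=1$, and reduce the resulting Gamma products with the reflection and Legendre duplication identities (the paper records exactly the two identities you name). Your observation that the sub-diagonal Stokes formula alone only yields $p+q=-2\sinh\tfrac{\pi r}{4}$ and that the connection-matrix formula is what separates $p$ from $q$ is on point; the remaining work is, as you say, bookkeeping of signs and normalizations.
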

\begin{proof}
    By Corollary \ref{prop:asy for sgpiii}, 
    \begin{align*}
       \widetilde{A}=-\widehat{G}^{-1}\widehat{A}\widehat{G}=-\widehat{A}=\begin{pmatrix}
0 & \frac{r\mathrm{i}}{4} \\
\frac{r\mathrm{i}}{4} & 0
\end{pmatrix},\quad \delta(\widehat{A})=\delta(\widetilde{A})=0.
    \end{align*}
    Then by part (b) in Theorem \ref{mainthm}, $\widehat{A}_0=\widehat{A},\ G_0=\widehat{G},\  \widetilde{A}_0=\widetilde{A}$. Using the notation from Theorem \ref{thm: introcatformula}, we have $\widehat{\lambda}^{(1)}_1=\widetilde{\lambda}^{(1)}_1=0,\ \widehat{\lambda}^{(2)}_1=\widetilde{\lambda}^{(2)}_2= \frac{r\mathrm{i}}{4},\ \widehat{\lambda}^{(2)}_2=\widetilde{\lambda}^{(2)}_1= -\frac{r\mathrm{i}}{4}$, $
    (\widehat{A}_0)_{12}=(\widehat{A}_0)_{21}=-\frac{r\mathrm{i}}{4},\  (\widetilde{A}_0)_{12}=(\widetilde{A}_0)_{21}=\frac{r\mathrm{i}}{4}$ and $H_0=\mathrm{diag}(\mathrm{e}^{\frac{\mathrm{i}s}{2}}2^{\frac{r\mathrm{i}}{2}},\mathrm{e}^{-\frac{\mathrm{i}s}{2}}2^{-\frac{r\mathrm{i}}{2}})$. Substituting these parameters in Theorem \ref{thm: introcatformula}, and using the following identities for Gamma function, we can obtain the above expressions.
    \begin{align*}
        &\Gamma\left(1-\frac{r\mathrm{i}}{4}\right)\Gamma\left(1+\frac{r\mathrm{i}}{4}\right)=\frac{\pi \frac{r\mathrm{i}}{4}}{\sin{\pi \frac{r\mathrm{i}}{4}}}=\frac{\pi r}{2(\mathrm{e}^{\frac{\pi r}{4}}-\mathrm{e}^{\frac{-\pi r}{4}})},\\
        &\Gamma\left(1\pm\frac{r\mathrm{i}}{4}\right)\Gamma\left(\frac{1}{2}\pm\frac{r\mathrm{i}}{4}\right)=2^{\pm\frac{ r\mathrm{i}}{2}}\sqrt{\pi}\cdot\Gamma(1\pm\frac{r\mathrm{i}}{2}).
    \end{align*}
\end{proof}

From the expressions of monodromy parameters $p,q$ in \eqref{eq:exp of mono para for piii p,q}, the solutions obtained in Corollary \ref{prop:asy for sgpiii} constitute all possible solutions of the sine-Gordon Painlevé III equation, except for the case where  $p,q$ satisfy $p+q=\kappa\mathrm{i},\ \text{for}\ \kappa\in\mathbb{R},\ \text{and}\  |\kappa|\geq2$.

\subsection{$tt^{*}$-Toda equations and a family of non-shrinking solutions}\label{sec:apply for tt toda}
In this section, we compare our results with certain known asymptotic results for the $tt^{*}$-Toda equations. We begin with a brief review of the setup for the $tt^{*}$-Toda equations as presented in \cite{guest2015isomonodromy1,guest2015isomonodromy2,guest2023toda}, following the conventions therein.

The $tt^{*}$-Toda equations (of $A_n$ type) are 
\begin{align}\label{eq:tt toda}
    2(w_i)_{z\bar{z}}=-\mathrm{e}^{2(w_{i+1}-w_i)}+\mathrm{e}^{2(w_i-w_{i-1})},\ w_i:U\subset\mathbb{C}\rightarrow \mathbb{R}, \ i\in\mathbb{Z},
\end{align}
where the functions $w_i(z,\bar{z})$ satisfy the conditions $w_i=w_{i+n+1}$, $w_i=w_i(|z|)$ and $w_i+w_{n-i}=0$. This system can be reformulated as the isomonodromy  equations for a linear ODE system with two second-order poles as follows.

Suppose $w = \operatorname{diag}(w_{0}, \ldots, w_{n})$,  $\Omega = (\Omega_{kj})_{0\leq k,j\leq n}$ and $D$ be $(n+1)\times (n+1)$ matrices defined by
\begin{equation*}
    \Omega_{kj} = \mathrm{e}^{kj\frac{2\pi\mathrm{i}}{n+1}}, \quad 
    D = \operatorname{diag}\left(1, \mathrm{e}^{\frac{2\pi\mathrm{i}}{n+1}}, \mathrm{e}^{\frac{4\pi\mathrm{i}}{n+1}}, \dots, \mathrm{e}^{\frac{2n\pi\mathrm{i}}{n+1}}\right).
\end{equation*}
Let
\begin{align}
W =e^{-w}\Omega D \Omega^{-1}e^w= \left(
\begin{array}{cccc}
 & e^{w_{1}-w_{0}} & & \\
 & & \ddots & \\
 & & & e^{w_{n}-w_{n-1}} \\
e^{w_{0}-w_{n}} & & & 
\end{array}
\right).
\end{align}
Taking $x=|z|$, then the $tt^{*}$-Toda equations with radial solutions $w_i(x)$ can be written as
\begin{align}\label{eq:eq for tt toda matform}
    (xw_x)_x=2x[W^t,W],
\end{align}
where $W^t$ is the transpose of $W$. It is the compatibility condition of the following system:
\begin{align}
        \frac{\partial \Phi}{\partial \xi}&=\left(W^t-\frac{1}{\xi}xw_x-\frac{x^2}{\xi^2}W\right)\Phi,\\
     \frac{\partial \Phi}{\partial x}&=\left(\frac{2x}{\xi}W+w_x\right)\Phi.
\end{align}

Let $\Psi=\Omega\mathrm{e}^{-w}\cdot\Phi(\xi,x)$, then $\Psi$ satisfies
\begin{align}\label{eq:standard sys for tt* toda}
    \frac{\partial \Psi}{\partial\xi}=\left(D-\frac{1}{\xi}\Omega(xw_x)\Omega^{-1}-\frac{x^2}{\xi^2}GDG^{-1}\right)\Psi,
\end{align}
where $G=\Omega\mathrm{e}^{-2w}\Omega$.

Although $x=|z|$ is a real variable for $tt^{*}$-Toda equations, $w(x)$ can be analytically continued to a multi-valued meromorphic function on $\mathbb{C}$, by the Painlev\'e property of these isomonodromy equations. So applying Proposition \ref{thm:asym in t}, we have 
\begin{cor}
    For any given $m_i\in\mathbb{R},\ l_i\in \mathbb{R}_{+},\ i=0,...,n$, such that 
    \begin{equation}\label{shrtt*}
        |m_i-m_j|<1, \ m_i+m_{n-i}=0,\ l_i\cdot l_{n-i}=1,
    \end{equation}
    there exists a unique solution $w_i(x),\ i=0,1,...,n$, of \eqref{eq:eq for tt toda matform} with the following behaviors near zero:
    \begin{align}\label{eq:asy in tt toda}
        w_i=m_i\mathrm{log}x-\frac{1}{2}\mathrm{log}l_i + o(1), \ i=0,...,n,\ \text{as}\ x\rightarrow 0
    \end{align}
\end{cor}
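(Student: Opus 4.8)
The plan is to reduce this corollary directly to Proposition~\ref{thm:asym in t}, applied to the linear system \eqref{eq:standard sys for tt* toda}. First I would identify the coordinate data: for the $tt^{*}$-Toda system the two irregular poles are $\xi=\infty$ with $U=D$ and $\xi=0$ with $V=x^2 D$; since $x=|z|$ plays the role of the single remaining variable after the radial reduction, one checks that the coordinate $t$ from \eqref{coor2} equals (a constant multiple of) $x^{2(n+1)/(n+1)}$, i.e.\ a fixed power of $x$, so that $t\to 0$ is equivalent to $x\to 0$. Concretely $u_i$ are the eigenvalues of $D$ and $v_i$ those of $x^2 D$, hence $t=(v_n-v_1)(u_n-u_1)$ is a constant times $x^2$.

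Next I would specify the asymptotic data $\widehat{A},\widehat{G}$ to feed into Proposition~\ref{thm:asym in t}. The residue matrix at $\xi=\infty$ is $-\Omega(xw_x)\Omega^{-1}$, so the natural candidate for the limiting residue $\widehat{A}$ is $-\Omega\,\mathrm{diag}(m_0,\dots,m_n)\,\Omega^{-1}$, whose eigenvalues are $-m_0,\dots,-m_n$; the hypothesis $|m_i-m_j|<1$ is exactly the eigenvalue condition \eqref{eq:shrinking in t}. For $\widehat{G}$ one takes the limit suggested by \eqref{eq:asy in tt toda}: since $G=\Omega e^{-2w}\Omega$ and $e^{-2w_i}=l_i x^{-2m_i}(1+o(1))$, the matrix $x^{2\Omega\,\mathrm{diag}(m_i)\,\Omega^{-1}\text{-type conjugation}}$ times $G$ tends to $\Omega\,\mathrm{diag}(l_0,\dots,l_n)\,\Omega$. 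The symmetry constraints $m_i+m_{n-i}=0$ and $l_i l_{n-i}=1$ ensure the resulting $(\widehat{A},\widehat{G})$ respect the reality/symmetry structure of the $tt^{*}$ system, so that the solution produced is genuinely a solution of \eqref{eq:eq for tt toda matform} with $w_i+w_{n-i}=0$ and real. Then Proposition~\ref{thm:asym in t} (and its globalisation in the proof of Theorem~\ref{thm:asy of iso A from hatA}, to allow the single variable $x$ to move) yields a unique $(A(t),G(t))$ with $A\to\widehat{A}$ and $t^{-\widehat{A}}G\,w_1^{\delta(G^{-1}AG)}\to\widehat{G}$.

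The final step is to translate these matrix asymptotics back into the scalar statement \eqref{eq:asy in tt toda}. Exactly as in the proof of Corollary~\ref{prop:asy for sgpiii}, one observes that the Picard iteration \eqref{picard iter for A}--\eqref{picard iter for B} preserves the special algebraic form of $(A,G)$ — namely $A=-\Omega(xw_x)\Omega^{-1}$ and $G=\Omega e^{-2w}\Omega$ for some diagonal $w(x)$ — so the constructed solution is automatically of $tt^{*}$-Toda type. Conjugating the convergence $A\to\widehat{A}$ by $\Omega^{-1}$ gives $xw_x\to\mathrm{diag}(m_i)$, i.e.\ $x(w_i)_x\to m_i$; and unwinding $t^{-\widehat{A}}G\to\widehat{G}$ (after diagonalising $\widehat{A}$, and using that $\delta(G^{-1}AG)$ also tends to a diagonal matrix with entries $-m_i$) gives $x^{-2m_i}e^{-2w_i}\to l_i$, hence $w_i=m_i\log x-\tfrac12\log l_i+o(1)$. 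A branch of $\log x$ must be fixed, just as in Corollary~\ref{prop:asy for sgpiii}, and uniqueness of $w_i$ follows from the uniqueness clause of Proposition~\ref{thm:asym in t}.

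The main obstacle I anticipate is bookkeeping rather than conceptual: verifying that the constraints \eqref{shrtt*} are precisely what is needed to (i) satisfy the eigenvalue hypothesis \eqref{eq:shrinking in t}, and (ii) guarantee that the abstract solution $(A,G)$ from Proposition~\ref{thm:asym in t} actually lies in the $tt^{*}$-Toda locus $\{A=-\Omega(xw_x)\Omega^{-1},\ G=\Omega e^{-2w}\Omega,\ w_i+w_{n-i}=0\}$ — this requires checking that the reality/symmetry structure is invariant under the Picard iteration, which is the analogue of the argument used implicitly in Corollary~\ref{prop:asy for sgpiii}. One also has to be slightly careful that the normalising factor $w_1^{\delta(G^{-1}AG)}$ contributes only a benign diagonal rescaling in the limit, so that it does not disturb the identification of the constants $l_i$; this is where the condition $\delta(G^{-1}AG)\to -\Omega^{-1}\widehat{A}\Omega$ having real diagonal entries $-m_i$ (forced by $m_i+m_{n-i}=0$) is used.
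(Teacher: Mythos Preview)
Your approach is correct and essentially identical to the paper's: reduce to Proposition~\ref{thm:asym in t} with $\widehat{A}=-\Omega\,\mathrm{diag}(m_i)\,\Omega^{-1}$ and $\widehat{G}$ built from $\Omega\,\mathrm{diag}(l_i)\,\Omega$, observe that the Picard iteration preserves the property that $\Omega^{-1}A\Omega$ and $\Omega^{-1}G\Omega^{-1}$ are diagonal, and translate back. One small correction: $\delta(G^{-1}AG)$ is not $\mathrm{diag}(-m_i)$ but identically zero, since $G^{-1}AG=-\Omega^{-1}(xw_x)\Omega$ and the diagonal of $\Omega^{-1}\Lambda\Omega$ for diagonal $\Lambda$ is $\tfrac{1}{n+1}\mathrm{tr}(\Lambda)\cdot\mathrm{Id}$, which vanishes because $m_i+m_{n-i}=0$ forces $\sum m_i=0$; so the $w_1^{\delta(G^{-1}AG)}$ factor is simply $\mathrm{Id}$ and the constraints $m_i+m_{n-i}=0$, $l_il_{n-i}=1$ are used instead to ensure the constructed $w$ satisfies the $tt^*$ symmetry $w_i+w_{n-i}=0$.
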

\begin{proof}
   Since the sine-Gorden Painlev\'e III equation is the matrix order 2 case of  $tt^*$ -Toda equations of $A_n$ type, the proof here proceeds in parallel with that of Corollary \ref{prop:asy for sgpiii}. As established above, we identify the $tt^*$-Toda equations \eqref{eq:eq for tt toda matform} with the isomonodromy equations of system \eqref{eq:standard sys for tt* toda}. In the $(\mathbf{z},t,\mathbf{w})$ coordinate system \eqref{coor1}--\eqref{coor3}, the variable $t$ is given by $t=-(1-\mathrm{e}^{-\frac{2\pi\mathrm{i}}{n+1}})^2x^2$. By Proposition \ref{thm:asym in t}, for any prescribed 
\begin{align*}
    &\widehat{A}:=-\Omega\cdot\Lambda\cdot\Omega^{-1},\quad \Lambda:=\mathrm{diag}(m_0,\dots,m_n),\\
    &\widehat{G}:=\Omega\left(-(1-\mathrm{e}^{-\frac{2\pi\mathrm{i}}{n+1}})^2\right)^{\Lambda}\cdot L\cdot\Omega,\quad L:=\mathrm{diag}(l_0,\dots,l_n),
\end{align*}
satisfying \eqref{shrtt*}, there exists a unique solution $(A(t), G(t))$ to \eqref{eq:iso for only t A} and \eqref{eq:iso for only t G} (where $U=D$, $V=-x^2D$, and $\delta(G^{-1}AG)=0$), such that 
\begin{align}\label{der of asy for tteq}
    A\rightarrow\widehat{A},\quad t^{-\widehat{A}}G\rightarrow\widehat{G},\quad \text{as } t\rightarrow 0.
\end{align}
Furthermore, since both $\Omega^{-1}\widehat{A}\Omega$ and $\Omega^{-1}\widehat{G}\Omega^{-1}$ are diagonal, the Picard iteration \eqref{picard iter for A}--\eqref{picard iter for B} in Proposition \ref{thm:asym in t} ensures that $\Omega^{-1}{A}\Omega$ and $\Omega^{-1}{G}\Omega^{-1}$ also remain diagonal. 

Setting $\mathrm{e}^{-2w}=\Omega^{-1}G\Omega^{-1}$, the equation \eqref{eq:iso for only t G} for $G$ is now equivalent to $xw_x=-\Omega^{-1}A\Omega$, and the equation \eqref{eq:iso for only t A} for $A(t)$ reduce to equation \eqref{eq:eq for tt toda matform} for $w(x)$. Moreover, the asymptotics \eqref{der of asy for tteq} translate to:
\begin{align*}
    xw_x &\rightarrow \mathrm{diag}(m_0,\dots,m_n),\quad \text{as } x\rightarrow 0,\\
    \mathrm{e}^{2(-w+\Lambda\log x)} &\rightarrow \mathrm{diag}(l_0,\dots,l_n),\quad \text{as } x\rightarrow 0.
\end{align*}
Thus, we can choose a branch of the logarithm such that $w(x)$ satisfies the asymptotics \eqref{eq:asy in tt toda}, and then the solution $w(x)$ to equation \eqref{eq:eq for tt toda matform} is uniquely determined by the relation $\mathrm{e}^{-2w}=\Omega^{-1}G\Omega^{-1}$. Since $m_i,l_i$ are real, it follows that $w(x)$ is real-valued when restricted to the positive real axis. And condition \eqref{shrtt*} implies that $w_i(x)=w_{n-i}(x)$. Thus, $w(x)$ is indeed a solution to the $tt^*$-Toda equations.
\end{proof}
In the work of \cite{guest2015isomonodromy1,guest2015isomonodromy2,guest2023toda}, global smooth solutions $w(z,\bar{z})$ defined on $\mathbb{C}^*$ are considered. These solutions have prescribed asymptotic behavior at infinity ($w_i(x)\rightarrow 0 $ as $x\rightarrow\infty$) and thus can be parameterized  just by the parameters $m_i,\ i=0,...,n$, at the origin. The explicit expression of the
 $l_i$  in terms of the parameters $m_i$ for  global smooth solutions is given in \cite{guest2023toda}.

In \cite{guest2023toda}, the parameters range for all global smooth solutions is 
\[\{m\in\mathbb{R}^{n+1}: m_i-m_{i-1}\leq 1, \ m_i+m_{n-i}=0\}.\] This range is wider than the boundary condition \eqref{shrtt*}. (One can think of these solutions of $tt^*$-Toda equations as a $\frac{n}{2}$ parameters family of solutions of general rank $n$ isomonodromy equations with $2n^2$ parameters. Our boundary condition is open and dense in the space of $2n^2$ parameters, but is not dense when restricts to the slice of $2n$ parameters). Thus, some global smooth solutions of $tt^{*}$-Toda equations {yield solutions that} are not in the set $\mathfrak{Sol}_{Shr}$, which also means their monodromy data do not satisfy the strictly log-confined condition in our Definition \ref{def:log-conf cond}.  As shown in the proof of Proposition \ref{prop:almost every sol is shr}, only identities \eqref{eq:nongeneri cond1} and \eqref{eq:nongeneri cond2} can occur.

For matrices of order 4 and 5, we further numerically examine which of the two identities \eqref{eq:nongeneri cond1} and \eqref{eq:nongeneri cond2} specifically arise within the monodromy data corresponding to these non-shrinking solutions. The explicit formulas for the Stokes matrices in terms of asymptotic parameters for matrices of orders 4, 5, 6 are provided in 
\cite{guest2015isomonodromy1}. After accounting for the braid group action, we obtain the monodromy matrices at the points $U,V$ and direction $d$ satisfying \eqref{eq:u,v cond for monodromy mat}, as required in Definition \ref{def:log-conf cond}. Numerical verification shows that the monodromy matrix at infinity $M=\nu_d^{(\infty)}$  always has some upper-left submatrices $M^{[k]}$ with paired negative real eigenvalues, leading to  $|\mathrm{Re}(\lambda^{(k)}_i-\lambda^{(k)}_j)|=1$ for some $1\leq i<j\leq k$. 

In the case of matrices of order 4, a typical numerical example is as follows. We take the eigenvalues of $\widehat{A}$ to be  $(m_0,m_1,m_2,m_3)=(0.4,-0.55,-0.4,0.55)$, which do not satisfy the boundary condition \eqref{eq:shrinking in t}. (This eigenvalues condition corresponds to the case 4a  in \cite[Theorem A,\ Theorem B]{guest2015isomonodromy1}, matching their parameters $\gamma=0.8,\ \delta=-1.1$. )  By the formula given in \cite[Corollary 4.7]{guest2015isomonodromy1}, the Stokes matrices are 
\begin{align*}
 \left(S^{(\infty)}_{\frac{\pi}{8},+}\right)^{-1} =  \begin{pmatrix}
1 & 0 & 2.04909\mathrm{i} &-0.33219+0.33219\mathrm{i} \\
-0.33219+0.33219\mathrm{i} &1&-0.34850-0.34850\mathrm{i}& 1.82839\mathrm{i}\\
0 & 0&1&0.33219+0.33219\mathrm{i}\\
0&0&0&1
\end{pmatrix},\\
S^{(\infty)}_{\frac{\pi}{8},-}= \begin{pmatrix}
1 & 0.33219+0.33219\mathrm{i} & 0 &0 \\
0 &1&0& 0\\
2.04909\mathrm{i} & -0.33219+0.33219\mathrm{i}&1&0\\
-0.34850-0.34850\mathrm{i}&1.82839\mathrm{i}&-0.33219+0.33219\mathrm{i}&1
\end{pmatrix}.
\end{align*}
Here $\left(S^{(\infty)}_{\frac{\pi}{8},+}\right)^{-1}, S^{(\infty)}_{\frac{\pi}{8},-}$ are $S_1,S_2$ in \cite{guest2015isomonodromy1} respectively. Since the aforementioned Stokes matrices are computed at $(u_1,u_2,u_3,u_4)=(1,\mathrm{i},-1,-\mathrm{i})$, we apply the following braid group action to transform them into the stokes matrices at $(u_1,u_2,u_3,u_4)=(\mathrm{i},1,-1,-\mathrm{i})$, thereby satisfying the condition \eqref{eq:u,v cond for monodromy mat}: let
\begin{align*}
 &\mathcal{B}_1 = \mathrm{Id_4}+s_1E_{21}=\mathrm{Id_4}+(-0.33219+0.33219\mathrm{i})E_{21},\\
 &\mathcal{B}_2=\mathrm{Id}_4-\bar{s}_1E_{12}=\mathrm{Id_4}+(0.33219+0.33219\mathrm{i})E_{12},
\end{align*}
where $s_1=2\mathrm{e}^{\frac{3\pi\mathrm{i}}{4}}\left(\cos\frac{\pi}{4}(1+2m_0)+\cos\frac{\pi}{4}(3+2m_1)\right)$ as given in \cite{guest2015isomonodromy1}, and $E_{ij}$ denotes the $4\times4$ matrix with 1 at the $(i,j)$-entry and 0 elsewhere. Thus, the Stokes matrices at $(\mathrm{i},1,-1,-\mathrm{i})$ are 
\begin{align*}
    S_1'=\mathcal{B}_1^{-1}\left(S^{(\infty)}_{\frac{\pi}{8},+}\right)^{-1}\mathcal{B}_2,\quad S_2'=\mathcal{B}_2^{-1}S^{(\infty)}_{\frac{\pi}{8},-}\mathcal{B}_1.
\end{align*}
Therefore the monodromy matrix $\nu_{\frac{\pi}{8}}^{(\infty)}$ at $(\mathrm{i},1,-1,-\mathrm{i})$ is 
\begin{align*}
   \nu_{\frac{\pi}{8}}^{(\infty)}=\left(S_1'S_2'\right)^{-1} =\begin{pmatrix}
        1&-0.33219-0.33219\mathrm{i}&-1.82839\mathrm{i}&-0.95587+0.95585\mathrm{i}\\
        0.33219-0.33219\mathrm{i}&0.77930&-0.93956-0.93956\mathrm{i}&-1.19333\mathrm{i}\\
        -2.04909\mathrm{i}&-0.34850+0.34850\mathrm{i}&-2.96752&1.01911+1.01911\mathrm{i}\\
        -0.33219-0.33219\mathrm{i}&-1.82839\mathrm{i}&-0.95587+0.95587\mathrm{i}&-2.33219
    \end{pmatrix}.
\end{align*}
The eigenvalues of the upper-left $3\times 3$ submatrix of $\nu_{\frac{\pi}{8}}^{(\infty)}$ is 
$(-1.53758,1,-0.65037)$. Thus the $\sigma_3$ in Definition \ref{def:generic sol} is $(-0.5,0,0.5)$, which does not satisfy the condition \eqref{eq:generic cond 2 in def}.

The two scatter figures \ref{fig:order4}, \ref{fig:order5} below summarize our numerical results. The axes $\gamma$ and $\delta$  correspond to $2m_0$ and $2m_1$  respectively (twice eigenvalues of $\widehat{A}$). The colored points indicate $(\gamma,\delta)$ values for which corresponding submatrix of $M$ exhibits paired negative real eigenvalues, thus the corresponding monodromy data are not strictly log-confined. Within the region where $|\gamma|<1, |\delta|<1$, no colored points appear. This indicates that the monodromy matrices corresponding to these $(\gamma,\delta)$ values is { strictly log-confined }, which aligns with our theoretical expectations.
\begin{figure}[H]
    \centering
    \includegraphics[scale=0.35]{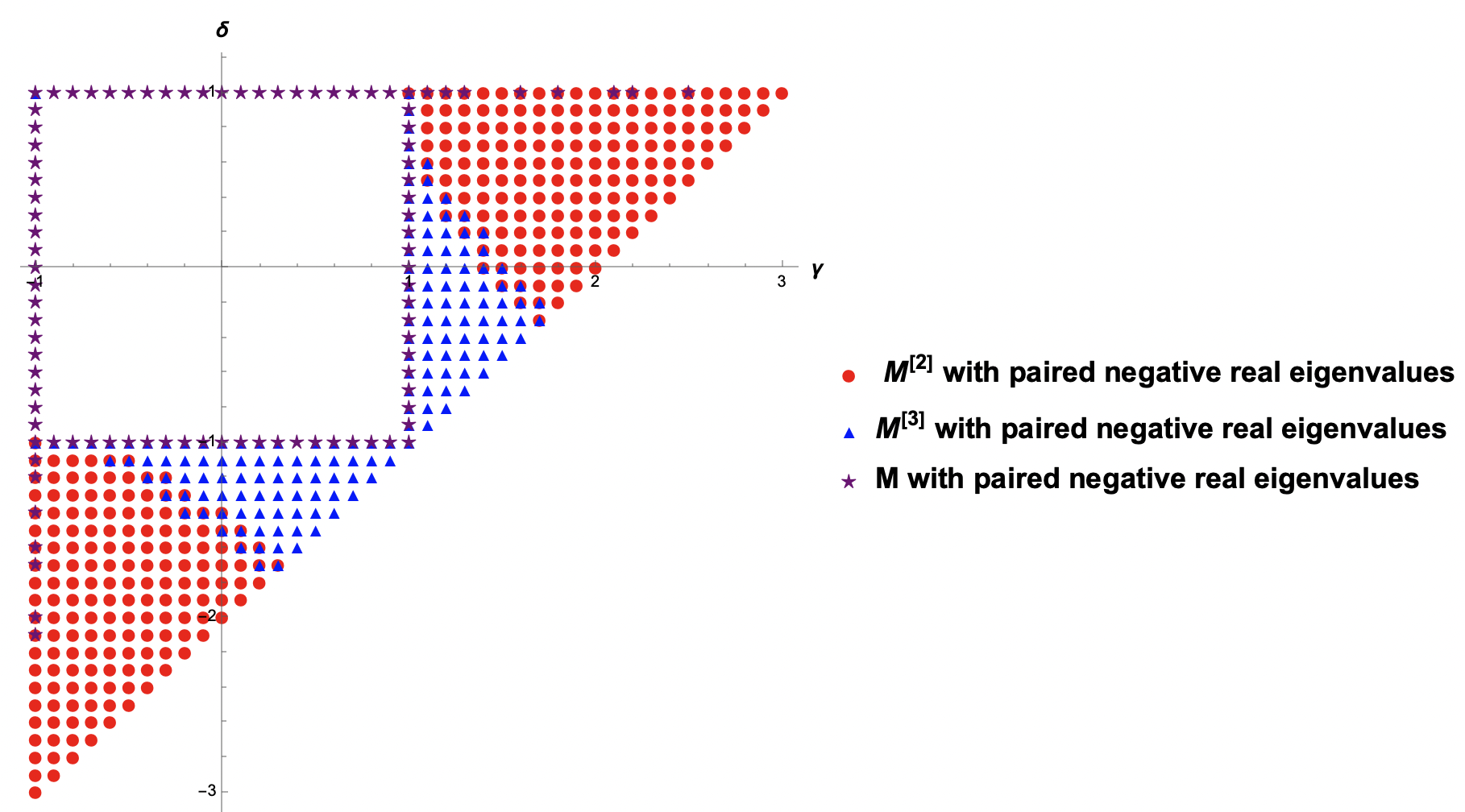}
    \caption{Discrimination of Monodromy matrix $M=\nu^{(\infty)}_{\frac{\pi}{8}}$ for order 4}
    \label{fig:order4}
\end{figure}

\vspace{-15pt} 

\begin{figure}[H]
    \centering
    \includegraphics[scale=0.35]{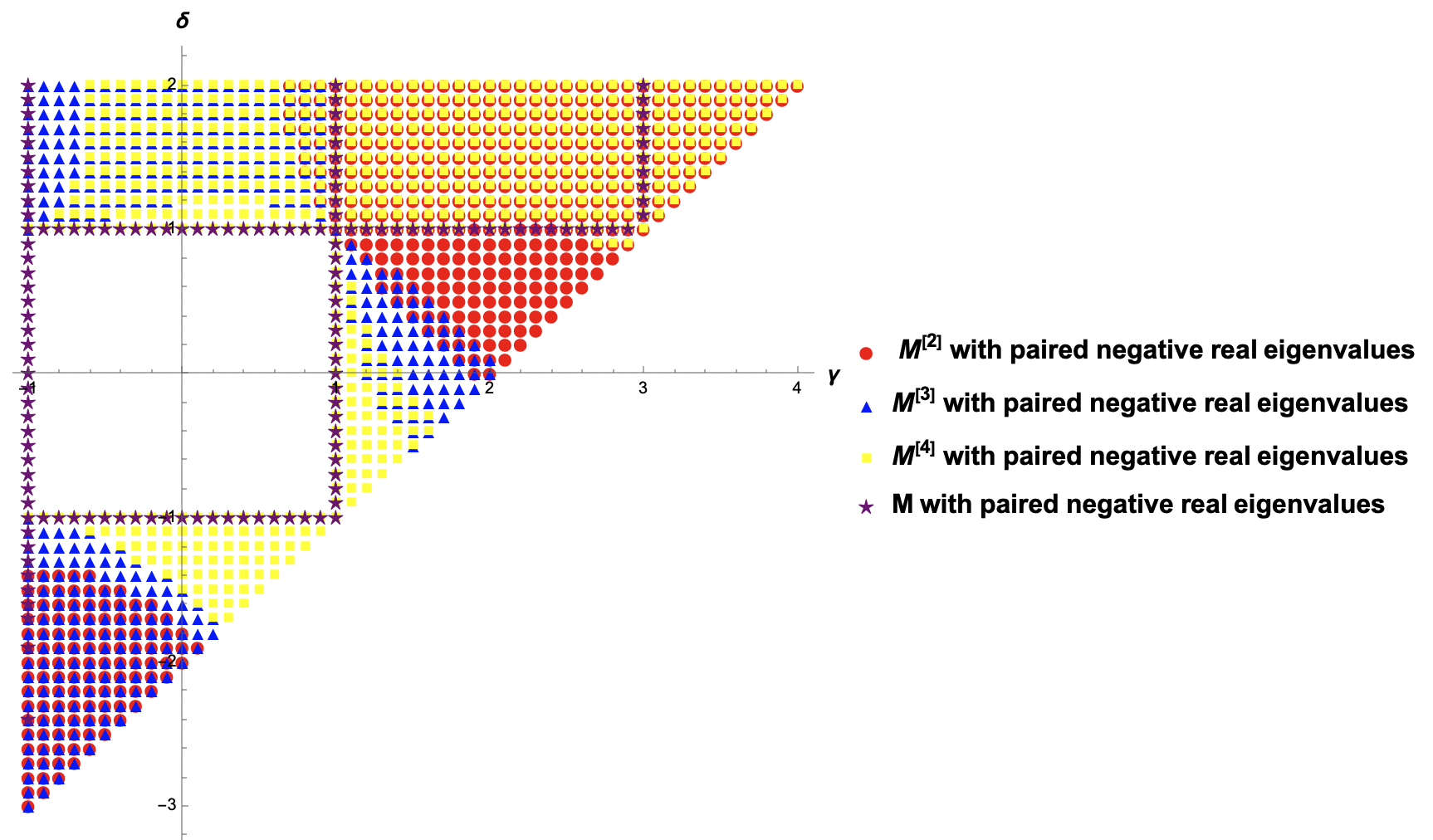}
    \caption{Discrimination of Monodromy matrix $M=\nu^{{(\infty)}}_{\frac{\pi}{10}}$ for order 5}
    \label{fig:order5}
\end{figure}

\subsection{General $tt^{*}$ equations}\label{sec:apply for general tt}

In this section, we use our asymptotic results to describe the local behavior of the solutions to the general $tt^{*}$ equations with similarity reduction as $t\rightarrow0$. The $tt^{*}$ equations originated from the study of $N=2$ supersymmetric quantum field theories in \cite{cecotti1991topological,cecotti1993classification}, where their solutions represent the metric for supersymmetric ground state. Dubrovin provided a mathematical interpretation of  $tt^{*}$ equations in \cite{dubrovin1993tteq}, formulating them as the compatibility conditions of a linear PDE system. And the solutions of $tt^{*}$ equations define a geometric structure (special geometry structure) on the Frobenius manifold.

Based on \cite{dubrovin1993tteq}, suppose $\mathbf{u}=(u_1,...,u_n)$ is canonical coordinate on a Frobenius manifold, $\bar{\mathbf{u}}=(\bar{u}_1,...,\bar{u}_n)$ is the conjugate coordinate of $\mathbf{u}$, let $q(\mathbf{u},\bar{\mathbf{u}})$ be a symmetric off-diagonal matrix function, $m(\mathbf{u},\bar{\mathbf{u}})$ be an orthogonal, Hermitian matrix function, such as 
\begin{align*}
    [U,q]=m[\overline{U},\overline{q}]\overline{m},\quad U=\mathrm{diag}(u_1,...,u_n).
\end{align*}
The $tt^{*}$ equations (with similarity reduction) are equations for  $q(\mathbf{u},\bar{\mathbf{u}}) ,\ m(\mathbf{u},\bar{\mathbf{u}})$ as following,
\begin{align}
    -[U,\frac{\partial }{\partial u_k}q]&=[\mathrm{ad}_{E_k}q,\mathrm{ad}_Uq]+[E_k,\ q+m\overline{U}m^{-1}],\\
  \frac{\partial}{\partial \bar{u}_k} q &= mE_km^{-1},\\
   \frac{\partial}{\partial {u}_k} m &=-[E_k,q]\cdot m,\\
    \frac{\partial}{\partial \bar{u}_k} m &=  m\cdot [E_k,\overline{q}].
\end{align}
These equations arise precisely as the compatibility conditions of the following linear PDE system: 
\begin{align}\label{eq:symmtric for tt}
   \frac{\partial}{\partial \lambda}\varphi&= (U-\lambda^{-1}[U,q]-\lambda^{-2}m\bar{U}m^{-1})\varphi,  \\ \label{eq:orig tt1}
    \frac{\partial}{\partial u_k} \varphi &= (\lambda E_{kk}-[E_{kk},q])\varphi,\\ \label{eq:orig tt2}
    \frac{\partial}{\partial \bar{u}_k}\varphi &= \lambda^{-1}mE_{kk}m^{-1}\varphi.
\end{align}
 Equation \eqref{eq:symmtric for tt} is precisely the special case to equation \eqref{introeq}. Therefore, the $tt^{*}$ equations are equivalent to the isomonodromy equations we are studying. Furthermore, suppose $u_1=0$ (for arbitrary $u_1$, it suffices to let  $\phi=\mathrm{e}^{-u_1\lambda-\frac{\bar{u}_1}{\lambda}}\varphi$ and consider $\phi$), and let $\xi=|u_n|\cdot\lambda$, \eqref{eq:symmtric for tt} becomes
 \begin{align}\label{eq:change sys for tteq}
      \frac{\partial}{\partial \xi}\varphi&= \left(\frac{1}{|u_n|}U-\lambda^{-1}[U,q]-\lambda^{-2}
      |u_n|m\bar{U}\bar{m}\right)\varphi.
 \end{align}
 The isomonodromy equations of \eqref{eq:change sys for tteq} are still the $tt^{*}$ equations, and our $(\mathbf{z},t,\mathbf{w})$ coordinates now transform as follows: 
 \begin{align*}
     t=|u_n|^2,\ z_{n-1}=\frac{u_n}{u_{n-1}},\cdots,z_{2}=\frac{u_3}{u_2},\ z_1=\frac{u_2}{|u_n|}, \ w_{n-1}=\bar{z}_{n-1},\cdots, w_2=\bar{z}_2,
 \end{align*}
thus $t$ represents scale for $\varphi$ and $z_1$ represents rotation angle for $\varphi$.

Similar to the $tt^{*}$-Toda case, by the Painlev\'e property, $q(\mathbf{z},\bar{\mathbf{z}}),\ m(\mathbf{z},\bar{\mathbf{z}})$ can be analytically continued to multi-valued meromorphic functions on $\mathbb{C}^{2n}\setminus\Delta_{\mathbf{u}, \mathbf{v}}$. Thus applying Theorem \ref{mainthm}, we have 
\begin{cor}\label{prop:asym of gener tt}
     Give an anti-symmetric matrix $\widehat{A}_0$, and an orthogonal, Hermitian matrix $m_0$, such that $\widehat{A}_0=-m_0\overline{\widehat{A}_0}\overline{m_0}$, and satisfy the eigenvalues conditions \eqref{eq:eigen condition of Ahat and Atil}.
Let $\widehat{A}\left(\mathbf{z},\widehat{A}_0\right)$ (denote by $\widehat{A}$ below)  be the solutions of \eqref{eq iso for hatA z} with boundary value $\widehat{A}_0$, as provided by Theorem~\ref{thm:iso asy by tangxu}; and let $\mathbf{z}=(z_1,...,z_{n-1})$,
\begin{align*}
   & \widehat{m}(\mathbf{z},\bar{\mathbf{z}}):=\left|z_1
   \right|^{2\widehat{A}}\cdot\widehat{\mathfrak{C}}^{-1}\cdot m_0\cdot \mathrm{e}^{\pi\mathrm{i}\widehat{A}_0}\cdot\overline{\widehat{\mathfrak{C}}}\cdot\mathrm{e}^{\pi\mathrm{i}\widehat{A}}.
\end{align*}
Here $\widehat{\mathfrak{C}}$ is  the solutions of \eqref{eq: eq of one irr connection mat} corresponding to $U=\mathrm{diag}(0,1,z_2,\cdots,z_2\cdots z_{n-1}),\ \Phi=\widehat{A}(\mathbf{z},\widehat{A}_0)$, such that the asymptotic constant in \eqref{eq:leading term of connection mat in one irr} is $\mathrm{Id}$. And $\overline{\widehat{\mathfrak{C}}}$ is the complex conjugate of $\widehat{\mathfrak{C}}$. Then there exists a unique solution $(q(t,\mathbf{z},\bar{\mathbf{z}}),m(t,\mathbf{z},\bar{\mathbf{z}}))$ of $tt^{*}$ equations satisfying the following asymptotic behavior near $t=0$:
\begin{align}
    A=-[U,q]\rightarrow \widehat{A},\quad
   t^{-\widehat{A}}m\rightarrow \widehat{m},\quad \text{as}\ t\rightarrow 0.
\end{align}

Furthermore, $(\widehat{A},\widehat{m})$ satisfy the following asymptotic behaviors as $z_{n-1}\rightarrow \infty, \cdots z_2\rightarrow \infty$ successively:
\begin{align*}
     &\lim_{z_{k}\rightarrow\infty} \delta_{k}\widehat{A}_k=\delta_k\widehat{A}_{k-1}, \ \lim_{z_k\rightarrow\infty} z_k^{\delta_k\widehat{A}_{k-1}}\cdot\widehat{A}_{k}\cdot z_k^{-\delta_k\widehat{A}_{k-1}}=\widehat{A}_{k-1},\ k=2,...,n-1,\ \widehat{A}_1=\widehat{A}_0;\\
      &\overrightarrow{\prod_{k=1}^{n-1}}\left(z_{k}^{\delta_{k}\widehat{A}_{k-1}}z_{k}^{-\widehat{A}_{k}}\right)\left(z_1\cdot |z_2|^2\cdots |z_{n-1}|^2\right)^{\widehat{A}}\widehat{m} \overleftarrow{\prod_{k=2}^{n-1}}\overline{\left(z_{k}^{\widehat{A}_{k}}z_{k}^{-\delta_{k}\widehat{A}_{k-1}}\right)}\rightarrow m_0.
\end{align*}

On the other hand, when the monodromy data of system \eqref{eq:change sys for tteq} or \eqref{eq:symmtric for tt} is strictly log-confined as in Definition \ref{def:log-conf cond}, the corresponding solutions of $tt^{*}$ equations must satisfy this asymptotic behaviors near $t=0$.
\end{cor}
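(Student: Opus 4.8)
The strategy is to realize the $tt^{*}$ equations with similarity reduction as the restriction of the isomonodromy system \eqref{eq:iso eq of two irr A-u}--\eqref{eq:iso eq of two irr G-v} to the slice $V=\overline{U}$, $A=-[U,q]$, $G=m$ with $q$ symmetric off-diagonal and $m$ orthogonal and Hermitian subject to $[U,q]=m[\overline{U},\overline{q}]\overline{m}$, and then to push Theorem \ref{mainthm} and Theorem \ref{thm:almost every solution shrinking} through this reduction. First I would record the linear algebra forced by the $tt^{*}$ constraints: orthogonality together with Hermiticity gives $m^{-1}=m^{t}=\overline{m}$, hence $G^{-1}AG=m^{-1}(-[U,q])m=-[\overline{U},\overline{q}]$, which is off-diagonal, so that $\delta(G^{-1}AG)=0$ and the factor $w_1^{\delta(G^{-1}AG)}$ in Theorem \ref{mainthm} disappears; moreover $\widetilde{A}_0:=-G_0^{-1}\widehat{A}_0G_0=-\overline{m_0}\,\widehat{A}_0\,m_0=\overline{\widehat{A}_0}$, using the hypothesis $\widehat{A}_0=-m_0\overline{\widehat{A}_0}\,\overline{m_0}$. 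Consequently the pair $(\widehat{A}_0,G_0)=(\widehat{A}_0,m_0)$ satisfies the boundary condition \eqref{eq:eigenvalue cond of hatA tilA} exactly when the eigenvalue conditions \eqref{eq:eigen condition of Ahat and Atil} hold, so Proposition \ref{prop:from A0,G0 to hatA, hatG} produces $(\widehat{A},\widehat{G})$ with $\widetilde{A}=-\widehat{G}^{-1}\widehat{A}\widehat{G}$, and Theorem \ref{thm:asy of iso A from hatA} then yields a solution $(A,G)$ of \eqref{iso for z begin}--\eqref{iso for z end} with the asymptotics of Theorem \ref{mainthm}.

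The heart of the matter is to verify that this $(A,G)$ lies in the $tt^{*}$ slice, that is, $A=-[U,q]$ for a symmetric off-diagonal $q$ and $G=m$ for an orthogonal Hermitian $m$ satisfying the mixed relation. The mechanism is an anti-holomorphic involution of the coordinate system \eqref{iso for z begin}--\eqref{iso for z end} obtained by composing entrywise complex conjugation with the substitution $\mathbf{z}\mapsto\overline{\mathbf{z}}$, $w_k\mapsto\overline{z}_k$, $t\mapsto t$ (under $V=\overline{U}$ this exchanges the data at $\xi=0$ and $\xi=\infty$, as in the change of variable $\eta=1/\xi$): the $tt^{*}$ reality and symmetry constraints cut out precisely its fixed-point locus, and the boundary data $(\widehat{A}_0,m_0)$ is a fixed point by hypothesis. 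Two routes are available. Directly, one checks that the Picard iteration \eqref{picard iter for A}--\eqref{picard iter for B} underlying Proposition \ref{thm:asym in t} is equivariant for the involution: once one knows that $\widehat{A}$ and $\widehat{G}$ are themselves fixed points — which follows from the construction of $\widehat{A}(\mathbf{z},\widehat{A}_0)$ in Theorem \ref{thm:iso asy by tangxu} and of $\widehat{G}$ in \eqref{eq:def of hatG in RH prob}, together with the conjugation symmetry of \eqref{eq: eq of one irr connection mat} (Lemma \ref{prop:eq of one irr connection mat}) — every iterate is a fixed point, hence so is the limit; this is exactly how the sine-Gordon Painlev\'e III and $tt^{*}$-Toda cases were handled in Sections \ref{sec:apply for sg piii} and \ref{sec:apply for tt toda}. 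More cleanly, one applies the involution to $(A,G)$, observes that the image is again a solution with the same boundary data, and invokes the uniqueness in Theorem \ref{thm:asy of iso A from hatA} (equivalently Lemma \ref{lem:uniqueness in RH}) to conclude it coincides with $(A,G)$. Either way, one then defines $q$ and $m$ from $A$ and $G$. This slice-invariance step is the main obstacle: the delicate point is to confirm that the mixed relation $[U,q]=m[\overline{U},\overline{q}]\overline{m}$ is genuinely preserved, which amounts to checking that the involution commutes with all the flows in \eqref{iso for z begin}--\eqref{iso for z end} and interacts correctly with the $t^{\widehat{A}}$-regularizations; the remaining items are bookkeeping with formulas already in hand.

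It then remains to read off the stated formulas. Substituting $w_k=\overline{z}_k$, $G_0=m_0$, $\widetilde{A}_0=\overline{\widehat{A}_0}$ and the induced identities $\widetilde{A}(\mathbf{w})=\overline{\widehat{A}(\mathbf{z})}$ and $\widetilde{\mathfrak{C}}(\mathbf{w},\widetilde{A}_{n-1})=\overline{\widehat{\mathfrak{C}}(\mathbf{z},\widehat{A}_{n-1})}$ (again from the conjugation symmetry of \eqref{eq: eq of one irr connection mat}, using $\widetilde{v}_k=\overline{z}_k$ and $\widetilde{V}$ proportional to $\overline{U}$), together with $z_1z_2\cdots z_{n-1}w_{n-1}\cdots w_2=z_1|z_2|^2\cdots|z_{n-1}|^2$ and the $tt^{*}$ normalization $u_1=0$, $\xi=|u_n|\lambda$, into the formula \eqref{eq:def of hatG in RH prob} for $\widehat{G}$ and into $\widehat{G}=\lim_{t\to0}t^{-\widehat{A}}m$ yields precisely the displayed formula for $\widehat{m}$; the $z_k\to\infty$ asymptotics of $(\widehat{A},\widehat{m})$ are the $tt^{*}$ specialization of \eqref{eq:limit in zk one irr iso} and \eqref{eq:asym leading of hatG}. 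Finally, for the converse: if the monodromy data of \eqref{eq:change sys for tteq} (equivalently \eqref{eq:symmtric for tt}) is strictly log-confined, then Theorem \ref{thm:almost every solution shrinking} exhibits the underlying $(A,G)$ as a shrinking solution $(A(\widehat{A}_0,G_0),G(\widehat{A}_0,G_0))$; since $(A,G)$ is $tt^{*}$, hence fixed by the involution, and the boundary value of a solution transforms under the involution compatibly with the iterated limits, the uniqueness in Lemma \ref{lem:uniqueness in RH} forces $(\widehat{A}_0,G_0)$ itself to be a fixed point, i.e. $G_0=m_0$ orthogonal Hermitian with $\widehat{A}_0=-m_0\overline{\widehat{A}_0}\,\overline{m_0}$, and the first part then gives the claimed behavior near $t=0$.
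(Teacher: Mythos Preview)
Your proposal is correct and follows essentially the same approach as the paper: build the general shrinking solution from $(\widehat{A}_0,m_0)$ via Proposition \ref{prop:from A0,G0 to hatA, hatG} and Theorem \ref{thm:asy of iso A from hatA}, then use the uniqueness in Theorems \ref{thm:iso asy by tangxu} and \ref{thm:asy of iso A from hatA} to force the $tt^{*}$ symmetries (your route (b)), specialize \eqref{eq:def of hatG in RH prob} to obtain the formula for $\widehat{m}$, and for the converse invoke Theorem \ref{thm:almost every solution shrinking} together with Lemma \ref{lem:uniqueness in RH}. The paper does exactly this, checking separately the transpose symmetry ($-A^{T}=A$, $m^{-T}=m$) and the conjugation symmetry ($-m\overline{A}\,\overline{m}^{-1}=A$, $\overline{m}^{T}=m$); your single ``anti-holomorphic involution'' packages the latter, so just be sure to also record the transpose invariance to recover the full $tt^{*}$ slice.
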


 \begin{proof}
    Starting from the pair $(\widehat{A}_0, m_0)$ satisfying the conditions of the corollary, the relation $\widetilde{A}_0 = -m_0^{-1} \widehat{A}_0 m_0 = \overline{\widehat{A}_0}$ implies that the shrinking conditions for $\widehat{A}_0$ are equivalent to those for $\widetilde{A}_0$. Following Proposition \ref{prop:from A0,G0 to hatA, hatG}, we employ equation \eqref{eq:def of hatG in RH prob} to construct $\widehat{m}(\mathbf{z}, \mathbf{w})$ from $\widehat{A}(\mathbf{z}, \widehat{A}_0)$ and $\widetilde{A}(\mathbf{w}, \widetilde{A}_0)$. By Theorem \ref{thm:asy of iso A from hatA}, we obtain the multi-valued meromorphic solutions to the isomonodromy equations, denoted by $(A(\mathbf{z}, t, \mathbf{w}), m(\mathbf{z}, t, \mathbf{w}))$, on the domain $\mathbb{C}^{2n} \setminus \Delta_{\mathbf{u}, \mathbf{v}}$.

    Restricting these solutions to $V = \overline{U}$ and invoking the uniqueness result in Theorem \ref{thm:iso asy by tangxu}, we have $\widetilde{A}(\bar{z}, \widetilde{A}_0) = \overline{\widehat{A}}(z, \widehat{A}_0)$ and $\overline{C}_{-d}(U, \widehat{A}) = C_{d}(\overline{U}, \overline{\widehat{A}}) = C_d(\overline{U}, \widetilde{A})$. Consequently, $\widehat{m}(\mathbf{z}, \bar{\mathbf{z}})$ can be represented in the form specified in this corollary. Furthermore, the uniqueness property from Theorem \ref{thm:asy of iso A from hatA} yields the following symmetries:
    \begin{align*}
        -A^{T}(\mathbf{z}, t, \bar{\mathbf{z}}) &= A(\mathbf{z}, t, \bar{\mathbf{z}}), \quad m^{-T}(\mathbf{z}, t, \bar{\mathbf{z}}) = m(\mathbf{z}, t, \bar{\mathbf{z}}); \\
        -m(\mathbf{z}, t, \bar{\mathbf{z}}) \overline{A}(\mathbf{z}, t, \bar{\mathbf{z}}) \overline{m}^{-1}(\mathbf{z}, t, \bar{\mathbf{z}}) &= A(\mathbf{z}, t, \bar{\mathbf{z}}), \quad \overline{m}^{T}(\mathbf{z}, t, \bar{\mathbf{z}}) = m(\mathbf{z}, t, \bar{\mathbf{z}}).
    \end{align*}

    Conversely, suppose the monodromy data of system \eqref{eq:change sys for tteq} or \eqref{eq:symmtric for tt} is strictly log-confined. In this case, Theorem \ref{thm:almost every solution shrinking} ensures the existence of multi-valued meromorphic solutions $(A(\mathbf{z}, t, \mathbf{w}), m(\mathbf{z}, t, \mathbf{w}))$ on $\mathbb{C}^{2n} \setminus \Delta_{\mathbf{u}, \mathbf{v}}$. When these solutions are restricted to $U = \overline{V}$, the invariance of the monodromy data and Lemma \ref{lem:uniqueness in RH} imply that $(A(\mathbf{z}, t, \bar{\mathbf{z}}), m(\mathbf{z}, t, \bar{\mathbf{z}}))$ are precisely the solutions to the $tt^*$ equations (as given in system \eqref{eq:change sys for tteq} or \eqref{eq:symmtric for tt}) from which we initially started. Finally, from the algebraic relations between $A$ and $m$, we can deduce through a limiting process that $\widehat{A}_0$ is anti-symmetric, $m_0$ is orthogonal and Hermitian, and the identity $\widehat{A}_0 = -m_0 \overline{\widehat{A}_0} m_0^{-1}$ holds.
\end{proof}

\medskip

\bibliographystyle{plain}
\bibliography{2irrsys}

\end{document}